\numberwithin{equation}{section}
\newcommand{\norm}[1] {\left \| #1 \right \|}
\newcommand{\inclu}[0] {\ar@{^{(}->}}
\newcommand{\diag}{{\rm diag}}
\newcommand{\PP}{\mathbb{P}}
\newcommand{\EE}{\mathbb{E}}
\newcommand{\sign}{\mathrm{sign}}
\newcommand{\RR}{\mathbb{R}}
\newcommand{\cT}{\mathcal{T}}
\newcommand{\cD}{\mathcal{D}}
\newcommand{\cM}{\mathcal{M}}
\newcommand{\cN}{\mathcal{N}}
\newcommand{\rank}{\mathrm{rank}}
\newcommand{\abs}[1]{\left| #1 \right|}
\newcommand{\fnorm}[1]{\| #1 \|_{\mathrm{F}}}
\newcommand{\fknorm}[2]{\| #1 \|_{\mathrm{F,#2}}}
\newcommand{\ceil}[1]{\left \lceil #1 \right \rceil }
\newcommand{\argmax}{\text{argmax}}
\newcommand{\expect}[1]{\mathbb{E}\left[#1\right]}
\newcommand{\dotp}[1]{\left\langle #1\right\rangle}
\newtheorem{thm}{Theorem}[section]
\newtheorem{mdl}{Model}
\newtheorem{lemma}{Lemma}[section]
\newtheorem{definition}[thm]{Definition}
\newtheorem{proposition}[thm]{Proposition}
\newtheorem{lem}[thm]{Lemma}
\theoremstyle{remark}
\newif\ifcomment
\newcommand{\qq}[1]{\ifcomment{\color{blue}{Qing: #1}}\else\fi}
\newcommand{\ld}[1]{\ifcomment{\color{red}{Lijun: #1}}\else\fi}
\newcommand{\yc}[1]{\ifcomment{\color{magenta}{Yudong: #1}}\else\fi}
\newcommand{\lj}[1]{\ifcomment{\color{brown}{Liwei: #1}}\else\fi}
\newcommand{\zz}[1]{\ifcomment{\color{purple}{#1}}\else\fi}
\newcommand{\nct}[1]{{#1}}
\newcommand{\bb}{\mathbb}
\newcommand{\Brac}[1]{\left\lbrace #1 \right\rbrace}
\numberwithin{equation}{section}
\newcommand{\bS}{\mathbb{S}}
\newcommand{\truX}{X_\natural}
\title{Rank Overspecified Robust Matrix Recovery: Subgradient Method and Exact Recovery \thanks{The first two authors contributed equally to this paper.}}
\author{Lijun Ding\thanks{School of ORIE, Cornell University. Ithaca, NY 14850, USA;~\url{https://people.orie.cornell.edu/ld446}} \qquad 
Liwei Jiang\thanks{School of ORIE, Cornell University. Ithaca, NY 14850, USA;~\url{https://www.orie.cornell.edu/research/grad-students/liwei-jiang}}\qquad
	Yudong Chen\thanks{Department of Computer Sciences,
University of Wisconsin-Madison. Madison, WI 53706, USA;~\texttt{https://pages.cs.wisc.edu/{\raise.17ex\hbox{$\scriptstyle\mathtt{\sim}$}}yudongchen}} \qquad
	Qing Qu\thanks{Department of Electrical Engineering and Computer Science, University of Michigan-Ann Arbor. Ann Arbor, MI 48109, USA;~ \url{https://qingqu.engin.umich.edu/}}\qquad 
	Zhihui Zhu\thanks{Department of Electrical and Computer Engineering,
University of Denver. Denver, CO 80208, USA;~\texttt{https://mysite.du.edu/{\raise.17ex\hbox{$\scriptstyle\mathtt{\sim}$}}zzhu61}}}
\date{Last updated: September 23, 2021}
\begin{document}
\maketitle
\begin{abstract}
We study the robust recovery of a low-rank matrix from \nct{sparsely and} grossly corrupted Gaussian measurements, with no prior knowledge on the intrinsic rank. We consider the robust matrix factorization approach. We employ a robust $\ell_1$ loss function and deal with the challenge of the unknown rank by using an overspecified factored representation of the matrix variable. We then solve the associated nonconvex nonsmooth problem using a subgradient method with diminishing stepsizes. We show that under a regularity condition on the sensing matrices and corruption, which we call restricted direction preserving property (RDPP), even with rank overspecified, the subgradient method converges to the exact low-rank solution at a sublinear rate. Moreover, our result is more general in the sense that it automatically speeds up to a linear rate once the factor rank matches the unknown rank. On the other hand, we show that the RDPP condition holds under generic settings, such as Gaussian measurements under independent or adversarial sparse corruptions, where the result could be of independent interest. Both the exact recovery and the convergence rate of the proposed subgradient method are numerically verified in the overspecified regime. Moreover, our experiment further shows that our particular design of diminishing stepsize effectively prevents overfitting for robust recovery under overparameterized models, such as robust matrix sensing and learning robust deep image prior. This regularization effect is worth further investigation. 
\end{abstract}
\newpage
\tableofcontents

\section{Introduction}
Robust low rank matrix recovery problems are ubiquitous in  computer vision \cite{candes2011robust}, signal processing \cite{li2016low}, quantum state tomography \cite{rambach2021robust}, etc. In the vanilla setting, the task is to recover a rank $r$ positive semidefinite (PSD) matrix
$\truX \in \RR^{d\times d}$ ($r \ll d $) from a few corrupted linear measurements $\{(y_i,A_i)\}_{i=1}^n$ of the form
\begin{equation}\label{eq: observation model} 
y_i =\dotp{A_i, \truX} + s_i, \quad i=1,\dots,m,
\end{equation}
where $\Brac{s_i}_{i=1}^m$ are the corruptions. This model can be written succinctly as $y = \mathcal{A}(\truX) + s$, where $\mathcal{A}:\RR^{d\times d} \to \RR^{m}$ is the measurement operator. Recently, tremendous progress has been made on this problem, leveraging  advances on convex relaxation \cite{candes2011robust,li2016low}, nonconvex optimization \cite{li2020non,li2020nonconvex,charisopoulos2021low,tong2021low}, and high dimension probability \cite{vershynin2010introduction,tropp2012user,vershynin2018high,wainwright2019high}. However, several practical challenges remain:
\begin{itemize}
    \item \textbf{Nonconvex and nonsmooth optimization for sparse corruptions.} For computational efficiency, we often exploit the low-rank structure via the factorization $X = FF^\top$ with $F\in \bb R^{d \times r}$, and solve the problem with respect to $F$ via nonconvex optimization. This problem is challenging since the corruption can be arbitrary. Fortunately, \nct{the corruptions $s_i$ are \emph{sparse}} \cite{candes2011robust}. This motivates us to consider the $\ell_1$ loss function and lead to a nonsmooth nonconvex problem.
    \item \textbf{Unknown rank and overspecification.} Another major challenge in practice is that the \emph{exact} true rank $r$ is usually \emph{unknown} apriori.\footnote{There are very few exceptions, such as the case of robust phase retrieval with $r=1$ \cite{duchi2019solving}.} To deal with this issue, researchers often solve the problem in \emph{rank overspecified regime}, using a factor variable $F \in \bb R^{d \times k}$ with a larger inner dimension $k > r$. Most existing analysis relies on the $k$-th eigenvalue of $\truX$ being nonzero and is inapplicable in this setting. 
    Recent work studied the problem either in some special case (e.g., rank one) \cite{ma2021implicit} or under unrealistic communitability conditions \cite{you2020robust}.  
\end{itemize}

\paragraph{Our approach and contributions.} 
To deal with these challenges, we consider the following nonconvex and nonsmooth formulation with overspecified rank $k \geq r$,
\begin{equation}\label{opt: maintextmain}
	\min_{F\colon F \in \RR^{d\times k}} f(F) := \frac{1}{2m}\sum_{i=1}^{m} \abs{\dotp{A_i, FF^\top} -y_i} = \frac{1}{2m} \norm{ \mathcal{A}(FF^\top) - y }_1,
\end{equation}
and solve the problem using a subgradient method with \emph{adaptive} stepsize. We summarize our contributions as follows.

\begin{itemize}
    \item \textbf{Exact recovery with rank overspecification}. In \Cref{thm: exact recovery under models}, we show that \nct{under Gaussian measurements and sparse corruptions,}  with $m=\tilde{\mathcal{O}}(dk^3)$ samples,\footnote{Here $\tilde{\mathcal{O}}$ hides the condition number of $\truX$ and other log factors, \nct{and we require the corruption rate, the fraction of corrupted measurements, to be sufficient small.}} even with overspecified rank $k>r$, our subgradient method with spectral initialization converges to the {ground truth} $\truX$ \emph{exactly}. When $k>r$, the convergence rate is $\mathcal{O}(\frac{1}{t})$ in the operator norm error $\norm{F_tF_t^\top-\truX}$. When $k=r$, the convergence rate boosts to a linear (geometric) rate.
    \item \textbf{Convergence under a regularity condition.} Our convergence result is established under a regularity condition termed \emph{restricted direction preserving property} (RDPP); see Definition \ref{defn: RDPP}. RDPP ensures that the \emph{direction} of the finite-sample subgradient stays close to its population version along the algorithm's iteration path. The key to our proof is to use this property to leverage a recent result on rank overspecification under \emph{smooth} formulations \cite{zhuo2021computational}. Moreover, we show that RDPP holds with high probability under arbitrary sparse corruptions and Gaussian measurement matrices. 
    \item 
    \textbf{Adaptive stepsize and implicit regularization}. As shown in \cite{you2020robust,ma2021implicit}, for overparametrized robust matrix recovery problems, subgradient methods with constant stepsizes suffer from overfitting, even when initialized near the origin. In contrast, numerical experiments demonstrate that our theory-inspired diminishing stepsize rule has an implicit regularization effect that prevents overfitting. Our stepsize rule is easy to implement and almost tuning-free, and performs better than other non-adaptive stepsize rules (e.g., sublinear or geometric decaying stepsizes). We demonstrate similar good performance of our method (\nct{with a fixed decaying stepsize}) in image recovery with a robust deep image prior (DIP) \cite{ulyanov2018deep}.
\end{itemize}

\paragraph{Related work.} The nonconvex, factorized formulation~\eqref{opt: maintextmain} is computationally more efficient than  convex relaxation approaches \cite{recht2010guaranteed,candes2011robust,chen2013low,liu2019recovery}, as it involves a smaller size optimization variable and only requires one partial singular value decomposition (SVD) in the initialization phase. In the corruption-free ($s_i \equiv 0$) and exact rank ($k=r$) setting, provably efficient methods have been studied for the \emph{smooth}, least squares version of \eqref{opt: maintextmain} \cite{sun2016guaranteed,bhojanapalli2016global,ge2017no,zhu2018global}; in this case, the loss function has no spurious local minimum and is locally strongly convex\cite{chi2019nonconvex,zhang2020symmetry}. For smooth formulations with rank overspecification ($k\ge r$ or even $k = d$), gradient descent initialized near the origin is shown to \emph{approximately} recover the ground truth matrix \cite{gunasekar2018implicit,li2018algorithmic}.  

When the rank $r$ is known and measurement corruption is present, the work \cite{li2020non} considers the smooth formulation and proposes a gradient descent method with a robust gradient estimator using median truncation. The nonsmooth formulation \eqref{opt: maintextmain} is considered in a recent line of work \cite{li2020nonconvex,charisopoulos2021low,tong2021low}, again in the known rank setting ($k=r$), where they show that the loss function satisfies certain sharpness \cite{burke1993weak} properties and hence subgradient methods converge locally at a linear rate. 
\nct{The number of samples required in these works for exact recovery of $\truX$ is $\mathcal{O}(dr^2)$ when the corruption rate is sufficiently small. The $\mathcal{O}(r^2)$ dependence is due to their initialization procedure.}
We emphasize that the above results crucially rely on knowing and using the exact true rank.
When the rank is unknown, the work \cite{you2020robust} considers a doubly over-parameterized formulation but requires a stringent, commutability assumptions on the measurement matrices; moreover, their analysis concerns continuous dynamic of gradient descent and is hard to generalize to discrete case. Our results only impose standard measurement assumptions and apply to the usual discrete dynamic of subgradient methods.

\begin{table}[]
    \centering
    \begin{tabular}{c|c|c|c}
    \toprule
    &  \cite{li2020non,li2020nonconvex,charisopoulos2021low,tong2021low} & \cite{ma2021implicit} & Ours\\
    \hline
       Rank Specification  &  $k=r$ & $k\geq  r =1$  &  $k\geq r$ \\
         \hline 
      Exact Recovery & $\checkmark$  & \ding{55} & $\checkmark$ \\ 
      \hline 
      \nct{Sample Complexity}  & \nct{$\tilde{\mathcal{O}}(dr^2)$} & 
        \nct{$\tilde{\mathcal{O}}\left(\frac{dk}{\delta^4}\right)$}
        & \nct{$\tilde{\mathcal{O}}(dk^3)$} \\
      \bottomrule
    \end{tabular}
    \caption{\textbf{Comparison of results on robust matrix sensing.}}
    \label{tab:comparison}
\end{table}

The work most related to ours is \cite{ma2021implicit}, which studies subgradient methods for the nonsmooth formulation \eqref{opt: maintextmain} in the rank-one case ($r=1$). Even in this special case, only approximate recovery is guaranteed (i.e., finding an $F$ with $\fnorm{FF^\top -\truX}\lesssim \delta \log (\frac{1}{\delta})$ for a fixed $\delta>0$). Additionally, \nct{the number of samples required in \cite{ma2021implicit} for this approximate recovery is $\tilde{\mathcal{O}}\left(\frac{dk}{\delta^4}\right)$ when the corruption rate is sufficiently small.} We note that the lack of exact recovery is inherent to their choice of stepsizes, and spectral initialization already guarantees $\fnorm{FF^\top -\truX}\lesssim \delta$ \cite[Equation (85)]{ma2021implicit}. In contrast, our result applies beyond rank-1 case and guarantees exact recovery thanks to the proposed adaptive diminishing stepsize rule. We summarize the above comparisons in \Cref{tab:comparison}.

\section{Models, regularity, and identifiability }\label{sec:models}
In this section, we formally describe our model and assumptions. We then introduce the restricted direction preserving property and establish a basic identifiability result under rank overspecification, both key to the convergence analysis of our algorithm given in \Cref{sec:alg} to follow.

\subsection{Observation and corruption models}\label{subsec:models}

In this work, we study subgradient methods for the robust low-rank matrix recovery problem \eqref{opt: maintextmain} under the observation model \eqref{eq: observation model}. Throughout this paper, we assume that the sensing matrices  $A_i\in \RR^{d\times d}, i=1,\dots,m$ are generated \emph{i.i.d.} from the Gaussian Orthogonal Ensemble (GOE).\footnote{Our results can be easily extended sensing matrices with i.i.d. standard normal entries since $X_\natural$ is symmetric.} That is, each $A_i$ is symmetric, with the diagonal entries being \emph{i.i.d.}\ standard Gaussian $N(0,1)$, and the upper triangular entries are \emph{i.i.d.}\ $N(0,\frac{1}{2})$ independent of the diagonal. We consider two models for the corruption $s = (s_1, \ldots, s_m )^\top $: (\emph{i}) arbitrary corruption (AC) and (\emph{ii}) random corruption (RC).

\begin{mdl}[AC model]\label{md: arbitraryCorruption}
Under the model AC($\truX,\mathcal{A},p,m$), the index set $S\subset\{1,\dots,m\}$ of nonzero entries of $s$ is uniformly random with size $\lfloor pm \rfloor$, independently of $\mathcal{A}$, and the values of the nonzero entries are arbitrary.  
\end{mdl} 

\begin{mdl}[RC model]\label{md: randomCorruption}
Under the model RC($\truX,\mathcal{A},p,m$), each $s_i$, $i=1,\dots,m$, is a random variable, independent of $\mathcal{A}$ and each other, with probability $p\in(0,1)$ being nonzero.
\yc{Does $s_i$ need to be mean zero? Can the mean depend on anything? Arbitrary mean is equivalent to the AC model below.} \lj{The mean can be non-zero, but they can't be chosen based on $y_i$ or $A_i$ because this will make $s_i$ depend on $A_i$. }
\end{mdl}

In the AC model, the values of the nonzero entries of $s$ can be chosen in a coordinated and potentially {adversarial} manner dependent on $\{A_i\}$ and $\truX$, whereas their locations are independent. Therefore, this model is more general than those in \cite{candes2011robust,chen2020bridging}, which assume random signs.\footnote{Our results can be extended to the even more general setting with adversarial locations \cite{li2020non,li2020nonconvex,charisopoulos2021low,tong2021low}, with an extra $\textup{poly}(k,\kappa)$ factor in the requirement of the corruption rate $p$ compared to the one we present in \Cref{sec:alg}.} The RC model puts no restriction on the random variables $\{s_i\}$ except for independence, and hence is more general than the model in \cite{ma2021implicit}. In particular, the distributions of $\{s_i\}$ can be heavy-tailed, non-identical and have non-zero means (where the means cannot depend on $\{A_i\}$). Under the RC model, we can allow the corruption rate $p$ to be close to $\frac{1}{2}$ and still achieve exact recovery (see \Cref{thm: exact recovery under models}).
\yc{The AC model above does NOT allow arbitrary locations, right?}\ld{Right, location is not arbitrary} \lj{Allowing arbitrary location is doable, but it will make the assumption on $p$ worse. We picked the better one to present.}

\subsection{Restricted direction preserving property (RDPP)}

When the true rank $r$ is known, existing work \cite{li2020nonconvex,charisopoulos2021low} on subgradient methods typically makes use of the so-called $\ell_1/\ell_2$-restricted isometry property (RIP), which ensures that the function $g(X)= \frac{1}{m} \norm{ \mathcal{A}(X) - y }_1$ concentrates uniformly around its expectation. Note that the factorized objective $f$ in \eqref{opt: maintextmain} is related to $g$ by $f(F)=\frac{1}{2}g(FF^\top)$. 
With $\ell_1/\ell_2$-RIP, existing work establishes the sharpness of $g$ when $k=r$, i.e., linear growth around the minimum \cite[Proposition 2]{li2020nonconvex}. In the rank-overspecification setting, however, $g$ is \emph{not} sharp, as it grows more slowly than linear in the direction of superfluous eigenvectors of $F$. 
We address this challenge by directly analyzing the subgradient dynamics, detailed in Section \ref{sec:alg}, and establishing concentration of the \emph{subdifferential} of $g$ rather than $g$ itself.

Recall the regular subdifferential of $g(X)$ \cite{clarke2008nonsmooth} is 
\begin{align}\label{eqn: cDsubdifferential}
  \cD(X) := \frac{1}{m}\sum_{i=1}^{m}\overline{\sign}(\dotp{A_i,X} -s_i)A_i,\quad \text{and}\quad  \overline{{\sign}}(x) \;:=\; 
  \begin{cases} \{-1\}, & x<0 \\
    [-1,1], &  x=0 \\
    \{1\}, & x>0
    \end{cases}. 
\end{align}
Since our goal is to study the convergence of the subgradient method, which utilizes only one member $D(X)$ and will be illustrated in \Cref{subsec:subgradient}, it is enough to consider this member $D(X)$ by setting $\sign(x) =\begin{cases} -1, & x<0 \\
    1, & x\geq 0
    \end{cases}$. We are now ready to state the RDPP condition. 
%

\begin{definition}[RDPP]\label{defn: RDPP}
	The measurements and corruption $\{(A_i,s_i)\}_{i=1}^m$ are said to satisfy  restricted direction preserving property with parameters $(k',\delta)$ and a scaling function $\psi: \RR^{d\times d} \rightarrow \RR$ if for every symmetric matrix $X \in \RR^{d\times d}$ with rank no more than $k'$, we have 
	\begin{equation}\label{eq: RDPP}
	D(X):=\frac{1}{m}\sum_{i=1}^{m}{\sign}(\dotp{A_i,X} -s_i)A_i,\quad \text{and}\quad
		\norm{D(X) - \psi(X) \frac{X}{\fnorm{X}}} \le \delta.
	\end{equation}
\end{definition}

Here, the scaling function accounts for the fact that the expectation is not exactly $\frac{X}{\fnorm{X}}$, e.g., when no corruption, $\nabla \EE(g(X))=\sqrt{\frac{2}{\pi}}\frac{X}{\fnorm{X}}$.
%
How does RDPP relate to the $\ell_1/\ell_2$-RIP? In the absence of corruption, as shown in \Cref{sec: RDPP and l1l2RIP}, RDPP implies $\ell_1/\ell_2$-RIP for $\delta \lesssim \frac{1}{\sqrt{r}}$. 
 Conversely, we know if $\ell_1/\ell_2$-RIP condition holds for all rank $d$ matrices, then RDPP holds when $p=0$ \cite[Proposition 9]{ma2021implicit}.  

Our RDPP is inspired by the condition \emph{sign-RIP} introduced in
the recent work \cite[Definition 1]{ma2021implicit}.
The two conditions are similar, 
with the difference that RDPP is based on the operator norm (see the inequality in equation~\eqref{eq: RDPP}), whereas sign-RIP uses the Frobenius norm, hence a more stringent condition. In fact, the numerical results in Table~\ref{tab: RDPP} suggest that such a Frobenius norm bound \emph{cannot} hold if $m$ is on the order $\mathcal{O}(dr)$ even for a fixed rank-1 $\truX$---in our experiments $\fnorm{D(\truX) - \sqrt{\frac{2}{\pi}} \frac{\truX}{\fnorm{\truX}}}$ is always larger than $1$.%
\footnote{\nct{To the best of our knowlegde, the proof of \cite[Proposition 1]{ma2021implicit} actually establishes a bound on
the $\ell_2$ norm of top $r$ singular values, called partial Frobenius norm, rather than on Frobenius norm (i.e., the $\ell_2$ norm of all singular values). This is consistent with our empirical observations in Table~\ref{tab: RDPP}.}}
\footnote{\nct{After the first version of this paper was posted, the work \cite{ma2021implicitV3} became available, which is an updated version of \cite{ma2021implicit}. In this updated paper, the definition and associated results for sign-RIP (see \cite[Definition 2 and Theorem 1]{ma2021implicitV3}) use the rank-$r$ partial Frobenius norm, instead of the Frobenius norm as used in the original \cite[Definition 1]{ma2021implicit}.}
}
%
%
In contrast, standard Gaussian/GOE sensing matrices satisfy RDPP for any rank $k'\geq 1$, and it suffices for our analysis. Indeed, we can show the following.

\begin{table}[]
    \centering
    \begin{tabular}{c|c|c|c|c}
    \toprule
    $d$ & $d=50$ & $d=100$ & $d=200$ & $d=300$\\
    \hline
       RDPP: $\norm{D(\truX) - \sqrt{\frac{2}{\pi}} \frac{\truX}{\fnorm{\truX}}}$  &  $0.62/0.27$ & $0.62/0.28$  &  $0.63/0.28$ & $0.63/0.28$ \\
         \hline 
    sign-RIP \cite{ma2021implicit}: $\fnorm{D(\truX) - \sqrt{\frac{2}{\pi}} \frac{\truX}{\fnorm{\truX}}}$ & $2.26/1.01$  & $3.17/1.42$  & 
    $4.48/2.00$ & $5.49/2.45$\\ 
      \bottomrule
    \end{tabular}
    \smallskip 
    
    \caption{
    \textbf{Comparison of RDPP and sign-RIP} for $d\in\{50,100,200,300\}$ and $r\in\{1,5\}$. For each entry with two numbers, the left corresponds to $r=1$ and the right corresponds to $r=5$. 
    We set $p=0$ and $m=5dr$. Note sign-RIP is never satisfied as  the above entries for sign-RIP has value larger than or equal to $1$. }
    \label{tab: RDPP}
\end{table}

\begin{proposition}\label{prop: main RDPP for two models}
  Let $m \gtrsim \frac{dk' \left(\log(\frac{1}{\delta}) \vee 1\right)}{\delta^4}$.  For Model  \ref{md: arbitraryCorruption},  $(k',\delta+ 3\sqrt{\frac{dp}{m}} +3p)$-RDPP holds with a scaling function $\psi(X)=\sqrt{\frac{2}{\pi}}$ with probability at least $1-\exp(-(pm+d)) - \exp(-c'm\delta^4)$.  For Model \ref{md: randomCorruption},  $(k',\delta)$-RDPP holds with a scaling function $\psi(X) = \frac{1}{m}\sum_{i=1}^{m}\sqrt{\frac{2}{\pi}}\left(1-p + p\EE_{s_i}\left[\exp(-\frac{s_i^2}{2\fnorm{X}^2})\right]\right)$ with probability at least $1-Ce^{-cm\delta^4}$. Here $c,c',C$ are universal constants.
\end{proposition}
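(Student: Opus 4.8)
The plan is to prove a uniform concentration bound for the random field $X \mapsto D(X)$ over the set of symmetric matrices of rank at most $k'$, using a standard three-step recipe: (i) pointwise concentration for a fixed $X$, (ii) a covering/net argument to upgrade to uniformity, and (iii) a separate accounting for the error introduced by the corruption $s$. I will normalize and assume $\fnorm{X}=1$ throughout, since $D(X)$ depends on $X$ only through $\mathrm{sign}(\dotp{A_i,X}-s_i)$, which is scale-covariant after rescaling $s_i$; the manifold of rank-$\le k'$ symmetric matrices on the unit Frobenius sphere has a $\frac{\varepsilon}{}$-net of cardinality $(C/\varepsilon)^{O(dk')}$ by the usual Sauer-type bound (decompose $X=U\Lambda U^\top$, net the Stiefel factor and the eigenvalues separately), which is what produces the $dk'$ in the sample complexity.

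\textbf{Step 1: Pointwise bound.} Fix $X$ with $\fnorm{X}=1$. In the corruption-free case, $\mE[\mathrm{sign}(\dotp{A_i,X})A_i] = \sqrt{\tfrac{2}{\pi}}\,X$ by Stein's identity / a direct Gaussian computation (the diagonal and off-diagonal variance bookkeeping of the GOE is exactly what makes the multiplier $\sqrt{2/\pi}$ and the target $X/\fnorm{X}$ come out clean). So $D(X)-\sqrt{2/\pi}\,X = \frac{1}{m}\sum_i Z_i$ with $Z_i$ i.i.d., mean zero, and sub-exponential (each $Z_i = \mathrm{sign}(\cdot)A_i - \mE[\cdot]$ has Gaussian-type tails in operator norm, $\|Z_i\|\lesssim \sqrt{d}$ typically). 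A matrix Bernstein inequality then gives $\Prob(\norm{D(X)-\sqrt{2/\pi}\,X} > \delta) \le 2d\exp(-cm\delta^2/d)$ type bound; to get the stated $\delta^4$ scaling and the clean $m\gtrsim dk'\log(1/\delta)/\delta^4$ requirement, I will instead control $D(X)$ via a truncation of $A_i$ at radius $\sim\sqrt{d}$ plus a union bound, or more simply quote the sharper fourth-moment / bounded-difference route used for sign-RIP-type statements: the key point is that because we only need an \emph{operator-norm} bound (not Frobenius), the variance proxy carries no extra rank factor. For the RC model the same computation gives conditional mean $\sqrt{2/\pi}(1-p+p\,\mE_{s_i}[\exp(-s_i^2/2)])$ times $X$ after conditioning on $s$ (integrate $\mathrm{sign}(g-s_i)$ against the standard Gaussian $g=\dotp{A_i,X}$, using $\Prob(g>s_i)=1-\Phi(s_i)$ and $\mE[g\,\mathbf 1\{g>s_i\}] = \phi(s_i)$), and the Gaussian density $\phi(s_i)=\tfrac{1}{\sqrt{2\pi}}\exp(-s_i^2/2)$ is exactly where the $\exp(-s_i^2/2\fnorm{X}^2)$ term in $\psi$ originates — here independence of $s_i$ from $A_i$ is essential so that conditioning on $s$ keeps the $A_i$ Gaussian.

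\textbf{Step 2: Uniformize.} Combine Step 1 with the $\varepsilon$-net bound and a Lipschitz-in-$X$ estimate for $D(X)$: the map $X\mapsto D(X)$ is not continuous (it is a sum of signs), so instead I will bound the oscillation $\sup_{\fnorm{X-X'}\le\varepsilon}\norm{D(X)-D(X')}$ by counting sign flips — for GOE $A_i$, the number of indices $i$ with $\dotp{A_i,X}$ and $\dotp{A_i,X'}$ of opposite sign, or $|\dotp{A_i,X}-s_i|\le \varepsilon$, is at most $O(m\varepsilon + m\delta^2)$ with high probability since $\dotp{A_i,X}-s_i$ has a bounded density. Each flipped term contributes $\le \frac{2}{m}\|A_i\|\lesssim \frac{\sqrt d}{m}$, so the oscillation is $\lesssim \sqrt{d}(\varepsilon + \delta^2)$; choosing $\varepsilon\asymp \delta^2/\sqrt d$ and paying $\log(1/\varepsilon)\lesssim \log(1/\delta)$ per net point across $(C/\varepsilon)^{O(dk')}$ points yields the exponent $-c'm\delta^4$ after balancing. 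This is the step I expect to be the main obstacle: controlling the discontinuity of the sign field uniformly over a $dk'$-dimensional family is the technical heart, and getting the $\delta^4$ (rather than $\delta^2$) dependence right requires care in how the net resolution trades against the anti-concentration / flip-counting bound.

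\textbf{Step 3: Corruption bookkeeping for the AC model.} In the AC model the adversary controls the values on the random support $S$ of size $\lfloor pm\rfloor$ but not $\mathcal A$ and not $S$. Write $D(X) = \frac{1}{m}\sum_{i\notin S}\mathrm{sign}(\dotp{A_i,X})A_i + \frac{1}{m}\sum_{i\in S}\mathrm{sign}(\dotp{A_i,X}-s_i)A_i$. The first sum is (a $(1-|S|/m)$-fraction of) the clean field, handled by Steps 1–2 on the $m-|S|$ uncorrupted samples, contributing the $\delta$ term. The second sum has at most $|S|\le pm$ terms, each of operator norm $\lesssim \sqrt{d}$ uniformly (a crude $\sup_X \norm{\sum_{i\in S}\mathrm{sign}(\cdot)A_i}\le \sum_{i\in S}\|A_i\|$ bound, noting the signs are the only $X$-dependence and $\le 1$ in magnitude), giving a deterministic-in-$X$ bound $\lesssim \frac{1}{m}\sum_{i\in S}\|A_i\| \lesssim p\sqrt d + \sqrt{dp/m}$ by a Gaussian concentration bound on $\sum_{i\in S}\|A_i\|$ around its mean $\approx |S|\cdot O(\sqrt d)$ — reconciling the precise constants with the claimed $3\sqrt{dp/m}+3p$ perturbation and the $\exp(-(pm+d))$ failure term (the $pm+d$ coming from a union over the $\binom{m}{pm}$ choices of $S$ and the operator-norm deviation of a $d\times d$ GOE). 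Assembling the three steps with a union bound over the failure events gives the stated probabilities, and for the RC model Steps 1–2 applied directly (conditionally on $s$, then integrating) suffice with no separate corruption term, which is why its failure probability is the cleaner $Ce^{-cm\delta^4}$.
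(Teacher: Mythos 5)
Your high-level skeleton (computing $\psi$ via the Gaussian/quantile integral, a net over rank-$k'$ matrices of size $(C/\epsilon)^{O(dk')}$, a sign-flip count for the oscillation, and the AC-model split into clean terms plus a uniformly-bounded corrupted sum with a union over the $2^{pm}$ sign patterns) matches the paper's proof, and your Step 3 and the derivation of $\psi$ for the RC model are essentially the paper's Lemma on $\EE[D]$ and its $Z_1/Z_2/Z_3$ decomposition. However, there are two quantitative gaps in Steps 1--2 that, as written, do not deliver the claimed sample complexity $m\gtrsim dk'\log(1/\delta)/\delta^4$. First, your pointwise bound via matrix Bernstein carries a dimension factor in the exponent (roughly $e^{-cm\delta^2/d}$, since $\|A_i\|\asymp\sqrt d$ and the matrix variance proxy is of order $d$), so after the union bound over the $(C/\epsilon)^{O(dk')}$-point net you need $m\gtrsim d^2k'/\delta^2$, which does not prove the proposition in the relevant regime $\delta\gtrsim d^{-1/2}$; your proposed fixes (truncation, a ``fourth-moment/bounded-difference route'') are too vague to close this. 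The paper avoids matrix concentration entirely: it writes $\norm{D(X)-\psi(X)X}$ variationally as $\sup_{Y}\,|\frac1m\sum_i \sign(\dotp{A_i,X}-s_i)\dotp{A_i,Y}-\psi(X)\dotp{X,Y}|$ over rank-one unit-Frobenius $Y$, nets over \emph{both} $X$ and $Y$, and uses scalar sub-Gaussian concentration for each fixed pair, whose exponent $e^{-cm\delta^2}$ is dimension-free; the dimension then enters only through the net cardinalities.

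Second, your oscillation bound in Step 2 charges each sign-flipped index its full operator norm $\|A_i\|\lesssim\sqrt d$, giving an oscillation of order $\sqrt d\,(\varepsilon+\delta^2)$; with any admissible net resolution this leaves a term of order $\sqrt d\,\delta^2$, which exceeds the target accuracy $\delta$ whenever $\delta> d^{-1/2}$, so the balancing you describe does not actually yield the $-c'm\delta^4$ exponent with $m\asymp dk'/\delta^4$. The paper's resolution is to control the flipped terms in the scalar (variational) formulation by Cauchy--Schwarz: the flipped contribution is at most $2\bigl(\tfrac1m\sum_i \mathbf{1}\{\text{flip}\}\bigr)^{1/2}\bigl(\tfrac1m\sum_i\dotp{A_i,Y}^2\bigr)^{1/2}$, where the second factor is $O(1)$ uniformly over rank-one $Y$ by the $\ell_2$-RIP and the flip fraction is bounded by $\epsilon/t+t+\delta_5$ via the Markov bound plus Gaussian anti-concentration over the net; choosing $t\asymp\delta^2$ and $\epsilon\asymp\delta^4$ makes this contribution $O(\delta)$ with no $\sqrt d$ factor, and this is exactly where the $\delta^4$ in the sample complexity comes from. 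Without this (or an equivalent dimension-free treatment of the flipped sum, which is delicate because the flip set and signs depend on the $A_i$), your argument does not establish the proposition as stated. A minor additional slip: in the AC model the union bound is over the $2^{pm}$ sign patterns on the given support $S$ (which is independent of $\mathcal A$), not over the $\binom{m}{pm}$ choices of $S$.
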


Our proof of Proposition~\ref{prop: main RDPP for two models} \nct{follows that of \cite[Proposition 5]{ma2021implicit} with minor modifications accounting for our corruption models and the definition of RDPP. For completeness we provide the proof in Appendix \ref{sec: Proof of RDPP random}}. The operator norm bound in the definition~\eqref{eq: RDPP} of RDPP turns out to be sufficient for our purpose in approximating the population subgradient of $f$ for our factor $F$. 

\subsection{Identifiability of the ground truth $\truX$}

If the exact rank is given (i.e., $(k=r)$), it is known that our objective function \eqref{opt: maintextmain} 
indeed identifies $\truX$, in the sense that any global optimal solution $F_\star$ coincides with $\truX = F_\star F_\star^\top$\cite{li2020nonconvex}. When $k >r$, one may suspect that there might be an idenitfiability or overfitting issue due to rank oversepcification and the presence of corruptions, i.e., there might be a factor $F$ that has better objective value than $F_\star$, where $F_\star F_\star^\top = X_\natural$. However, so long as the number of samples are sufficiently large $m=\Omega(dk)$, we show that the identifiability of $\truX$ is ensured. We defer the proof to \Cref{sec: proofofidentifiability}.

\begin{thm} \label{thm: identifiability} 
Fix any $p< \frac{1}{2}$. Under either Model \ref{md: arbitraryCorruption} or Model  \ref{md: randomCorruption} with corruption rate $p$, if $m\geq cdk$, then  with probability at least $1-e^{-c_1m}$, any global optimizer $F_\star$ of \eqref{opt: maintextmain}  satisfies $F_\star F_\star ^\top = \truX$. Here, the constants $c,c_1>0$ only depend on $p$.
\yc{Our main theorem allows $p\le 1/2 - \epsilon$ under the RC model. Do we have the landscape result for this $p$?}\ld{actually both models are ok.}
\end{thm}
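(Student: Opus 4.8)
Let me think about this carefully. We want to show any global optimizer $F_\star$ of the $\ell_1$ objective satisfies $F_\star F_\star^\top = \truX$.

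The natural approach: Let $X_\star = F_\star F_\star^\top \succeq 0$. We compare $f(F_\star) = \frac{1}{2m}\|\cA(X_\star) - y\|_1$ against $f$ evaluated at a factorization of $\truX$ itself. Since $F_\star$ is a global minimizer and $\truX$ has rank $r \le k$, we have $\|\cA(X_\star) - y\|_1 \le \|\cA(\truX) - y\|_1 = \|s\|_1$ (using $y = \cA(\truX) + s$). So $\|\cA(X_\star - \truX) + s\|_1 \le \|s\|_1$. Writing $H = X_\star - \truX$ (symmetric, rank $\le 2k$), this says $\|\cA(H) + s\|_1 \le \|s\|_1$, i.e. $\sum_i |\langle A_i, H\rangle + s_i| \le \sum_i |s_i|$. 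On the uncorrupted indices $i \notin S$, $|\langle A_i, H\rangle| $ appears, so $\sum_{i\notin S}|\langle A_i,H\rangle| \le \sum_{i\in S}(|s_i| - |\langle A_i,H\rangle + s_i|) \le \sum_{i\in S}|\langle A_i, H\rangle|$. Thus we need: for all symmetric $H$ of rank $\le 2k$, $\sum_{i\in S}|\langle A_i,H\rangle| < \sum_{i\notin S}|\langle A_i, H\rangle|$ unless $H = 0$. This is a uniform lower-isometry / restricted-eigenvalue-type statement: the minority fraction $p < 1/2$ of Gaussian measurements cannot dominate the majority, uniformly over low-rank matrices.

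So the plan: (1) set up the comparison above to reduce to showing $\sum_{i\notin S}|\langle A_i,H\rangle| > \sum_{i\in S}|\langle A_i,H\rangle|$ for all nonzero symmetric $H$ with $\rank(H)\le 2k$. (2) Prove a two-sided $\ell_1/\ell_2$-RIP (small-ball / concentration) for GOE measurements restricted to rank-$\le 2k$ matrices: with $m \gtrsim dk$, uniformly $\frac{1}{m}\sum_{i\in T}|\langle A_i,H\rangle| \in [(1-\epsilon)c_1|T|\fnorm{H}/m, (1+\epsilon)c_1|T|\fnorm{H}/m]$-type bounds (with $c_1 = \sqrt{2/\pi}$), for any fixed index subset $T$, via a covering-net argument over the low-rank manifold (net size $\exp(O(dk))$) together with sub-Gaussian/sub-exponential concentration of $\sum|\langle A_i,H\rangle|$ and a Lipschitz bound to pass from the net to all $H$. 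Since $|S| = \lfloor pm\rfloor$ with $p<1/2$ and $|S^c| \ge (1-p)m$, choosing $\epsilon$ small enough as a function of the gap $1/2 - p$ (equivalently $\frac{1-p}{p} > 1$) gives $\sum_{i\notin S}|\langle A_i,H\rangle| \ge (1-\epsilon)c_1(1-p) \fnorm{H} > (1+\epsilon)c_1 p \fnorm{H} \ge \sum_{i\in S}|\langle A_i,H\rangle|$. Under the AC model $S$ is independent of $\cA$ so we can condition on $S$; under the RC model we instead absorb the corrupted terms differently but the same RIP applies (or condition on the support of $s$). (3) Conclude $H = 0$.

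The main obstacle is step (2): establishing the uniform (over all rank-$\le 2k$ symmetric $H$) two-sided deviation bound with the correct sample complexity $m = \Omega(dk)$ and with constants sharp enough that $(1-\epsilon)(1-p) > (1+\epsilon)p$ holds — i.e. the union bound over the $\exp(O(dk))$-size net must beat the $\exp(-\Omega(m))$ concentration tail, which forces $m \gtrsim dk / (\tfrac12 - p)^2$ roughly, consistent with the stated $c, c_1$ depending only on $p$. One must be careful that $\frac1m\sum_i |\langle A_i,H\rangle|$ is a sum of sub-exponential (folded-Gaussian) variables, so Bernstein's inequality gives the right tail, and that the map $H \mapsto \frac1m\sum_i|\langle A_i,H\rangle|$ is Lipschitz on the unit sphere with high probability (using $\frac1m\sum_i \opnorm{A_i}$ or $\max_i$ bounds) to discretize. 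A cleaner alternative for passing net-to-sphere is to use the standard $\ell_1$-RIP result for low-rank matrices (e.g. along the lines of the $\ell_1/\ell_2$-RIP invoked earlier in the paper, applied to rank $2k$ with a tunable constant $\delta = \delta(p)$), which directly yields $\big|\frac1m\sum_i|\langle A_i,H\rangle| - c_1\fnorm H\big| \le \delta \fnorm H$ uniformly; combining the subset version of this for $T=S$ and $T=S^c$ finishes the argument. Everything else (the reduction, choosing $\epsilon$, handling both corruption models) is routine.
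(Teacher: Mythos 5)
Your proposal is correct and takes essentially the same route as the paper: reduce via optimality of $F_\star$ and the reverse triangle inequality to $\norm{\mathcal{A}_{S^c}(H)}_1 \leq \norm{\mathcal{A}_S(H)}_1$, then invoke the $\ell_1/\ell_2$-RIP (uniformly over rank $\leq k+r$ symmetric matrices, on the full index set and on $S^c$, with $S$ independent of $\mathcal{A}$ and $|S^c|\gtrsim (1-p)m$ by Bernstein) to conclude $H=0$ whenever $p<1/2$. The only cosmetic difference is that the paper writes the comparison as $2\norm{\mathcal{A}_{S^c}(H)}_1 - \norm{\mathcal{A}(H)}_1 > 0$ rather than directly as $\norm{\mathcal{A}_{S^c}(H)}_1 > \norm{\mathcal{A}_S(H)}_1$, which is algebraically equivalent.
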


Here, overfitting does not occur since sample size $m$ matches the degree of freedom of our formulation $\mathcal{O}(dk)$, making it possible to guarantee success of the algorithm described in Section \ref{sec:alg}. On the other hand, if we are in the heavily overparameterized regime with $m=\mathcal{O}(dr) \ll dk$  (e.g., $k=d$),\footnote{A few authors \cite{ma2021implicit,zhuo2021computational} refer to the setting with $k>r$ and $m \asymp dk $ as ``overparametrization'' since $k$ is larger than necessary. Others reserve the term for the setting $m<dk$, i.e., the number of parameters exceeds the sample size. We generally refer to the former setting as ``(rank-)overspecifcation'' and the latter as ``overparametrization''.} the above identifiability result should not be expected to hold. Perhaps surprisingly, in \Cref{sec:exp} we empirically show that the proposed method works even in this heavy overparametrization regime with $k=d$. This is in contrast to the overfitting behavior of subgradient methods with constant stepsizes observed in \cite{you2020robust}, and it indicates an implicit regularization effect of our adaptive stepsize rule.

\vspace{-0.05in}
\section{Main results and analysis}\label{sec:alg}
In this section, we first introduce the subgradient method with implementation details. We then show our main convergence results, that subgradient method with diminishing stepsizes converge to the exact solutions even under the setting of overspecified rank. Finally, we sketch the high-level ideas and intuitions of our analysis. All the technical details are postponed to the appendices.

\subsection{Subgradient method with diminishing stepsizes}\label{subsec:subgradient}

A natural idea of optimizing the problem \eqref{opt: maintextmain} is to deploy the subgradient method starting from some initialization  $F_0\in \RR^{d\times k}$, and iterate\footnote{Recall the definition of the subgradient $D(X)=\frac{1}{m}\sum_{i=1}^{m}{\sign}(\dotp{A_i,X} -s_i)A_i$ in \eqref{eq: RDPP} for the objective, $g(X)= \frac{1}{2m} \norm{ \mathcal{A}(X) - y }_1$ in the original space $X\in \RR^{d\times d}$.}
\begin{align}\label{eqn: mainalgorithm}
	F_{t+1}= F_t -\eta_t g_t, \quad g_t = D(FF^\top -\truX)F, \quad,t=0,1,2,\dots. 
\end{align}
 with certain stepsize rule $\eta_t\geq 0$. The vector $g_t$ indeed belongs to the regular subdifferential of $\partial f(F_t) = \cD(F_tF_t^\top -\truX)F_t$ by chain rule (recall $\cD$ in \eqref{eqn: cDsubdifferential}).
 
 %
%
%
For optimizing a nonconvex and nonsmooth problem with subgradient methods, it should be noted that the choice of initialization and stepsize is critical for algorithmic success. As observed in \Cref{sec:exp-matrix}, a fixed stepsize rule, i.e., stepsize determined before seeing iterates, such as geometric or sublinear decaying stepsize, cannot perform uniformly well in various rank specification settings, e.g., $k=r,r\leq k=\mathcal{O}(r), k=d$.  Hence it is important to have an adaptive stepsize rule. 

\paragraph{Initialization} We initialize $F_0$ by a spectral method similar to \cite{ma2021implicit}, yet we do not require $F_0F_0^\top$ to be close to $\truX$ even in operator norm. We discuss the initialization in more detail in \Cref{subsec:init}.

\paragraph{Stepsize choice} To obtain exact recovery, we deploy a simple and adaptive diminishing stepsize rule for $\eta_t$, which is inspired by our analysis and is easy to tune. As we shall see in Theorem \ref{eqn: mainalgorithm} and its proof intuition in \Cref{sec: intuition}, the main requirement for stepsize is that it scales with $\fnorm{FF^\top -\truX}$ for an iterate $F$, and hence diminishing and adaptive as $FF^\top \rightarrow \truX$. To estimate $\fnorm{FF^\top -\truX}$ when corruption exists, we  define an operator  $\tau_{\mathcal{A},y}(F):\RR^{d\times k}\rightarrow \RR$ and the corresponding stepsize: 
\begin{equation} \label{eqn: stepsizerule}
 \tau_{\mathcal{A},y}(F) =\xi_{\frac{1}{2}}\left(\{|\dotp{A_i,FF^\top}-y_i|\}_{i=1}^{m}\right), \quad \text{and} \quad \eta_t = C_\eta \tau_{\mathcal{A},y}(F_t),
\end{equation}
where $\xi_{\frac{1}{2}}(\cdot)$ is the median operator, and $C_\eta >0$ is a free parameter to be chosen.
As without corruption \footnote{\nct{Here we consider a simple setting that the matrix $F\in \RR^{d\times k}$ is fixed and independent of $A_1$, and we take the expectation with respect to the randomness in $A_1$ only.}} $\EE{|\dotp{A_1,FF^\top}-y_1|} = \sqrt{\frac{2}{\pi}} \fnorm{FF^\top -\truX}$, we expect $\tau_{\mathcal{A},y}(F)$ to estimate $\fnorm{FF^\top -\truX}$ up to some scalar when corruption exists since the median is robust to corruptions.

\subsection{Main theoretical guarantees}

First, we present our main convergence result, that ensures that the iterate $F_tF_t^\top$ in subgradient method \eqref{eqn: mainalgorithm} converges to the rank $r$ ground-truth $\truX$, based on conditions on initialization, stepsize, and RDPP. Let the condition number of $\truX$ be $\kappa := \frac{\sigma_1(\truX)}{\sigma_r(\truX)}$, our result can be stated as follows.

\begin{thm}\label{lem: mainlemma}
Suppose the following conditions hold: 
\begin{itemize}
      \item[(i)] The initialization $F_0$ satisfies
    \begin{align}\label{eqn: initialconditionmainlemma}
		\|  F_0 F_0^\top - c^*\truX \| \;\leq\; c_0 \sigma_r/\kappa.
	\end{align}
	with some constant $c^* \in [\epsilon, \frac{1}{\epsilon}]$ and $c_0 = \tilde c_0\epsilon$ for some sufficiently small $\tilde c_0$, which only depends on $c_1$ below and $0<\epsilon <1$ is a fixed parameter. 
	Moreover, all the constants $c_i,i\geq 3$ in the following depend only on $\epsilon$ and $c_1$.  \lj{Does this $c_1$ look strange?}
 
    \item[(ii)] The stepsize satisfies $0<c_1\frac{\sigma_r}{\sigma_1^2}\leq \frac{\eta_t}{\fnorm{F_tF_t^\top -\truX}} \leq  c_2\frac{\sigma_r}{\sigma_1^2}$ for some small numerical constants $c_1 < c_2 \le 0.01$ and all $t\ge 0$. 
    \qq{consistent} 
    \item[(iii)] The $(r+k,\delta)$-RDPP holds for $\{A_i,s_i\}_{i=1}^m$ with $\delta \leq \frac{c_3}{\kappa^3\sqrt{k}}$ and a scaling function $\psi\in \big[\sqrt{\frac{1}{2\pi}}, \sqrt{\frac{2}{\pi}}\big].$
\end{itemize}
Define $\mathcal{T} = c_4\kappa^2 \log \kappa$. 
Then we have a sublinear convergence in the sense that for any $t \ge 0$,
\[
\norm{F_{t+\mathcal{T}}F_{t+\mathcal{T}}^\top -\truX} \leq c_5\sigma_1\frac{\kappa}{\kappa^3+t}.
\]
Moreover, if $k=r$, then under the same set of condition, we have convergence at a linear rate
\[
\norm{F_{t+\mathcal{T}}F_{t+\mathcal{T}}^\top -\truX}  \leq \frac{c_6\sigma_r}{\kappa}\left(1-\frac{c_7}{\kappa^2}\right)^t, \quad t\ge 0.
\]
\end{thm}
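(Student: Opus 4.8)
# Proof Proposal for Theorem \ref{lem: mainlemma}

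The plan is to track the operator‑norm error $E_t := F_tF_t^\top - \truX$ (or a suitable potential built from it) along the iteration and show it contracts at the desired rate. The overarching strategy is to reduce the nonsmooth dynamics to the behavior of a "clean" gradient‑like update by exploiting RDPP condition (iii): the true subgradient direction $g_t = D(F_tF_t^\top-\truX)F_t$ is, up to an error of size $\delta$ in operator norm, equal to $\psi(E_t)\,\frac{E_t}{\fnorm{E_t}}F_t$. Thus modulo a controllable perturbation, the subgradient step behaves like a scaled gradient step for the smooth surrogate $\frac{1}{2}\fnorm{FF^\top-\truX}$ (or its square), and I would import the rank‑overspecified analysis of \cite{zhuo2021computational} for that smooth problem. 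Concretely, I would write
\[
E_{t+1} = E_t - \eta_t\bigl(g_tF_t^\top + F_t g_t^\top\bigr) + \eta_t^2 g_t g_t^\top,
\]
substitute $g_t = \psi(E_t)\tfrac{E_t}{\fnorm{E_t}}F_t + \Delta_t F_t$ with $\|\Delta_t\|\le\delta$, and separate the "signal" terms (those with $E_t$) from the RDPP error terms and the second‑order term. The stepsize assumption (ii), $\eta_t \asymp \tfrac{\sigma_r}{\sigma_1^2}\fnorm{E_t}$, is exactly what makes the first‑order signal term dominate.

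The argument would proceed in several stages. First, a \emph{burn‑in / basin} phase: starting from the initialization (i), show that after $\mathcal{T}=c_4\kappa^2\log\kappa$ steps the iterate enters a regime where $F_tF_t^\top$ is aligned with $\truX$ — in particular the "signal" part of $F_t$ (the component in the column space of $\truX$) is well‑conditioned and the "residual" part (the superfluous directions) is small, of order $\delta$ or $\sigma_r/\kappa$. Here I would borrow the decomposition of $F_t$ into $U\,U^\top F_t$ and its orthogonal complement used in smooth overspecified analyses, and track the two blocks separately; the residual block shrinks geometrically while the signal block grows into the right basin. Second, a \emph{local decrease} phase: once in the basin, establish a one‑step inequality of the form $\fnorm{E_{t+1}} \le \fnorm{E_t} - c\,\tfrac{\sigma_r^2}{\sigma_1^2}\,\tfrac{\fnorm{E_t}^2}{\fnorm{E_t}} + (\text{error terms of order }\eta_t\delta\,\|F_t\| + \eta_t^2\|g_t\|^2)$. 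Because $\delta \lesssim \kappa^{-3}k^{-1/2}$ is chosen small relative to the per‑step decrease, the error terms are absorbed except for a floor term that itself shrinks, and a standard "$a_{t+1}\le a_t - c\,a_t^2 + (\text{shrinking})$"‑type recursion yields $a_t \lesssim 1/t$ — giving the $O(\sigma_1\kappa/(\kappa^3+t))$ bound after re‑indexing by $\mathcal{T}$. Third, the \emph{linear‑rate} refinement when $k=r$: in that case there is no residual block, $\fnorm{E_t}$ is comparable to $\sigma_r$ times a normalized error that satisfies sharpness (linear growth of $g$ around the minimum, via $\ell_1/\ell_2$‑RIP which RDPP implies per \Cref{sec: RDPP and l1l2RIP}), so the one‑step inequality upgrades to $\fnorm{E_{t+1}} \le (1 - c_7/\kappa^2)\fnorm{E_t}$, the familiar geometric rate for sharp weakly convex problems.

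The main obstacle, I expect, is \textbf{controlling the interaction between the RDPP error $\Delta_t$ and the superfluous directions of $F_t$ over many iterations}. A single‑step perturbation of size $\delta$ is harmless, but one must ensure these errors do not accumulate to push $F_t$ out of the basin or create a spurious "overfitting" direction — precisely the failure mode that afflicts constant‑stepsize methods \cite{you2020robust,ma2021implicit}. The resolution hinges on the diminishing, adaptive stepsize: since $\eta_t \asymp \fnorm{E_t}$, the error injected per step is of order $\fnorm{E_t}\cdot\delta$, which is \emph{proportional to the current error}, so it behaves like a contraction with a small multiplicative slack rather than an additive drift — this is what lets the $1/t$ (resp. linear) recursion close. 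A secondary technical point is that RDPP is stated for fixed low‑rank $X$, while $E_t = F_tF_t^\top - \truX$ is iterate‑dependent; but since $E_t$ has rank at most $r+k$ and RDPP in (iii) is assumed with parameter $r+k$ and holds \emph{uniformly} over all such matrices (Definition \ref{defn: RDPP} quantifies over every symmetric $X$ of rank $\le k'$), no union bound over the path is needed — this is exactly the advantage of the uniform operator‑norm formulation over a Frobenius‑norm one. I would also need to verify that the median‑based stepsize rule \eqref{eqn: stepsizerule} indeed satisfies (ii) with high probability, but that is a separate concentration statement (robustness of the median to the $\lfloor pm\rfloor$ corrupted residuals) that I would relegate to a lemma rather than fold into this proof.
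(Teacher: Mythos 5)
Your high-level skeleton matches the paper: RDPP lets you replace the subgradient $g_t$ with $\gamma_t(F_tF_t^\top-\truX)F_t$ up to a controlled perturbation $\gamma_t\Delta_t F_t$ with $\|\Delta_t\|\lesssim\delta\fnorm{F_tF_t^\top-\truX}$; the stepsize scaling $\eta_t\asymp\fnorm{F_tF_t^\top-\truX}\,\sigma_r/\sigma_1^2$ makes this perturbation multiplicative in the error rather than an additive drift; and the analysis is organized around the decomposition $F_t=US_t+VT_t$ with a burn-in phase followed by a contraction phase. These are exactly the paper's ingredients (Propositions~\ref{prop: updates of S,T}--\ref{prop: stagethree k=r}).

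There are, however, two points where your account departs from what actually happens, one a genuine misconception and one a genuinely different (and riskier) route.

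First, you state that during the burn-in the ``residual block shrinks geometrically while the signal block grows into the right basin.'' This gets the roles backwards. In the paper, the residual $T_t$ does \emph{not} shrink during burn-in --- Proposition~\ref{prop: uniform upper bound} only keeps it \emph{bounded} by its initial size $c_\rho\rho$, and it starts decaying only in the final stage, and then only sublinearly via $\|T_{t+1}T_{t+1}^\top\|\le\|T_tT_t^\top\|(1-\gamma_t\|T_tT_t^\top\|)$ (Proposition~\ref{prop: stagethree}). This is \emph{precisely why} the overall rate is $1/t$ when $k>r$: the superfluous directions are governed by a cubic ODE-like decay, not a spectral gap. What shrinks geometrically during burn-in is the \emph{signal}-block alignment error $\|S_tS_t^\top-D_S^*\|$ and the cross term $\|S_tT_t^\top\|$ (Propositions~\ref{prop: stagetwo phaseone} and~\ref{prop: stagetwo phasetwo}), together with the growth of $\sigma_r(S_t)$ (Proposition~\ref{prop: stage one evolution}); these are what take $c_4\kappa^2\log\kappa$ iterations. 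If the residual actually shrank geometrically you would have a geometric rate for all $k$, contradicting the theorem. (Relatedly, your display $\fnorm{E_t}-c\frac{\sigma_r^2}{\sigma_1^2}\frac{\fnorm{E_t}^2}{\fnorm{E_t}}$ simplifies to a linear contraction $(1-c/\kappa^2)\fnorm{E_t}$, which is inconsistent with the ``$a_{t+1}\le a_t-ca_t^2$'' recursion you invoke two lines later; the intended inequality is $E_{t+1}\le E_t-c\frac{\sigma_r}{\sigma_1^2}E_t^2$, coming from $E_{t+1}\le E_t(1-\gamma_t E_t)$ with $\gamma_t\asymp\sigma_r/\sigma_1^2$.)

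Second, for the $k=r$ linear rate you propose invoking sharpness of $g$ via the $\ell_1/\ell_2$-RIP that RDPP implies. This is a legitimate-sounding alternative route, essentially the argument of~\cite{li2020nonconvex}, but it is \emph{not} what the paper does, and it would require work you have not outlined: you would need to show that sharpness holds in a neighborhood of $F_\star$ (it fails globally over all rank-$k$ factorizations when $k>r$, so it is only meaningful once $k=r$ and the iterate is near the minimizer), and you would need a separate basin-of-attraction argument to certify the iterates stay where sharpness applies. The paper instead keeps the same $S_t/T_t$ machinery and observes (Proposition~\ref{prop: stagethree k=r}) that when $k=r$, $S_t^\top S_t$ is a full-rank $r\times r$ matrix with $\lambda_{\min}(S_t^\top S_t)\gtrsim\sigma_r$, so the update $T_{t+1}=T_t(I-\gamma_t T_t^\top T_t-\gamma_t S_t^\top S_t)+\ldots$ acquires a spectral gap and $\|T_tT_t^\top\|$ decays geometrically at rate $1-\Theta(\gamma_t\sigma_r)=1-\Theta(1/\kappa^2)$. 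This is both more unified and, crucially, local to the existing decomposition; the sharpness detour buys nothing here. If you pursue it anyway, be careful: the contraction constant $1/\kappa^2$ in the theorem comes from $\gamma_t\sigma_r\asymp\sigma_r^2/\sigma_1^2$, not from the sharpness modulus alone, so you must thread the stepsize scaling through that argument too.
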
 

\lj{Indeed, all the constants $c_i,i\geq2$ depend on $\epsilon$ and $c_1$. Once $\epsilon$ and $c_1$ are fixed, all the other constants can be determined.}

We sketch the high-level ideas of the proof in \Cref{sec: intuition}, and postpone all the details to \Cref{sec: analysisOfMainTheorem}. 
In the last, we show that the conditions of \Cref{lem: mainlemma} can be met with our model assumptions in \Cref{sec:models}, our initialization scheme in \Cref{subsec:init}, and our stepsize choice \eqref{eqn: stepsizerule}, with sufficient number of samples $m$ and proper sparsity level $p$ of the corruption $s$. Define $\theta_{\epsilon}$ to be the left quantile of the folded normal distribution $|Z|$, $Z\sim N(0,1)$, i.e., $\PP(Z\leq\theta_{\epsilon})=\epsilon$. The quantile function is used for the stepsize condition.  For both RC and AC models, we now state our results as follows. 

\begin{thm}\label{thm: exact recovery under models}
 Suppose under Model \ref{md: arbitraryCorruption}, we have $m\geq c_1 dk^3\kappa^{12}(\log \kappa + \log k)\log d$ and  $p\le \frac{c_2}{\kappa^3 \sqrt{k}}$ for some universal constants $c_1,c_2$. On the other hand, suppose under Model \ref{md: randomCorruption}, we have fixed $p<\frac{1}{2}$ and $m\geq c_3 dk^3\kappa^{12}(\log \kappa + \log k)\log d$ for some $c_3>0$ depending only on $p$. Then for both models, with probability at least $1- c_4 \exp(-c_5 \frac{m}{\kappa^{12} k^2})-\exp(-(pm+d))$ for some universal constants $c_4,c_5$, our subgradient method \eqref{eqn: mainalgorithm} 
 with the initialization in Algorithm \ref{alg: initialization}, 
 and the adaptive stepsize choice \eqref{eqn: stepsizerule} with  $C_\eta \in [ \frac{c_6}{\theta_{1-\frac{0.5-p}{3}}}\frac{\sigma_r}{\sigma_1^2}, \frac{c_7}{\theta_{1-\frac{0.5-p}{3}}}\frac{\sigma_r}{\sigma_1^2},]$ with some universal $c_6,c_7\leq 0.001$, converges as stated in \Cref{lem: mainlemma}.
\end{thm}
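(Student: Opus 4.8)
The strategy is to derive \Cref{thm: exact recovery under models} as a corollary of \Cref{lem: mainlemma}: I verify that, under the stated sample size $m$ and corruption rate $p$, the spectral initializer of \Cref{alg: initialization} and the adaptive stepsize rule~\eqref{eqn: stepsizerule} make hypotheses (i)--(iii) of \Cref{lem: mainlemma} hold simultaneously with the claimed probability, whereupon the stated convergence rates follow verbatim. Fix once and for all a universal $\epsilon$ (say $\epsilon=\tfrac12$) and a target accuracy $\delta\asymp\kappa^{-3}k^{-1/2}$ (so that condition (iii) is met with room to spare), and choose the remaining absolute constants in a consistent nested order. Hypothesis (iii) then follows from \Cref{prop: main RDPP for two models} applied with $k'=r+k\le 2k$ and tolerance $\delta/2$: under \Cref{md: arbitraryCorruption} the excess terms $3\sqrt{dp/m}+3p$ are each at most $\delta/4$, since $p\le c_2\kappa^{-3}k^{-1/2}$ and $m\gtrsim dk^3\kappa^{12}(\log\kappa+\log k)$ force $dp/m\ll\delta^2$, so $(r+k,\delta)$-RDPP holds with $\psi\equiv\sqrt{2/\pi}$; under \Cref{md: randomCorruption} the scaling $\psi(X)=\tfrac1m\sum_{i}\sqrt{2/\pi}\bigl(1-p+p\,\EE[\exp(-s_i^2/(2\fnorm{X}^2))]\bigr)$ lies in $\bigl[\sqrt{2/\pi}\,(1-p),\sqrt{2/\pi}\,\bigr]\subseteq\bigl[\sqrt{1/(2\pi)},\sqrt{2/\pi}\,\bigr]$ because $p\le\tfrac12$, so (iii) holds with the required $\psi$-range. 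The requirement $m\gtrsim d(r+k)\bigl(\log(1/\delta)\vee1\bigr)\delta^{-4}$ of the proposition becomes precisely $m\gtrsim dk^3\kappa^{12}(\log\kappa+\log k)$ after $\delta^{-4}\asymp\kappa^{12}k^2$ and $\log(1/\delta)\asymp\log\kappa+\log k$, and its failure probability $\exp(-(pm+d))+C\exp(-c\,m\delta^4)$ matches the one in the statement, with $m\delta^4\asymp m/(\kappa^{12}k^2)$.

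\textbf{Hypothesis (i).} The point is that $D(-\truX)=-\tfrac1m\sum_{i}\sign(y_i)A_i$ is computable from the data and $-\truX$ has rank $r\le r+k$, so the RDPP just established yields $\bigl\|\tfrac1m\sum_{i}\sign(y_i)A_i-\psi\,\truX/\fnorm{\truX}\bigr\|\le\delta$ for a $\psi$ bounded away from $0$. Since $\psi\,\truX/\fnorm{\truX}$ is exactly rank $r$ with smallest nonzero eigenvalue $\gtrsim\sigma_r/(\sqrt r\,\kappa)\gg\delta$, Weyl's inequality together with a Davis--Kahan (low-rank perturbation) estimate shows that the rank-$k$ spectral truncation in \Cref{alg: initialization}, followed by its magnitude calibration (which rescales using a robust estimate of $\fnorm{\truX}$ such as $\tau_{\mathcal A,y}(0)$), outputs $F_0$ with $\norm{F_0F_0^\top-c^*\truX}\lesssim\fnorm{\truX}\,\delta/\psi\lesssim\sqrt r\,\sigma_1\,\delta$ for an explicit scalar $c^*$. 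The extra power of $\kappa$ in $\delta\lesssim\kappa^{-3}$ (against the $\kappa^{-2}$ that would barely suffice) then gives $\sqrt r\,\sigma_1\,\delta\le c_0\sigma_r/\kappa$, and the calibration forces $c^*\in[\epsilon,1/\epsilon]$, which is exactly~\eqref{eqn: initialconditionmainlemma}.

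\textbf{Hypothesis (ii).} This is the crux. With $\Delta:=FF^\top-\truX$ (symmetric, rank $\le r+k$) we have $\langle A_i,FF^\top\rangle-y_i=\langle A_i,\Delta\rangle-s_i$, where $\langle A_i,\Delta\rangle\sim N(0,\fnorm{\Delta}^2)$, so on the at least $(1-p)m$ uncorrupted indices these values equal $\fnorm{\Delta}\,|Z_i|$ with $Z_i\sim N(0,1)$. A DKW/Hoeffding bound for the empirical distribution of $|Z|$, union-bounded over an $\varepsilon$-net of the rank-$\le(r+k)$ matrices on the Frobenius sphere (net cardinality $e^{O(kd\log(1/\varepsilon))}$, harmless since $m\gg kd$) and extended to all $\Delta$ by homogeneity and a Lipschitz argument, shows that, simultaneously for every such $\Delta$, the sample median obeys $\tau_{\mathcal A,y}(F)/\fnorm{\Delta}\in[\underline\theta,\bar\theta]$ with $\underline\theta\ge\theta_{(1/2-p)/(1-p)-o(1)}>0$ and $\bar\theta\le\theta_{(1/2)/(1-p)+o(1)}\le\theta_{1-(0.5-p)/3}$ (the last step is equivalent to $2p^2+3p-2\le0$, i.e.\ $p\le\tfrac12$). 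Consequently, with $C_\eta\in\bigl[\tfrac{c_6}{\theta_{1-(0.5-p)/3}}\tfrac{\sigma_r}{\sigma_1^2},\ \tfrac{c_7}{\theta_{1-(0.5-p)/3}}\tfrac{\sigma_r}{\sigma_1^2}\bigr]$ the stepsize $\eta_t=C_\eta\,\tau_{\mathcal A,y}(F_t)$ satisfies $c_1\tfrac{\sigma_r}{\sigma_1^2}\le\eta_t/\fnorm{F_tF_t^\top-\truX}\le c_2\tfrac{\sigma_r}{\sigma_1^2}$ for all $t$, with $c_2=c_7\le0.001$ universal (because $\bar\theta/\theta_{1-(0.5-p)/3}\le1+o(1)$) and $c_1=c_6\underline\theta/\theta_{1-(0.5-p)/3}$, which is universal under \Cref{md: arbitraryCorruption} (tiny $p$) and a positive $p$-dependent constant under \Cref{md: randomCorruption}; this is hypothesis (ii). A union bound over the failure events of the three steps --- each of the form $\exp(-c\,m/\mathrm{poly})$ or $\exp(-(pm+d))$, together with the net's contribution $e^{-cm+O(kd\log d)}\le e^{-cm/2}$ and an auxiliary bound $\max_i\norm{A_i}\lesssim\sqrt d$ that accounts for the extra $\log d$ factor in $m$ --- yields the probability in the statement and finishes the proof.

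\textbf{Main obstacle.} The real technical content is hypothesis (ii): controlling the \emph{data-dependent} adaptive stepsize demands a uniform concentration of a \emph{robust quantile} functional over the nonconvex manifold of low-rank matrices, coupled with a delicate accounting of folded-normal quantiles so that the resulting constants land inside the narrow window $c_2\le0.01$ required by \Cref{lem: mainlemma} while staying compatible with the $\psi$-range entering (iii) and the range of $c^*$ entering (i). By comparison, (iii) is essentially an invocation of \Cref{prop: main RDPP for two models}, and (i) is a routine spectral-perturbation argument once (iii) is in hand.
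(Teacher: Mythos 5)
Your proposal follows essentially the same route as the paper: verify hypotheses (i)--(iii) of Theorem~\ref{lem: mainlemma} using, respectively, the spectral-initialization proposition, the adaptive-stepsize concentration results, and the RDPP proposition, then take a union bound over the failure events. The paper's proof of this theorem is exactly the three-part checklist you carry out, with the initialization handled by Proposition~\ref{prop: initialization for models}, RDPP by Proposition~\ref{prop: main RDPP for two models}, and the stepsize by the quantile-concentration results of Appendix~\ref{sec: Choice of Stepsize}.

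One minor variation worth noting: for hypothesis (i) you observe that the spectral matrix $D$ in Algorithm~\ref{alg: initialization} equals $-D(-\truX)$, so the initialization bound can be read off directly from the RDPP at the single matrix $-\truX$. This is a cleaner, more unified argument than the paper's, which proves a separate single-point perturbation bound (Lemmas~\ref{lem:pertubation-bound-spectral} and~\ref{lem: perturbation bound under arbitrary corruption}) with the weaker sample requirement $m\gtrsim d\log(1/\delta)/\delta^2$ rather than the uniform RDPP's $d k' \log(1/\delta)/\delta^4$. Since the RDPP requirement dominates the final sample complexity anyway, your shortcut costs nothing in the theorem, though it does obscure slightly where each power of $\kappa$ and $k$ in $m$ actually comes from (as the paper's separate treatment makes clear, the $\kappa^{12}k^3$ is forced by condition (iii), while (i) and (ii) only need $\kappa^4 k$). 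Your quantile bookkeeping for hypothesis (ii) uses a slightly different parameterization from Proposition~\ref{prop: estimate fnorm random} (bracketing the corrupted median between $\theta_{(1/2-p)/(1-p)}$ and $\theta_{1/(2(1-p))}$ rather than $\theta_{(0.5-p)/3}$ and $\theta_{1-(0.5-p)/3}$), but the two are compatible and both verify that the upper endpoint is dominated by $\theta_{1-(0.5-p)/3}$ iff $p\le 1/2$, which is all that is needed to place $\eta_t/\fnorm{F_tF_t^\top-\truX}$ in the window $[c_1,c_2]\sigma_r/\sigma_1^2$ required by hypothesis (ii).
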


The proof follows by verifying the three conditions in \Cref{lem: mainlemma}. The initialization scheme is verified in \Cref{prop: initialization for models} in Section \ref{subsec:init}. The RDPP is verified in \Cref{prop: main RDPP for two models}. We defer the verification of the validity of our stepsize rule \eqref{eqn: stepsizerule} to \Cref{sec: Choice of Stepsize}.

Note that the initialization condition in \Cref{lem: mainlemma} is weaker than those in the literature \cite{li2020non,li2020nonconvex}, which requires $F_0F_0^\top$ to be close to $\truX$. Instead, we allow a constant multiple $c^*$ in the initialization condition \eqref{eqn: initialconditionmainlemma}. This means we only require $F_0F_0^\top$ to be accurate in the direction of $\truX$ and allows a crude estimate of the scale $\fnorm{\truX}$. This in turn allows us to have $p$ close to $\frac{1}{2}$ in \Cref{thm: exact recovery under models} for the RC Model.\footnote{The work 
\cite{ma2021implicit} actually allows for $p$ to be close to $1$ under a simpler RC model. The main reason is that they assume the scale information $\fnorm{\truX}$ is known a priori and their guarantee is only approximate $\fnorm{FF^\top -\truX}\lesssim \delta\log \delta$ for a rank-$1$ $\truX$. However, estimating $\fnorm{\truX}$ accurately requires $p$ to be close to $0$.} 
If we require $p \lesssim \frac{1}{\kappa \sqrt{r}}$ in both models, which is assumed in \cite{li2020non,li2020nonconvex}, then the sample complexity can be improved to $O(dk^3\kappa^{4}(\log \kappa + \log k)\log d)$ and the convergence rate can also be improved. See \Cref{sec: better convergence result} for more details.

\subsection{A sketch of analysis for \Cref{lem: mainlemma}}\label{sec: intuition}

In the following, we briefly sketch the high-level ideas of the proof for \Cref{lem: mainlemma}.

\paragraph{Closeness of the subgradient dynamics to its smooth counterpart.}
To get the main intuition of our analysis, for simplicity let us first consider the noiseless case, i.e., $s_i=0$ for all $i$. Now the objective $f(F)$ in \eqref{opt: maintextmain} has expectation 
 $\EE f(F)=\frac{1}{2}\EE\abs {
\dotp {A_1,FF^\top-\truX}}=\sqrt{\frac{1}{2\pi}}\fnorm{FF^\top -\truX}$.  Hence, when minimizing this expected loss, the subgradient update at $F_t$ with $F_tF_t^\top \neq \truX$ becomes
\begin{equation*}
 F_0\in \RR^{d\times k}; \quad 
	F_{t+1} = F_t -\eta_t \nabla \EE f(F) \, \mid_{F=F_t} = F_t -\eta_t \sqrt{\frac{2}{\pi}}\frac{\left(F_tF_t^\top -\truX \right)}{\fnorm{F_tF_t^\top -\truX}}F_t, \;t=0,1,2,\dots. 
\end{equation*}
If we always have $\eta_t\sqrt{\frac{2}{\pi}} = \gamma\fnorm{F_tF_t^\top -\truX}$ for some $\gamma>0$, then the above iteration scheme becomes the gradient descent for the smooth problem $\min_{F\in \RR^{d\times k}} \fnorm{FF^\top-\truX}^2$:
\begin{equation}
 F_0\in \RR^{d\times k}; \qquad 
	F_{t+1}= F_t -\gamma\left(F_tF_t^\top -\truX \right)F_t, \;t=0,1,2,\dots. \label{eqn: gd}
\end{equation}
Recent work \cite{zhuo2021computational} showed that when no corruption presents, the gradient decent dynamic \eqref{eqn: gd} converges to the exact rank solution in the rank overspecified setting. Thus, under the robustness setting, the major intuition of our stepsize design for the subgradient methods is to ensure that subgradient dynamics \eqref{eqn: mainalgorithm} for finite samples mimic the gradient descent dynamic \eqref{eqn: gd}. To show this, we need $\eta_t g_t$ in \eqref{eqn: mainalgorithm} to approximate $\left( F_tF_t^\top -\truX\right)F_t$ up to some multiplicative constant $\gamma$, which is indeed ensured by the conditions \emph{(ii)} and \emph{(iii)} in \Cref{lem: mainlemma}. Detailed analysis appears in \Cref{sec: analysisOfMainTheorem}. 



\vspace{-.1in}
\paragraph{Three-stage analysis of the subgradient dynamics.} However, recall our condition \emph{(i)} in \Cref{lem: mainlemma} only ensures that the initial $F_0F_0^\top$ is close to the ground truth $\truX$ up to some constant scaling factor. Thus, different from the analysis for matrix sensing with no corruptions \cite{zhuo2021computational}, here we need to deal with the issue where the initial  $\norm{F_0F_0^\top-\truX}$ could be quite large. To deal with this challenge, \nct{inspired by the work ~\cite{zhuo2021computational}}, we introduce extra technicalities by decomposing the iterate $F_t$ into two parts, and conduct a three-stage analysis of the iterates. More specifically, since $\truX$ is rank $r$ and PSD, let its SVD be
\begin{equation}
	\truX = \begin{bmatrix}
		U & V
	\end{bmatrix}\begin{bmatrix}
		D_S^* & 0\\
		0   & 0
	\end{bmatrix}
	\begin{bmatrix}
		U & V
	\end{bmatrix}^\top,
\end{equation}
where $D_S^* \in \RR^{r \times r},\;U \in \RR^{d\times r}$, and $V \in \RR^{d \times (d-r)}$.
Here $U$ and $V$ are orthonormal matrices and complement to each other, i.e., $U^\top V = 0$.
Thus, for any $F_t \in \RR^{d\times r}$, we can always decompose it as 
\begin{equation}
	F_t = US_t + VT_t,
\end{equation} 
where $S_t = U^\top F_t \in \RR^{r \times k}$ stands for the signal term and $T_t = V^\top F_t \in \RR^{(d-r)\times k}$ stands for an error term. \qq{explain a bit about the meaning of S and F at high level?} Based on this, we show the following (we refer to Appendix \ref{sec: analysisOfMainTheorem} for the details.):
\begin{itemize}[leftmargin = 0.65 in]
    \item[Stage 1.] We characterize the evolution of the smallest singular value $\sigma_r(S_t)$, showing that it will increase above a certain threshold, so that $F_tF_t^\top$ is moving closer to the scaling of $\truX$.
    \item[Stage 2.] We show that the quantity $Q_t = \max\{\norm{S_t S_t^\top- D_S^*}, \norm{S_t T_t^\top}\}$ decrease geometrically. In the analysis, for first two stages, $T_t$ may not decrease, and the first two stages consist of the burn-in phase $\mathcal{T}$. \qq{could not understand it here, which stage the burn-in phase refer to? 1 or 2}
    \ld{both are burn-in}
    \item[Stage 3.] We control $E_t = \max\{\norm{S_tS_t^\top -D_S^*}, \norm{S_t T_t^\top}, \norm{T_tT_t^\top}\}$, showing that $E_t \rightarrow 0$ at a rate $O(1/t)$ due to the slower convergence of $\norm{T_tT_t^\top}$. This further implies that $\norm{F_tF_t^\top - \truX}$ converges to $0$ at a sublinear rate $O(1/t)$. For the case $k=r$, the convergence speeds up to linear due to a better control of $\norm{T_tT_t^\top}$.
\end{itemize}

\subsection{Initialization schemes}\label{subsec:init}

\begin{algorithm}[t]
		\caption{Spectral Initialization}
		\label{alg: initialization}
		\begin{algorithmic}
			\STATE {\bfseries Input:} the sensing matrix $\{A_i\}_{i=1}^m$ and observations $\{y_i\}_{i=1}^m$
			\STATE $\quad$   
			Compute the top-$k$ eigenpairs of the matrix $D \;=\;  \frac{1}{m} \sum_{i=1}^m \sign(y_i) A_i$. Collect the eigenvectors as $U\in \RR^{d\times k}$, and the eigenvalues as the diagonals of $\Sigma$ $\in \RR^{k\times k}$.
			\STATE $\quad$ Compute $B=U(\Sigma_+)^\frac{1}{2}$ where $\Sigma_+$ is the nonnegative part of $\Sigma$.
			\STATE $\quad$ Compute the median $\xi_{\frac{1}{2}}(\{|y_i|\}_{i=1}^m)$, and set 
			$\gamma  =\xi_{\frac{1}{2}}(\{|y_i|\}_{i=1}^m)/(\sqrt{2/\pi}\theta_{\frac{1}{2}})$. 
			\STATE {\bfseries Output:} $F_0=\sqrt{\gamma}B$.
		\end{algorithmic}
	\end{algorithm}
	
In this section, we introduce a spectral initialization method as presented in Algorithm \ref{alg: initialization}. The main intuition is that the cooked up matrix $D$ in Algorithm \ref{alg: initialization} has an expectation $\EE(D) = \psi(X)\frac{X}{\fnorm{X}}$, so that the obtained factor $BB^\top$ from $D$ should be pointed to the same direction as $\truX$ up to a scaling. We then multiply $B$ by a suitable quantity to produce an initialization ${F}_0$ on the same scale as $\truX$. 
More specifically, given that $\theta_\epsilon$ is the left quantile of folded normal distribution introduced before \Cref{thm: exact recovery under models}, we state our result as follows.

\begin{proposition}\label{prop: initialization for models} 
Let $F_0$ be the output of Algorithm \ref{alg: initialization}. Fix constant $c_0<0.1$. For model~\ref{md: arbitraryCorruption} with $p\le \frac{\tilde c_0}{\kappa^2 \sqrt{r}}$ where $\tilde c_0$ depends only on $c_0$, there exist constants $c_1,c_2,c_3$ depending only on $c_0$ such that whenever $m \ge c_1 dr \kappa^4 (\log\kappa+\log r) \log d$, we have
 	\begin{align}\label{eq: intiailizationinequalitytildeF}
		\|  F_0 F_0^\top - c^*\truX \| \;\leq\; c_0 \sigma_r/\kappa.
	\end{align}
with probability (w.p.) at least  $1- c_2\exp(-\frac{c_3m}{\kappa^4 r})-\exp(-(pm+d))$, where $c^* =1$. For Model \ref{md: randomCorruption} with a fixed $p<0.5$ and $m \ge c_4 dr \kappa^4 (\log\kappa+\log r)\log d $. Then \eqref{eq: intiailizationinequalitytildeF} also holds for 
	$c^* \in \Big[\sqrt{\frac{1}{2\pi}} \cdot \frac{\theta_{\frac{0.5-p}{3}}}{\theta_{\frac{1}{2}}}, \sqrt{\frac{2}{\pi}} \cdot  \frac{\theta_{1-\frac{0.5-p}{3}}}{\theta_{\frac{1}{2}}}\Big]$ 
	w.p. at least  $1- c_5\exp(-\frac{c_6m}{\kappa^4 r})$, where the constants $c_4,c_5,c_6$ depend only on $p$ and $c_0$.
\end{proposition}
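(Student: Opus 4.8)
The plan is to analyze the matrix $D = \frac{1}{m}\sum_{i=1}^m \sign(y_i) A_i$ and show that, after the scaling in Algorithm \ref{alg: initialization}, its top-$k$ PSD truncation lands in the prescribed operator-norm neighborhood of a constant multiple of $\truX$. First I would compute the population object $\EE[D]$. Using $y_i = \dotp{A_i,\truX} + s_i$ and the Gaussianity of $A_i$, for each $i$ we have $\EE[\sign(\dotp{A_i,\truX}+s_i)A_i] = \psi_i \frac{\truX}{\fnorm{\truX}}$, where $\psi_i = \sqrt{\frac{2}{\pi}}(1-p) + p\,\EE_{s_i}[\cdots]$ under the RC model and $\psi_i = \sqrt{\frac{2}{\pi}}$ up to a sparse perturbation under the AC model (this is exactly the content behind the scaling function in Definition \ref{defn: RDPP} and Proposition \ref{prop: main RDPP for two models}, specialized to $X = \truX$). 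Thus $\EE[D] = \psi \frac{\truX}{\fnorm{\truX}}$ for an averaged scalar $\psi$ lying in $[\sqrt{1/(2\pi)},\sqrt{2/\pi}]$ (RC) or a neighborhood of $\sqrt{2/\pi}$ controlled by $p$ (AC). Note $\psi\frac{\truX}{\fnorm{\truX}}$ is itself rank $r$, PSD, with spectrum $\psi\frac{\sigma_i(\truX)}{\fnorm{\truX}}$.

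Second I would invoke the concentration bound: the $(r+k,\delta')$-RDPP of Proposition \ref{prop: main RDPP for two models}, applied at the single matrix $X=\truX$ (rank $r \le r+k$), yields $\norm{D - \psi\frac{\truX}{\fnorm{\truX}}} \le \delta'$ with the stated probability, where $\delta'$ can be driven below any target once $m \gtrsim \frac{dr(\log(1/\delta')\vee 1)}{\delta'^4}$ — here the relevant rank parameter is $O(r)$, not $O(k)$, which is why the sample requirement in the proposition reads $m \gtrsim dr\kappa^4(\cdots)\log d$ rather than involving $k^3$; I would set $\delta' \asymp c_0 \sigma_r/(\kappa \fnorm{\truX}) \asymp c_0/(\kappa^2\sqrt{r})$ roughly (using $\fnorm{\truX} \le \sqrt{r}\sigma_1 = \sqrt{r}\kappa\sigma_r$), which accounts for the $\kappa^4 r$ in the exponent and the $p \lesssim \tilde c_0/(\kappa^2\sqrt r)$ budget in the AC case (so that the sparse correction $3\sqrt{dp/m}+3p$ to $\delta'$ is absorbed). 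Third, I would pass from $D$ to $B(\Sigma_+)^{1/2}$: since $M := \psi\frac{\truX}{\fnorm{\truX}}$ is rank-$r$ PSD, a Davis–Kahan / Weyl-type argument shows the best rank-$k$ PSD truncation of $D$, namely $U\Sigma_+ U^\top = BB^\top$, satisfies $\norm{BB^\top - M} \lesssim \delta'$ (the truncation can only double the error, and discarding negative eigenvalues — all of which have magnitude $\le \delta'$ since $M\succeq 0$ — costs at most $\delta'$). Finally I would handle the rescaling: $F_0 = \sqrt{\gamma}B$ with $\gamma = \xi_{1/2}(\{|y_i|\})/(\sqrt{2/\pi}\,\theta_{1/2})$, so $F_0 F_0^\top = \gamma BB^\top$. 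A robust-median estimate (again using that the median ignores the $\le pm$ corrupted entries, plus concentration of the empirical median of $\{|\dotp{A_i,\truX}|\} = \fnorm{\truX}|Z_i|$ around $\fnorm{\truX}\theta_{1/2}$) gives $\gamma \approx \fnorm{\truX}$ in the RC case up to a factor in $[\theta_{(0.5-p)/3}/\theta_{1/2},\theta_{1-(0.5-p)/3}/\theta_{1/2}]$ coming from the corruption-induced bias of the median, and $\gamma \to \fnorm{\truX}$ (so $c^*=1$) in the AC case where $p$ is tiny; combining, $F_0F_0^\top = \gamma\psi\frac{\truX}{\fnorm{\truX}} + (\text{error} \lesssim \gamma\delta')$, and $\gamma\psi/\fnorm{\truX}$ is exactly the claimed range of $c^*$. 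The operator-norm error is then $\lesssim \gamma\delta' \lesssim \fnorm{\truX}\delta' \lesssim \sqrt{r}\sigma_1 \cdot \frac{c_0\sigma_r}{\kappa\fnorm{\truX}} \asymp c_0\sigma_r/\kappa$ after choosing constants; a union bound over the two high-probability events (RDPP concentration and median concentration, the latter a standard DKW-type or binomial-tail argument) finishes it.

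The main obstacle I expect is the bias analysis of the median-based scaling: unlike the mean, the empirical median of $\{|\dotp{A_i,\truX} + s_i|\}$ does not equal $\fnorm{\truX}\theta_{1/2}$ in expectation when corruptions are present, so one must argue that replacing up to $\lfloor pm\rfloor$ of the folded-Gaussian order statistics by arbitrary values can only shift the sample median by a controlled amount — quantitatively, the $(1/2)$-quantile of the contaminated sample lies between the $((1-p)/2$-ish$)$ and $((1+p)/2$-ish$)$ quantiles of the clean folded-Gaussian, which translates into the $\theta_{(0.5-p)/3}$ vs. $\theta_{1-(0.5-p)/3}$ window for $c^*$ (the factor-of-$3$ slack buffering the empirical-vs-population median fluctuation). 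Getting this window tight enough that it is consistent with the stepsize constant $C_\eta$ range in Theorem \ref{thm: exact recovery under models}, and making sure the same $c^*$ works simultaneously for the spectral part and the scaling part, is the delicate bookkeeping; everything else is routine concentration (GOE operator-norm bounds via $\varepsilon$-nets) plus Weyl/Davis–Kahan.
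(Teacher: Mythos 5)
Your scaffolding—compute $\EE[D]=\psi\,\truX/\fnorm{\truX}$, concentrate $D$ in operator norm, pass to the PSD rank-$k$ truncation via Weyl, then rescale by the robust median—matches the paper's proof exactly. But there is a quantitative gap at the concentration step that makes the argument, as stated, unable to deliver the claimed sample complexity and failure probability.

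You propose to invoke the uniform $(r+k,\delta')$-RDPP from Proposition \ref{prop: main RDPP for two models}, noting that only rank $r$ matters, and cite its sample requirement $m \gtrsim dr\bigl(\log(1/\delta')\vee 1\bigr)/\delta'^4$ with failure probability $\exp(-cm\delta'^4)$. With $\delta'\asymp c_0/(\kappa^2\sqrt r)$ as you correctly identify, this gives $m\gtrsim dr\cdot\kappa^8 r^2\log(\kappa r)=dr^3\kappa^8\log(\kappa r)$ and failure probability $\exp\bigl(-cm/(\kappa^8 r^2)\bigr)$, neither of which matches the proposition's $m\gtrsim dr\kappa^4(\log\kappa+\log r)\log d$ and $\exp\bigl(-c_3 m/(\kappa^4 r)\bigr)$: you are off by a factor of roughly $\kappa^4 r^2$ in the sample size and by $\kappa^4 r$ in the exponent. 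The issue is that RDPP is a \emph{uniform} bound over all rank-$\le(r+k)$ matrices $X$, and its $\delta^{-4}$ price comes from controlling how the sign pattern $\sign(\dotp{A_i,X}-s_i)$ changes as $X$ moves over the net (the term $Z_3$ in the proof of Proposition \ref{prop: sign-RIP}, handled by Cauchy--Schwarz). For the spectral initialization, however, $X=\truX$ is a single fixed matrix; you never vary it, so that entire term disappears, and one only needs a net over rank-$1$ test matrices $Y$ in the variational characterization of the operator norm. The paper exploits this in Lemma \ref{lem:pertubation-bound-spectral} (and its AC analogue Lemma \ref{lem: perturbation bound under arbitrary corruption}), which gives $\|D-\psi\truX/\fnorm{\truX}\|\le\delta$ under $m\gtrsim d\log(1/\delta)/\delta^2$ with failure probability $\exp(-c'm\delta^2)$. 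Plugging in $\delta\asymp c_0/(\kappa^2\sqrt r)$ then recovers exactly the $dr\kappa^4(\log\kappa+\log r)$ sample requirement and the $\exp(-cm/(\kappa^4 r))$ tail; the extra $\log d$ in the final bound comes from the median-concentration step (Proposition \ref{prop: quantile concentration}), not from the spectral part. So the fix is to replace your appeal to RDPP with a dedicated single-point perturbation bound; the rest of your argument (truncation via Weyl, the $c^*$-window from the contaminated median via Lemma \ref{lem: corrupted/clean samples}, and the final union bound) is essentially the paper's.
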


The details of the proof can be found in \Cref{sec: initialization}. It should be noted in \eqref{eq: intiailizationinequalitytildeF} that our guarantee ensures $F_0F_0^\top$ is close to $\truX$ up to a scalar multiple $c^*$, instead of the usual guarantees on $\norm{F_0F_0^\top -\truX}$ \cite{li2020non,li2020nonconvex}. If $p\lesssim\frac{1}{\sqrt{r}\kappa}$, we obtain  $\norm{F_0F_0^\top -\truX} \leq c_0\sigma_r$ in \Cref{prop: initialization for arbitrary corruption} and the sample complexity $m$ improves to $\mathcal{O}(dr\kappa^2\log d(\log \kappa+\log r))$.

\section{Experiments}\label{sec:exp}
In this section, we provide numerical evidence to verify our theoretical discoveries on robust matrix recovery in the rank overspecified regime. First, for robust low-rank matrix recovery, \Cref{sec:exp-matrix} shows that the subgradient method with proposed diminishing stepsizes can $(i)$ exactly recover the underlying low-rank matrix from its linear measurements even in the presence of corruptions and the rank is overspecified, and $(ii)$ prevent overfitting and produce high-quality solution even in the highly over-parameterized regime $k = d$. Furthermore, we apply the strategy of diminishing stepsizes for robust image recovery with DIP \cite{ulyanov2018deep,heckel2019denoising,you2020robust} in \Cref{sec:exp-dip}, demonstrating its effectiveness in alleviating overfitting. The experiments are conducted with Matlab 2018a and Google Colab \cite{bisong2019google}.

\subsection{Robust recovery of low-rank matrices}
\label{sec:exp-matrix} 

\begin{figure}[]
    \centering
    \subfloat[\label{fig:matrix-exact} $k=r$]
        {\includegraphics[width=0.32\textwidth]{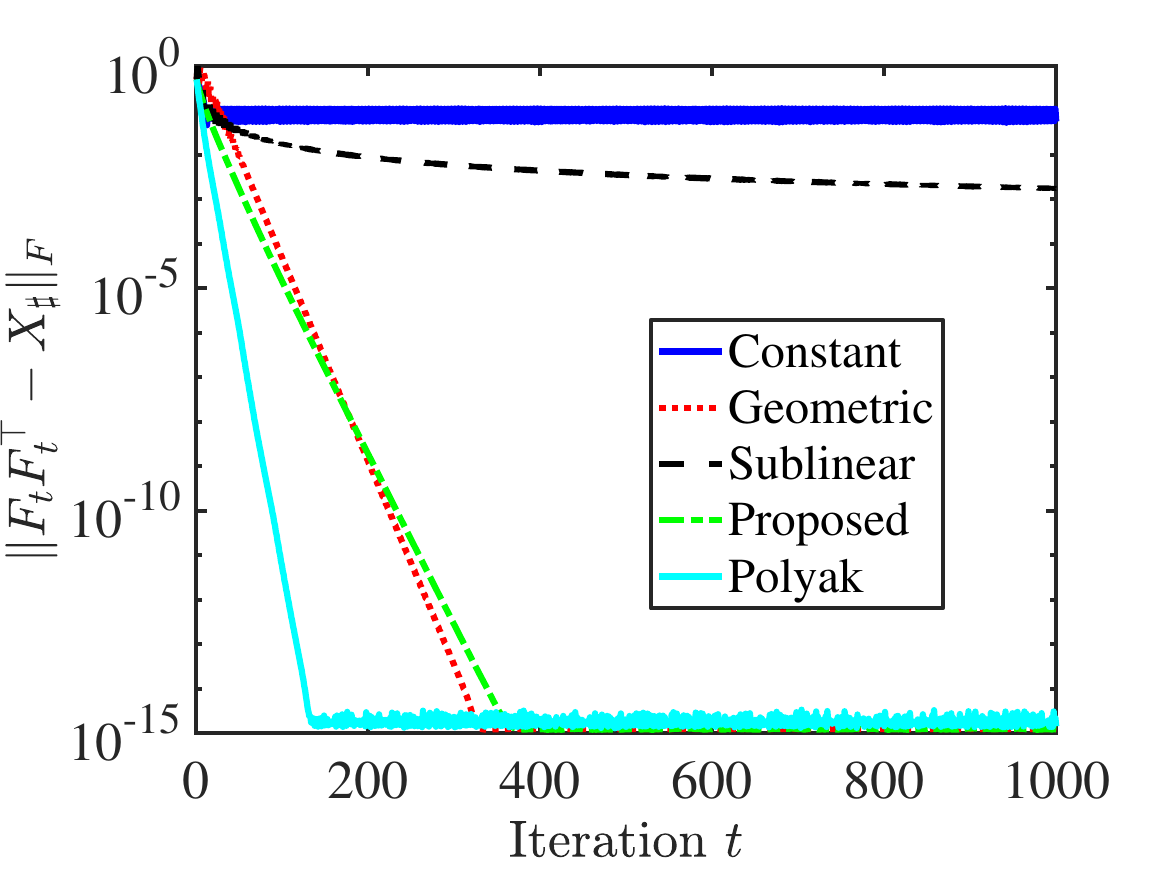}} \
      \subfloat[\label{fig:matrix-over}$k=2r$]  {\includegraphics[width=0.32\textwidth]{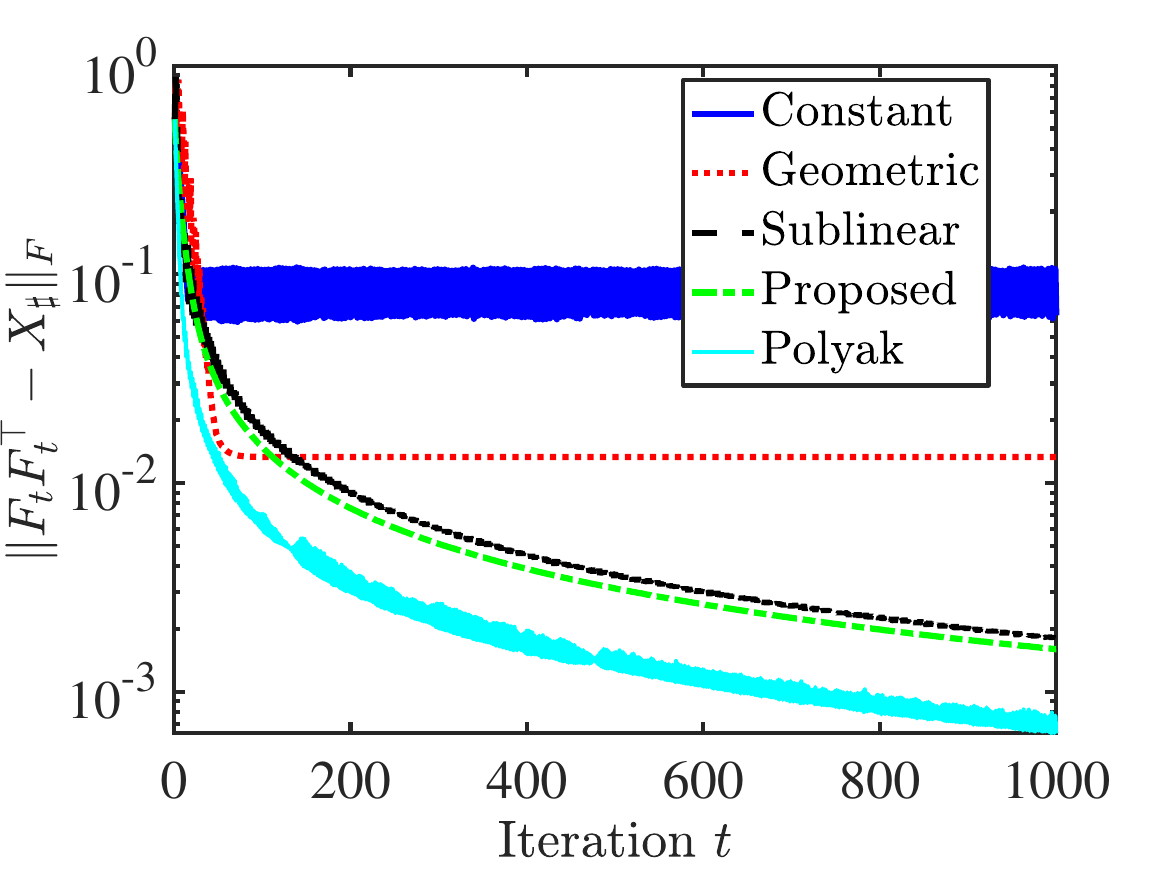}} \
        \subfloat[\label{fig:matrix-dim}$k=d$] {\includegraphics[width=0.32\textwidth]{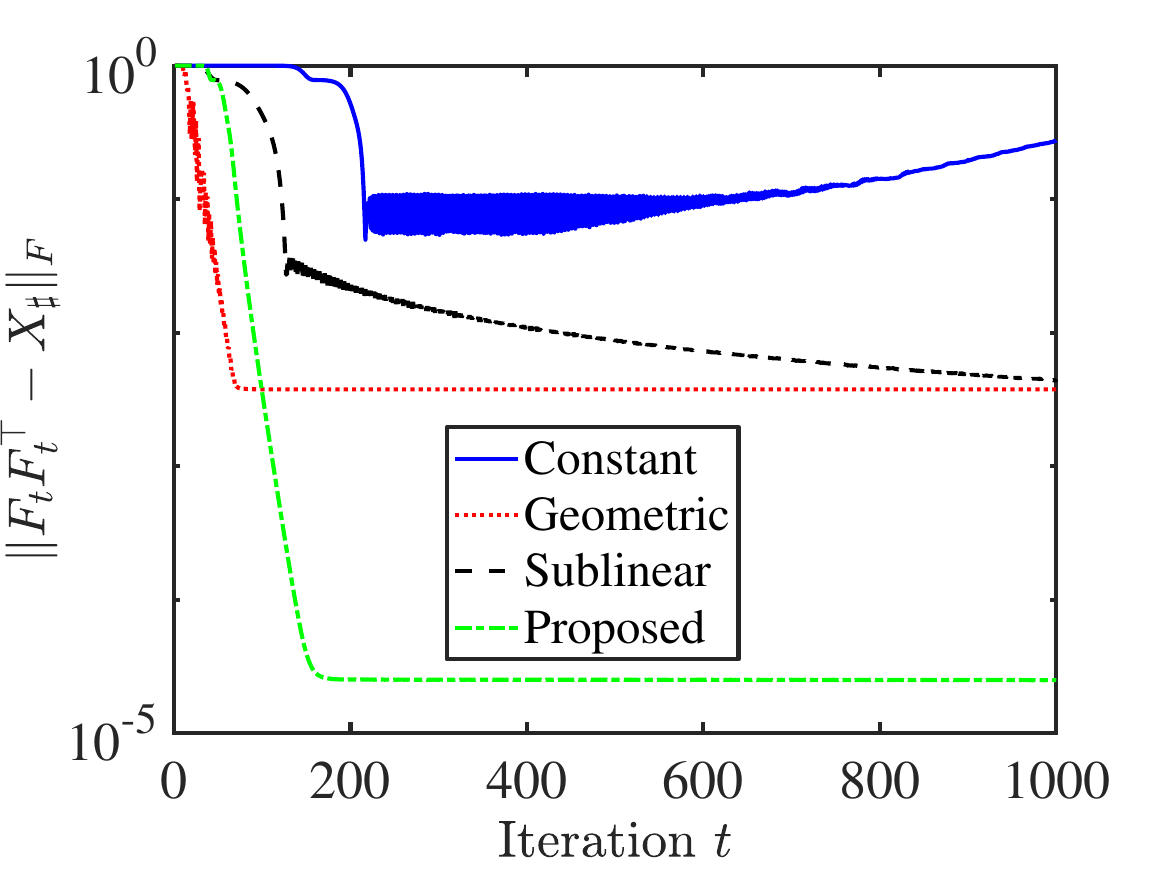}} 
    \caption{\footnotesize \textbf{Convergence of the subgradient method \eqref{eqn: mainalgorithm} with different stepsize rules} in the regimes of $(a)$ exact-parameterization $k = r$, $(b)$ rank overspecication $k = 2r$, and $(c)$ over-parameterization $k = d$. We use the following stepsizes: constant stepsizes $\eta_t = \eta_0$, geometric diminishing stepsizes $\eta_t = \eta_0 \cdot 0.9^t$, sublinear diminishing stepsizes $\eta_t = \eta_0/t$, the proposed stepsizes \eqref{eqn: stepsizerule}, and the Polyaks's stepsize.}
    \label{fig:matrix}
\end{figure}

\paragraph{Data generation and experiment setup.} For any given $d$ and $r$, we generate the ground truth $\truX = F_\natural F_\natural^\top$ with entries of $F_\natural\in\RR^{d\times r}$ \emph{i.i.d.} from standard normal distribution, and then normalize $\truX$ such that $\fnorm{\truX}=1$. Similarly, we generate the $m$ sensing matrices $A_1,\ldots,A_m$ to be GOE matrices discussed in \Cref{subsec:models}. For any given corruption ratio $p$, we generate the corruption vector $s\in\RR^m$ according to the AC model, by randomly selecting $\lfloor pm \rfloor$ locations to be nonzero and generating those entries from a \emph{i.i.d.} zero-mean Gaussian distribution with variance 100. We then generate the measurement according to \eqref{eq: observation model}, i.e., $y_i =\dotp{A_i, \truX} + s_i,  i=1,\dots,m$, and we set $n = 100$, $p = 0.2$, and $m = 10nr$. We run the subgradient methods for $10^3$ iterations using the following different stepsizes: $(i)$ the constant stepsize $\eta_t = 0.1$ for all $t\geq 0$, $(ii)$ sublinear diminishing stepsizes\footnote{For the two  diminising stepsizes, we set $\eta_0 = 2$ when $k = r$ or $ 2r$, and $\eta_0 = 5$ when $k = d$.} $\eta_t = \eta_0/t$, $(iii)$ geometrically diminishing stepsizes $\eta_t = \eta_0\cdot 0.9^t$, $(iv)$ the proposed stepsizes \eqref{eqn: stepsizerule} with $C_\eta = \frac{1}{2}$, and $(v)$ the Polyak stepsize rule \cite{polyak1969minimization} which is given by $\eta_t = \frac{f(F) - f^\star}{\fnorm{g_t}^2}$, where $f^\star$ is the optimal value of \eqref{opt: maintextmain} and $g_t$ is the subgradient in \eqref{eqn: mainalgorithm}. We use the spectral initialization illustrated in \Cref{subsec:init} to generate the initialization $F_0$.
\begin{figure}[]
\centering
  \includegraphics[width = 0.5 \textwidth]{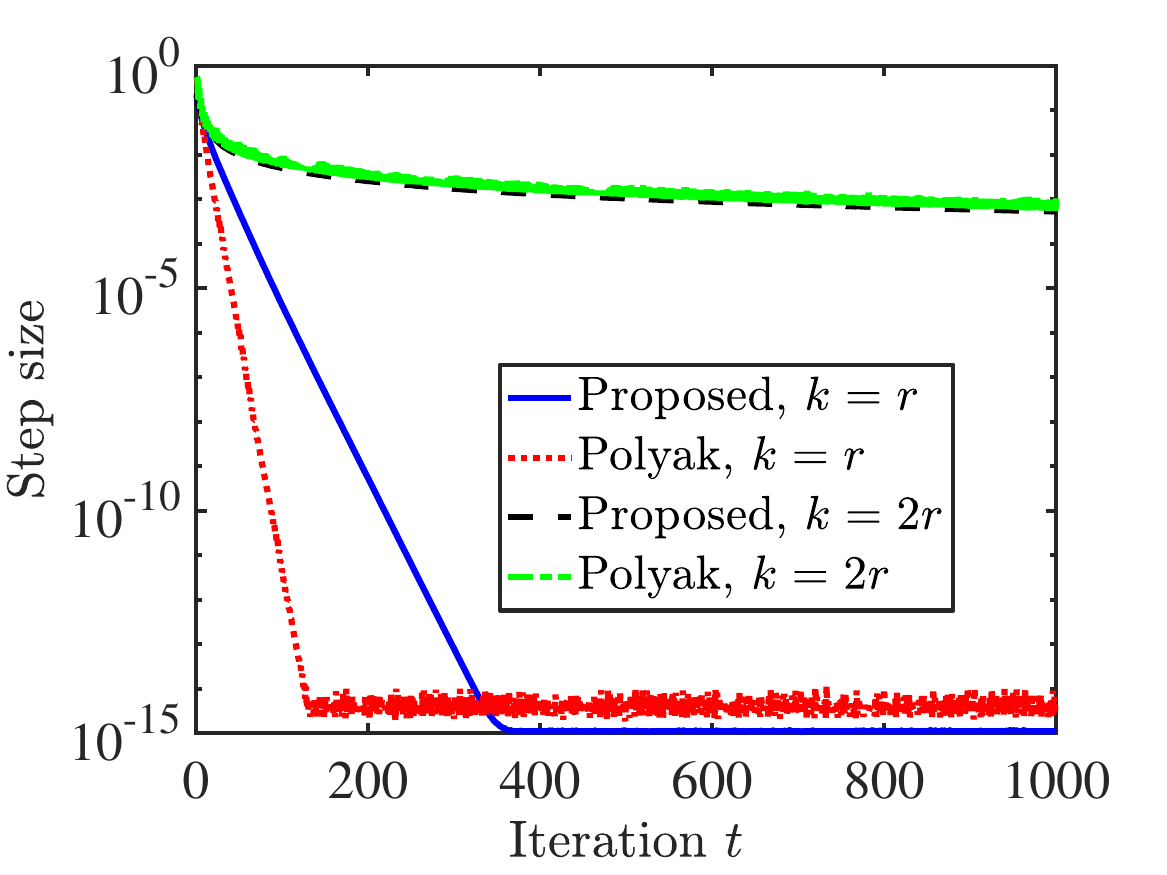}
  \caption{\footnotesize \textbf{Comparison of the proposed stepsize in \eqref{eqn: stepsizerule} and the Polyak stepsize} in the regimes of the exact-parameterization $k = r$ and the rank overspecification $k = 2r$.}
\label{fig:step-size}\end{figure}

\paragraph{Observations from the experimental results.} We run the subgradient method with different stepsize rules and different rank $k = r,\;2r,$ and $d$, and compute the reconstruction error $\fnorm{F_tF_t^\top - \truX}$ for the output of the algorithms. From \Cref{fig:matrix} and \Cref{fig:step-size}, our observations are the follows.

\begin{itemize}
    \item \emph{Exact rank case $k=r$.} As shown in \Cref{fig:matrix-exact}, we can see that \emph{(i)} the subgradient method with constant stepsizes does not converge to the target matrix, \emph{(ii)} using sublinear diminishing stepsizes results in a sublinear convergence rate, \emph{(iii)} geometrically diminishing or Polyak\footnote{As guaranteed by \Cref{thm: identifiability}, in this case, the optimal value $f^\star$ is achieved at $F_\natural$. 
    } stepsizes converge at a linear rate, consistent with the observation in \cite{li2020nonconvex}, and \emph{(iv)} the proposed stepsize rule also leads to convergence to the ground-truth at a linear rate, which is consistent with \Cref{lem: mainlemma}.
    \item \emph{Overspecified rank case $k=2r$.} As shown in \Cref{fig:matrix-over}, the subgradient method with any stepsize rule converges at most a sublinear rate, while constant or geometrically diminishing stepsizes result in poor solution accuracy. The proposed stepsize rule achieves on par convergence performance with the sublinear stepsize rule, which also demonstrates our \Cref{lem: mainlemma} on the sublinear convergence in the rank overspecified setting. As shown in \Cref{fig:step-size}, the proposed strategy \eqref{eqn: stepsizerule} gives stepsizes similar to Polyak's, and thus both resulting in a similar convergence rate (or the Polyak's is slightly better). However, the Polyak stepsize rule requires knowing the optimal value of the objective function, which is usually unknown a prior in practice.
    \item \emph{Overparameterized case $k=d$.} The results are shown in \Cref{fig:matrix-dim}.\footnote{We omit the performance of the Polyak stepsizes since in this case the optimal value $f^\star$ is not easy to compute; it may not be achieved at $F_\natural$ due to the overfitting issue. Our experiments indicate that the Polyak stepsizes do not perform well when computed by setting the optimal value as either $0$ or the value at $F_\natural$.}  In this case, inspired by \cite{li2018algorithmic,ma2021implicit}, we use an alternative tiny initialization $F_0$ by drawing its entries from \emph{i.i.d.} zero-mean Gaussian distribution with standard deviation $10^{-7}$. We first note that the constant stepsize rule results in overfitting. In contrast, the proposed diminishing stepsizes can prevent overfitting issues and find a very high-quality solution. We conjecture this is due to certain implicit regularization effects \cite{gunasekar2018implicit,li2018algorithmic,you2020robust} and we leave thorough theoretical analysis as future work.
\end{itemize}

\subsection{Robust recovery of natural images with deep image prior}\label{sec:exp-dip}

Finally, we conduct an exploratory experiment on robust image recovery with DIP \cite{ulyanov2018deep,you2020robust}. Here, the goal is to recover a natural image $\truX$ from its corrupted measurement $y = X_\natural + s$, where $s$ denotes noise corruptions. To achieve this, the DIP fits the observation by a highly overparameterized deep U-Net\footnote{Following \cite{ulyanov2018deep} (license obtained from \cite{DIP}), we use the same U-shaped architecture with skip connections, each layer containing a convolution, a nonlinear LeakyReLU and a batch normalization units,  Kaiming initialization for network parameters, and Adam optimizer \cite{kingma2014adam}. We set the network width as 192 as used in \cite{you2020robust}.
} $\phi(\Theta)$ by solving $\min_{\Theta} \|y - \phi(\Theta)\|_{\square}$ \cite{ronneberger2015u}, where $\Theta$ denotes network parameters and the norm $\norm{\cdot}_{\square}$ can be either $\ell_2$ or $\ell_1$ depending on the type of noises. As shown in 
\cite{ulyanov2018deep,heckel2019denoising,you2020robust}, by learning the parameters $\Theta$, the network first reveals the underlying clear image and then \emph{overfits} due to overparameterization, so that in many cases \emph{early stopping} is \emph{crucially} needed for its success.

\begin{figure} 
\centering
  \includegraphics[width=0.5\textwidth]{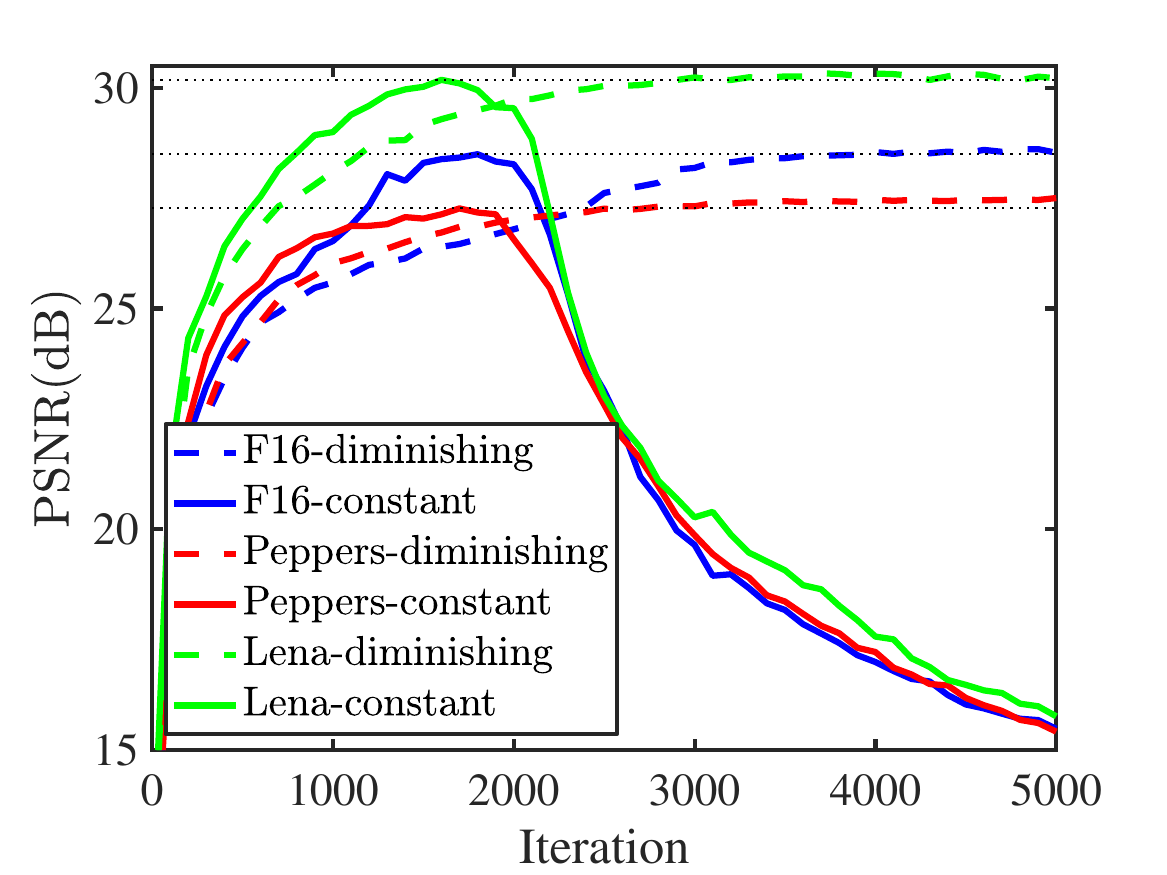}
  \caption{\footnotesize \textbf{Learning curves for robust image recovery with DIP} on different test images with 50\% salt-and-pepper noise. Here, we compare the constant and diminishing stepsize rules for DIP.}
\label{fig:DIP}
\end{figure}
In contrast to the constant stepsize rule, here we show a surprising result that, for sparse corruptions, optimizing a robust loss using the subgradient method with a diminishing stepsize rule does not overfit. In particular, we consider a sparse corruption $s$ by impulse salt-and-pepper noise, and solve the robust DIP problem $\min_{\Theta} \|y - \phi(\Theta)\|_{1}$ using the subgradient method with a diminishing stepsize rule. We test the performance using three images, ``F16'', ``Peppers'' and ``Lena'' from a standard dataset \cite{BM3D}, where we contaminate the image with salt-and-pepper noise by randomly replacing $50\%$ pixels with either $1$ or $0$ (each happens with $50\%$ probability). We compare the subgradient method with constant stepsize $0.01$ (or learning rate) and diminishing stepsize (we use an initial stepsize $0.05$ which is then scaled by $0.96$ for every $50$ iterations). As shown in \Cref{fig:DIP}, the subgradient method with constant stepsize eventually overfits so that it requires early stopping. In comparison, optimization with diminishing stepsize prevents overfitting in the sense that the performance continues to improve as training proceeds, and it achieves on par performance with that of constant stepsizes when the learning is early stopped at the optimal PSNR. However, early stopping is not easy to implement in practice without knowing the clean image \emph{a priori}.

\vspace{-0.05in}

\section{Discussion}\label{sec: conclusion}


In this work, we designed and analyzed a subgradient method for the robust matrix recovery problem under the rank overspecified setting, showing that it converges to the exact solution with a sublinear rate. Based on our results, there are several directions worth further investigation.
\begin{itemize}
\item \nct{\emph{Better sample complexity.} 
Our exact recovery result in Theorem \ref{thm: exact recovery under models} requires  $\tilde{\mathcal{O}}(dk^3)$ many samples. When the rank is exactly specified ($k=r$),
for factor approaches with corrupted measurements, the best sample complexity result is $\tilde{\mathcal{O}}(dr^2)$ \cite{li2016low,li2020non}. Hence there is an extra $k$ factor in the rank overspecified setting. 
Such a worse dependence may come from the $\delta^{-4}$ dependence of the sample complexity of RDPP in Proposition \ref{prop: main RDPP for two models}. More specifically, the Cauchy-Schwartz inequality we applied in \eqref{eq: csrdpp} might make the dependence on $\delta$ (and hence $k$) not tight enough. We leave the improvements for future work.}
    \item \emph{Implicit regularization}. As explored and demonstrated in our experiments in \Cref{sec:exp}, our stepsize choice combined with random initialization recovers $\truX$ with high accuracy, even if $k\geq r$ and the number of samples $m$ is way less than the degree of freedom (i.e., $m = O(dr)$). Theoretically justifying this phenomenon in the overparameterized regime would be of great interest.
    \item \emph{Other sensing matrix}. In this work, we assume $A_i$ to be a GOE matrix. This specific assumption on the measurement matrix still seems to be restrictive. For other measurements such as matrices with \emph{i.i.d.} sub-Gaussian entries, we believe that RDPP has to be modified. Empirically, we find that the algorithm is applicable to a broader setting (e.g., Robust PCA) even if the RDPP does not hold. Generalizing our results to more generic sensing matrices would also be an interesting future direction.
    \item \emph{Rectangular matrices}. In this work, we utilize the fact that $\truX$ is PSD, and optimize over one factor $F$. This is consistent with much existing analysis with rank overspecification \cite{li2018algorithmic,zhuo2021computational,ma2021implicit}. For rectangular $\truX$, we need to optimize over two factors $U,V$. How to combine the analysis here with an extra regularization $\fnorm{UU^\top -VV^\top}^2$ in the objective is another interesting direction to pursue.
\end{itemize}

\section*{Acknowledgement}
L. Ding would like to thank Jiacheng Zhuo for inspiring discussions. Y. Chen is partially supported by NSF grant CCF-1704828 and CAREER Award CCF-2047910. Z. Zhu is partially supported by NSF grants CCF-2008460 and CCF-2106881.

\qq{acknowledge salar here? for valuable comments}
\newpage 

{\small 
\bibliographystyle{unsrt}
\bibliography{reference}
}

\newpage

\appendix
\section{Analysis of algorithm under conditions of Theorem~\ref{lem: mainlemma}}
\label{sec: analysisOfMainTheorem}
To set the stage, we introduce some notations. Let the singular value decomposition of $\truX$ be 
\begin{equation}
	\truX = \begin{bmatrix}
		U & V
	\end{bmatrix}\begin{bmatrix}
		D_S^* & 0\\
		0   & 0
	\end{bmatrix}
	\begin{bmatrix}
		U & V
	\end{bmatrix}^\top,
\end{equation}
where $U \in \RR^{d\times r}, V \in \RR^{d \times d-r}, D_S^* \in \RR^{r \times r}$. $U$ and $V$ has orthonormal columns and $U^\top V = 0$. Denote $\sigma_i$ the singular values of $\truX$, thus $\sigma_1$ and $\sigma_r$ are the largest and smallest values in $D_S^*$. Since $\truX$ is assumed to have rank $r$, we have $\sigma_{r+1} =\sigma_{r+2}=\ldots =\sigma_d =0$. The condition number is defined to be $\kappa = \frac{\sigma_1}{\sigma_r}$. Since union of column space of $U$ and $V$ spans the whole space, for any $F_t \in \RR^{d\times r}$, we can write 
\begin{equation}
	F_t = US_t + VT_t,
\end{equation} 
where $S_t = U^\top F_t \in \RR^{r \times k}$ and $T_t = V^\top F_t \in \RR^{(d-r)\times k}$.\\
First of all, our initialization strategy will give us the following initialization quality.
\begin{proposition}[Initialization quality]\label{prop: initialization quality}
	Under the condition on $F_0$, we have 
	\begin{align}
		4\norm{T_0}^2 &\le 0.001\sigma_r(S_0) \frac{\sigma_r}{\sqrt{\sigma_1}}\\
		\norm{T_0}&\le \min\{0.1\sqrt{\sigma_1},\frac{\sigma_r}{200\sqrt{\sigma_1}}\}=\frac{\sigma_r}{200\sqrt{\sigma_1}}\\
		\norm{S_0} &\le 2\sqrt{\sigma_1}.
	\end{align}
	In the analysis, we will take $c_\rho$ and $\rho$ such that
	\begin{align}
		\sigma_r(S_0) &= \rho\\
		\norm{T_0} &\le c_\rho \rho =\frac{\sigma_r}{200\sqrt{\sigma_1}}\\
		\norm{S_0} & \le 2\sqrt{\sigma_1}.
	\end{align}
	The parameters satisfy $4(c_\rho\rho)^2 \le 0.001 \rho \frac{\sigma_r}{\sqrt{\sigma_1}}$ and $c_\rho \rho \le \min\{0.1\sqrt{\sigma_1},\frac{\sigma_r}{200\sqrt{\sigma_1}} \}=\frac{\sigma_r}{200\sqrt{\sigma_1}}$.
\end{proposition}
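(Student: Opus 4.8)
\emph{Plan.} The only hypothesis is the operator-norm bound $\norm{F_0F_0^\top - c^*\truX}\le c_0\sigma_r/\kappa$ from condition (i) of Theorem~\ref{lem: mainlemma}; since $\sigma_r/\kappa=\sigma_r^2/\sigma_1$, this says $F_0F_0^\top$ agrees with the scaled ground truth $c^*\truX$ to within $c_0\sigma_r^2/\sigma_1$ in operator norm, with $c^*\in[\epsilon,1/\epsilon]$ and $c_0=\tilde c_0\epsilon$ small. The first step is to push this down to the signal/error blocks. Writing $F_0=US_0+VT_0$ and using $U^\top U=I$, $V^\top V=I$, $U^\top V=0$, $\truX=UD_S^*U^\top$, one reads off
\[
U^\top(F_0F_0^\top - c^*\truX)U = S_0S_0^\top - c^*D_S^*,\qquad V^\top(F_0F_0^\top - c^*\truX)V = T_0T_0^\top,\qquad U^\top(F_0F_0^\top - c^*\truX)V = S_0T_0^\top .
\]
Since $U,V$ have orthonormal columns, each of these blocks has operator norm at most $c_0\sigma_r^2/\sigma_1$. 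In particular $\norm{T_0}^2=\norm{T_0T_0^\top}\le c_0\sigma_r^2/\sigma_1$ and $\norm{S_0S_0^\top - c^*D_S^*}\le c_0\sigma_r^2/\sigma_1\le c_0\sigma_r$.

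Next I would read off the three claimed bounds from these two inequalities. For $S_0$: Weyl's inequality applied to $S_0S_0^\top$ against the perturbation $c^*D_S^*$ (eigenvalues $c^*\sigma_1\ge\cdots\ge c^*\sigma_r$) gives $\norm{S_0}^2=\sigma_1(S_0S_0^\top)\le c^*\sigma_1+c_0\sigma_r\le(c^*+c_0)\sigma_1$, which is $\le 4\sigma_1$ for the admissible range of $c^*$ and small $c_0$, hence $\norm{S_0}\le 2\sqrt{\sigma_1}$; and $\sigma_r(S_0)^2=\lambda_{\min}(S_0S_0^\top)\ge c^*\sigma_r-c_0\sigma_r\ge\tfrac{\epsilon}{2}\sigma_r$, using $c^*\ge\epsilon$ and $c_0=\tilde c_0\epsilon\le\epsilon/2$. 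For $T_0$: $\norm{T_0}\le\sqrt{c_0}\,\sigma_r/\sqrt{\sigma_1}$, which is $\le\sigma_r/(200\sqrt{\sigma_1})$ once $\tilde c_0$ is small enough that $\sqrt{\tilde c_0\epsilon}\le 1/200$; and since $\sigma_r\le\sigma_1$ we have $\sigma_r/(200\sqrt{\sigma_1})\le\sqrt{\sigma_1}/200\le 0.1\sqrt{\sigma_1}$, so this is indeed the smaller of the two terms in the stated $\min$.

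For the remaining inequality $4\norm{T_0}^2\le 0.001\,\sigma_r(S_0)\,\sigma_r/\sqrt{\sigma_1}$ I would combine $4\norm{T_0}^2\le 4c_0\sigma_r^2/\sigma_1$ with the lower bound $\sigma_r(S_0)\ge\sqrt{\epsilon\sigma_r/2}$ just established: cancelling the common factor $\sigma_r^{3/2}/\sqrt{\sigma_1}$ reduces the claim to $4c_0\sqrt{\sigma_r/\sigma_1}\le 0.001\sqrt{\epsilon/2}$, which holds since $\sigma_r/\sigma_1\le 1$ and $\tilde c_0$ is chosen small. Finally, setting $\rho:=\sigma_r(S_0)$ and $c_\rho:=\sigma_r/(200\sqrt{\sigma_1}\rho)$ yields $\norm{T_0}\le c_\rho\rho=\sigma_r/(200\sqrt{\sigma_1})$, $\norm{S_0}\le 2\sqrt{\sigma_1}$, together with the two parameter relations $4(c_\rho\rho)^2\le 0.001\rho\,\sigma_r/\sqrt{\sigma_1}$ and $c_\rho\rho\le\min\{0.1\sqrt{\sigma_1},\,\sigma_r/(200\sqrt{\sigma_1})\}$, exactly as asserted.

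\emph{Main obstacle.} Conceptually there is essentially none: everything follows from operator-norm estimates for the three $U,V$-blocks of $F_0F_0^\top-c^*\truX$ plus Weyl's inequality. The only point needing care is the constant bookkeeping — verifying that a single small $\tilde c_0$ (and the fixed parameter $\epsilon$, whose admissible range is pinned down by Proposition~\ref{prop: initialization for models}) simultaneously meets $c_0\le\epsilon/2$, $\sqrt{c_0}\le 1/200$, and $4c_0\le 0.001\sqrt{\epsilon/2}$, and that the range $[\epsilon,1/\epsilon]$ permitted for $c^*$ is consistent with the hard constant $2$ in $\norm{S_0}\le 2\sqrt{\sigma_1}$.
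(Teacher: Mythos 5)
Your block decomposition and use of Weyl's inequality match the paper's strategy exactly (the paper invokes "almost the same proof as Lemma~\ref{lem: decomposition of FF-X}" for the block bounds you derive by hand). Two of the three bounds — $\sigma_r(S_0)$ from below and $\norm{T_0}$ from above — and the final parameter check are carried out the same way, modulo tiny constant differences.

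However there is a genuine gap in your argument for $\norm{S_0}\le 2\sqrt{\sigma_1}$, and you half-notice it in your closing remark. Weyl gives $\norm{S_0}^2 \le c^*\sigma_1 + c_0\sigma_r$, and you then assert this is $\le 4\sigma_1$ "for the admissible range of $c^*$." But condition (i) only guarantees $c^*\in[\epsilon,1/\epsilon]$, and Proposition~\ref{prop: initialization for models} produces, under the RC model, a $c^*$ whose upper endpoint $\sqrt{2/\pi}\,\theta_{1-(0.5-p)/3}/\theta_{1/2}$ blows up as $p\to 1/2$. So $c^*+c_0$ need not be below $4$, and the displayed deduction $\norm{S_0}\le 2\sqrt{\sigma_1}$ fails without further work. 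The paper closes this gap with a normalization device: if $\norm{S_0}>2\sqrt{\sigma_1}$, rescale $F_0$ by a factor proportional to $1/\sqrt{c^*}$ so that the rescaled $S_0$ has norm exactly $2\sqrt{\sigma_1}$, then check that the rescaling shrinks $c^*$ to an $O(1)$ quantity while preserving the lower bound on $\sigma_r(S_0)$ and the upper bound on $\norm{T_0}$ (since Weyl's inequality gives $\sigma_1(S_0S_0^\top)\le 1.01\,c^*\sigma_1$, the rescaling factor is comparable to $c^*$, so the rescaled $c^*$ is order one and both side conditions survive). Your proof is correct only on the sub-case $c^*\lesssim 1$; to complete it you would need to add this (or an equivalent) reduction.

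A second, smaller remark: you propose to define $c_\rho := \sigma_r/(200\sqrt{\sigma_1}\rho)$, which makes $c_\rho\rho = \sigma_r/(200\sqrt{\sigma_1})$ an equality by fiat. That is fine, but you must then verify $4(c_\rho\rho)^2\le 0.001\,\rho\,\sigma_r/\sqrt{\sigma_1}$ directly from the lower bound on $\rho$ rather than from the raw bound $4\norm{T_0}^2\le 4c_0\sigma_r^2/\sigma_1$; the two are not the same once $c_\rho\rho$ is pinned at $\sigma_r/(200\sqrt{\sigma_1})$. Concretely, with $c_\rho\rho=\sigma_r/(200\sqrt{\sigma_1})$ the required inequality reduces to $\rho\ge\sigma_r/(10\sqrt{\sigma_1})$, which does follow from your lower bound $\rho\ge\sqrt{\epsilon\sigma_r/2}$ (since $\sqrt{\epsilon\sigma_r/2}\ge\sigma_r/(10\sqrt{\sigma_1})$ iff $\epsilon/2\ge\sigma_r/(100\sigma_1)$, and the right side is at most $1/100$), but this chain should be spelled out rather than derived from the weaker $4\norm{T_0}^2$ bound.
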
 
 By assumptin, the RDPP holds with parameters $(k+r, \sqrt{\frac{1}{2\pi}}\delta)$ and $\delta = \frac{c}{\kappa^3 \sqrt{k}}$ for some small constant $c$ depending on $c_3$ in Theorem~\ref{lem: mainlemma}. Since the RDPP holds, let 
\begin{equation}\label{eq: gammatdef} 
	\gamma_t= \frac{\eta_t\psi(F_tF_t^\top -\truX)}{\fnorm{F_tF_t^\top - \truX}}, \quad D_t \in D(F_tF_t^\top -\truX),
\end{equation} 
we have
\begin{align}\label{eq: riponDt}
	\norm{\eta_t D_t -\gamma_t (F_tF_t^\top -\truX)}_{F,k+r} & \le \eta_t \sqrt{\frac{1}{2\pi}}\delta\\
	&\le \eta_t \psi(F_tF_t^\top -\truX) \delta\\
	&= \delta \gamma_t\fnorm{F_tF_t^\top -\truX}.
\end{align}
Define the following shorthand $\Delta_t$,
\begin{equation}\label{eqn: Deltatdef} 
	\Delta_t = \frac{\eta_t}{\gamma_t} D_t - (F_tF_t^\top -\truX). 
\end{equation} 
Using that fact that the subgradient we used in algorithm~\ref{eqn: mainalgorithm} can be written as $g_t = D_t F_t$, we have
\begin{align}
	F_{t+1} &= F_t - \gamma_t (F_tF_t^\top -\truX)F_t + \gamma_t\Delta_tF_t,\quad \text{and}  \label{eqn:algorithm update}\\
	\norm{\Delta}&\le  \delta \fnorm{F_tF_t^\top -\truX} \overset{(a)}{\le} \delta\sqrt{k+r}\norm{F_tF_t^\top - \truX} \label{eqn: RIP bound}
\end{align}
Here step $(a)$  is because $F_tF_t^\top -\truX$ has rank no more than $k+r$. 
Note that $\tilde{F}_{t+1} = F_t - \gamma_t \left(F_tF_t^\top -\truX\right)F_t$ is the update if we apply gradient descent to smooth function $\tilde{f}(F) = \frac{1}{4}\fnorm{FF^\top -\truX}^2$. We will leverage the properties of this population level update throughout the analysis.
\noindent The next proposition illustrates the evolution of $S_t$ and $T_t$.
\begin{proposition}[Updates of $S_t,T_t$]\label{prop: updates of S,T}
	For any $t \ge 0$, we have 
	\begin{align}
		S_{t+1} &= S_t - \gamma_t\left(S_t S_t^\top S_t + S_t T_t^\top T_t -D_S^*S_t\right) + \gamma_t U^\top \Delta_t F_t,\\
		T_{t+1} &= T_t - \gamma_t \left(T_tT_t^\top T_t + T_tS_t^\top S_t \right) + \gamma_t V^\top \Delta_t F_t.
	\end{align}
\end{proposition}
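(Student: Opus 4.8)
The plan is to derive both update rules by simply projecting the algorithm's update onto the column spaces of $U$ and $V$. The starting point is the update equation \eqref{eqn:algorithm update}, namely $F_{t+1} = F_t - \gamma_t(F_tF_t^\top - \truX)F_t + \gamma_t \Delta_t F_t$. Since $U$ and $V$ have orthonormal columns, satisfy $U^\top V = 0$, and together span $\RR^d$, left-multiplying by $U^\top$ gives $S_{t+1} = U^\top F_{t+1}$ and left-multiplying by $V^\top$ gives $T_{t+1} = V^\top F_{t+1}$. The final term of the update immediately contributes $\gamma_t U^\top \Delta_t F_t$ and $\gamma_t V^\top \Delta_t F_t$, respectively, so the whole task reduces to computing the two matrices $U^\top(F_tF_t^\top - \truX)F_t$ and $V^\top(F_tF_t^\top - \truX)F_t$.

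First I would record the elementary identities implied by the decomposition $F_t = US_t + VT_t$ together with the orthogonality relations $U^\top U = I_r$, $V^\top V = I_{d-r}$, $U^\top V = 0$: one has $U^\top F_t = S_t$, $V^\top F_t = T_t$, and the Gram matrix factors as $F_t^\top F_t = S_t^\top S_t + T_t^\top T_t$. Moreover, since $\truX = U D_S^* U^\top$, we get $\truX F_t = U D_S^* U^\top(US_t + VT_t) = U D_S^* S_t$, hence $U^\top \truX F_t = D_S^* S_t$ and $V^\top \truX F_t = 0$. Writing $F_tF_t^\top F_t = F_t(F_t^\top F_t)$ and using $U^\top F_t = S_t$, $V^\top F_t = T_t$ then yields $U^\top F_tF_t^\top F_t = S_t(S_t^\top S_t + T_t^\top T_t)$ and $V^\top F_tF_t^\top F_t = T_t(S_t^\top S_t + T_t^\top T_t)$.

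Combining these pieces gives $U^\top(F_tF_t^\top - \truX)F_t = S_tS_t^\top S_t + S_tT_t^\top T_t - D_S^*S_t$ and $V^\top(F_tF_t^\top - \truX)F_t = T_tT_t^\top T_t + T_tS_t^\top S_t$, and substituting into the two projected update equations produces exactly the claimed formulas for $S_{t+1}$ and $T_{t+1}$. I do not expect a genuine obstacle here: the statement is a direct algebraic verification, and the only thing requiring care is the careful bookkeeping of the orthogonality relations and the resulting block structure of $F_t^\top F_t$; in particular one must not forget that $V^\top \truX F_t$ vanishes because $V^\top U = 0$, which is precisely why the $D_S^*$ term appears only in the $S_t$-update.
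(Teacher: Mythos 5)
Your proposal is correct and follows essentially the same approach as the paper: left-multiply the update \eqref{eqn:algorithm update} by $U^\top$ and $V^\top$, expand $F_t = US_t + VT_t$, and simplify via $U^\top U = I$, $V^\top V = I$, $U^\top V = 0$. The only cosmetic difference is that you factor $F_t F_t^\top F_t = F_t(F_t^\top F_t)$ and compute $F_t^\top F_t = S_t^\top S_t + T_t^\top T_t$ first, whereas the paper expands $(US_t+VT_t)(US_t+VT_t)^\top$ directly; both routes are equivalent and equally elementary.
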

We introduce notations
\begin{align}
	\cM_t(S_t) &= S_t- \gamma_t\left(S_t S_t^\top S_t + S_t T_t^\top T_t -D_S^*S_t\right) \\
	\cN_t(T_t) &= T_t - \gamma_t \left(T_tT_t^\top T_t + T_tS_t^\top S_t \right).
\end{align}
They are "population-level" updates for $S_t$ and $T_t$.
\begin{proposition}[Uniform upper bound]\label{prop: uniform upper bound}
	Suppose $\gamma_t$ satisfies $\gamma_t \le \frac{0.01}{\sigma_1}$ for all $t\ge 0$ and $ (50\sqrt{k}\delta)^{\frac{1}{3}} \le \frac{c_\rho\rho}{2\sqrt{\sigma_1}} = \frac{\sigma_r}{400\sigma_1}$, we have 
	\begin{align}
		\norm{T_t} &\le c_\rho \rho\le 0.1 \sqrt{\sigma_r}\le 0.1 \sqrt{\sigma_1}\\
		\norm{S_t} &\le 2 \sqrt{\sigma_1}
	\end{align}
	for all $t \ge 0$.
\end{proposition}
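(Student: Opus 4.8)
\textbf{Proof proposal for Proposition \ref{prop: uniform upper bound}.}

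The plan is to prove the two bounds simultaneously by strong induction on $t$, exploiting the fact that the population-level updates $\cM_t$ and $\cN_t$ already contract (or at least do not expand) the relevant quantities once the induction hypotheses hold, and that the perturbation $\gamma_t \Delta_t F_t$ is small by RDPP. At $t = 0$ the bounds $\norm{T_0} \le c_\rho \rho$ and $\norm{S_0} \le 2\sqrt{\sigma_1}$ hold by Proposition \ref{prop: initialization quality}. For the inductive step, assume $\norm{T_t} \le c_\rho\rho \le 0.1\sqrt{\sigma_r}$ and $\norm{S_t} \le 2\sqrt{\sigma_1}$; I would separately bound $\norm{S_{t+1}}$ and $\norm{T_{t+1}}$ using the update formulas in Proposition \ref{prop: updates of S,T}.

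For the $T$-bound, write $T_{t+1} = \cN_t(T_t) + \gamma_t V^\top \Delta_t F_t$. For the population part, factor $\cN_t(T_t) = T_t(I - \gamma_t(T_t^\top T_t + S_t^\top S_t))$; since $T_t^\top T_t + S_t^\top S_t \succeq 0$ and its operator norm is at most $\norm{T_t}^2 + \norm{S_t}^2 \le 0.01\sigma_r + 4\sigma_1 \le 5\sigma_1$, the choice $\gamma_t \le 0.01/\sigma_1$ gives $I - \gamma_t(T_t^\top T_t + S_t^\top S_t) \succeq (1 - 0.05)I \succ 0$, so $\norm{\cN_t(T_t)} \le \norm{T_t}$. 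For the perturbation term, $\norm{\gamma_t V^\top \Delta_t F_t} \le \gamma_t \norm{\Delta_t}\norm{F_t} \le \gamma_t \cdot \delta\sqrt{k+r}\norm{F_tF_t^\top - \truX} \cdot \norm{F_t}$ by \eqref{eqn: RIP bound}; bounding $\norm{F_t} \le \norm{S_t} + \norm{T_t} \le 3\sqrt{\sigma_1}$ and $\norm{F_tF_t^\top - \truX} \lesssim \sigma_1$, this is $\lesssim \gamma_t \sigma_1^{3/2}\sqrt{k}\,\delta \lesssim \sqrt{\sigma_1}\sqrt{k}\,\delta$. The hypothesis $(50\sqrt{k}\delta)^{1/3} \le \frac{\sigma_r}{400\sigma_1}$, i.e. $\sqrt{k}\delta \lesssim (\sigma_r/\sigma_1)^3$, is exactly what makes this perturbation small enough that $\norm{T_{t+1}} \le c_\rho\rho$ still holds — the cube root is there precisely so the error budget closes. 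One must check the arithmetic so that the loss incurred over the perturbation is dominated by any slack in the population contraction, or alternatively track that $\norm{T_t}$ can only grow by a geometrically summable amount (but since we only need a uniform bound, showing $\norm{\cN_t(T_t)} \le \norm{T_t}$ plus a small additive term that keeps us under $c_\rho\rho$ suffices, possibly using that $T_t$ actually contracts whenever it is not already tiny).

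For the $S$-bound, write $S_{t+1} = \cM_t(S_t) + \gamma_t U^\top\Delta_t F_t$ and bound $\norm{\cM_t(S_t)} \le \norm{S_t} + \gamma_t\norm{S_t}\big(\norm{S_t}^2 + \norm{T_t}^2 + \sigma_1\big)$; crudely $\norm{S_t}^2 + \norm{T_t}^2 + \sigma_1 \le 4\sigma_1 + 0.01\sigma_r + \sigma_1 \le 6\sigma_1$, so with $\gamma_t \le 0.01/\sigma_1$ we get $\norm{\cM_t(S_t)} \le (1 + 0.06)\norm{S_t}$, which is not immediately $\le 2\sqrt{\sigma_1}$ if $\norm{S_t}$ is close to the boundary. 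The standard fix, which I would carry out, is to observe that when $\norm{S_t}$ is near $2\sqrt{\sigma_1}$ the term $-\gamma_t S_tS_t^\top S_t$ actually pulls $\norm{S_{t+1}}$ back down: more precisely, $\norm{\cM_t(S_t)}^2 \le \norm{S_t}^2 - 2\gamma_t\big(\norm{S_t^\top S_t}^2_{\text{relevant}} - \sigma_1\norm{S_t}^2 - \norm{S_t}^2\norm{T_t}^2\big) + O(\gamma_t^2)$, and for $\norm{S_t} \ge \sqrt{2\sigma_1}$ (say) the leading quartic term $\norm{S_t}^4$ dominates $\sigma_1\norm{S_t}^2$, giving a net decrease; combined with the small $\gamma_t\delta\sqrt{k}\,\sigma_1^{3/2}$ perturbation this keeps $\norm{S_{t+1}} \le 2\sqrt{\sigma_1}$. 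The main obstacle is precisely this $S$-bound: the population update on the signal part is \emph{not} a contraction near the invariant scale, so one cannot just iterate a Lipschitz estimate, and the argument must use the sign and size of the cubic self-interaction term together with the careful budget $(50\sqrt{k}\delta)^{1/3} \le \frac{\sigma_r}{400\sigma_1}$ to absorb the RDPP perturbation; getting the constants to line up (the $0.01$, the $2$, the $400$) is where the real work is, but it is routine once the monotone/decrease structure is identified.
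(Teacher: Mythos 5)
Your proposal follows essentially the same route as the paper: induction on $t$, base case from Proposition~\ref{prop: initialization quality}, decompose each update into the population step plus the RDPP perturbation $\gamma_t \Delta_t F_t$ (bounded via $\norm{\Delta_t F_t}\le 50\delta\sqrt{k}\sigma_1^{3/2}$), and close the induction by exploiting the self-damping cubic term. One point where your sketch should be sharpened, and where the paper does exactly what you suspect: the weak bound $\norm{\cN_t(T_t)}\le\norm{T_t}$ cannot close the induction on its own, since the positive perturbation would then push $\norm{T_{t+1}}$ strictly above $c_\rho\rho$ whenever $\norm{T_t}=c_\rho\rho$. The paper instead uses the sharper $\norm{\cN_t(T_t)}\le\norm{T_t}(1-\gamma_t\norm{T_t}^2)$ (via the lemma on singular values of $A(I-\eta A^\top A)$, after dropping the PSD term $\gamma_t S_t^\top S_t$) and then runs a two-case split: for $\norm{T_t}\le c_\rho\rho/2$ the additive perturbation alone is below $(c_\rho\rho/2)^3 \gamma_t$ and so cannot push past $c_\rho\rho$, and for $c_\rho\rho/2 < \norm{T_t}\le c_\rho\rho$ the cubic contraction $\gamma_t\norm{T_t}^3 \ge \gamma_t(c_\rho\rho/2)^3 \ge 50\gamma_t\delta\sqrt{k}\sigma_1^{3/2}$ dominates the perturbation, which is precisely what the hypothesis $(50\sqrt{k}\delta)^{1/3}\le c_\rho\rho/(2\sqrt{\sigma_1})$ is calibrated for. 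Your $S$-bound argument is the same idea executed through $\norm{\cM_t(S_t)}^2$ rather than the paper's direct bound $\norm{S_t(I-\gamma_t S_t^\top S_t)}=\norm{S_t}-\gamma_t\norm{S_t}^3$ followed by a split at $\norm{S_t}=1.5\sqrt{\sigma_1}$; either route works once the constants are tracked, and you correctly identify that this tracking is where the work lies.
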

The analysis of algorithm consists of three stages:
\begin{itemize}
	\item In stage $1$, we show at $\sigma_r(S_t)$ increases geometrically to level $\sqrt{\frac{\sigma_r}{2}}$ by time $\cT_1'$, then $\norm{S_tS_t^\top - D_S^*}$ will decrease geometrically to $\frac{100(c_\rho \rho)^2\sigma_1}{\sigma_r}(\le \frac{\sigma_r}{100})$by $\cT_1$. The iterate will then enter a good region.
	\item In stage $2$, we show that $D_t=\max\{\norm{S_tS_t^\top - D_S^*}, \norm{S_tT_t^\top}\}$ decreases geometrically if it is bigger than $10\delta \sqrt{2k}\sigma_1$, which is the computational threshold. In other words, $\norm{S_tS_t^\top - D_S^*}$ decrease to a $\frac{100(c_\rho \rho)^2\sigma_1}{\sigma_r}$ geometrically, and this will happen by $\cT_2$.
	\item  In stage $3$, after $\cT_2$, $E_t =\max\{\norm{S_tS_t^\top - D_S^*}, \norm{S_tT_t^\top},\norm{T_tT_t^\top}\}$ converges to $0$ sublinearly. 
\end{itemize} 
In the above statement, \begin{equation}
	\cT_1' = \ceil{\log \left(\frac{\sqrt{\sigma_r}}{\sqrt{2}\rho}\right)/ \log\left(1+\frac{c_\gamma\sigma_r^2}{6\sigma_1^2}\right)},
\end{equation} 
\begin{equation}
	\cT_1= \cT_1' + \ceil{\frac{\log(\frac{20  (c_\rho \rho)^2}{\sigma_r})}{\log\left(1- \frac{c_\gamma \sigma_r^2}{2\sigma_1^2}\right)}}
\end{equation}
and 
\begin{equation}
	\cT_2= \cT_1 + \ceil{\frac{\log\left( \frac{1000\delta \sqrt{k}\sigma_1}{\sigma_r}\right)}{\log\left(1-\frac{c_\gamma\sigma_r^2}{6\sigma_1^2}\right)}}.
\end{equation}
Stage $1$ consists of all the iterations up to time $\cT_1$. Stage $2$ consists of all the iterations between $\cT_1 + 1$ and $\cT_2$. Stage $3$ consists of all the iterations afterwards.
\subsection{Analysis of $\cM_t(S_t)$ and $\cN_t(T_t)$.}
In this sections we prove some facts about $\cM_t(S_t)$ and $\cN_t(T_t)$ that will be useful in the analysis.
\begin{proposition}\label{prop: bound on MM NN}
	Suppose $\gamma_t \le \min\{\frac{0.01}{\sigma_1}, \frac{0.01\sigma_r}{\sigma_1^2}\}$, $\norm{S_t} \le 2\sqrt{\sigma_1}$, $\sigma_r(S_t)\ge \sqrt{\frac{\sigma_r}{2}}$ and $\norm{T_t}\le 0.1\sqrt{\sigma_r}$, we have the following:
	\begin{enumerate}
		\item[1.] $\norm{\cM_t(S_t)\cM_t(S_t)^\top - D^*_S} \le (1-\frac{3\gamma_t\sigma_r}{4}) \norm{S_tS_t^\top - D_S^*} + 3 \gamma_t \norm{S_t T_t^\top}^2$.
	\end{enumerate}
	Suppose $\gamma_t \le \frac{0.01}{\sigma_1}$, $\norm{S_t} \le 2\sqrt{\sigma_1}$, and $\norm{T_t}\le 0.1\sqrt{\sigma_r}$, we have the following:
	\begin{enumerate}
		\item[2.] $\norm{\cN_t(T_t) \cN_t(T_t)^\top} \le  \norm{T_t}^2(1-\frac{3\gamma_t}{2} \norm{T_t}^2) = \norm{T_tT_t^\top}\left(1-\frac{3\gamma_t}{2} \norm{T_tT_t^\top}\right)$.
	\end{enumerate}
Furthermore, suppose $\gamma_t \le \frac{0.01}{\sigma_1}$, $\norm{S_t} \le 2\sqrt{\sigma_1}$, $\sigma_r(S_t)\ge \sqrt{\frac{\sigma_r}{2}}$, $\norm{T_t}\le 0.1\sqrt{\sigma_r}$, and $\norm{S_tS_t^\top - D_S^*} \le \frac{\sigma_r}{10 }$, we have same inequalities as $1$, $2$ and 
\begin{enumerate}
	\item[3.] $\norm{\cM_t(S_t)\cN_t(T_t)^\top} \le (1-\frac{\gamma_t\sigma_r}{3})\norm{S_tT_t^\top}$. 
\end{enumerate} 
\end{proposition}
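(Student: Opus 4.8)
The plan is to prove all three bounds by one mechanism: expand each product into a \emph{contractive leading term} plus \emph{lower-order remainders}, control the leading term by the standard one-step contraction estimate for the smooth factorization objective $\tfrac14\norm{SS^\top-D_S^*}^2$ (equivalently, a Lyapunov-type operator-norm bound), and absorb every remainder using the smallness inputs $\gamma_t\sigma_1\lesssim 10^{-2}$ and $\norm{T_t}\le 0.1\sqrt{\sigma_r}$ together with $\tfrac{\sigma_r}{2}I\preceq S_tS_t^\top\preceq 4\sigma_1 I$ (which follows from $\sigma_r(S_t)\ge\sqrt{\sigma_r/2}$ and $\norm{S_t}\le 2\sqrt{\sigma_1}$). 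Throughout write $G:=S_tS_t^\top-D_S^*$ and $W:=S_tT_t^\top$, and record the identities $\cM_t(S_t)=(I-\gamma_tG)S_t-\gamma_tS_tT_t^\top T_t$ and $\cN_t(T_t)=T_t\bigl(I-\gamma_t(T_t^\top T_t+S_t^\top S_t)\bigr)$.

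\emph{Item 1.} The first identity yields
\[
\cM_t(S_t)\cM_t(S_t)^\top-D_S^*=\underbrace{G-\gamma_t\bigl(GS_tS_t^\top+S_tS_t^\top G\bigr)}_{\text{signal block}}+\gamma_t^2(GS_t)(GS_t)^\top-\gamma_t\bigl((I-\gamma_tG)WW^\top+WW^\top(I-\gamma_tG)\bigr)+\gamma_t^2W(T_tT_t^\top)W^\top,
\]
using $S_tT_t^\top T_tS_t^\top=WW^\top$ and $S_t(T_t^\top T_t)^2S_t^\top=W(T_tT_t^\top)W^\top$. For the signal block I invoke the contraction $\norm{G-\gamma_t(YG+GY)}\le(1-\gamma_t\sigma_r)\norm{G}$ with $Y=S_tS_t^\top$ (valid since $\gamma_t\norm{Y}<\tfrac12$ and $\lambda_{\min}(Y)\ge\sigma_r/2$), which is the usual descent-lemma estimate. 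The term $\gamma_t^2(GS_t)(GS_t)^\top$ has norm $\le\gamma_t^2\norm{G}^2\norm{S_t}^2\le 4\gamma_t^2\sigma_1\norm{G}^2$, which is $\le\tfrac14\gamma_t\sigma_r\norm{G}$ once one uses $\gamma_t\le 0.01\sigma_r/\sigma_1^2$ (bounding $\norm{G}\le 4\sigma_1$) or, in the refined regime, $\gamma_t\le 0.01/\sigma_1$ with $\norm{G}\le\sigma_r/10$; keeping the $\gamma_t^2$ error in this form (rather than as $\gamma_t^2\norm{Y}^2\norm{G}$) is what prevents a factor of $\kappa$ from leaking into the rate. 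The two $W$-blocks contribute at most $2\gamma_t(1+\gamma_t\norm{G})\norm{W}^2+\gamma_t^2\norm{T_t}^2\norm{W}^2\le 3\gamma_t\norm{W}^2$. Summing proves item~1 in both regimes.

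\emph{Item 2.} Here $\cN_t(T_t)\cN_t(T_t)^\top=T_t(I-\gamma_tB)^2T_t^\top$ with $B:=T_t^\top T_t+S_t^\top S_t\succeq T_t^\top T_t$. Since $B$ and $2I-\gamma_tB$ commute and are PSD for $\gamma_t\norm{B}<2$, $(I-\gamma_tB)^2=I-\gamma_tB(2I-\gamma_tB)\preceq I-\gamma_t(2-\gamma_t\norm{B})T_t^\top T_t$, hence $\cN_t(T_t)\cN_t(T_t)^\top\preceq T_tT_t^\top-\gamma_t(2-\gamma_t\norm{B})(T_tT_t^\top)^2$. Diagonalizing $T_tT_t^\top=\sum_i\lambda_iu_iu_i^\top$ with $\lambda_1=\norm{T_t}^2$ and using that $\lambda\mapsto\lambda-\gamma_t(2-\gamma_t\norm{B})\lambda^2$ is increasing on $[0,\norm{T_t}^2]$ (as $\gamma_t\norm{T_t}^2\le 0.01$), the spectral norm is attained at $\lambda_1$, so $\norm{\cN_t(T_t)\cN_t(T_t)^\top}\le\norm{T_t}^2\bigl(1-\gamma_t(2-\gamma_t\norm{B})\norm{T_t}^2\bigr)$; since $\gamma_t\norm{B}\le\gamma_t(\norm{T_t}^2+\norm{S_t}^2)$ is tiny, this is item~2. (Alternatively, take the top eigenvector $u$ of $\cN_t(T_t)\cN_t(T_t)^\top$, set $a=T_t^\top u$, and use $\norm{\cN_t(T_t)\cN_t(T_t)^\top}=\norm{a}^2-2\gamma_ta^\top Ba+\gamma_t^2\norm{Ba}^2$ with $\norm{Ba}^2\le\norm{B}\,a^\top Ba$ and $a^\top Ba\ge\norm{T_ta}^2$.)

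\emph{Item 3 and the main obstacle.} For item~3 (which assumes $\norm{G}\le\sigma_r/10$), $S_t(I-\gamma_tB)T_t^\top=(I-\gamma_tD_S^*)W-\gamma_tGW-\gamma_tW(T_tT_t^\top)$ (using $S_tS_t^\top=D_S^*+G$), so
\[
\cM_t(S_t)\cN_t(T_t)^\top=(I-\gamma_tG)\bigl[(I-\gamma_tD_S^*)W-\gamma_tGW-\gamma_tW(T_tT_t^\top)\bigr]-\gamma_tS_tT_t^\top T_t\,(I-\gamma_tB)\,T_t^\top .
\]
The leading term $(I-\gamma_tG)(I-\gamma_tD_S^*)W$ has norm $\le(1+\gamma_t\norm{G})(1-\gamma_t\sigma_r)\norm{W}\le(1-0.9\gamma_t\sigma_r)\norm{W}$ (as $D_S^*\succeq\sigma_rI$ and $\gamma_t\sigma_1<1$), while each of the remaining four terms is of the form $\gamma_t\cdot(\text{norm}\le\norm{G}\le\tfrac{\sigma_r}{10}\text{ or }\le\norm{T_t}^2\le\tfrac{\sigma_r}{100})\cdot\norm{W}$ (using $\norm{I-\gamma_tB}\le 1$ and $\norm{S_tT_t^\top T_t}\le\norm{W}\norm{T_t}$), so the remainders sum to $\le\tfrac12\gamma_t\sigma_r\norm{W}$ and item~3 follows. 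The conceptual content is light; the one genuinely delicate point is the signal block of item~1 in the relaxed regime (only $\gamma_t\le 0.01/\sigma_1$, not $\gamma_t\lesssim\sigma_r/\sigma_1^2$): there one must organize the $O(\gamma_t^2)$ error as $\gamma_t^2\norm{GS_t}^2$ and trade one power of $\norm{G}$ against $\sigma_r$ via $\norm{G}\le\sigma_r/10$, and obtain the leading contraction from the sharp one-step descent estimate rather than from the lossy $\norm{(I-\gamma_tY)G(I-\gamma_tY)}+\gamma_t^2\norm{Y}^2\norm{G}$. The rest is routine verification that the remainders fit inside the constants $\tfrac34$, $3$, $\tfrac13$ (and $2$).
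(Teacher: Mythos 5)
Your approach is the same as the paper's: the identities $\cM_t(S_t)=(I-\gamma_tG)S_t-\gamma_tS_tT_t^\top T_t$ and $\cN_t(T_t)=T_t(I-\gamma_tB)$, the symmetric-split contraction $\norm{(\tfrac12 I-\gamma_tY)G+G(\tfrac12 I-\gamma_tY)}\le (1-\gamma_t\sigma_r)\norm{G}$ for the signal block, and absorption of the $\gamma_t^2$ and $W$-remainders into the constants $\tfrac34$, $3$, $\tfrac13$. Items~1 and~3, including your observation that in the relaxed regime one must keep the quadratic error as $\gamma_t^2\norm{GS_t}^2$ and trade one power of $\norm{G}$ against $\sigma_r$ (rather than bound it by $\gamma_t^2\norm{Y}^2\norm{G}$, which loses a $\kappa$), match the paper's own handling of its $Z_3$ term in the two regimes.

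For item~2, you should be aware that the inequality you actually prove is $\norm{\cN_t(T_t)\cN_t(T_t)^\top}\le\norm{T_t}^2\bigl(1-\gamma_t(2-\gamma_t\norm{B})\norm{T_t}^2\bigr)$, which is \emph{strictly weaker} than the stated $\norm{T_t}^2(1-2\gamma_t\norm{T_t}^2)$, and your final sentence ("since $\gamma_t\norm{B}$ is tiny, this is item~2") runs the inequality the wrong way: smallness of $\gamma_t\norm{B}$ makes your bound \emph{close to} the target, not $\le$ it. This is not a defect specific to your argument: the positive $\gamma_t^2$-terms (your $\gamma_t^2T_tB^2T_t^\top$, the paper's $Z_6$ plus the $\gamma_t^2(T_tT_t^\top)^3$ piece that the paper's decomposition silently drops) genuinely prevent the constant $2$ from being attained, and the paper's own proof in fact concludes with $\norm{T_tT_t^\top}(1-\gamma_t\norm{T_tT_t^\top})$ rather than the stated $1-2\gamma_t\norm{T_tT_t^\top}$. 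What the downstream use in the paper's Stage-3 analysis actually needs is any coefficient safely above roughly $1.1$, which your $2-\gamma_t\norm{B}\ge 1.9$ supplies comfortably and which the paper's written $1$ (taken literally) would not. So your bound is, if anything, tighter than the paper's own proof, but you should state it as $1-c\gamma_t\norm{T_t}^2$ for an explicit $c\ge 1.9$ rather than claim the nominal $2$. The parenthetical alternative argument you sketch (top eigenvector of $\cN\cN^\top$, lower-bounding $a^\top Ba$ by $\norm{T_ta}^2$) does not close: $\norm{T_ta}^2=\norm{T_tT_t^\top u}^2$ for $u$ the top eigenvector of $\cN\cN^\top$, and nothing relates that back to $\norm{T_t}^4$; stick with the PSD ordering route, which is the one the paper also uses implicitly.
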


\begin{proposition}\label{prop: bound on MS NT}
	Suppose $\gamma_t \le \frac{0.01}{\sigma_1}$, $\norm{S_t} \le 2\sqrt{\sigma_1}$, $\sigma_r(S_t) \ge \sqrt{\frac{\sigma_r}{2}}$ and $\norm{T_t}\le 0.1\sqrt{\sigma_r}$, we have the following:
	\begin{enumerate}
		\item $\norm{D_S^* - \cM_t(S_t)S_t^\top} \le \left(1-\frac{\gamma\sigma_r}{2}\right)\norm{D_S^*-S_tS_t^\top} + \gamma_t \norm{S_tT_t^\top}^2$.
		\item $\norm{\cM_t(S_t) T_t^\top} \le 2 \norm{S_tT_t^\top}$.
		\item $\norm{\cN_t(T_t) S_t^\top} \le \norm{T_tS_t^\top}$.
		\item $\norm{\cN_t(T_t) T_t^\top} \le  \norm{T_t}^2(1-\gamma_t \norm{T_t}^2) = \norm{T_tT_t^\top}\left(1-\gamma_t \norm{T_tT_t^\top}\right).$
	\end{enumerate}
\end{proposition}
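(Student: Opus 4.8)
The plan is to prove all four estimates in the same elementary fashion: expand the population updates $\cM_t(S_t)$ and $\cN_t(T_t)$, group the resulting matrix products, and then invoke two facts that are immediate from the standing hypotheses. The first fact is that $I-\gamma_t S_tS_t^\top$ is positive definite with
\[
\norm{I-\gamma_t S_tS_t^\top}\;\le\;1-\gamma_t\,\sigma_r(S_t)^2\;\le\;1-\tfrac{\gamma_t\sigma_r}{2},
\]
the lower eigenvalue bound coming from $\sigma_r(S_t)\ge\sqrt{\sigma_r/2}$ and the positivity from $\gamma_t\norm{S_tS_t^\top}\le 4\gamma_t\sigma_1\le 0.04<1$. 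The second fact is that $T_tT_t^\top$ is negligible on the $\sigma_r$-scale, $\norm{T_t}^2\le 0.01\sigma_r$, so every correction term carrying a factor $T_tT_t^\top$ or $\norm{T_t}^2$ also carries a spare $\gamma_t\norm{T_t}^2\le 0.01\gamma_t\sigma_r$. Submultiplicativity of the operator norm then disposes of items $1$--$3$; item $4$ needs in addition the Loewner order.

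For item $1$ I would record the algebraic identity
\[
D_S^*-\cM_t(S_t)S_t^\top\;=\;(D_S^*-S_tS_t^\top)\,(I-\gamma_t S_tS_t^\top)\;+\;\gamma_t\,(S_tT_t^\top)(S_tT_t^\top)^\top,
\]
obtained by expanding $\cM_t(S_t)S_t^\top=S_tS_t^\top-\gamma_t\big((S_tS_t^\top)^2+S_tT_t^\top T_tS_t^\top-D_S^*S_tS_t^\top\big)$ and factoring $-\gamma_t(D_S^*-S_tS_t^\top)S_tS_t^\top$ out of its first and third correction terms; then $\norm{(S_tT_t^\top)(S_tT_t^\top)^\top}=\norm{S_tT_t^\top}^2$ together with the first fact give the claim. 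For item $2$, expand $\cM_t(S_t)T_t^\top=S_tT_t^\top-\gamma_t\big((S_tS_t^\top)(S_tT_t^\top)+(S_tT_t^\top)(T_tT_t^\top)-D_S^*(S_tT_t^\top)\big)$ and bound each of the three correction terms by $\gamma_t\norm{S_tT_t^\top}$ times one of $\norm{S_t}^2,\norm{T_t}^2,\norm{D_S^*}$, each of which is at most $4\sigma_1$; since $\gamma_t\sigma_1\le 0.01$ the total correction is a small multiple of $\norm{S_tT_t^\top}$, well inside the factor $2$. Item $3$ is the mirror image: write $\cN_t(T_t)S_t^\top=(T_tS_t^\top)(I-\gamma_t S_tS_t^\top)-\gamma_t(T_tT_t^\top)(T_tS_t^\top)$, apply the first fact to the leading term and $\gamma_t\norm{T_t}^2\le 0.01\gamma_t\sigma_r$ to the correction, and absorb the latter into the slack of $1-\tfrac{\gamma_t\sigma_r}{2}$.

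Item $4$ is the step I expect to be the only real obstacle. Expanding $\cN_t(T_t)T_t^\top=T_tT_t^\top-\gamma_t(T_tT_t^\top)^2-\gamma_t\,T_tS_t^\top S_tT_t^\top$, a naive triangle inequality would leave a stray $+\gamma_t\norm{S_tT_t^\top}^2$, and since $\norm{S_tT_t^\top}^2$ can be of order $\sigma_1\norm{T_t}^2\gg\norm{T_t}^4$ this would destroy the claimed bound. Instead I would sandwich the (symmetric) matrix $\cN_t(T_t)T_t^\top$ in the Loewner order. Dropping the negative semidefinite term gives $\cN_t(T_t)T_t^\top\preceq T_tT_t^\top(I-\gamma_t T_tT_t^\top)$, and since $x\mapsto x-\gamma_t x^2$ is increasing on $[0,\norm{T_tT_t^\top}]$ (because $\gamma_t\norm{T_tT_t^\top}\le 10^{-4}$) the top eigenvalue of the right-hand side is $\norm{T_tT_t^\top}(1-\gamma_t\norm{T_tT_t^\top})$. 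For the lower bound, the identity $T_tS_t^\top S_tT_t^\top=T_t(S_t^\top S_t)T_t^\top\preceq\norm{S_t}^2\,T_tT_t^\top$ yields $\cN_t(T_t)T_t^\top\succeq T_tT_t^\top\big(I-\gamma_t T_tT_t^\top-\gamma_t\norm{S_t}^2 I\big)\succeq 0$, the last inequality using $\gamma_t(\norm{T_t}^2+\norm{S_t}^2)\le 0.0001+0.04<1$. Hence $\norm{\cN_t(T_t)T_t^\top}=\lambda_{\max}\big(\cN_t(T_t)T_t^\top\big)\le\norm{T_tT_t^\top}(1-\gamma_t\norm{T_tT_t^\top})=\norm{T_t}^2(1-\gamma_t\norm{T_t}^2)$, which is what is asserted.
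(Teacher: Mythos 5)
Your proof is correct and follows essentially the same route as the paper's: the same algebraic decompositions of $\cM_t(S_t)S_t^\top$, $\cM_t(S_t)T_t^\top$, $\cN_t(T_t)S_t^\top$, $\cN_t(T_t)T_t^\top$, submultiplicativity plus the eigenvalue bound on $I-\gamma_t S_tS_t^\top$, and the Loewner-order trick for item~4. Two small points of difference are worth noting. In item~3 you split asymmetrically as $(T_tS_t^\top)(I-\gamma_t S_tS_t^\top)-\gamma_t(T_tT_t^\top)(T_tS_t^\top)$ and absorb the correction into the slack of $1-\tfrac{\gamma_t\sigma_r}{2}$, whereas the paper uses a symmetric $\tfrac12$-split $\bigl(\tfrac12 I-\gamma_t T_tT_t^\top\bigr)T_tS_t^\top+T_tS_t^\top\bigl(\tfrac12 I-\gamma_t S_tS_t^\top\bigr)$ and bounds each half by $\tfrac12\norm{T_tS_t^\top}$; both are fine. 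In item~4 your treatment is actually more careful than the paper's: passing from the Loewner bound $\cN_t(T_t)T_t^\top\preceq T_tT_t^\top(I-\gamma_t T_tT_t^\top)$ to an operator-norm bound requires control of the smallest eigenvalue as well, and your observation that $\cN_t(T_t)T_t^\top=T_t(I-\gamma_t T_t^\top T_t-\gamma_t S_t^\top S_t)T_t^\top\succeq 0$ supplies exactly this, a step the paper leaves implicit. Your comment that a naive triangle inequality would leave a stray $\gamma_t\norm{S_tT_t^\top}^2$ term of the wrong order is also the right diagnosis for why the Loewner argument is essential here.

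One minor cosmetic discrepancy, inherited from the paper itself rather than introduced by you: your (and the paper's own) proof of item~1 yields the factor $1-\tfrac{\gamma_t\sigma_r}{2}$, while the proposition as stated displays $1-\tfrac{\gamma_t}{2\kappa}=1-\tfrac{\gamma_t\sigma_r}{2\sigma_1}$. These coincide only when $\sigma_1=1$; the version your argument establishes is the one actually used downstream.
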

\subsection{Analysis of Stage 1}
The following proposition characterize the evolution of $\sigma_r(S_t)$. In stage one, we start with a initialization satisfies conditions in Proposition~\ref{prop: initialization quality}.
\begin{proposition}\label{prop: stage one evolution}
	Suppose there is some constant $c_\gamma>0$ such that the parameters satisfy $\frac{c_\gamma \sigma_r}{\sigma_1^2}\le \gamma_t \le \min \{\frac{1}{100\sigma_1}, \frac{\sigma_r}{100 \sigma_1^2}\}= \frac{\sigma_r}{100 \sigma_1^2}$, $ (50\sqrt{k}\delta)^{\frac{1}{3}} \le \frac{c_\rho\rho}{2\sqrt{\sigma_1}} =\frac{\sigma_r}{400\sigma_1}$,  we have 
	\begin{equation}\label{eqn: evolution in stage one}
		\sigma_r(S_t) \ge \min\left\{(1+\frac{\sigma_r^2 c_\gamma}{6\sigma_1^2})^t \sigma_r(S_0) , \sqrt{\frac{\sigma_r}{2}}\right\}
	\end{equation}
	for all $t \ge 0$.
	In particular, we have 
	\begin{align}
		\sigma_r(S_{\cT_1'}+t) &\ge \sqrt{\frac{\sigma_r}{2}}.
	\end{align}
	for all $t \ge 0$.
\end{proposition}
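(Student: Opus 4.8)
The plan is to prove the displayed bound by induction on $t$, carrying along the uniform bounds $\norm{S_t}\le 2\sqrt{\sigma_1}$ and $\norm{T_t}\le c_\rho\rho\le 0.1\sqrt{\sigma_r}$ supplied by \Cref{prop: uniform upper bound} (whose hypotheses are exactly those assumed here). At step $t$ I would plug the exact update $S_{t+1}=\cM_t(S_t)+\gamma_t U^\top\Delta_t F_t$ of \Cref{prop: updates of S,T} into Weyl's inequality for singular values, reducing the whole problem to a lower bound on $\sigma_r(\cM_t(S_t))$ minus the perturbation $\gamma_t\norm{U^\top\Delta_t F_t}$. The perturbation is controlled by the RDPP estimate \eqref{eqn: RIP bound} together with the uniform bounds: since $U^\top\Delta_t$ has rank at most $r$ one gets $\gamma_t\norm{U^\top\Delta_t F_t}\lesssim \gamma_t\sqrt{k}\,\delta\,\sigma_1^{3/2}$, and the hypothesis $(50\sqrt{k}\delta)^{1/3}\le\frac{\sigma_r}{400\sigma_1}$ makes this negligible next to the geometric gain (using also $\sigma_r(S_t)\ge\rho$ throughout Stage~1, which follows inductively together with \Cref{prop: initialization quality}).

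The heart of the argument is the lower bound on $\sigma_r(\cM_t(S_t))$. Write $\cM_t(S_t)=MS_t-\gamma_t S_t(T_t^\top T_t)$ with $M=I+\gamma_t H$, $H=D_S^*-S_tS_t^\top$; the multiplier $M$ is PSD because $\norm{S_t}^2\le 4\sigma_1$ and $\gamma_t\sigma_1\le 0.01$, and the $T_t^\top T_t$ term has operator norm at most $2\gamma_t\sqrt{\sigma_1}(c_\rho\rho)^2$, which the initialization relation $4(c_\rho\rho)^2\le 0.001\,\rho\,\sigma_r/\sqrt{\sigma_1}$ keeps small compared with $\gamma_t\sigma_r\sigma_r(S_t)$. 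It then suffices to analyze $\cM_t(S_t)\cM_t(S_t)^\top\succeq MS_tS_t^\top M-(\text{small})=(I+\gamma_t H)S_tS_t^\top(I+\gamma_t H)-(\text{small})$, and dropping the PSD term $\gamma_t^2 HS_tS_t^\top H$ gives $MS_tS_t^\top M\succeq S_tS_t^\top+\gamma_t\big(HS_tS_t^\top+S_tS_t^\top H\big)$. Evaluating on the (possibly enlarged, to absorb near-degeneracies) bottom eigenspace of $S_tS_t^\top$ and using that there $v^\top Hv=v^\top D_S^* v-\sigma_r(S_t)^2\ge\sigma_r-\sigma_r(S_t)^2>\tfrac12\sigma_r$ in the regime $\sigma_r(S_t)^2<\sigma_r/2$, one obtains $v^\top MS_tS_t^\top Mv\ge(1+\gamma_t\sigma_r)\sigma_r(S_t)^2$ there; a short perturbation argument, comparing with the larger eigenvalues of $S_tS_t^\top$ (all preserved up to the factor $\lambda_{\min}(M)^2\ge 0.92$), upgrades this to $\lambda_{\min}\big(\cM_t(S_t)\cM_t(S_t)^\top\big)\ge(1+\tfrac12\gamma_t\sigma_r)\sigma_r(S_t)^2$, hence $\sigma_r(\cM_t(S_t))\ge(1+\tfrac16\gamma_t\sigma_r)\sigma_r(S_t)$ after subtracting the two small error sources. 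Since $\gamma_t\ge c_\gamma\sigma_r/\sigma_1^2$ this is $\ge(1+\tfrac{c_\gamma\sigma_r^2}{6\sigma_1^2})\sigma_r(S_t)$, which closes the induction whenever $\sigma_r(S_t)^2<\sigma_r/2$.

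When $\sigma_r(S_t)\ge\sqrt{\sigma_r/2}$ the same expansion suffices with less care: for $\sigma_r(S_t)^2\in[\sigma_r/2,\sigma_r)$ the term $\gamma_t(HS_tS_t^\top+S_tS_t^\top H)$ still contributes a non-negative push on the bottom eigenvalue, so $\sigma_r(S_{t+1})^2\ge\sigma_r(S_t)^2-(\text{small})\ge\sigma_r/2$; for $\sigma_r(S_t)^2\ge\sigma_r$ the multiplier $M\succeq(1-O(\gamma_t\sigma_1))I$ costs at most an $O(\gamma_t\sigma_1)\le 0.04$ fraction per step, which together with $\gamma_t\sigma_1\le 0.01$ and the cushion $\sigma_r(S_t)^2\ge\sigma_r$ again yields $\sigma_r(S_{t+1})^2\ge\sigma_r/2$. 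This establishes the inductive bound for all $t$; finally, by the definition of $\cT_1'$ we have $(1+\tfrac{c_\gamma\sigma_r^2}{6\sigma_1^2})^{\cT_1'}\sigma_r(S_0)\ge\sqrt{\sigma_r/2}$, so for $t\ge\cT_1'$ the minimum equals $\sqrt{\sigma_r/2}$, giving $\sigma_r(S_{\cT_1'+t})\ge\sqrt{\sigma_r/2}$ for all $t\ge 0$.

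I expect the main obstacle to be the matrix-analytic lower bound on $\lambda_{\min}\big(\cM_t(S_t)\cM_t(S_t)^\top\big)$: because $D_S^*$ and $S_tS_t^\top$ do not commute, the naive decoupling $\sigma_r(MS_t)\ge\lambda_{\min}(M)\sigma_r(S_t)$ is useless (here $\lambda_{\min}(M)$ can sit well below $1$), so one is forced to exploit the PSD structure $HS_tS_t^\top H\succeq 0$ and to localize on the bottom eigenspace of $S_tS_t^\top$ while carefully handling eigenvalues of $S_tS_t^\top$ lying just above the smallest one. A secondary nuisance is making the three competing quantities—the geometric gain $\gamma_t\sigma_r\sigma_r(S_t)$, the spurious-direction contamination $\gamma_t\sqrt{\sigma_1}(c_\rho\rho)^2$, and the RDPP error $\gamma_t\sqrt{k}\,\delta\,\sigma_1^{3/2}$—line up quantitatively and uniformly in $t$, which is precisely what the stated constraints on $\gamma_t$, on $\delta$, and on the initialization parameters are engineered to guarantee.
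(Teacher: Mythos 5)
Your high-level plan (induction, uniform bounds from \Cref{prop: uniform upper bound}, RDPP error control via \eqref{eqn: RIP bound}, and a two/three-case analysis on $\sigma_r(S_t)$) matches the paper's, and your estimates for the $T_t$- and $\Delta_t$-errors do line up with the stated hypotheses. The gap is in the heart of the argument, the lower bound on $\sigma_r(\cM_t(S_t))$, and it is not a repairable imprecision: the route you propose cannot give the needed inequality in the regime the proposition must cover.

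Concretely, you reduce to lower bounding $\lambda_{\min}(MQM)$ with $M = I+\gamma_t H$, $H = D_S^* - Q$, $Q = S_tS_t^\top$, and you propose to drop the PSD term $\gamma_t^2 HQH$ and bound $\lambda_{\min}\bigl(Q+\gamma_t(HQ+QH)\bigr)$ by evaluating on the bottom eigenspace of $Q$. That intermediate matrix equals $Q - 2\gamma_t Q^2 + \gamma_t(D_S^*Q + QD_S^*)$, and in the eigenbasis of $Q$ the off-diagonal block coupling the bottom eigenvector of $Q$ to the rest has entries $\gamma_t (D_S^*)_{ij}(\mu_i+\mu_j)$, which can be of order $\gamma_t\sigma_1\cdot\|Q\| \sim \gamma_t\sigma_1^2 \lesssim \sigma_r/100$. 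This is not small relative to the diagonal gain $\gamma_t(\sigma_r-\mu_r)\mu_r$, because at initialization $\mu_r = \rho^2 \approx \epsilon\sigma_r/4$. In fact $Q+\gamma_t(HQ+QH)$ need not even be positive semidefinite here, so the Loewner lower bound is vacuous; and a Gershgorin/Davis--Kahan perturbation against the larger eigenvalues of $Q$ gives a loss of order $\gamma_t^2 \mu_1 (D_S^*)_{r1}^2 / \mu_r \gtrsim \gamma_t\sigma_r\cdot\kappa\cdot(\text{const})$, which overwhelms the gain when $\kappa$ is large. The same obstruction persists if you instead keep the $\gamma_t^2 HQH$ term or compare $\sigma_{\min}(Q^{1/2}M)$ to $\sigma_{\min}(Q^{1/2}(I-\gamma_t Q))$: the perturbation $\gamma_t Q^{1/2}D_S^*$ has norm of order $\gamma_t\sigma_1^{3/2}$, which is not small relative to $\sqrt{\mu_r}\approx\rho$. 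You correctly flag this non-commutativity issue as the main obstacle, but the proposed localization-plus-perturbation fix does not close it.

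The paper avoids the problem by a different algebraic maneuver that never asks for $\lambda_{\min}$ of the awkward product $MQM$. It writes the update in two ways: \eqref{eqn:inverse} $S_{t+1}=H_tS_t-\gamma_t E_t$ and \eqref{eqn: prodsigmar} $S_{t+1}=(I+\gamma_t D_S^*)S_t(I-\gamma_t S_t^\top S_t)+\gamma_t^2 D_S^*S_tS_t^\top S_t-\gamma_t E_t$, solves the first for $S_t=H_t^{-1}(S_{t+1}+\gamma_t E_t)$, and substitutes into the cross term of the second, arriving at $\bigl(I-\gamma_t^2 D_S^*S_tS_t^\top H_t^{-1}\bigr)S_{t+1}=(I+\gamma_t D_S^*)S_t(I-\gamma_t S_t^\top S_t)+Z_2$. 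The whole point is that the troublesome cross term $\gamma_t^2 D_S^*S_tS_t^\top S_t$ (whose operator norm, $\lesssim \gamma_t^2\sigma_1^{5/2}$, is \emph{not} negligible compared with the additive gain $\gamma_t\sigma_r\rho$ when $\kappa$ is large) is converted into a \emph{multiplicative} perturbation of $S_{t+1}$ with norm $\lesssim\gamma_t^2\sigma_1^2\lesssim\gamma_t\sigma_r/100$, which is tiny in the right units. The remaining main term is the genuine sandwich $(I+\gamma_t D_S^*)S_t(I-\gamma_t S_t^\top S_t)$, whose left factor has \emph{all} eigenvalues at least $1+\gamma_t\sigma_r$ (unlike your $M$, whose smallest eigenvalue dips to about $1-4\gamma_t\sigma_1$), and the right factor is controlled exactly by Lemma~\ref{lem: S(I-SS) svd}; combined with Lemma~\ref{lem: sigmar ineq} this yields $\sigma_r(Z_1)\ge(1+\gamma_t\sigma_r)\sigma_r(S_t)(1-\gamma_t\sigma_r(S_t)^2)$ without ever confronting the non-commutativity of $D_S^*$ and $S_tS_t^\top$. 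This implicit-equation/left-multiplier trick is the missing idea.
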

\noindent Next, we show that $\norm{S_tS_t^\top - D_S^*}$ decays geometrically to $ \frac{100(c_\rho \rho)^2\sigma_1}{\sigma_r}$.
\begin{proposition}\label{prop: stagetwo phaseone}
	Suppose there is some constant $c_\gamma>0$ such that the parameters satisfy $\frac{c_\gamma \sigma_r}{\sigma_1^2}\le \gamma_t \le \min \{\frac{1}{100\sigma_1}, \frac{\sigma_r}{100 \sigma_1^2}\}= \frac{\sigma_r}{100 \sigma_1^2}$, $ (50\sqrt{k}\delta)^{\frac{1}{3}} \le \frac{c_\rho\rho}{2\sqrt{\sigma_1}}$,  we have for any $t \ge 0$, we have
	\begin{equation}\label{eqn: stagetwo phseone}
		\norm{S_{\cT_1+t}S_{\cT_1+t}^\top - D_S^*} \le \max\{5\sigma_1 (1-\frac{c_\gamma\sigma_r^2}{2\sigma_1^2})^t ,  \frac{100(c_\rho \rho)^2\sigma_1}{\sigma_r}\}.
	\end{equation}
	In particular, for $\cT_1= \cT_1' + \ceil{\frac{\log(\frac{20  (c_\rho \rho)^2}{\sigma_r})}{\log\left(1- \frac{c_\gamma \sigma_r^2}{2\sigma_1^2}\right)}}$, we have 
	\begin{equation}
		\norm{S_t S_t^\top - D_S^*} \le  \frac{100(c_\rho \rho)^2\sigma_1}{\sigma_r} \le \frac{\sigma_r}{100}, \qquad \forall t \ge \cT_1 .
	\end{equation}
\end{proposition}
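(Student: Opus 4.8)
The plan is to reduce the claim to a one-step linear recursion for the signal error $a_t := \norm{S_tS_t^\top - D_S^*}$, valid once $\sigma_r(S_t)$ has grown past $\sqrt{\sigma_r/2}$, and then to unroll it. First I would invoke Proposition~\ref{prop: stage one evolution} to get $\sigma_r(S_t)\ge\sqrt{\sigma_r/2}$ for all $t\ge\cT_1'$, and Proposition~\ref{prop: uniform upper bound} to get $\norm{S_t}\le 2\sqrt{\sigma_1}$ and $\norm{T_t}\le c_\rho\rho$ for all $t\ge 0$. These yield $a_{\cT_1'}\le\norm{S_{\cT_1'}}^2+\norm{D_S^*}\le 5\sigma_1$, together with the uniform estimates $\norm{S_tT_t^\top}\le 2\sqrt{\sigma_1}\,c_\rho\rho$ and $\norm{T_tT_t^\top}\le(c_\rho\rho)^2$. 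Since $\gamma_t\le\frac{\sigma_r}{100\sigma_1^2}=\min\{\frac{0.01}{\sigma_1},\frac{0.01\sigma_r}{\sigma_1^2}\}$ and $\norm{T_t}\le 0.1\sqrt{\sigma_r}$, the hypotheses of Proposition~\ref{prop: bound on MM NN}(1) hold for every $t\ge\cT_1'$.

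Next I would expand the update $S_{t+1}=\cM_t(S_t)+\gamma_t U^\top\Delta_tF_t$ from \eqref{eqn:algorithm update} and Proposition~\ref{prop: updates of S,T}:
\[
S_{t+1}S_{t+1}^\top - D_S^* = \bigl(\cM_t(S_t)\cM_t(S_t)^\top - D_S^*\bigr) + \gamma_t\bigl(\cM_t(S_t)F_t^\top\Delta_t^\top U + U^\top\Delta_tF_t\cM_t(S_t)^\top\bigr) + \gamma_t^2\,U^\top\Delta_tF_tF_t^\top\Delta_t^\top U.
\]
Proposition~\ref{prop: bound on MM NN}(1) bounds the population part by $(1-\tfrac{3\gamma_t\sigma_r}{4})a_t + 3\gamma_t\norm{S_tT_t^\top}^2\le(1-\tfrac{3\gamma_t\sigma_r}{4})a_t + 12\gamma_t\sigma_1(c_\rho\rho)^2$. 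For the finite-sample terms I would use $\norm{F_t}\le 3\sqrt{\sigma_1}$ and $\norm{\cM_t(S_t)}\le 3\sqrt{\sigma_1}$ (from the uniform bounds and $\gamma_t\le 0.01/\sigma_1$), the RDPP estimate $\norm{\Delta_t}\le\delta\sqrt{k+r}\,\norm{F_tF_t^\top-\truX}$ from \eqref{eqn: RIP bound}, and the block decomposition $F_tF_t^\top-\truX = U(S_tS_t^\top-D_S^*)U^\top + US_tT_t^\top V^\top + VT_tS_t^\top U^\top + VT_tT_t^\top V^\top$, which with the uniform $T_t$ bounds gives $\norm{F_tF_t^\top-\truX}\le a_t+4\sqrt{\sigma_1}\,c_\rho\rho+(c_\rho\rho)^2\le a_t+\tfrac{\sigma_r}{40}$. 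Because $\delta\le\frac{c}{\kappa^3\sqrt k}$, one has $\delta\sqrt{k+r}\lesssim\sigma_r^3/\sigma_1^3$, so the two cross terms are bounded by a constant multiple of $\gamma_t\sigma_1(\sigma_r^3/\sigma_1^3)(a_t+\sigma_r)$, and the quadratic term is even smaller (it carries an extra factor $\gamma_t\le 0.01/\sigma_1$). Collecting, the coefficient of $a_t$ is $1-\tfrac{3\gamma_t\sigma_r}{4}+O(\gamma_t\sigma_r/\kappa^2)\le 1-\tfrac{\gamma_t\sigma_r}{2}\le 1-\tfrac{c_\gamma\sigma_r^2}{2\sigma_1^2}$ (using $\gamma_t\ge c_\gamma\sigma_r/\sigma_1^2$), while the additive remainder $b$ is a constant multiple of $\gamma_t\sigma_1(c_\rho\rho)^2$; since $\gamma_t\le\frac{\sigma_r}{100\sigma_1^2}$ and $c_\rho\rho=\frac{\sigma_r}{200\sqrt{\sigma_1}}$, one checks $b\le\tfrac{c_\gamma\sigma_r^2}{2\sigma_1^2}\cdot\tfrac{100(c_\rho\rho)^2\sigma_1}{\sigma_r}$ when $c_\gamma$ lies in the admissible range. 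This gives the recursion $a_{t+1}\le(1-\tfrac{c_\gamma\sigma_r^2}{2\sigma_1^2})a_t+b$ for all $t\ge\cT_1'$.

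Finally I would unroll: starting from $a_{\cT_1'}\le 5\sigma_1$, the recursion yields $a_{\cT_1'+s}\le 5\sigma_1(1-\tfrac{c_\gamma\sigma_r^2}{2\sigma_1^2})^s+\tfrac{100(c_\rho\rho)^2\sigma_1}{\sigma_r}$; since $b$ is at most $\tfrac{c_\gamma\sigma_r^2}{2\sigma_1^2}$ times the floor $\tfrac{100(c_\rho\rho)^2\sigma_1}{\sigma_r}$, once $a_t$ reaches that floor it stays below it, so this rearranges into $a_{\cT_1+s}\le\max\{5\sigma_1(1-\tfrac{c_\gamma\sigma_r^2}{2\sigma_1^2})^s,\ \tfrac{100(c_\rho\rho)^2\sigma_1}{\sigma_r}\}$ at index $\cT_1+s\ge\cT_1'+s$. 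For the ``in particular'' part, the definition $\cT_1-\cT_1'=\lceil\log(\tfrac{20(c_\rho\rho)^2}{\sigma_r})/\log(1-\tfrac{c_\gamma\sigma_r^2}{2\sigma_1^2})\rceil$ makes $5\sigma_1(1-\tfrac{c_\gamma\sigma_r^2}{2\sigma_1^2})^{\cT_1-\cT_1'}\le\tfrac{100(c_\rho\rho)^2\sigma_1}{\sigma_r}$, hence $a_t\le\tfrac{100(c_\rho\rho)^2\sigma_1}{\sigma_r}$ for all $t\ge\cT_1$; substituting $c_\rho\rho=\frac{\sigma_r}{200\sqrt{\sigma_1}}$ gives $\tfrac{100(c_\rho\rho)^2\sigma_1}{\sigma_r}=\tfrac{\sigma_r}{400}\le\tfrac{\sigma_r}{100}$. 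I expect the main obstacle to be the control of the finite-sample perturbation $\gamma_t U^\top\Delta_tF_t$: the RDPP error scales with $\norm{F_tF_t^\top-\truX}$ rather than with $a_t$ alone, so one needs the uniform smallness of $\norm{S_tT_t^\top}$ and $\norm{T_tT_t^\top}$ (Proposition~\ref{prop: uniform upper bound}) to close the loop $\norm{F_tF_t^\top-\truX}\le a_t+O(\sigma_r)$, and one needs the precise scaling $\delta\lesssim 1/(\kappa^3\sqrt k)$ so that the extra $\sqrt{k+r}$ factor in \eqref{eqn: RIP bound} is absorbed into the slack between $\tfrac{3\gamma_t\sigma_r}{4}$ and $\tfrac{\gamma_t\sigma_r}{2}$, leaving the contraction essentially intact.
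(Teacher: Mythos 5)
Your proposal follows the paper's proof closely: same induction over $t\ge\cT_1'$ with base case $a_{\cT_1'}\le 5\sigma_1$, same decomposition $S_{t+1}S_{t+1}^\top - D_S^* = (\cM_t\cM_t^\top - D_S^*) + \text{linear-in-}\Delta_t + \text{quadratic-in-}\Delta_t$, same invocation of Proposition~\ref{prop: bound on MM NN}(1) for the population part, and same RDPP control via \eqref{eqn: RIP bound} for the perturbation. The overall plan is correct.

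Two places where your bookkeeping is weaker than the paper's, both fixable. First, you bound $\norm{F_tF_t^\top-\truX}\le a_t + O(\sigma_r)$ and then argue the $a_t$-contribution is absorbed into the slack between $\tfrac{3\gamma_t\sigma_r}{4}$ and $\tfrac{\gamma_t\sigma_r}{2}$ using $\delta\sqrt{k+r}\lesssim\kappa^{-3}$. This works, but the paper avoids the dependence-on-$a_t$ entirely by using the blunt uniform bound $\norm{F_tF_t^\top-\truX}\le\norm{F_t}^2+\norm{\truX}\le 10\sigma_1$, which via $(50\sqrt{k}\delta)^{1/3}\le c_\rho\rho/(2\sqrt{\sigma_1})$ gives $\norm{Z_2}\le\gamma_t(c_\rho\rho)^3\sqrt{\sigma_1}\le 0.1\gamma_t(c_\rho\rho)^2\sigma_1$ directly; so the perturbation folds entirely into the additive remainder rather than into the contraction coefficient. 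Your route buys nothing here and is slightly more to check.

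Second, and more substantively: your unrolling step claims ``$b\le\tfrac{c_\gamma\sigma_r^2}{2\sigma_1^2}\cdot\tfrac{100(c_\rho\rho)^2\sigma_1}{\sigma_r}$ when $c_\gamma$ lies in the admissible range.'' With $b\asymp\gamma_t\sigma_1(c_\rho\rho)^2$, that inequality reduces to $\gamma_t\lesssim c_\gamma\sigma_r/\sigma_1^2$, i.e.\ to $c_\gamma$ being bounded \emph{below} by a constant — which the hypotheses do not grant ($c_\gamma$ can be arbitrarily small). The paper sidesteps this by never converting the contraction factor $(1-\tfrac{\gamma_t\sigma_r}{2})$ into $(1-\tfrac{c_\gamma\sigma_r^2}{2\sigma_1^2})$ until \emph{after} the additive term has been absorbed: it splits into the cases $a_t\le\tfrac{52(c_\rho\rho)^2\sigma_1}{\sigma_r}$ (where $a_{t+1}$ stays under the floor, using only $\gamma_t\le 0.01/\sigma_1$) and $a_t>\tfrac{52(c_\rho\rho)^2\sigma_1}{\sigma_r}$ (where $13\gamma_t(c_\rho\rho)^2\sigma_1\le\tfrac{\gamma_t\sigma_r}{4}a_t$, the $\gamma_t$'s cancel, and the contraction is genuinely $(1-\tfrac{\gamma_t\sigma_r}{2})$). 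Only then is $\gamma_t\ge c_\gamma\sigma_r/\sigma_1^2$ invoked. You should restructure your unrolling along these lines so that the argument does not implicitly require a lower bound on $c_\gamma$.
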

\noindent When $t=\cT_1$,  by Proposition~\ref{prop: stagetwo phaseone},
\begin{equation}
	\norm{D_S^* - S_{\cT_1}S_{\cT_1}^\top} \le \frac{\sigma_r}{100}.
\end{equation} 
By Proposition~\ref{prop: uniform upper bound} and our assumption that $c_\rho \rho \le \frac{\sigma_r}{200\sqrt{\sigma_1}}$, 
\begin{equation}
	\norm{S_{\cT_1} T_{\cT_1}} \le 2(c_\rho \rho)\sqrt{\sigma_1} \le \frac{\sigma_r}{100}.
\end{equation}
Combining, we obtain
\begin{equation}
	D_{\cT_1} \le \frac{\sigma_r}{100}.
\end{equation}

\subsection{Analysis of Stage 2}
Recall
\begin{equation}
	D_t = \max\{\norm{S_t S_t^\top- D_S^*}, \norm{S_t T_t^\top}\}.
\end{equation} 
We show that $D_t$ decreases to $10\delta \sqrt{k+r}\sigma_1$ geometrically after $\cT_1$.
\begin{proposition}\label{prop: stagetwo phasetwo}
	Suppose there is some constant $c_\gamma>0$ such that the parameters satisfy $\frac{c_\gamma\sigma_r}{\sigma_1^2}\le \gamma_t \le \frac{0.01}{\sigma_1}$, $ \delta \sqrt{k+r} \le \frac{0.001\sigma_r}{\sigma_1}$. Also, we suppose $\norm{T_t}\le 0.1\sqrt{\sigma_r}$ for all $t$.  If for some $\cT_1>0$, 
	\begin{equation}
		D_{\cT_1} \le \max\{\frac{\sigma_r}{100}, 10 \delta \sqrt{k+r}\sigma_1 \}=\frac{\sigma_r}{100},
	\end{equation} then for any $t\ge 0$, we have
	\begin{equation}\label{eqn: stagetwo phasetwo}
		D_{\cT_1 +t} \le \max\left\{\left(1-\frac{c_\gamma\sigma_r^2}{6\sigma_1^2}\right)^t\cdot \frac{\sigma_r}{100} ,10\delta \sqrt{k+r} \sigma_1\right\}.
	\end{equation}
	In particular, for $\cT_2= \cT_1 + \ceil{\frac{\log\left( \frac{1000\delta \sqrt{k+r}\sigma_1}{\sigma_r}\right)}{\log\left(1-\frac{c_\gamma\sigma_r^2}{6\sigma_1^2}\right)}} $, we have 
	\begin{align}
		\norm{S_t S_t^\top - D_S^*} &\le 10\delta \sqrt{k+r}\sigma_1,\\ 
		\norm{S_t T_t^\top} &\le 10\delta \sqrt{k+r}\sigma_1, \qquad \forall t \ge \cT_2.
	\end{align}
\end{proposition}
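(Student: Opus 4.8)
The plan is to prove the geometric-decay bound \eqref{eqn: stagetwo phasetwo} by induction on $t$ and then deduce the ``in particular'' statement by specializing to $t=\cT_2-\cT_1$. Abbreviate $\bar\delta:=\delta\sqrt{k+r}$, $B:=10\bar\delta\sigma_1$, $c:=\frac{c_\gamma\sigma_r^2}{6\sigma_1^2}$; the hypothesis $\bar\delta\le\frac{0.001\sigma_r}{\sigma_1}$ gives $B\le\frac{\sigma_r}{100}$, so the $\max$ in $D_{\cT_1}\le\max\{\frac{\sigma_r}{100},B\}$ equals $\frac{\sigma_r}{100}$ and the base case $t=0$ is immediate. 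For the inductive step fix $n:=\cT_1+t$ and assume $D_n\le\max\{(1-c)^t\frac{\sigma_r}{100},B\}$, so in particular $D_n\le\frac{\sigma_r}{100}$. This alone supplies the hypotheses of \Cref{prop: bound on MM NN} at iteration $n$: $\norm{S_nS_n^\top-D_S^*}\le D_n\le\frac{\sigma_r}{100}\le\frac{\sigma_r}{10}$ (so the relaxed ``$\gamma_t\le 0.01/\sigma_1$'' form applies), $\norm{S_n}^2\le\sigma_1+\frac{\sigma_r}{100}\le(2\sqrt{\sigma_1})^2$, and $\sigma_r(S_n)^2=\lambda_{\min}(S_nS_n^\top)\ge\sigma_r-\frac{\sigma_r}{100}\ge\frac{\sigma_r}{2}$ by Weyl's inequality; together with the standing assumption $\norm{T_n}\le 0.1\sqrt{\sigma_r}$ and $\gamma_n\le 0.01/\sigma_1$ all preconditions hold. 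I also record $\norm{F_nF_n^\top-\truX}\le\norm{S_nS_n^\top-D_S^*}+2\norm{S_nT_n^\top}+\norm{T_n}^2\le 3D_n+0.01\sigma_r$ and hence $\norm{F_n}^2=\norm{F_nF_n^\top}\le\sigma_1+\norm{F_nF_n^\top-\truX}\le 2\sigma_1$.

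The heart of the argument is a one-step recursion for $D_{n+1}$. By \Cref{prop: updates of S,T} and \eqref{eqn:algorithm update}, $S_{n+1}=\cM_n(S_n)+\gamma_nU^\top\Delta_nF_n$ and $T_{n+1}=\cN_n(T_n)+\gamma_nV^\top\Delta_nF_n$; put $w_n:=\gamma_n\norm{\Delta_n}\norm{F_n}$, which by the RDPP estimate \eqref{eqn: RIP bound} and the bounds above satisfies $w_n\le\gamma_n\bar\delta(3D_n+0.01\sigma_r)\sqrt{2\sigma_1}$. Expanding $S_{n+1}S_{n+1}^\top-D_S^*$ and $S_{n+1}T_{n+1}^\top$ into a ``population'' term plus cross terms in $\gamma_nU^\top\Delta_nF_n,\gamma_nV^\top\Delta_nF_n$, I control the population terms via \Cref{prop: bound on MM NN}: item~1 gives $\norm{\cM_n(S_n)\cM_n(S_n)^\top-D_S^*}\le(1-\tfrac{3\gamma_n\sigma_r}{4})D_n+3\gamma_n\norm{S_nT_n^\top}^2$ with the last term $\le\tfrac{3\gamma_n\sigma_r}{100}D_n$ and, as a byproduct, $\norm{\cM_n(S_n)}\le 2\sqrt{\sigma_1}$; item~2 gives $\norm{\cN_n(T_n)}\le\norm{T_n}$; and item~3 gives $\norm{\cM_n(S_n)\cN_n(T_n)^\top}\le(1-\tfrac{\gamma_n\sigma_r}{3})D_n$. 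Every cross term is then at most an absolute constant times $\sqrt{\sigma_1}\,w_n=O(\gamma_n\sigma_1\bar\delta D_n)+O(\gamma_n\sigma_1\bar\delta\sigma_r)$, and $w_n^2$ is even smaller. Using $\sigma_1\bar\delta\le 0.001\sigma_r$ throughout, the first group costs only a tiny fraction of $\gamma_n\sigma_r D_n$ and is absorbed into the contraction factors, while the second group and $w_n^2$ collapse into a single additive floor. This yields
\[
D_{n+1}\ \le\ (1-a\gamma_n\sigma_r)D_n\ +\ b\,\gamma_n\sigma_1\bar\delta\sigma_r
\]
for absolute constants $a\ge\tfrac14$ and $b\le\tfrac1{10}$ (the smaller $a$ coming from the $\norm{S_nT_n^\top}$ block, whose population contraction is only $1-\tfrac{\gamma_n\sigma_r}{3}$).

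To close the induction I split on the size of $D_n$. If $D_n\le B$, then $b\gamma_n\sigma_1\bar\delta\sigma_r=\tfrac{b}{10}\gamma_n\sigma_rB<a\gamma_n\sigma_rB$ (as $b<10a$), so the recursion gives $D_{n+1}\le B\le\max\{(1-c)^{t+1}\tfrac{\sigma_r}{100},B\}$. If $D_n>B$, then the induction hypothesis forces $D_n\le(1-c)^t\tfrac{\sigma_r}{100}$ and $\sigma_1\bar\delta<\tfrac{D_n}{10}$, hence $b\gamma_n\sigma_1\bar\delta\sigma_r<\tfrac{b}{10}\gamma_n\sigma_rD_n$ and $D_{n+1}\le\big(1-(a-\tfrac{b}{10})\gamma_n\sigma_r\big)D_n$; since $\gamma_n\ge\tfrac{c_\gamma\sigma_r}{\sigma_1^2}$ and $a-\tfrac{b}{10}\ge\tfrac16$, this is $\le(1-c)D_n\le(1-c)^{t+1}\tfrac{\sigma_r}{100}$. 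Either way $D_{n+1}\le\max\{(1-c)^{t+1}\tfrac{\sigma_r}{100},B\}$, which is \eqref{eqn: stagetwo phasetwo}. For the final statement, $t^\star:=\cT_2-\cT_1=\ceil{\log(\tfrac{1000\bar\delta\sigma_1}{\sigma_r})/\log(1-c)}$ is a nonnegative integer (the numerator is $\le 0$ precisely because $\bar\delta\le\tfrac{0.001\sigma_r}{\sigma_1}$) with $(1-c)^{t^\star}\le\tfrac{1000\bar\delta\sigma_1}{\sigma_r}$, so $(1-c)^t\tfrac{\sigma_r}{100}\le B$ for all $t\ge t^\star$; thus the right side of \eqref{eqn: stagetwo phasetwo} equals $B=10\delta\sqrt{k+r}\sigma_1$ for every $t\ge t^\star$, and recalling $D_\tau=\max\{\norm{S_\tau S_\tau^\top-D_S^*},\norm{S_\tau T_\tau^\top}\}$ for each index $\tau$, both $\norm{S_tS_t^\top-D_S^*}$ and $\norm{S_tT_t^\top}$ are $\le 10\delta\sqrt{k+r}\sigma_1$ for all $t\ge\cT_2$.

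The step I expect to be the real work is the cross-term bookkeeping in the second paragraph: one must check that every perturbation produced by the finite-sample subgradient (entering through $\Delta_n$) is dominated by the population contraction. This is where the hypothesis must be used in its sharp form $\delta\sqrt{k+r}\le\tfrac{0.001\sigma_r}{\sigma_1}$ rather than merely ``small'': it both turns the $O(\gamma_n\sigma_1\bar\delta D_n)$ overhead into a negligible fraction of $\gamma_n\sigma_r D_n$ and guarantees that the additive floor $b\,\gamma_n\sigma_1\bar\delta\sigma_r$ sits a constant factor $\tfrac{b}{10a}<1$ below the target level $B$ — which is exactly why $10\delta\sqrt{k+r}\sigma_1$, and not a larger multiple, is the correct stopping level for Stage 2.
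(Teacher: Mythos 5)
Your proof is correct and takes essentially the same route as the paper: induction on $t$, decomposing the one-step update of $S_{t+1}S_{t+1}^\top - D_S^*$ and $S_{t+1}T_{t+1}^\top$ into the population pieces controlled by Proposition~\ref{prop: bound on MM NN} plus cross terms bounded through the RDPP estimate \eqref{eqn: RIP bound}, followed by a case split against the additive floor $10\delta\sqrt{k+r}\sigma_1$. The only cosmetic difference is the bookkeeping of the cross terms — you split $\|F_tF_t^\top-\truX\|\le 3D_t+0.01\sigma_r$ into a $D_t$-dependent part (absorbed into the contraction) and a constant part (the floor), while the paper uses the coarser bound $\|F_tF_t^\top-\truX\|\le \tfrac{4\sigma_r}{100}$ uniformly — but both lead to the same one-step recursion and the same stopping level.
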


\subsection{Analysis of Stage 3}
Define 
\begin{equation}
	E_t = \max\{\norm{S_tS_t^\top -D_S^*}, \norm{S_t T_t^\top}, \norm{T_tT_t^\top}\}.
\end{equation} 
We are going to show the sublinear convergence of $E_t$ in stage three.
\begin{proposition}\label{prop: stagethree}
	Suppose we have $\gamma_t \le \frac{0.01}{\sigma_1}$, $\delta\sqrt{k+r} \le \frac{0.001 \sigma_r}{\sigma_1}$ and $E_{t} \le 0.01\sigma_r$ for some $t>0$. Then we have
	\begin{equation}
		E_{t+1} \le  \max \{(1-\frac{\gamma_t\sigma_r}{6})E_t, E_t(1-\gamma_t E_t)\} =E_t(1-\gamma_tE_t).
	\end{equation}
\end{proposition}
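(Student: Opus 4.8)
The plan is to push the one-step population bounds of Propositions~\ref{prop: bound on MM NN} and~\ref{prop: bound on MS NT} through the noisy update $S_{t+1}=\cM_t(S_t)+\gamma_t U^\top\Delta_t F_t$, $T_{t+1}=\cN_t(T_t)+\gamma_t V^\top\Delta_t F_t$ (recall \eqref{eqn:algorithm update}--\eqref{eqn: RIP bound}), bound the three quantities $\norm{S_{t+1}S_{t+1}^\top-D_S^*}$, $\norm{S_{t+1}T_{t+1}^\top}$, $\norm{T_{t+1}T_{t+1}^\top}$ separately, and take the maximum. First I would observe that the standing hypotheses of those two propositions are all implied by the two assumptions of the present proposition: from $\norm{S_tS_t^\top-D_S^*}\le E_t\le 0.01\sigma_r$ we get $\norm{S_t}\le\sqrt{1.01\sigma_1}\le 2\sqrt{\sigma_1}$ and $\sigma_r(S_t)^2\ge\sigma_r-\norm{S_tS_t^\top-D_S^*}\ge 0.99\sigma_r\ge\sigma_r/2$, while from $\norm{T_tT_t^\top}\le E_t\le 0.01\sigma_r$ we get $\norm{T_t}\le 0.1\sqrt{\sigma_r}$; moreover $\norm{S_tS_t^\top-D_S^*}\le\sigma_r/10$, so the relaxed-$\gamma_t$ branch of Proposition~\ref{prop: bound on MM NN} (which only needs $\gamma_t\le 0.01/\sigma_1$) is in force, as is Proposition~\ref{prop: bound on MS NT}. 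Along the way I record the auxiliary estimates $\norm{\cM_t(S_t)}\le 1.1\sqrt{\sigma_1}$ (from item~1 of Proposition~\ref{prop: bound on MM NN}), $\norm{\cN_t(T_t)}^2\le\norm{T_tT_t^\top}\bigl(1-2\gamma_t\norm{T_tT_t^\top}\bigr)$ (item~2), $\norm{F_t}\le 2.1\sqrt{\sigma_1}$, and $\norm{F_tF_t^\top-\truX}\le 2E_t$; the last, plugged into \eqref{eqn: RIP bound}, gives $\norm{\Delta_t}\le 2\delta\sqrt{k+r}\,E_t$, so that $\gamma_t\norm{U^\top\Delta_t F_t}$ and $\gamma_t\norm{V^\top\Delta_t F_t}$ are both at most $c\,\gamma_t\delta\sqrt{k+r}\sqrt{\sigma_1}\,E_t$ for an absolute constant $c$.

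Next I would treat the two fast coordinates. Expanding $S_{t+1}S_{t+1}^\top-D_S^*=\bigl(\cM_t(S_t)\cM_t(S_t)^\top-D_S^*\bigr)+\cM_t(S_t)(\gamma_t U^\top\Delta_t F_t)^\top+(\gamma_t U^\top\Delta_t F_t)\cM_t(S_t)^\top+(\gamma_t U^\top\Delta_t F_t)(\gamma_t U^\top\Delta_t F_t)^\top$ and applying item~1 of Proposition~\ref{prop: bound on MM NN}, the leading term is at most $(1-\tfrac{3}{4}\gamma_t\sigma_r)E_t+3\gamma_t E_t^2$, while the three remaining terms are of order $\gamma_t\delta\sqrt{k+r}\sigma_1 E_t$ up to constants; using $\delta\sqrt{k+r}\le 0.001\sigma_r/\sigma_1$ and $E_t\le 0.01\sigma_r$ to absorb $3\gamma_t E_t^2$ and these cross terms into a small multiple of $\gamma_t\sigma_r E_t$, I obtain $\norm{S_{t+1}S_{t+1}^\top-D_S^*}\le(1-\tfrac{1}{6}\gamma_t\sigma_r)E_t$. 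The identical expansion for $S_{t+1}T_{t+1}^\top$, now using item~3 of Proposition~\ref{prop: bound on MM NN} ($\norm{\cM_t(S_t)\cN_t(T_t)^\top}\le(1-\tfrac{1}{3}\gamma_t\sigma_r)\norm{S_tT_t^\top}$) for the leading term together with the size bounds on $\cM_t(S_t)$ and $\cN_t(T_t)$, yields $\norm{S_{t+1}T_{t+1}^\top}\le(1-\tfrac{1}{6}\gamma_t\sigma_r)E_t$ as well. Since $E_t\le 0.01\sigma_r<\sigma_r/6$, each of these is already $\le E_t(1-\gamma_t E_t)$, so neither coordinate is the bottleneck.

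The crux is the slow coordinate $\norm{T_{t+1}T_{t+1}^\top}=\norm{T_{t+1}}^2$. Writing $T_{t+1}=\cN_t(T_t)+\gamma_t V^\top\Delta_t F_t$ and $\tau:=\norm{T_tT_t^\top}\le E_t$,
\[
\norm{T_{t+1}}^2\le\norm{\cN_t(T_t)}^2+2\norm{\cN_t(T_t)}\cdot\gamma_t\norm{V^\top\Delta_t F_t}+\gamma_t^2\norm{V^\top\Delta_t F_t}^2\le\tau(1-2\gamma_t\tau)+2\sqrt{\tau}\cdot c\,\gamma_t\delta\sqrt{k+r}\sqrt{\sigma_1}\,E_t+\text{(lower-order terms)},
\]
so the contraction buys a gain on the order of $\gamma_t\tau^2$ whereas the subgradient-noise term is on the order of $\gamma_t\sqrt{\tau}\,\delta\sqrt{k+r}\sqrt{\sigma_1}\,E_t$. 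The plan is to use $\delta\sqrt{k+r}\le 0.001\sigma_r/\sigma_1$, $E_t\le 0.01\sigma_r$, and the rank-$(k+r)$ refinement of \eqref{eqn: RIP bound} (bounding $\fnorm{F_tF_t^\top-\truX}\le\sqrt{3r+k}\,E_t$ rather than $\sqrt{k+r}\,\norm{F_tF_t^\top-\truX}$) to show that the noise term is dominated by the budget $\gamma_t\tau^2+\gamma_t\tau(E_t-\tau)$ furnished by the contraction together with the gap between $\tau$ and $E_t$, delivering $\norm{T_{t+1}T_{t+1}^\top}\le E_t(1-\gamma_t E_t)$; combining the three coordinate bounds then gives $E_{t+1}\le\max\{(1-\tfrac{1}{6}\gamma_t\sigma_r)E_t,\,E_t(1-\gamma_t E_t)\}$, and since $E_t\le 0.01\sigma_r<\sigma_r/6$ this maximum equals $E_t(1-\gamma_t E_t)$. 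The step I expect to be the main obstacle is precisely this last one: in the rank-overspecified regime $\norm{T_tT_t^\top}$ contracts only at the slow rate $1-2\gamma_t\norm{T_tT_t^\top}$, which lives on the same scale as the noise perturbation, so the argument must lean on the smallness of $\delta$ and on the fact that $T_tT_t^\top$ enters $\Delta_t$ only through the rank-constrained Frobenius norm; the two fast coordinates, by contrast, inherit the $\Theta(\gamma_t\sigma_r)$ contraction from Stage~2 and are comparatively routine.
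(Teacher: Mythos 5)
Your setup (deriving the standing hypotheses of Propositions~\ref{prop: bound on MM NN} and \ref{prop: bound on MS NT} from $E_t \le 0.01\sigma_r$) and your treatment of the two fast coordinates $\norm{S_{t+1}S_{t+1}^\top - D_S^*}$ and $\norm{S_{t+1}T_{t+1}^\top}$ match the paper's argument and are fine. The gap is exactly where you flagged the difficulty: the slow coordinate $\norm{T_{t+1}T_{t+1}^\top}$. Bounding the cross term by $2\norm{\cN_t(T_t)} \cdot \gamma_t \norm{V^\top \Delta_t F_t}$ via submultiplicativity gives something of size $\gamma_t\,\delta\sqrt{k+r}\,\sqrt{\sigma_1}\,\sqrt{\tau}\,E_t$, i.e.\ of order $\gamma_t\,\delta\sqrt{k+r}\,\sqrt{\sigma_1}\,E_t^{3/2}$ when $\tau = \norm{T_tT_t^\top}$ is comparable to $E_t$. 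The contraction budget you have available, namely the slack between $\tau(1-2\gamma_t\tau)$ and $E_t(1-\gamma_t E_t)$, is at best $\gamma_t E_t^2$ (achieved when $\tau = E_t$). Requiring $\gamma_t\,\delta\sqrt{k+r}\,\sqrt{\sigma_1}\,E_t^{3/2} \lesssim \gamma_t E_t^2$ forces a \emph{lower} bound $E_t \gtrsim (\delta\sqrt{k+r})^2 \sigma_1 \asymp (\sigma_r/\sqrt{\sigma_1})^2$, and Stage~3 is precisely the regime where $E_t \to 0$, so this estimate fails eventually. Neither the slightly tightened Frobenius bound $\fnorm{F_tF_t^\top - \truX}\le\sqrt{3r+k}\,E_t$ nor the smallness of $\delta$ repairs this: both shave constants without changing the $E_t^{3/2}$ scaling.

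The paper's proof gets the correct $E_t^2$ scaling by \emph{not} separating $\norm{F_t}$ and $\norm{\cN_t(T_t)}$. It decomposes $V^\top\Delta_t F_t\cN_t(T_t)^\top = V^\top\Delta_t U\, S_t\cN_t(T_t)^\top + V^\top\Delta_t V\, T_t\cN_t(T_t)^\top$ and invokes items~3 and 4 of Proposition~\ref{prop: bound on MS NT} on the unbroken products: $\norm{\cN_t(T_t) S_t^\top}\le\norm{T_t S_t^\top}\le E_t$ and $\norm{\cN_t(T_t) T_t^\top}\le\norm{T_tT_t^\top}\le E_t$. Although $\norm{S_t}\asymp\sqrt{\sigma_1}$ and $\norm{\cN_t(T_t)}\asymp\sqrt{E_t}$ are each large compared to $E_t$, their product is small because the signal and error blocks are nearly orthogonal. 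This yields $\norm{V^\top\Delta_t F_t\cN_t(T_t)^\top}\le\norm{\Delta_t}\bigl(\norm{S_t\cN_t(T_t)^\top}+\norm{T_t\cN_t(T_t)^\top}\bigr)\le 8\delta\sqrt{k+r}\,E_t^2$, hence $\norm{Z_8}\le 0.02\gamma_t E_t^2$, which fits comfortably inside the $\gamma_t E_t^2$ budget; the quadratic $Z_9$ term is likewise $O(\gamma_t E_t^2)$. Without this structural identity your proposed bound for the slow coordinate does not close, so the proof has a genuine gap at the one step you yourself identified as the crux.
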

Indeed, we can prove a better rate if there is no overparametrization. 
\begin{proposition}\label{prop: stagethree k=r}
	Suppose we have $\gamma_t \le \frac{0.01}{\sigma_1}$, $\delta\sqrt{k+r} \le \frac{0.001 \sigma_r}{\sigma_1}$ and $E_{t} \le 0.01\sigma_r$ for some $t>0$. If $k=r$, then we have
	\begin{equation}
		E_{t+1} \le  (1-\frac{\gamma_t\sigma_r}{3})E_t.
	\end{equation}
\end{proposition}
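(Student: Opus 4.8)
The plan is to specialize the one-step recursion used for Proposition~\ref{prop: stagethree}: track the three quantities $\norm{S_tS_t^\top-D_S^*}$, $\norm{S_tT_t^\top}$, $\norm{T_tT_t^\top}$ (whose maximum is $E_t$), show each contracts by $1-\frac{\gamma_t\sigma_r}{3}$ in one step when $k=r$, and take the maximum. First I would record the elementary consequences of $E_t\le0.01\sigma_r$: $\norm{S_t}^2=\norm{S_tS_t^\top}\le1.01\sigma_1$, $\sigma_r(S_t)^2=\sigma_r(S_tS_t^\top)\ge\sigma_r-E_t\ge0.99\sigma_r$, $\norm{T_t}\le0.1\sqrt{\sigma_r}$, $\norm{F_tF_t^\top-\truX}\le\norm{S_tS_t^\top-D_S^*}+2\norm{S_tT_t^\top}+\norm{T_tT_t^\top}\le4E_t$, and $\norm{F_t}^2=\norm{F_tF_t^\top}\le1.1\sigma_1$. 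These verify the hypotheses of Propositions~\ref{prop: bound on MM NN}--\ref{prop: bound on MS NT} (with the relaxed requirement $\gamma_t\le0.01/\sigma_1$, legitimate since $\norm{S_tS_t^\top-D_S^*}\le\sigma_r/10$) and, via $\norm{\Delta_t}\le\delta\sqrt{k+r}\norm{F_tF_t^\top-\truX}$, bound the finite-sample perturbation in the updates $S_{t+1}=\cM_t(S_t)+\gamma_tU^\top\Delta_tF_t$, $T_{t+1}=\cN_t(T_t)+\gamma_tV^\top\Delta_tF_t$: both $\gamma_t\norm{U^\top\Delta_tF_t}$ and $\gamma_t\norm{V^\top\Delta_tF_t}$ are $\le4.4\gamma_t\delta\sqrt{k+r}\sqrt{\sigma_1}\,E_t\le\tfrac{0.005\gamma_t\sigma_r}{\sqrt{\sigma_1}}E_t$ after using $\delta\sqrt{k+r}\le0.001\sigma_r/\sigma_1$.

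The one genuinely new ingredient, and the only place $k=r$ is used, is the control of $\norm{T_{t+1}T_{t+1}^\top}$. When $k=r$ the matrix $S_t$ is square, so $S_t^\top S_t\succeq\sigma_r(S_t)^2I_k\succeq0.99\sigma_rI_k$ is positive definite on \emph{all} of $\RR^k$ -- unlike the case $k>r$, where $S_t^\top S_t$ vanishes on a $(k-r)$-dimensional subspace and only the slow bound of Proposition~\ref{prop: bound on MM NN}.2 is available. Writing $\cN_t(T_t)=T_t\bigl(I-\gamma_t(T_t^\top T_t+S_t^\top S_t)\bigr)=:T_tM_t$, the symmetric factor $M_t$ has all eigenvalues in $[0,\,1-0.99\gamma_t\sigma_r]$ (upper bound from $S_t^\top S_t\succeq0.99\sigma_rI_k$; nonnegativity since $\gamma_t(\norm{S_t}^2+\norm{T_t}^2)\le0.02$), so $\norm{\cN_t(T_t)}\le(1-0.99\gamma_t\sigma_r)\norm{T_t}$. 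Hence, using $\norm{T_t}\le\sqrt{\norm{T_tT_t^\top}}\le\sqrt{E_t}$ and $E_t\le0.1\sqrt{\sigma_r}\sqrt{E_t}$, one gets $\norm{T_{t+1}}\le(1-0.99\gamma_t\sigma_r)\sqrt{E_t}+\tfrac{0.005\gamma_t\sigma_r}{\sqrt{\sigma_1}}E_t\le(1-0.98\gamma_t\sigma_r)\sqrt{E_t}$, and squaring gives $\norm{T_{t+1}T_{t+1}^\top}\le(1-0.98\gamma_t\sigma_r)^2E_t\le(1-\tfrac{\gamma_t\sigma_r}{3})E_t$.

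For the other two quantities I would reuse the analysis behind Proposition~\ref{prop: stagethree}. The $\norm{S_{t+1}S_{t+1}^\top-D_S^*}$ estimate comes from Proposition~\ref{prop: bound on MM NN}.1: its population rate $1-\tfrac34\gamma_t\sigma_r$, together with the correction $3\gamma_t\norm{S_tT_t^\top}^2\le0.03\gamma_t\sigma_rE_t$ and the perturbation cross terms (bounded using $\norm{\cM_t(S_t)}\le\sqrt{2\sigma_1}$ and the bound on $\gamma_t\norm{U^\top\Delta_tF_t}$ above), leaves a factor $\le1-\tfrac{\gamma_t\sigma_r}{3}$. For $\norm{S_{t+1}T_{t+1}^\top}$, the expansion underlying Proposition~\ref{prop: bound on MM NN}.3 -- but keeping the full contraction $I-\gamma_t(2S_tS_t^\top-D_S^*)\preceq(1-0.98\gamma_t\sigma_r)I_r$ (valid since $2S_tS_t^\top-D_S^*\succeq0.98\sigma_rI_r$) rather than the conservative coefficient $\tfrac13$ stated there -- yields $\norm{\cM_t(S_t)\cN_t(T_t)^\top}\le(1-\tfrac12\gamma_t\sigma_r)\norm{S_tT_t^\top}$, and adding the perturbation keeps the $ST$-bound $\le(1-\tfrac{\gamma_t\sigma_r}{3})E_t$. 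Taking the maximum of the three bounds gives $E_{t+1}\le(1-\tfrac{\gamma_t\sigma_r}{3})E_t$.

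I expect the main obstacle to be bookkeeping: verifying that in each of the three bounds the cumulative correction and finite-sample perturbation -- the $3\gamma_t\norm{S_tT_t^\top}^2$ term, the $\gamma_t^2$ remainders in the refined $ST$ estimate, and the $\gamma_tU^\top\Delta_tF_t$, $\gamma_tV^\top\Delta_tF_t$ contributions -- is at most $\tfrac{1}{12}\gamma_t\sigma_rE_t$ in absolute value, which sits comfortably inside the gap between each population contraction rate (roughly $0.72$, $0.5$, and $0.98$ for the three blocks) and the target $\tfrac13$. This is exactly what the hypotheses $\gamma_t\le0.01/\sigma_1$, $\delta\sqrt{k+r}\le0.001\sigma_r/\sigma_1$ and $E_t\le0.01\sigma_r$ are spent on; everything else is a direct specialization of the $k\ge r$ argument of Proposition~\ref{prop: stagethree}.
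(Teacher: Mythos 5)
Your proof is correct and the central idea is the same as the paper's (the finite-sample perturbation analysis for the $SS^\top$ and $ST^\top$ blocks is inherited, while the $TT^\top$ block gets a genuine linear contraction because $S_t^\top S_t$ is full-rank when $k=r$), but you execute the $TT^\top$ step by a cleaner route: you factor $\cN_t(T_t)=T_tM_t$ with $M_t=I-\gamma_t(T_t^\top T_t+S_t^\top S_t)$ a PSD contraction, bound $\norm{M_t}\le 1-c\gamma_t\sigma_r$ directly, and square. The paper instead expands $\cN_t(T_t)\cN_t(T_t)^\top=Z_4+Z_5$ with $Z_4=T_t(I-\gamma_tS_t^\top S_t-2\gamma_tT_t^\top T_t)T_t^\top$ and separately verifies $Z_5\preceq 0$ before bounding $\norm{Z_4}\le(1-\gamma_t\sigma_r/2)\norm{T_t}^2$; your factorization avoids the $Z_5\preceq 0$ check entirely and is arguably the more transparent argument. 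You also caught something the paper glosses over: the paper invokes ``we already have $\norm{S_{t+1}T_{t+1}^\top}\le(1-\gamma_t\sigma_r/3)E_t$'' from Proposition~\ref{prop: stagethree}, but that proof actually concludes $(1-\gamma_t\sigma_r/6)E_t$ for the $ST^\top$ piece, which would only give $E_{t+1}\le(1-\gamma_t\sigma_r/6)E_t$ in the end. Your refinement of the $ST^\top$ bound using the full $I-\gamma_t(2S_tS_t^\top-D_S^*)$ contraction (valid since $2S_tS_t^\top-D_S^*\succeq0.98\sigma_rI_r$ under $E_t\le0.01\sigma_r$) is precisely what is needed to recover the stated factor of $1/3$; without it, the proposition is true only with a worse constant.
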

\subsection{Proof of Theorem~\ref{lem: mainlemma}}
The proof is a combination of all the propositions in this section. First, we show that under suitable choice of $c_0$ and $c_3$, all the assumptions are satisfied. First, if we take $c_3$ to be small enough, we know that $(50\sqrt{k}\delta)^{\frac{1}{3}} \le \frac{\sigma_r}{400 \sigma_1}$ holds. Hence, all the conditions related to $\delta$ are satisfied. Next, by definition, 
$\gamma_t = \frac{\eta_t \psi(F_tF_t^\top -\truX)}{\fnorm{F_tF_t^\top -\truX}}$. By the second assumption and the assumption on range of $\psi$, we know 
\begin{equation}
    \gamma_t \in[c_1 \sqrt{\frac{1}{2\pi}} \frac{\sigma_r}{\sigma_1^2}, c_2\sqrt{\frac{2}{\pi}} \frac{\sigma_r}{\sigma_1^2}].
\end{equation}
Since we assumed $c_2\le 0.01$, so the step size condition $\gamma_t \le \frac{\sigma_r}{100\sigma_1^2}$ is satisfied. Moreover, $c_\gamma \ge c_1 \sqrt{\frac{1}{2\pi}}$. Now, applying theorems for initialization, stage 1 and stage 2, we know that 
	\begin{align}
		\norm{S_{\cT_2} S_{\cT_2}^\top - D_S^*} &\le 10\delta \sqrt{k+r}\sigma_1 \le \frac{0.01 \sigma_r^2}{\sigma_1},\\ 
		\norm{S_{\cT_2} T_{\cT_2}^\top} &\le 10\delta \sqrt{k+r}\sigma_1 \le \frac{0.01\sigma_r^2}{\sigma_1}.
	\end{align}
In addition, by Proposition~\ref{prop: uniform upper bound}, we know 
\begin{align}
    \norm{T_{\cT_2}T_{\cT_2}^\top} = \norm{T_{\cT_2}}^2 \le (c_\rho\rho)^2 \le \frac{0.01\sigma_r^2}{\sigma_1}.
\end{align}
Hence, $E_{\cT_2}\le \frac{0.01 \sigma_r^2}{\sigma_1}$. 
Here are two cases:
\begin{itemize}
    \item $k > r$, By Proposition~\ref{prop: stagethree} and induction, we know
    \begin{equation}
        E_{t+1} \le E_t(1-\gamma_tE_t) \le E_t(1-\frac{c_\gamma\sigma_r}{\sigma_1^2} E_t),\qquad \forall t \ge \cT_2.
    \end{equation}
    where $ c_1\sqrt{\frac{2}{\pi}}\le c_\gamma \le 0.01$. Define $G_t = \frac{c_\gamma \sigma_r}{\sigma_1^2}E_t$, then we have $G_{\tau_2} <1$ and
\begin{align}
	G_{t+1} \le G_t(1-G_t),\qquad \forall t \ge \cT_2.
\end{align}
Taking reciprocal, we obtain
\begin{equation}
	\frac{1}{G_{t+1}} \ge \frac{1}{G_t} + \frac{1}{1-G_t} \ge \frac{1}{G_t} +1,\qquad \forall t \ge \cT_2
\end{equation}
So we obtain 
\begin{equation}
	G_{\cT_2+t} \le \frac{1}{\frac{1}{G_{\cT_2}}+t}, \qquad \forall t \ge 0.
\end{equation}
Plugging in the definition of $G_t$, we obtain
\begin{equation}
    E_{\tau_2 +t} \le  \frac{\sigma_1^2}{c_\gamma \sigma_r} \frac{1}{\frac{\sigma_1^2}{c_\gamma \sigma_r E_{\cT_2}} +t} \le \frac{\sigma_1^2}{c_\gamma \sigma_r} \frac{1}{\frac{100\sigma_1^3}{c_\gamma \sigma_r^3} +t} = \frac{\sigma_1}{c_\gamma} \frac{\kappa}{\frac{100}{c_\gamma}\kappa^3 +t}\le \frac{\sigma_1}{c_\gamma} \frac{\kappa}{\kappa^3 +t}.
\end{equation}
Since $c_\gamma \ge c_1\sqrt{\frac{2}{\pi}}$, we can simply take $c_5 = \frac{1}{4c_1}\sqrt{\frac{\pi}{2}}$, $\cT =\cT_2$, apply Lemma~\ref{lem: decomposition of FF-X}, and get 
\begin{equation}
    \norm{F_{\cT+t}F_{\cT+t}^\top -\truX} \le c_5\sigma_1\frac{\kappa}{\kappa^3 +t},\qquad \forall t\ge 0.
\end{equation}
The last thing to justify is $\cT_2 \lesssim \kappa^2 \log \kappa$. Recall 
\begin{equation}
	\cT_1' = \ceil{\log \left(\frac{\sqrt{\sigma_r}}{\sqrt{2}\rho}\right)/ \log\left(1+\frac{c_\gamma\sigma_r^2}{6\sigma_1^2}\right)},
\end{equation} 
\begin{equation}
	\cT_1= \cT_1' + \ceil{\frac{\log(\frac{20  (c_\rho \rho)^2}{\sigma_r})}{\log\left(1- \frac{c_\gamma \sigma_r^2}{2\sigma_1^2}\right)}}
\end{equation}
and 
\begin{equation}
	\cT_2= \cT_1 + \ceil{\frac{\log\left( \frac{1000\delta \sqrt{k}\sigma_1}{\sigma_r}\right)}{\log\left(1-\frac{c_\gamma\sigma_r^2}{6\sigma_1^2}\right)}}.
\end{equation}
Simple calculus yield that each integer above is $O(\kappa^2 \log \kappa)$. So the proof is complete in overspecified case.
\item $k=r$. By Proposition~\ref{prop: stagethree k=r} and induction,
we obtain
\begin{equation}
    E_{t+1} \le (1-\frac{\gamma_t\sigma_r}{3})E_t \le (1-\frac{c_\gamma \sigma_r^2}{\sigma_1^2})E_t, \forall t\ge \cT_2.
\end{equation}
Applying this inequality recursively and noting $c_\gamma \ge c_1\sqrt{\frac{2}{\pi}}$, we obtain
\begin{equation}
    E_{\cT_2+t} \le (1-\frac{c_\gamma \sigma_r^2}{\sigma_1^2})^t E_{\cT_2}\le \left(1-\frac{c_1\sqrt{\frac{2}{\pi}}}{\kappa^2}\right)^t \frac{0.01\sigma_r}{\kappa}, \forall t \ge 0.
\end{equation}
Thus, we can take $c_6 = 0.01/4$, $c_7 = c_1\sqrt{\frac{2}{\pi}}$,  $\cT = \cT_2$, apply Lemma~\ref{lem: decomposition of FF-X} and get
\begin{align}
     \norm{F_{\cT+t}F_{\cT+t}^\top -\truX} \le \frac{c_6\sigma_r}{\kappa}\left(1-\frac{c_7}{\kappa^2}\right)^t,\qquad \forall t\ge 0.
\end{align} 
The validity of $\cT$ is proved in the last part. The proof is complete.
\end{itemize}

\section{Proof of Propositions}
	\subsection{Proof of Proposition~\ref{prop: initialization quality}}
First, we note that the $r$-th singular value of $c^*\truX$ is at least $\epsilon \sigma_r$. By almost the same proof as Lemma~\ref{lem: decomposition of FF-X}, we get
\begin{equation}
	\max\{\norm{S_0S_0^\top -c^*D_S^*}, \norm{S_0T_0^\top}, \norm{T_0T_0^\top}\} \le \norm{F_0F_0^\top -c^*\truX} \le  \frac{\tilde c_0 \epsilon \sigma_r}{\kappa}.
\end{equation}
We take $\tilde c_0 =\left(\frac{1}{200}\right)^2$. By Weyl's inequality~\eqref{lem: weyl's ineq}, \begin{align}
	\sigma_r(S_0S_0^\top) \ge \sigma_r(c^*D_S^*) - \norm{S_0S_0^\top -c^*D_S^*} \ge \frac{c^*\sigma_r}{4}\ge \frac{\epsilon\sigma_r}{4}.
\end{align}
Hence, $\rho = \sigma_r(S_0) \ge \frac{\sqrt{\epsilon\sigma_r}}{2}$. On the other hand,
\begin{align}
    \norm{T_0}&\le \sqrt{\frac{\tilde c_0 \epsilon \sigma_r}{\kappa}}\\
    &\le \frac{\sigma_r}{200\sqrt{\sigma_1}}
\end{align}
We can simply assume $\sigma_1(S_0) \le 2\sqrt{\sigma_1}$. If not so, we can normalize $F_0$ so that $\sigma_1(S_0)= 2\sqrt{\sigma_1}$ and use normalized $F_0$ as our initialization. By Weyl's inequality~\eqref{lem: weyl's ineq},
\begin{align}
    \sigma_1(S_0S_0^\top) \le 1.01c^* \sigma_1.
\end{align}
Hence, $c^* \ge 3$. In this case, the factor we use to normalize $S_0$ is propotional to $c^*\ge3$. It's easy to show that $\rho = \sigma_r(S_0) \ge \frac{\sqrt{\epsilon\sigma_r}}{2}$ and $\norm{T_0} \le \frac{\sigma_r}{200\sqrt{\sigma_1}}$ still holds. Therefore, the initialization quality is proved.
\subsection{Proof of Proposition~\ref{prop: updates of S,T}}
The algorithm~\ref{eqn:algorithm update} updates $F_t$ by
\begin{equation}
	F_{t+1} = F_t - \gamma_t \left(F_tF_t^\top -\truX\right)F_t +\gamma_t \Delta_t F_t .
\end{equation}
Using the definition of $U,V, S_t, T_t$, we have
\begin{align*}
	S_{t+1} &= U^\top F_{t+1}\\
	&=  U^\top F_t - \gamma_t U^\top\left(F_tF_t^\top -\truX\right)F_t+ \gamma_t U^\top \Delta_tF_t\\
	&=U^\top (US_t + VT_t) - \gamma_t U^\top\left[(US_t+VT_t)(US_t + VT_t)^\top - UD^*_S U^\top  - VD^*_T V^\top\right] (US_t+ VT_t)\\
	&\qquad + \gamma_t U^\top \Delta_tF_t\\
	&\overset{(\sharp)}{=} S_t - \gamma_t(S_tS_t^\top S_t + S_tT_t^\top T_t - D_S^*S_t)+ \gamma_t U^\top \Delta_tF_t.
\end{align*} 
Here $(\sharp)$ follows from the fact that $U^\top V = 0$ and $U,V$ are orthonormal.\\
By the same token, we can show
\begin{equation}
	T_{t+1} = T_t  - \gamma_t\left(T_tT_t^\top T_t + T_t S_t^\top S_t \right)+ \gamma_t V^\top \Delta_tF_t.
\end{equation}

\subsection{Proof of Proposition~\ref{prop: uniform upper bound}}
We prove the proposition by induction. By Proposition~\ref{prop: initialization quality}, it's clear that the proposition holds for $t=0$. Suppose for $t \ge 0$, we have
\begin{align}
	\norm{T_t} &\le c_\rho \rho \le 0.1\sqrt{\sigma_1}\\
	\norm{S_t} &\le 2\sqrt{\sigma_1}.
\end{align} 
By Proposition~\ref{prop: updates of S,T}, we know 
\begin{equation}
	S_{t+1}= S_t - \gamma_t(S_tS_t^\top S_t + S_tT_t^\top T_t - D_S^*S_t)+ \gamma_t U^\top \Delta_tF_t.
\end{equation}
Since $\|T_t\| \le c_\rho \rho \le 0.1\sqrt{\sigma_1}$, $\|S_t\| \le 2\sqrt{\sigma_1}$ and our assumption that $\gamma_t \le \frac{0.01}{\sigma_1}$,  $I- \gamma_t S_t^\top S_t -\gamma_t T_t^\top T_t$ is a PSD matrix. By lemma~\ref{lem: S(I-SS) svd},
\begin{align}
	\norm{S_t\left(I - \gamma_t S_t^\top S_t - \gamma_t T_t^\top T_t\right) } &\le \norm{S_t\left(I - \gamma_t S_t^\top S_t \right) } + \gamma_t \norm{S_t}\norm{T_t}^2\\
	&= \norm{S_t} - \gamma_t \norm{S_t}^3 + 0.1\gamma_t \sigma_1^{\frac{3}{2}}.
\end{align}
On the other hand, simple triangle inequality yields
\begin{equation}
	\norm{F_t} = \norm{U S_t +V T_t} \le \norm{S_t} + \norm{T_t} \le 3\sqrt{\sigma_1}.
\end{equation}
By \ref{eqn: RIP bound} and lemma~\ref{lem: fnorm opnorm}, we get
\begin{align}
	\norm{\Delta_t F_t}&\le \norm{\Delta_t} \norm{F_t}\\
	&\le 3\sigma_1^\frac{1}{2}\delta \fnorm{F_tF_t^\top - \truX} \\
	&\le 3\sigma_1^\frac{1}{2}\delta \sqrt{k+r} \norm{F_tF_t^\top- \truX}\\
	&\le  3\sigma_1^\frac{1}{2}\delta \sqrt{k+r}\left( \norm{F_t}^2 + \norm{\truX}\right)\\
	&\le 50\delta \sqrt{k} \sigma_1^\frac{3}{2}\\
	&\le 0.1 \sigma_1^\frac{3}{2}
\end{align}
Combining, we have
\begin{align}
	\norm{S_{t+1}} &\le \norm{S_t\left(I - \gamma_t S_t^\top S_t - \gamma_t T_t^\top T_t\right) } + \gamma_t \norm{D_S^* S_t} + \gamma_t  \norm{U^\top \Delta_t F_t}\\
	&\le \norm{S_t} - \gamma_t \norm{S_t}^3  + \gamma_t\sigma_1 \norm{S_t} + 0.2\gamma_t\sigma_1^\frac{3}{2} \\
	&= \norm{S_t} \left(1 + \gamma_t\sigma_1 - \gamma_t \norm{S_t}^2\right) +0.2\gamma_t\sigma_1^\frac{3}{2}.
\end{align}
We consider two different cases:
\begin{itemize}
	\item $\|S_t\| \le 1.5 \sqrt{\sigma_1}$. By the inequality above, we have 
	\begin{equation}
	\norm{S_{t+1}} \le \norm{S_t}(1+\gamma_t \sigma_1) +0.1\gamma_t \sigma_1^\frac{3}{2} \le 1.6\sqrt{\sigma_1} + 0.2\sqrt{\sigma_1} \le 2 \sqrt{\sigma_1}
	\end{equation}
	\item $ 1.5 \sqrt{\sigma_1}< \|S_t\| \le 2 \sqrt{\sigma_1}$. In this case, we  have 
	\begin{align}
		\norm{S_{t+1}} &\le \norm{S_t}(1+\gamma_t \sigma_1 - 2.25\gamma_t \sigma_1) +0.2\gamma_t\sigma_1^\frac{3}{2}\\
		&\le \norm{S_t}(1-\gamma_t \sigma_1)  + 0.2\gamma_t \sigma_1 \norm{S_t}\\
		&\le \norm{S_t}\\
		&\le 2\sqrt{\sigma_1}.
	\end{align}
\end{itemize}
The desired bound for $S_{t+1}$ is established. For $T_{t+1}$, we note 
\begin{align}
	T_{t+1} = T_t(I - \gamma_t T_t^\top T_t - \gamma_t S_t^\top S_t) +\gamma_t V^\top \Delta_t F_t.
\end{align}
We expand $T_{t+1}T_{t+1}^\top$ and obtain
\begin{align}
	T_{t+1} T_{t+1}^\top &= (\cN_t(T_t) + \gamma_t V^\top \Delta_t F_t)(\cN_t(T_t) + \gamma_t V^\top \Delta_t F_t)^\top\\
	&\le \underbrace{\cN_t(T_t)\cN_t(T_t)^\top}_{Z_1} +\underbrace{\gamma_t V^\top \Delta_t F_t \cN_t(T_t)^\top + \gamma_t \cN_t(T_t) F_t^\top \Delta_t^\top V}_{Z_2}\\
	&\qquad + \underbrace{\gamma_t^2 V^\top \Delta_t F_tF_t^\top \Delta_t^\top V}_{Z_3}
\end{align}
By Proposition~\ref{prop: bound on MM NN}, we have $\|Z_1\|\le \|T_tT_t^\top\|(1-\frac{3\gamma_t}{2} \|T_tT_t^\top\|).$ By induction hypothesis and triangle inequality, we have
\begin{align}
    \norm{Z_2}&\le 2\gamma_t 50\delta \sqrt{k}\sigma_1^{\frac{3}{2}} \|\cN_t(T_t)\| \\
    &\le 2\gamma_t 50\delta \sqrt{k}\sigma_1^{\frac{3}{2}} \|T_t\| 
\end{align}
and
\begin{align}
    \norm{Z_3} \le \gamma_t^2 \left(50\delta \sqrt{k}\sigma_1^{\frac{3}{2}}  \right)^2 \le 0.01\gamma_t (c_\rho \rho)^4.
\end{align}
By triangle inequality, we have
	\begin{align}
		\norm{T_{t+1}T_{t+1}^\top} &\le \|Z_1\|+ \|Z_2\| + \|Z_3\|\\
		&\le \|T_tT_t^\top\|(1-\frac{3\gamma_t}{2} \|T_tT_t^\top\|) + 2\gamma_t 50\delta \sqrt{k}\sigma_1^{\frac{3}{2}} \|T_t\| +0.01\gamma_t (c_\rho \rho)^4
	\end{align}
We consider two different cases:
\begin{itemize}
	\item $\norm{T_tT_t^\top } \le  \frac{(c_\rho \rho)^2}{2}$. We have 
	\begin{align}
		\norm{T_{t+1}T_{t+1}^\top} &\le \|T_tT_t^\top\| + 2\gamma_t 50\delta \sqrt{k}\sigma_1^{\frac{3}{2}} \|T_t\| +0.01\gamma_t (c_\rho \rho)^4\\
		&\le \frac{(c_\rho \rho)^2}{2} + \frac{\gamma_t (c_\rho \rho)^4}{4} + 0.01\gamma_t (c_\rho \rho)^4\\
		&\le (c_\rho \rho)^2.
	\end{align}
	
\item $\frac{(c_\rho \rho)^2}{2} <\norm{T_tT_t^\top} \le (c_\rho \rho)^2$. We have 
\begin{align}
		\norm{T_{t+1}T_{t+1}^\top} &\le \|T_tT_t^\top\|(1-\frac{3\gamma_t}{2}\|T_tT_t^\top\|) + 2\gamma_t 50\delta \sqrt{k}\sigma_1^{\frac{3}{2}} \|T_t\| +0.01\gamma_t (c_\rho \rho)^4\\
		&\le \|T_tT_t^\top\| - \frac{3\gamma_t}{8}(c_\rho \rho)^4 +2\gamma_t 50\delta \sqrt{k}\sigma_1^{\frac{3}{2}} \|T_t\| +0.01\gamma_t (c_\rho \rho)^4\\
		&\le \|T_tT_t^\top\| - \frac{3\gamma_t}{8}(c_\rho \rho)^4 +\frac{\gamma_t (c_\rho \rho)^4}{4} +0.01\gamma_t (c_\rho \rho)^4\\
		&\le \|T_tT_t^\top\|^2\\
		&\le (c_\rho \rho)^2.
	\end{align}
\end{itemize}
Hence, we proved the inequality for $\norm{T_{t+1}}$. By induction, the proof is complete.
\subsection{Proof of Proposition~\ref{prop: bound on MM NN}}
\begin{enumerate}
	\item $\norm{\cM_t(S_t)\cM_t(S_t)^\top - D^*_S} \le (1-\frac{3\gamma_t\sigma_r}{4}) \norm{S_tS_t^\top - D_S^*} + 3 \gamma_t \norm{S_t T_t^\top}^2$.\\
First, we suppose that we have  $\gamma_t \le \min\{\frac{0.01}{\sigma_1}, \frac{0.01\sigma_r}{\sigma_1^2}\}$. By definition,
\begin{equation}
	\cM_t(S_t)  = S_t - \gamma_t \left(S_t S_t^\top S_t +S_t T_t^\top T_t - D_S^*S_t\right).
\end{equation}
This yields
\begin{align}
	&D_S^* - \cM_t(S_t)\cM_t(S_t)^\top \\
	&=  D_S^* - \left[S_t - \gamma_t \left(S_t S_t^\top S_t +S_t T_t^\top T_t - D_S^*S_t\right)\right]\left[S_t - \gamma_t \left(S_t S_t^\top S_t +S_t T_t^\top T_t - D_S^*S_t\right)\right]^\top\\
	&= Z_1+ Z_2+ Z_3,
\end{align}
where 
\begin{align}
	Z_1 &= D_S^* - S_tS_t^\top  - \gamma_t (D_S^*-S_tS_t^\top )S_tS_t^\top -\gamma_t S_tS_t^\top ( D_S^*-S_tS_t^\top),\\
	Z_2 &= 2\gamma_t S_tT_t^\top T_t S_t^\top ,
\end{align}
and 
\begin{equation}
	Z_3 = -\gamma_t^2 (S_tS_t^\top S_t + S_tT_t^\top T_t - D_S^*S_t)(S_tS_t^\top S_t + S_tT_t^\top T_t - D_S^*S_t)^\top.
\end{equation}
We bound each of them separately. For $Z_1$, by triangle inequality,
\begin{align}
	\norm{Z_1} &= \norm{\left(D_S^* - S_tS_t^\top\right)\left(\frac{1}{2}I - \gamma_t S_tS_t^\top\right) +\left(\frac{1}{2}I - \gamma_t S_tS_t^\top\right)\left(D_S^* - S_tS_t^\top\right)}\\
	&\le \norm{\left(D_S^* - S_tS_t^\top\right)\left(\frac{1}{2}I - \gamma_t S_tS_t^\top\right)} + \norm{\left(\frac{1}{2}I - \gamma_t S_tS_t^\top\right)\left(D_S^* - S_tS_t^\top\right)}\\
	&\le  (\frac{1}{2} - \gamma_t \sigma_r^2(S_t))\norm{D_S^* -S_tS_t^\top} +(\frac{1}{2} - \gamma_t \sigma_r^2(S_t))\norm{D_S^* -S_tS_t^\top}\\
	&\le (1-\gamma_t \sigma_r) \norm{D_S^* - S_tS_t^\top}
\end{align}
The norm of $Z_2$ can be simply bounded by 
\begin{equation}
	\norm{Z_2} \le 2\gamma_t \norm{S_t T_t^\top}^2 
\end{equation}
For $Z_3$,
we can split it as 
\begin{align}
	Z_3 &= - \gamma_t ^2 (S_tS_t^\top - D_S^*)S_tS_t^\top(S_tS_t^\top - D_S^*)^\top  - \gamma_t^2 S_TT_t^\top T_tT_t^\top T_t S_t^\top \\
	&\qquad - \gamma_t^2 \left(S_tT_t^\top T_t\left(S_t^\top S_tS_t^\top -S_t^\top D_S^*\right) + \left(S_tS_t^\top S_t - D_S^*S_t\right)T_t^\top T_t S_t^\top\right)
\end{align}
By triangle inequality and our assumption that $\norm{S_t} \le 2\sqrt{\sigma_1}$, we have $\norm{S_tS_t^\top -D_S^*} \le 5\sigma_1$. Hence
\begin{align}
	\norm{Z_3} &\le 20\gamma_t^2\sigma_1^2 \norm{S_tS_t^\top -D_S^*} + 0.01\gamma_t^2\sigma_1 \norm{S_tT_t^\top}^2 + 2\gamma_t^2\norm{S_tS_t^\top -D_S^*} \norm{S_tT_t^\top}^2\\
	&\overset{(\sharp)}{\le}  20\gamma_t^2\sigma_1^2 \norm{S_tS_t^\top -D_S^*} + 0.01\gamma_t^2 \sigma_1 \norm{S_tT_t^\top}^2 + \gamma_t^2\sigma_1 \sigma_r\norm{S_tS_t^\top -D_S^*}\\
	&\le  (20\gamma_t^2\sigma_1^2+ \gamma_t^2 \sigma_1\sigma_r) \norm{S_tS_t^\top -D_S^*} +\gamma_t \norm{S_tT_t^\top}^2 \\
	&\le  (\frac{20}{100}\gamma_t \sigma_r+ \frac{1}{100}\gamma_t\sigma_r) \norm{S_tS_t^\top -D_S^*} +\gamma_t \norm{S_tT_t^\top}^2
\end{align}
Here $(\sharp)$ follows from our assumption that $\norm{S_t}\le 2\sqrt{\sigma_1}$ and $\norm{T_t} \le 0.1\sqrt{\sigma_r}$.
Combining, we have
\begin{align}
	\norm{D_S^* - S_{t+1}S_{t+1}^\top} &\le \norm{Z_1} + \norm{Z_2} + \norm{Z_3}\\
	&\le (1-\gamma_t\sigma_r + \frac{21}{100}\gamma_t \sigma_r) \norm{D_S^* - S_tS_t^\top} +(2\gamma_t+\gamma_t)\norm{S_tT_t^\top}^2\\
	&\le \left(1-\frac{3\gamma_t\sigma_r}{4}\right) \norm{D_S^*- S_tS_t^\top} + 3\gamma_t \norm{S_tT_t^\top}^2
\end{align}
If we assume $\norm{S_tS_t^\top - D_S^*} \le \frac{\sigma_r}{10}$ and $\gamma_t \le \frac{0.01}{\sigma_1}$ instead, the only bound that will change is 
\begin{align}
\norm{Z_3} &\le 4\gamma_t^2\sigma_1 \norm{S_tS_t^\top -D_S^*}^2 + 0.01\gamma_t^2\sigma_r \norm{S_tT_t^\top}^2 + 2\gamma_t^2\norm{S_tS_t^\top -D_S^*} \norm{S_tT_t^\top}^2\\
&\le 4\gamma_t^2 \sigma_1 \frac{\sigma_r}{10} \norm{S_tS_t^\top - D_S^*} + 0.01 \gamma_t^2\sigma_1 \norm{S_tT_t^\top}^2 + \gamma_t^2 \sigma_1\sigma_r\norm{S_tS_t^\top-D_S^*}\\
&\le \left(\frac{4}{1000}\gamma_t \sigma_r+\frac{1}{100}\gamma_t \sigma_r\right) \norm{S_tS_t^\top - D_S^*} + \gamma_t \norm{S_tT_t^\top}^2. 
\end{align}
With this bound, we can do same argument except only with $\gamma_t \le \frac{0.01}{\sigma_1}$ to get same bound 
\begin{equation}\label{eqn: stage two error update}
\norm{D_S^* - S_{t+1}S_{t+1}^\top} \le \left(1-\frac{3\gamma_t\sigma_r}{4}\right) \norm{D_S^*- S_tS_t^\top} + 3\gamma_t \norm{S_tT_t^\top}^2 
\end{equation}
\item $\norm{\cN_t(T_t) \cN_t(T_t)^\top} \le  \norm{T_t}^2(1-\gamma_t \norm{T_t}^2) = \norm{T_tT_t^\top}\left(1-\gamma_t \norm{T_tT_t^\top}\right)$.\\
By definition, 
\begin{equation}
	\cN_t(T_t) = T_t - \gamma_t(T_tT_t^\top T_t + T_tS_t^\top S_t).
\end{equation}
Plug this into $\cN_t(T_t)\cN_t(T_t)^\top$, we obtain
\begin{align}
 \cN_t(T_t)\cN_t(T_t)^\top &=  \left(T_t - \gamma_t(T_tT_t^\top T_t + T_tS_t^\top S_t)\right)\left(T_t - \gamma_t(T_tT_t^\top T_t + T_tS_t^\top S_t)\right)^\top\\
 &=Z_4 + Z_5+Z_6
\end{align}
where 
\begin{equation}
	Z_4 = T_tT_t^\top -2\gamma_t T_tT_t^\top T_tT_t^\top,
\end{equation}
\begin{equation}
	Z_5 =- 2\gamma_t T_tS_t^\top S_tT_t^\top+ \gamma_t^2 T_tS_t^\top S_tS_t^\top S_t T_t^\top,
\end{equation}
and 
\begin{equation}
    Z_6 = \gamma_t^2\left[T_tT_t^\top \left(T_tS_t^\top S_tT_t^\top \right) + \left(T_tS_t^\top S_tT_t^\top \right) T_tT_t^\top\right] 
\end{equation}
We bound each of them separately. By lemma~\ref{lem: S(I-S) svd}, we obtain
\begin{align}
	\norm{Z_4}&\le \norm{T_tT_t^\top - 2\gamma_t T_tT_t^\top T_tT_t^\top}\\
	&\le \norm{T_tT_t^\top}(1-2\gamma_t \norm{T_tT_t^\top})\\
	&= \norm{T_t}^2(1-2\gamma_t \norm{T_t}^2).
\end{align}
On the other hand, 
\begin{align}
	Z_5 &=  - 2\gamma_t T_tS_t^\top S_tT_t^\top +\gamma_t^2 T_tS_t^\top S_tS_t^\top S_t T_t^\top\\
	&=-2\gamma_t T_tS_t^\top(I - \gamma_t S_tS_t^\top) S_tT_t^\top\\
	&\preceq 0.
\end{align}
Furthermore, 
\begin{align}
    \norm{Z_6}&\le \gamma_t^2 \norm{S_t^\top S_t}\norm{T_tT_t^\top}^2\\
    &\le \frac{\gamma_t}{2} \norm{T_tT_t^\top}^2.
\end{align}
Combining, we obtain
\begin{align}
	\norm{\cN_t(T_t)\cN_t(T_t)^\top} &= \norm{Z_4+Z_5+Z_6} \\
	&\le \norm{Z_4+Z_5} + \norm{Z_6}\\
	&\le \norm{Z_4}+\norm{Z_6}\\
	&\le \norm{T_tT_t^\top}(1-\frac{3\gamma_t}{2} \norm{T_tT_t^\top}).
\end{align}
The second inequality follows from the fact that $Z_5 \preceq 0$. In this proof, we only need $\gamma_t \le \frac{0.01}{\sigma_1}$.
\item $\norm{\cM_t(S_t)\cN_t(T_t)^\top} \le (1-\frac{\gamma_t\sigma_r}{3})\norm{S_tT_t^\top}$. \\
By definition of $\cM_t(S_t), \cN_t(T_t)$, we have
\begin{align}
	\cM_t(S_t) \cN_t(T_t)^\top &= \left(S_t - \gamma_t(S_tS_t^\top S_t + S_t T_t^\top T_t - D_S^* S_t)\right)\left(T_t - \gamma_t(T_tT_t^\top T_t + T_t S_t^\top S_t)\right)^\top\\
	&=Z_7 + Z_8 + Z_9,
\end{align}
where 
\begin{equation}
	Z_7= (I - \gamma_t S_tS_t^\top)S_t T_t^\top,
\end{equation}
\begin{equation}
	Z_8 = \gamma_t (D_S^*- S_tS_t^\top)S_tT_t^\top
\end{equation}
and 
\begin{equation}
	Z_9 = -2\gamma_t S_t T_t^\top T_t T_t^\top + \gamma_t^2(S_tS_t^\top S_t + S_t T_t^\top T_t - D_S^* S_t)(T_tT_t^\top T_t + T_tS_t^\top S_t)^\top.
\end{equation}
We bound each of them. By our assumption that $\sigma_r(S_t) \ge \sqrt{\frac{\sigma_r}{2}}$, 
\begin{equation}
	\norm{Z_7} \le (1-\frac{\gamma_t\sigma_r}{2}) \norm{S_tT_t^\top}.
\end{equation}
By the assumption that $\norm{D_S^* - S_tS_t^\top} \le \frac{\sigma_r}{10}$, 
\begin{equation}
	\norm{Z_8}\le \frac{\gamma_t\sigma_r}{10}\norm{S_tT_t^\top}.
\end{equation}
For $Z_9$, we use triangle inequality and get 
\begin{align}
	\norm{Z_9} &\le \norm{2\gamma_t S_t T_t^\top T_t T_t^\top} + \gamma_t^2\norm{(S_tS_t^\top - D_S^*)S_tT_t^\top T_t T_t^\top}\\
	&\qquad +\gamma_t^2\norm{(S_tS_t^\top - D_S^*)S_tS_t^\top S_t T_t^\top)}+ \gamma_t^2 \norm{S_tT_t^\top T_t (T_t^\top T_tT_t^\top + S_t^\top S_t T_t^\top)}\\
	&\overset{(\sharp)}{\le} \frac{2}{100}\gamma_t\sigma_r \norm{S_t T_t^\top} + \gamma_t^2 \frac{\sigma_r}{10}\frac{\sigma_r}{100}\norm{S_tT_T^\top}+4\gamma_t^2 \frac{\sigma_r}{10}\sigma_1\norm{S_tT_t^\top} \\
	&\qquad + \gamma_t^2\left(\frac{\sigma_r}{100}\right)^2 \norm{S_tT_t^\top}+\gamma_t^2\left(\frac{2\sqrt{\sigma_1\sigma_r}}{10}\right)^2 \norm{S_tT_t^\top}\\
	&\overset{(\star)}{\le} \frac{\gamma_t\sigma_r}{20 } \norm{S_t T_t^\top}
\end{align}
In $(\sharp)$, we used the bound that $\norm{S_tT_t^\top}\le \norm{S_t}\norm{T_t} \le \frac{2\sqrt{\sigma_1\sigma_r}}{10}$ and $\norm{T_tT_t^\top}\le \frac{\sigma_r}{100}$. $(\star)$ follows from our assumption that $\gamma_t \le \frac{0.01}{\sigma_1}$.
Combining, we obtain
\begin{align}
	\norm{\cM_t(S_t)\cN_t(T_t)^\top} &\le \norm{Z_7} + \norm{Z_8} + \norm{Z_9}\\
	&\le \left(1- \frac{\gamma_t\sigma_r}{3}\right) \norm{S_tT_t^\top}.
\end{align}
\end{enumerate}

\subsection{Proof of Proposition~\ref{prop: bound on MS NT}}
We prove them one by one.
\begin{enumerate}
	\item $\norm{D_S^* - \cM_t(S_t)S_t^\top} \le (1-\frac{\gamma_t\sigma_r}{2})\norm{D_S^* -S_tS_t^\top}+\gamma_t \norm{S_tT_t^\top}^2$. By definition of $\cM_t(S_t)$, we know that 
	\begin{align}
		\cM_t(S_t)S_t^\top - D_S^* &= S_tS_t^\top -\gamma_t(S_tS_t^\top S_t + S_tT_t^\top T_t - D_S^*S_t)S_t^\top - D_S^*\\
		&=(S_tS_t^\top - D_S^*)(I- \gamma_t S_tS_t^\top) - \gamma_t S_tT_t^\top T_t S_t^\top.
	\end{align}
	By our assumption that $\sigma_r(S_t) \ge \sqrt{\frac{\sigma_r}{2}}$, we know 
	\begin{equation}
		\norm{(S_tS_t^\top - D_S^*)(I- \gamma_t S_tS_t^\top)} \le (1-\frac{\sigma_r}{2})\norm{S_tS_t^\top -D_S^*}.
	\end{equation}
	By triangle inequality, the result follows.
	\item $\norm{\cM_t(S_t)T_t^\top} \le 2\norm{S_t T_t^\top}$. By definition of $\cM_t(S_t)$, we have 
	\begin{align}
		\cM_t(S_t)T_t^\top = S_tT_t^\top - \gamma_t(S_tS_t^\top S_t + S_tT_t^\top T_t - D_S^*S_t)T_t^\top
	\end{align}
	Triangle inequality yields
	\begin{align}
		\gamma_t\norm{(S_tS_t^\top S_t + S_tT_t^\top T_t - D_S^*S_t)T_t^\top}&\le \gamma_t (\norm{S_t}^2 + \norm{T_t}^2 + \norm{D_S^*})\norm{S_tT_t^\top}\\
		&\le \gamma_t (4\sigma_1 + 0.01\sigma_r  + \sigma_1)\norm{S_tT_t^\top}\\
		&\le \norm{S_tT_t^\top}
	\end{align}
	The last inequality follows from our assumption that $\gamma_t \le \frac{0.01}{\sigma_1}$. By triangle inequality again, we obtain
	\begin{equation}
		\norm{\cM_t(S_t)T_t^\top} \le \norm{S_tT_t^\top} + \norm{S_tT_t^\top} = 2\norm{S_t T_t^\top}.
	\end{equation}
	\item $\norm{\cN_t(T_t) S_t^\top} \le \norm{T_tS_t^\top}$. By definition of $\cN_t(T_t)$, 
	\begin{align}
		\cN_t(T_t)S_t^\top &= T_tS_t^\top - \gamma_t(T_tT_t^\top T_t + T_tS_t^\top S_t)S_t^\top\\
		&= \left(\frac{1}{2}I- \gamma_t T_tT_t^\top \right)T_tS_t^\top + T_tS_t^\top\left(\frac{1}{2}I - \gamma_t S_tS_t^\top\right)
	\end{align}
	By triangle inequality,
	\begin{align}
		\norm{\cN_t(T_T)S_t^\top} &\le \norm{\left(\frac{1}{2}I- \gamma_t T_tT_t^\top \right)T_tS_t^\top} +\norm{ T_tS_t^\top\left(\frac{1}{2}I - \gamma_t S_tS_t^\top\right)}\\
		&\le \norm{T_tS_t^\top}.
	\end{align}
	The last inequality follows from the choice of $\gamma_t$ and the fact that $\norm{\left(\frac{1}{2}I- \gamma_t T_tT_t^\top \right)}\le \frac{1}{2}$, $\norm{\left(\frac{1}{2}I- \gamma_t S_tS_t^\top \right)}\le \frac{1}{2}$.
	\item $\norm{\cN_t(T_t) T_t^\top} \le  \norm{T_t}^2(1-\gamma_t \norm{T_t}^2) = \norm{T_tT_t^\top}\left(1-\gamma_t \norm{T_tT_t^\top}\right).$ By definition of $\cN_t(T_t)$, we have 
	\begin{align}
		\cN_t(T_t)T_t^\top &=  T_tT_t^\top - \gamma_t(T_tT_t^\top T_t + T_tS_t^\top S_t)T_t^\top\\
		&= T_tT_t^\top(I -\gamma_t T_tT_t^\top) - \gamma_t T_tS_t^\top S_t T_t^\top \\
		&\preceq T_tT_t^\top(I -\gamma_t T_tT_t^\top) 
	\end{align}
	As a result of lemma~\ref{lem: S(I-S) svd}, we have 
	\begin{align}
		\norm{\cN_t(T_t)T_t^\top} \le \norm{T_tT_t^\top}(1-\gamma_t \norm{T_tT_t^\top}) = \norm{T_t}^2(1-\gamma_t \norm{T_t}^2).
	\end{align}
\end{enumerate}
\subsection{Proof of Proposition~\ref{prop: stage one evolution}}
We prove this proposition by induction. Note that the inequality~\ref{eqn: evolution in stage one} holds trivially when $s=0$. Suppose it holds for $t \ge 0$.
By Proposition~\ref{prop: updates of S,T}, we can write $S_{t+1}$ as 
\begin{align}
	S_{t+1} &= \left(I - \gamma_t S_tS_t^\top + \gamma_t D_S^*\right)S_t -\gamma_t S_tT_t^\top T_t +\gamma_t U^\top \Delta_t F_t \label{eqn:inverse}\\
	&= \left(I+\gamma_t D_S^*\right)S_t (I-\gamma_t S_t^\top S_t)  +\gamma_t^2 D_S^*S_tS_t^\top S_t - \gamma_t S_t T_t^\top T_t+ \gamma_t U^\top\Delta_tF_t \label{eqn: prodsigmar}
\end{align}
These two ways of expressing $S_{t+1}$ are crucial to the proof.\\
For the ease of notation, we introduce some notations. Let
\begin{align}
	H_t &= I- \gamma_t S_tS_t^\top +\gamma_t D_S^*\\
	E_t &= S_tT_t^\top T_t - U^\top \Delta_t F_t
\end{align}
By Proposition~\ref{prop: uniform upper bound} and our assumption that $ (50\sqrt{k}\delta)^{\frac{1}{3}} \le \frac{c_\rho\rho}{2\sqrt{\sigma_1}}$, we have 
\begin{align}
	\norm{E_t} &\le \norm{S_t} \norm{T_t}^2 + \norm{\Delta_t F_t}\\
	&\le 2(c_\rho \rho)^2\sqrt{\sigma_1} + 50\delta \sqrt{k}\sigma_1^\frac{3}{2}\\
	&\le  2(c_\rho \rho)^2\sqrt{\sigma_1} + \frac{(c_\rho \rho)^3}{8}\\
	&\le 3(c_\rho \rho)^2\sqrt{\sigma_1}.
\end{align}
In the last inequality, we used our assumption that $c_\rho\rho \le 0.1\sqrt{\sigma_1}$.
By lemma~\ref{lem: norm of inverse} and our choice of $\gamma_t$, we know $H_t$ is invertible and 
\begin{equation}
	\norm{H_t^{-1}} \le \frac{1}{1-\gamma_t  \norm{S_t}^2 - \gamma_t \norm {D_S^*}} \le \frac{1}{1- 0.04 - 0.01 }\le 2.
\end{equation}
By~\ref{eqn:inverse}, we can write 
\begin{equation}
	S_t = H_t^{-1}S_{t+1} + \gamma_t  H_t^{-1}E_t
\end{equation}
Plug this in to~\ref{eqn: prodsigmar} and rearrange, we get 
\begin{equation}\label{eqn: decomposition into Z1Z_2}
	\left(I- \gamma_t ^2 D_S^* S_tS_t^\top H_t^{-1}\right)S_{t+1} = \underbrace{\left(I+\gamma_t  D_S^*\right)S_t (I-\gamma_t  S_t^\top S_t)}_{Z_1} +  \underbrace{\gamma_t ^3 D_S^* S_tS_t^\top H_t^{-1}E_t - \gamma_t  E_t}_{Z_2}
\end{equation}
Let's consider the $r$-th singular value of both sides. For LHS, by lemma~\ref{lem: sigmar ineq} and lemma~\ref{lem: norm of inverse}
\begin{align}
	\sigma_r(\left(I- \gamma_t ^2 D_S^* S_tS_t^\top H_t^{-1}\right)S_{t+1}) &\le \norm{\left(I- \gamma_t ^2 D_S^* S_tS_t^\top H_t^{-1}\right)} \sigma_r(S_{t+1})\\
	&\le \frac{1}{1- \gamma_t ^2 \norm{D_S^* S_tS_t^\top H_t^{-1}}} \sigma_r(S_{t+1})\\
	&\le \frac{1}{1-8\gamma_t ^2\sigma_1^2} \sigma_r(S_{t+1}).
\end{align}
For RHS, we consider $Z_1$ and $Z_2$ separately. For $Z_1$, by lemma~\ref{lem: S(I-SS) svd}, we have 
\begin{align}
	\sigma_r(Z_1) &\ge \sigma_r(I+\gamma_t  D_S^*) \cdot \sigma_r(S_t(I-\gamma_t  S_t^\top S_t))\\
	&= (1+\gamma_t \sigma_r) \sigma_r(S_t)(1-\gamma_t \sigma_r^2(S_t))
\end{align}
For $Z_2$, by triangle inequality,
\begin{align}
	\norm{\gamma_t ^3 D_S^* S_tS_t^\top H_t^{-1}E_t - \gamma_t  E_t} &\le \gamma_t ^3\norm{D_S^* S_tS_t^\top H_t^{-1}E_t}+\gamma_t  \norm{E_t}\\
	&\le(8\gamma_t ^3\sigma_1^2+\gamma_t ) \norm{E_t}\\
	&\le 3(8\gamma_t ^3\sigma_1^2 +\gamma_t)(c_\rho \rho)^2\sqrt{\sigma_1}.
\end{align}
Combining, by  lemma~\ref{lem: weyl's ineq}, we obtain 
\begin{align}
	&\;\;\;\;\sigma_r( \left(I+\gamma_t  D_S^*\right)S_t (I-\gamma_t  S_t^\top S_t) +  \gamma_t^3 D_S^* S_tS_t^\top H_t^{-1}E_t - \gamma_t  E_t)\\
	&\ge \sigma_r\left(Z_1 \right) - \gamma_t  \norm{E_t} - \norm{Z_2}\\
	&\ge (1+\gamma_t \sigma_r) \sigma_r(S_t)(1-\gamma_t \sigma_r^2(S_t))-3(8\gamma_t ^3\sigma_1^2 +\gamma_t)(c_\rho \rho)^2\sqrt{\sigma_1}.
\end{align}
By induction hypothesis, we know $\sigma_r(S_t) \ge \rho$. Note we assumed that $4c_\rho^2 \rho \le 0.01 \frac{\sigma_r}{\sqrt{\sigma_1}} $, so we have 
\begin{equation}
	3(8\gamma_t ^3\sigma_1^2 +\gamma_t)(c_\rho \rho)^2\sqrt{\sigma_1}\le 4 \gamma_t  (c_\rho \rho)^2\sqrt{\sigma_1} \le 0.01 \gamma_t \sigma_r \sigma_r(S_t)
\end{equation}
Consequently, we get
\begin{align}
	&\;\;\;\;\sigma_r( \left(I+\gamma_t D_S^*\right)S_t (I-\gamma_t S_t^\top S_t) +  \gamma_t^3 D_S^* S_tS_t^\top H_t^{-1}E_t - \gamma_t E_t)\\
	&\ge (1+\gamma_t\sigma_r) \sigma_r(S_t)(1-\gamma_t\sigma_r^2(S_t)) - 0.01 \sigma_r \sigma_r(S_t)\\
	&=\sigma_r(S_t) \left(1+0.99\gamma_t \sigma_r - \gamma_t \sigma_r^2(S_t) - \gamma_t^2 \sigma_r \sigma_r^2(S_t)\right)
\end{align}
Combining the LHS and RHS, we finally get 
\begin{equation}\label{eqn: stageone key ineq}
	\sigma_r(S_{t+1}) \ge (1-8\gamma_t^2\sigma_1^2) \left(1+0.99\gamma_t \sigma_r - \gamma_t \sigma_r^2(S_t) - \gamma_t^2 \sigma_r \sigma_r^2(S_t)\right) \sigma_r(S_t)
\end{equation}
We consider two cases(recall $\sigma_r = \frac{1}{\kappa}$):
\begin{itemize}
	\item $\sigma_r(S_t) \ge \sqrt{\frac{3\sigma_r}{4}}$. By~\ref{eqn: stageone key ineq}, we know that 
	\begin{equation}
		\sigma_r(S_{t+1}) \ge (1-8\gamma_t^2\sigma_1^2)(1-5\gamma_t\sigma_1) \sigma_r(S_t).
	\end{equation}
	Here we used Proposition~\ref{prop: uniform upper bound} to bound $\sigma_r(S_t)$ by $2\sqrt{\sigma_1}$. Since $\gamma_t \le \frac{0.01}{\sigma_1}$, simple calculation shows that 
	\begin{equation}
		\sigma_r(S_{t+1}) \ge (1-8\gamma_t^2\sigma_1^2)(1-5\gamma_t\sigma_1) \sqrt{\frac{3\sigma_r}{4}} \ge \sqrt{\frac{\sigma_r}{2}}.
	\end{equation}
	\item $\sigma_r(S_t) < \sqrt{\frac{3\sigma_r}{4}}$. By~\ref{eqn: stageone key ineq} and induction hypothesis, we know
	\begin{align}
		\sigma_r(S_{t+1}) &\ge (1-8\gamma_t^2\sigma_1^2) \left(1+0.99\gamma_t \sigma_r - \gamma_t \sigma_r^2(S_t) - \gamma_t^2 \sigma_r \sigma_r^2(S_t)\right) \sigma_r(S_t)\\
		&\ge (1-8\gamma_t^2\sigma_1^2) \left(1+\frac{\gamma_t\sigma_r}{5 } \right) \sigma_r(S_t)\\
		&\ge \left(1+\frac{\gamma_t\sigma_r^2 }{6\sigma_1^2 } \right) \sigma_r(S_t)\\
		&\ge \min\{(1+\frac{c_\gamma\sigma_r^2}{6\sigma_1^2})^{t+1}\sigma_r(S_0), \sqrt{\frac{\sigma_r}{2}}\}.
	\end{align}
We used the bound $\gamma_t \ge \frac{c_\gamma\sigma_r}{\sigma_1^2}$ in the last inequality.
\end{itemize}
By induction, we proved inequality~\ref{eqn: evolution in stage one} for $\sigma_r(S_t)$. By our choice of $\cT_1$, it's easy to verify that 
\begin{equation}
	\sigma_r(S_{\cT_1+t}) \ge \sqrt{\frac{\sigma_r}{2}},\qquad \forall t\ge 0.
\end{equation}

\subsection{Proof of Proposition~\ref{prop: stagetwo phaseone}}
We prove it by induction. For the ease of notation, we use index $t$ for $t \ge \cT_1'$ instead of $\cT_1'+t$.
The inequality~\ref{eqn: stagetwo phseone} holds for $t=\cT_1'$ by Proposition~\ref{prop: uniform upper bound} and triangle inequality that 
\begin{equation}
	\norm{S_{\cT_1}S_{\cT_1}^\top - D_S^*} \le \norm{S_{\cT_1}}^2 + \norm{D_S^*} \le 5\sigma_1.
\end{equation}
Suppose that~\ref{eqn: stagetwo phseone} holds for some $t\ge \cT_1'$. By Proposition~\ref{prop: updates of S,T}, we have
\begin{equation}
	S_{t+1} = \cM_t(S_t) + \gamma_t U^\top \Delta_t F_t.
\end{equation}
As a result,
\begin{align}
	S_{t+1}S_{t+1}^\top - D_S^* &= \underbrace{\cM_t(S_t)\cM_t(S_t)^\top - D_S^*}_{Z_1} + \underbrace{\gamma_t(U^\top \Delta_t F_t \cM_t(S_t)^\top + \cM_t(S_t)F_t^\top \Delta_t^\top U)}_{Z_2}\\
	&\qquad + \underbrace{\gamma_t^2 U^\top \Delta_t F_tF_t^\top \Delta_t^\top U}_{Z_3}
\end{align}
By Proposition~\ref{prop: bound on MM NN}, we know 
\begin{align}
	\norm{Z_1} &\le (1-\frac{3\gamma_t\sigma_r}{4})\norm{S_tS_t^\top - D_S^*} + 3\gamma_t \norm{S_tT_t^\top}^2\\
	&\overset{(\sharp)}{\le} (1-\frac{3\gamma_t\sigma_r}{4})\norm{S_tS_t^\top - D_S^*} + 12\gamma_t\sigma_1 (c_\rho \rho)^2
\end{align}
Here $(\sharp)$ follows from Proposition~\ref{prop: uniform upper bound}.\\
On the other hand, it's easy to see $\norm{\cM_t(S_t)} \le 3\sqrt{\sigma_1}$ by its definition and Proposition~\ref{prop: uniform upper bound}. By triangle inequality,
\begin{align}
	\norm{Z_2} &\le 2 \gamma_t \norm{U^\top \Delta_t F_t \cM_t(S_t)^\top}\\
	&\le 2 \gamma_t \norm{\Delta_t} \norm{US_t + VT_t} \norm{\cM_t(S_t)}\\
	&\le 18 \gamma_t \norm{\Delta_t}\sigma_1\\
	&\overset{(\sharp)}{\le} 18\gamma_t \delta \sqrt{k+r} \norm{F_tF_t^\top - \truX}\sigma_1\\
	&\overset{(\star)}{\le} 270\gamma_t \delta \sqrt{k}\sigma_1^2\\
	&\overset{(*)}{\le} \gamma_t (c_\rho \rho)^3 \sqrt{\sigma_1}
\end{align}
Here $(\sharp)$ follows from~\ref{eqn: RIP bound}, $(\star)$ follows from uniform bound $\norm{F_t} \le 3\sqrt{\sigma_1}$, and $(*)$ follows from the assumption that $(50\sqrt{k}\delta)^{\frac{1}{3}}\le \frac{c_\rho \rho}{2\sqrt{\sigma_1}}$.\\
Furthermore, 
\begin{align}
	\norm{Z_3} &\le \gamma_t^2 \norm{\Delta_t}^2 \norm{F_t}^2\\
	&\le 9\gamma_t^2 (10\delta \sqrt{k+r})^2\sigma_1^3\\
	&\le \gamma_t^2 (c_\rho \rho)^6
\end{align}
The last inequality follows simply from our assumption that $(50\sqrt{k}\delta)^{\frac{1}{3}}\le \frac{c_\rho \rho}{2\sqrt{\sigma_1}}$.
Combining, we obtain
\begin{align}
 \norm{S_{t+1}S_{t+1}^\top - D_S^*} &\le \norm{Z_1} + \norm{Z_2} + \norm{Z_3}\\
 &\le  (1-\frac{3\gamma_t\sigma_r}{4})\norm{S_tS_t^\top - D_S^*} + 12\gamma_t(c_\rho \rho)^2\sigma_1 +\gamma_t (c_\rho \rho)^3 \sqrt{\sigma_1} + \gamma_t^2(c_\rho \rho)^6\\
 &\le (1-\frac{3\gamma_t\sigma_r}{4})\norm{S_tS_t^\top - D_S^*} + 13\gamma_t(c_\rho \rho)^2\sigma_1 
\end{align}
In the last inequality, we used $c_\rho \rho \le 0.1\sqrt{\sigma_1}$ and $\gamma_t \le \frac{0.01}{\sigma_1}$.
We consider two cases:
\begin{itemize}
	\item $\norm{S_tS_t^\top -D_S^*}\le \frac{52(c_\rho \rho)^2\sigma_1}{\sigma_r} $. By above inequality, we simply have
	\begin{equation}
		\norm{S_{t+1}S_{t+1}^\top - D_S^*} \le \norm{S_tS_t^\top -D_S^*} + 13 \gamma_t(c_\rho \rho)^2\sigma_1  \le \frac{100(c_\rho \rho)^2\sigma_1}{\sigma_r} .
	\end{equation}
The last inequality follows from the assumption that $\gamma_t \le \frac{0.01}{\sigma_1}\le \frac{0.01}{\sigma_r}$.
\item $\norm{S_tS_t^\top -D_S^*}> \frac{52(c_\rho \rho)^2\sigma_1}{\sigma_r}$. In this case, $13\gamma_t (c_\rho \rho)^2\sigma_1 \le \frac{\gamma_t \sigma_r}{4} \norm{S_tS_t^\top -D_S^*}$. Consequently, 
\begin{align}
	\norm{S_{t+1}S_{t+1}^\top - D_S^*} &\le (1-\frac{3\gamma_t\sigma_r}{4})\norm{S_tS_t^\top -D_S^*} + \frac{\gamma_t\sigma_r}{4} \norm{S_tS_t^\top -D_S^*}\\
	&\le (1-\frac{\gamma_t\sigma_r}{2})\norm{S_tS_t^\top -D_S^*}\\
	&\le \max\{5(1-\frac{c_\gamma \sigma_r^2}{2\sigma_1^2})^{t+1- \cT_1'}, \frac{100(c_\rho \rho)^2\sigma_1}{\sigma_r}\}.
\end{align}
We used the induction hypothesis in the last inequality. By induction, inequality~\ref{eqn: stagetwo phseone} is proved. Moreover, $\cT_2'$ is the smallest integer such that 
\begin{equation}
	5(1-\frac{c_\gamma \sigma_r^2}{2\sigma_1^2})^{t- \cT_1'}\le  \frac{100(c_\rho \rho)^2\sigma_1}{\sigma_r}.
\end{equation}
Therefore, the second claim in Proposition~\ref{prop: stagetwo phaseone} follows from~\ref{eqn: stagetwo phseone}.
\end{itemize}
\subsection{Proof of Proposition~\ref{prop: stagetwo phasetwo}}
We prove it by induction. For the ease of notation, we use index $t$ for $t \ge \cT_1$ instead of $\cT_1+t$. When $t=\cT_1$,~\ref{eqn: stagetwo phasetwo} holds by assumption. Now suppose~\ref{eqn: stagetwo phasetwo} holds for some $t\ge \cT_1$. By induction hypothesis, we have
\begin{equation}
	\norm{S_tT_t^\top} \le 0.01\sigma_r.
\end{equation}
Moreover, 
\begin{align}
	\norm{S_tS_t^\top} \le \norm{D_S^*} + \norm{S_tS_t^\top - D_S^*} \le 1.01 \sigma_1.
\end{align}
Therefore, $\norm{S_t} \le 2\sqrt{\sigma_1}$. Also,
\begin{align}
	\sigma_r(S_tS_t^\top) \ge \sigma_r(D_S^*) - \norm{S_tS_t^\top -D_S^*} \ge \frac{\sigma_r}{2}.
\end{align}
Hence, $\sigma_r(S_t) \ge \sqrt{\frac{\sigma_r}{2}}$ and the conditions of Proposition~\ref{prop: bound on MM NN} and Proposition~\ref{prop: bound on MS NT} are satisfied.
We consider $\norm{S_{t+1}S_{t+1}^\top -D_S^*}$ and $\norm{S_{t+1}T_{t+1}^\top}$ separately.
\begin{enumerate}
	\item For $\norm{S_{t+1}S_{t+1}^\top- D_S^*}$, we apply the same idea as proof of Proposition~\ref{prop: stagetwo phaseone} and write 
	\begin{align}
		S_{t+1}S_{t+1}^\top - D_S^* &= \underbrace{\cM_t(S_t)\cM_t(S_t)^\top - D_S^*}_{Z_1} - \underbrace{\gamma_t(U^\top \Delta_t F_t \cM_t(S_t)^\top + \cM_t(S_t)F_t^\top \Delta_t^\top U)}_{Z_2}\\
		&\qquad + \underbrace{\gamma_t^2 U^\top \Delta_t F_tF_t^\top \Delta_t^\top U}_{Z_3}
	\end{align}
	By Proposition~\ref{prop: bound on MM NN}, we know 
	\begin{align}
		\norm{Z_1} &\le (1-\frac{3\gamma_t\sigma_r}{4})\norm{S_tS_t^\top - D_S^*} + 3\gamma_t \norm{S_tT_t^\top}^2\\
		&\le (1-\frac{3\gamma_t\sigma_r}{4})\norm{S_tS_t^\top - D_S^*} +0.03\gamma_t\sigma_r \norm{S_t T_t^\top}\\
		&\le (1-\frac{3\gamma_t\sigma_r}{4} + 0.03\gamma_t\sigma_r)D_t.
	\end{align}
	On the other hand, 
	 By triangle inequality,
	\begin{align}
		\norm{Z_2} &\le 2 \gamma_t \norm{U^\top \Delta_t F_t \cM_t(S_t)^\top}\\
		&\le 2 \gamma_t \norm{\Delta_t} \norm{US_t + VT_t} \norm{\cM_t(S_t)}\\
		&\le 18 \gamma_t \sigma_1 \norm{\Delta_t}\\
		&\overset{(\sharp)}{\le} 18\gamma_t \sigma_1 \delta \sqrt{k+r} \norm{F_tF_t^\top - \truX}\\ 
	\end{align}
	Here $(\sharp)$ follows from~\ref{eqn: RIP bound}.
	By lemma~\ref{lem: decomposition of FF-X}, we see that 
	\begin{align}
		\norm{F_tF_t^\top - \truX} &\le \norm{S_tS_t^\top - D_S^*} + 2\norm{S_tT_t^\top} + \norm{T_tT_t^\top}\\
		&\le \frac{3\sigma_r}{100} + \frac{\sigma_r}{100}\\
		&\le \frac{4\sigma_r}{100}.
	\end{align}
	Hence, we obtain
	\begin{equation}
		\norm{Z_2} \le \frac{72}{100}\gamma_t\sigma_r \delta \sqrt{k+r}  \sigma_1.
	\end{equation}
	Similarly,
	\begin{align}
		\norm{Z_3} &\le \gamma_t^2 \norm{\Delta_t}^2 \norm{F_t}^2\\
		&\le 9\sigma_1 \gamma_t^2 (\delta \sqrt{k+r})^2 \norm{F_tF_t^\top -\truX}^2\\
		&\le  9\sigma_1 \gamma_t^2 (\delta \sqrt{k+r})^2 (\frac{4\sigma_r}{100})^2\\
		&\le \frac{1}{100}\gamma_t \sigma_r \delta \sqrt{k+r} \sigma_1.
	\end{align}
	In the last inequality, we used our assumption that $\gamma_t \sigma_r \le \gamma_t\sigma_1\le 0.01$ and $\delta\sqrt{k+r} \le 0.001$. Combining, we obtain
	\begin{align}
		\norm{S_{t+1}S_{t+1}^\top -D_S^*}&\le (1-\frac{\gamma_t\sigma_r}{2})D_t+ \gamma_t\sigma_r \delta \sqrt{k+r} \sigma_1.
	\end{align}
	We consider two cases:
	\begin{itemize}
		\item $D_t \le 3 \delta \sqrt{k+r} \sigma_1$. In this case, we simply have 
		\begin{align}
			\norm{S_{t+1}S_{t+1}^\top -D_S^*} \le D_t + 3 \gamma_t \sigma_r \delta \sqrt{k+r} \sigma_1 \le D_t +\delta \sqrt{k+r} \sigma_1  \le 10 \delta \sqrt{k+r}\sigma_1.
		\end{align} 
		\item $ 3 \delta \sqrt{k+r} \sigma_1< D_t \le 10 \delta \sqrt{k+r} \sigma_1$. In this case, we clearly have 
		\begin{align}
			\gamma_t \delta \sqrt{k+r}\sigma_1 \sigma_r \le \frac{\gamma_t\sigma_r}{3}D_t.
		\end{align}
		Consequently, 
		\begin{align}
			\norm{S_{t+1}S_{t+1}^\top -D_S^*} \le (1-\frac{\gamma_t \sigma_r}{6\sigma_1})D_t \le  \max\left\{\left(1-\frac{c_\gamma\sigma_r^2}{6\sigma_1^2}\right)^{t+1-\cT_1}\cdot \frac{\sigma_r}{10} ,10\delta \sqrt{k+r}\sigma_1\right\}.
		\end{align}
		Here we used the induction hypothesis on $D_t$.
	\end{itemize}
	\item For $\norm{S_{t+1}T_{t+1}^\top}$, we can expand it and get
	\begin{align}
		S_{t+1}T_{t+1}^\top &= (\cM_t(S_t) + \gamma_t U^\top \Delta_t F_t)(\cN_t(T_t) + \gamma_t V^\top \Delta_t F_t)^\top\\
		&=\underbrace{\cM_t(S_t)\cN_t(T_t)^\top}_{Z_4} + \underbrace{\gamma_t U^\top \Delta_t F_t \cN_t(T_t) + \gamma_t \cM_t(S_t)F_t^\top \Delta_t^\top V}_{Z_5} \\
		&\qquad + \underbrace{\gamma_t^2 U^\top \Delta_t F_t F_t^\top \Delta_t^\top V}_{Z_6}.
	\end{align}
	By assumption,  we have
	\begin{align}
		\norm{S_t S_t^\top - D_S^*} &\le D_t\\
		&\le \max\{\frac{\sigma_r}{100}, 10\delta \sqrt{k+r}\sigma_1\}\\
		& \le \frac{\sigma_r}{100}.
	\end{align}By Proposition~\ref{prop: bound on MM NN}, we know 
	\begin{equation}
		\norm{Z_4} \le (1-\frac{\gamma_t \sigma_r}{3})\norm{S_t T_t^\top}\le (1-\frac{\gamma_t \sigma_r}{3})D_t
	\end{equation}
	On the other hand, it's easy to see the $\norm{\cM_t(S_t)} \le 3\sqrt{\sigma_1}$ and $\norm{\cN_t(T_t)}\le \sqrt{\sigma_1}$, by triangle inequality and the same argument as $\norm{S_{t+1}S_{t+1}^\top - D_S^*}$, 
	\begin{align}
		\norm{Z_5}&\le \gamma_t \left(\norm{F_t} \norm{\cN_t(T_t)}+ \norm{F_t}\norm{\cM_t(S_t)} \right)\norm{\Delta_t}\\
		&\le 12\gamma_t \sigma_1 \norm{\Delta_t}\\
		&\le 12\gamma_t \sigma_1 \delta\sqrt{k+r}\norm{F_tF_t^\top -\truX}\\
		&\le \frac{48}{100}\gamma_t \sigma_r \delta \sqrt{k+r}\sigma_1.
	\end{align}
We used $\norm{F_tF_t^\top - \truX} \le \frac{4\sigma_r}{100}$, which was proved above.
	Similar as calculation for $\norm{S_{t+1}S_{t+1}^\top -D_S^*}$, we have 
	\begin{equation}
		\norm{Z_6} \le \frac{1}{100}\gamma_t \sigma_r\delta \sqrt{k+r}\sigma_1.
	\end{equation}
	Combining, we obtain
	\begin{align}
		\norm{S_{t+1}T_{t+1}^\top} &\le \norm{Z_4} + \norm{Z_5}+\norm{Z_6}\\
		&\le \left(1-\frac{\gamma_t\sigma_r}{3}\right) D_t + \gamma_t \sigma_r\delta \sqrt{k+r}\sigma_1.
	\end{align}
	We consider two cases:
	\begin{itemize}
		\item $D_t \le 6 \delta \sqrt{k} \sigma_1$. In this case, we simply have 
		\begin{align}
			\norm{S_{t+1}T_{t+1}^\top} \le D_t +\gamma_t \sigma_r\delta \sqrt{k+r} \sigma_1  \le D_t +\delta \sqrt{k+r} \sigma_1  \le 10 \delta \sqrt{k+r}\sigma_1.
		\end{align} 
		\item $ 6 \delta \sqrt{k+r} \sigma_1< D_t \le 10 \delta \sqrt{k+r} \sigma_1$. In this case, we clearly have 
		\begin{align}
			\gamma_t  \sigma_r \delta \sqrt{k+r}\sigma_1\le \frac{\gamma_t\sigma_r}{6}D_t.
		\end{align}
		Consequently, 
		\begin{align}
			\norm{S_{t+1}T_{t+1}^\top} \le (1-\frac{\gamma_t \sigma_r}{6})D_t \le  \max\left\{\left(1-\frac{c_\gamma\sigma_r^2}{6\sigma_1^2}\right)^{t+1-\cT_1}\cdot \frac{\sigma_r}{10} ,10\delta \sqrt{k+r}\sigma_1\right\}.
		\end{align}
		Here we used the induction hypothesis on $D_t$.
	\end{itemize}
\end{enumerate}
Combining, we see that 
\begin{equation}
	D_{t+1} \le \max\left\{\left(1-\frac{c_\gamma\sigma_r^2}{6\sigma_1^2}\right)^{t+1-\cT_1}\cdot \frac{\sigma_r}{10} ,10\delta \sqrt{k+r}\sigma_1\right\}.
\end{equation}
So the induction step is proved. Note that $\cT_2$ is chosen to be the smallest integer $t$ that  
\begin{equation}
	\left(1-\frac{c_\gamma\sigma_r^2}{6\sigma_1^2}\right)^{t-\cT_1}\cdot \frac{\sigma_r}{10} \le 10\delta\sqrt{k+r}\sigma_1,
\end{equation}
the second part of Proposition~\ref{prop: stagetwo phasetwo} follows.
\subsection{Proof of Proposition~\ref{prop: stagethree}}
The proof is inspired by~\cite{zhuo2021computational}. By our assumption that $E_{t}\le 0.01\sigma_r$, we have 
\begin{equation}
	\norm{S_tS_t^\top} \le \norm{S_t S_t^\top -D_S^*} + \norm{D_S^*} \le 1.01\sigma_1.
\end{equation}
As a result, $\norm{S_t} \le 2\sqrt{\sigma_1}$. Similarly,
\begin{equation}
	\norm{T_t}\le \sqrt{\norm{T_tT_t^\top}} \le 0.1\sqrt{\sigma_r}.
\end{equation}
Moreover, 
\begin{equation}
	\sigma_r(S_tS_t^\top) \ge \sigma_r(D_S^*) - \norm{S_tS_t^\top - D_S^*} \ge \frac{\sigma_r}{2}.
\end{equation}
We obtain 
\begin{equation}
	\sigma_r(S_t) \ge \sqrt{\frac{\sigma_r}{2}}.
\end{equation}
Thus, $S_t,T_t$ satisfy all the conditions in Proposition~\ref{prop: bound on MM NN} and Proposition~\ref{prop: bound on MS NT}.
We will bound $\norm{S_{t+1}S_{t+1}^\top -D_S^*}$, $\norm{S_{t+1}T_{t+1}^\top}$, $\norm{T_{t+1}T_{t+1}^\top}$ separately.
\begin{itemize}
	\item $\norm{S_{t+1}S_{t+1}^\top - D_S^*}$. Simple algebra yields
		\begin{align}
		S_{t+1}S_{t+1}^\top - D_S^* &= \underbrace{\cM_t(S_t)\cM_t(S_t)^\top - D_S^*}_{Z_1} + \underbrace{\gamma_t(U^\top \Delta_t F_t \cM_t(S_t)^\top + \cM_t(S_t)F_t^\top \Delta_t^\top U)}_{Z_2}\\
		&\qquad + \underbrace{\gamma_t^2 U^\top \Delta_t F_tF_t^\top \Delta_t^\top U}_{Z_3}
	\end{align}
By Proposition~\ref{prop: bound on MM NN}, we obtain
	\begin{align}
		\norm{Z_1} &\le (1-\frac{3\gamma_t\sigma_r}{4})\norm{S_tS_t^\top - D_S^*} + 3\gamma_t \norm{S_tT_t^\top}^2\\
		&\overset{(\sharp)}{\le} (1-\frac{3\gamma_t\sigma_r}{4})\norm{S_tS_t^\top - D_S^*} +0.03\gamma_t \sigma_r \norm{S_t T_t^\top}\\
		&\le (1-\frac{3\gamma_t\sigma_r}{4}+0.03\gamma_t \sigma_r)E_t.
	\end{align}
	In $(\sharp)$, we used our assumption that $\norm{S_t T_t^\top}\le 0.01\sigma_r$.
	On the other hand, it's easy to see $\norm{\cM_t(S_t)} \le 3\sqrt{\sigma_1}$ by its definition and the fact that $\norm{S_t}\le 2\sqrt{\sigma_r}$. By triangle inequality,
	\begin{align}
		\norm{Z_2} &\le 2 \gamma_t \norm{U^\top \Delta_t F_t \cM_t(S_t)^\top}\\
		&\le 2 \gamma_t \norm{\Delta_t} \norm{US_t + VT_t} \norm{\cM_t(S_t)}\\
		&\le 18 \gamma_t \sigma_1 \norm{\Delta_t}\\
		&\overset{(\sharp)}{\le} 18\gamma_t \sigma_1 \delta \sqrt{k+r} \norm{F_tF_t^\top - \truX}\\ 
		&\overset{(\star)}{\le} 0.018\gamma_t \sigma_r \norm{F_tF_t^\top - \truX} 
	\end{align}
	Here $(\sharp)$ follows from~\ref{eqn: RIP bound}. $(\star)$ follows from our assumption that $\delta \sqrt{k+r} \le \frac{0.001\sigma_r}{\sigma_1}$.
	By lemma~\ref{lem: decomposition of FF-X}, we see that 
	\begin{align}
		\norm{F_tF_t^\top - \truX} &\le \norm{S_tS_t^\top - D_S^*} + 2\norm{S_tT_t^\top} + \norm{T_tT_t^\top}\\
		&\le 4 E_t
	\end{align}
	Hence, we obtain
	\begin{equation}
		\norm{Z_2} \le 0.1 \gamma_t \sigma_r E_t.
	\end{equation}
	Similarly,
	\begin{align}
		\norm{Z_3} &\le \gamma_t^2 \norm{\Delta_t}^2 \norm{F_t}^2\\
		&\le 9\sigma_1 \gamma_t^2 (\delta \sqrt{k+r})^2 \norm{F_tF_t^\top -\truX}^2\\
		&\le  144\sigma_1 \gamma_t^2 (\delta \sqrt{k+r})^2 E_t^2\\
		&\le 0.1\gamma_t \sigma_r E_t.
	\end{align}
	In the last inequality, we used our assumption that $\delta \sqrt{k+r} \le \frac{0.
		001\sigma_r}{\sigma_1} \le 0.001$, $\gamma_t\le \frac{0.01}{\sigma_1}$ and $\norm{E_t} \le 0.01\sigma_r$. Combining, we obtain
	\begin{align}
		\norm{S_{t+1}S_{t+1}^\top -D_S^*}&\le \norm{Z_1} + \norm{Z_2} + \norm{Z_3}\\
		&\le (1-\frac{\gamma_t\sigma_r}{2})E_t
	\end{align}
\item $\norm{S_{t+1} T_{t+1}^\top}$. We can expand it and get
\begin{align}
	S_{t+1}T_{t+1}^\top &= (\cM_t(S_t) + \gamma_t U^\top \Delta_t F_t)(\cN_t(T_t) + \gamma_t V^\top \Delta_t F_t)^\top\\
	&=\underbrace{\cM_t(S_t)\cN_t(T_t)^\top}_{Z_4} + \underbrace{\gamma_t U^\top \Delta_t F_t \cN_t(T_t) + \gamma_t \cM_t(S_t)F_t^\top \Delta_t^\top V}_{Z_5} \\
	&\qquad + \underbrace{\gamma_t^2 U^\top \Delta_t F_t F_t^\top \Delta_t^\top V}_{Z_6}.
\end{align}
By Proposition~\ref{prop: bound on MM NN}, we know 
\begin{equation}
	\norm{Z_4} \le (1-\frac{\gamma_t \sigma_r}{3})\norm{S_t T_t^\top}\le (1-\frac{\gamma_t \sigma_r}{3})E_t
\end{equation}
On the other hand, we see that $\norm{\cM_t(S_t)} \le 3\sqrt{\sigma_1}$ and $\norm{\cN_t(T_t)}\le \sqrt{\sigma_1}$(by bound on $S_t$ and $T_t$ and the update rule), by triangle inequality and the same argument as $\norm{S_{t+1}S_{t+1}^\top - D_S^*}$, 
\begin{align}
	\norm{Z_5}&\le \gamma_t \left(\norm{F_t} \norm{\cN_t(T_t)}+ \norm{F_t}\norm{\cM_t(S_t)} \right)\norm{\Delta_t}\\
	&\le 12\gamma_t \sigma_1 \norm{\Delta_t}\\
	&\le 12\gamma_t \sigma_1 \delta\sqrt{k+r}\norm{F_tF_t^\top -\truX}\\
	&\le 0.05\gamma_t  \sigma_r E_t.
\end{align}
Same as calculation for $\norm{S_{t+1}S_{t+1}^\top -D_S^*}$, we have 
\begin{equation}
	\norm{Z_6} \le  0.1\gamma_t  \sigma_r E_t.
\end{equation}
Combining, we obtain
\begin{align}
	\norm{S_{t+1}T_{t+1}^\top} &\le \norm{Z_4} + \norm{Z_5}+\norm{Z_6}\\
	&\le \left(1-\frac{\gamma_t\sigma_r}{6}\right) E_t.
\end{align}
\item $\norm{T_{t+1}T_{t+1}^\top}$. We expand it and obtain
\begin{align}
	T_{t+1} T_{t+1}^\top &= (\cN_t(T_t) + \gamma_t V^\top \Delta_t F_t)(\cN_t(T_t) + \gamma_t V^\top \Delta_t F_t)^\top\\
	&\le \underbrace{\cN_t(T_t)\cN_t(T_t)^\top}_{Z_7} +\underbrace{\gamma_t V^\top \Delta_t F_t \cN_t(T_t)^\top + \gamma_t \cN_t(T_t) F_t^\top \Delta_t^\top V}_{Z_8}\\
	&\qquad + \underbrace{\gamma_t^2 V^\top \Delta_t F_tF_t^\top \Delta_t^\top V}_{Z_9}
\end{align}
By Proposition~\ref{prop: bound on MM NN}, 
\begin{align}
	\norm{Z_7} \le \norm{T_tT_t^\top}(1-2\gamma_t\norm{T_tT_t^\top}) \le E_t(1-2\gamma_t E_t).
\end{align}
The last inequality follows from the fact that $x \rightarrow x(1-2\gamma_t x)$ is non-decreasing on interval $[0,\frac{1}{4\gamma_t}]$.
On the other hand,
\begin{align}
	V^\top \Delta_t F_t \cN_t(T_t)^\top &=V^\top \Delta_t (US_t + VT_t)\cN_t(T_t)^\top\\
	&= V^\top \Delta_t U S_t \cN_t(T_t)^\top + V^\top \Delta_t VT_t\cN_t(T_t)^\top
\end{align}
By Proposition~\ref{prop: bound on MS NT}, we obtain
\begin{align}
	\norm{V^\top \Delta_t F_t \cN_t(T_t)^\top}&\le \norm{V^\top \Delta_t U S_t \cN_t(T_t)^\top} + \norm{V^\top \Delta_t V T_t \cN_t(T_t)^\top}\\
	&\le \left(\norm{S_t\cN_t(T_t)^\top} + \norm{T_t\cN_t(T_t)^\top}\right)\norm{\Delta_t}\\
	&\le \left(\norm{S_t T_t^\top} + \norm{T_tT_t^\top}\right) \delta \sqrt{k+r} \norm{F_tF_t^\top - \truX}\\
	&\le 8\delta \sqrt{k+r} E_t^2.\\
	&\le 0.01 E_t^2
\end{align}
Consequently,
\begin{align}
	\norm{Z_8} \le 2\gamma_t \norm{V^\top \Delta_t F_t\cN_t(T_t)^\top} \le 0.02 \gamma_t E_t^2.
\end{align}
Furthermore, 
\begin{align}
	\norm{Z_9}&\le \gamma_t^2 \norm{F_t}^2 \norm{\Delta_t}^2\\
	&\le 9\gamma_t^2 \sigma_1 (\delta \sqrt{k+r})^2 \norm{F_tF_t^\top - \truX}^2\\
	&\le 144 \gamma_t^2 \sigma_1 (\delta \sqrt{k+r})^2 E_t^2\\
	&\le 0.1\gamma_t E_t^2.
\end{align}
In the last inequality, we used our assumption that $\gamma_t \le 0.01\sigma_1$ and $\delta \sqrt{k+r} \le 0.001$.
Combining, we obtain
\begin{align}
	\norm{T_{t+1} T_{t+1}^\top} \le E_t(1-\gamma_t E_t).
\end{align}
\end{itemize}
The result follows.
\subsection{Proof of Proposition~\ref{prop: stagethree k=r}}
The proof of this proposition has lots of overlap with Proposition~\ref{prop: stagethree}. 
By our assumption that $E_{t}\le 0.01\sigma_r$, we have 
\begin{equation}
	\norm{S_tS_t^\top} \le \norm{S_t S_t^\top -D_S^*} + \norm{D_S^*} \le 1.01\sigma_1.
\end{equation}
As a result, $\norm{S_t} \le 2\sqrt{\sigma_1}$. Similarly,
\begin{equation}
	\norm{T_t}\le \sqrt{\norm{T_tT_t^\top}} \le 0.1\sqrt{\sigma_r}.
\end{equation}
Moreover, 
\begin{equation}
	\sigma_r(S_tS_t^\top) \ge \sigma_r(D_S^*) - \norm{S_tS_t^\top - D_S^*} \ge \frac{\sigma_r}{2}.
\end{equation}
We obtain 
\begin{equation}
	\sigma_r(S_t) \ge \sqrt{\frac{\sigma_r}{2}}.
\end{equation}
Thus, $S_t,T_t$ satisfy all the conditions in Proposition~\ref{prop: bound on MM NN} and Proposition~\ref{prop: bound on MS NT}.
We will bound $\norm{S_{t+1}S_{t+1}^\top -D_S^*}$, $\norm{S_{t+1}T_{t+1}^\top}$, $\norm{T_{t+1}T_{t+1}^\top}$ separately. Note that the proof of Proposition~\ref{prop: stagethree} doesn't use $k>r$, so it also holds for the case when $k=r$. So, we already have 
\begin{equation}
	\norm{S_{t+1}S_{t+1}^\top -D_S^*} \le (1-\frac{\gamma_t \sigma_r}{2})E_t
\end{equation}
and
\begin{equation}
	\norm{S_{t+1}T_{t+1}^\top} \le (1-\frac{\gamma_t\sigma_r}{3})E_t.
\end{equation}
Next, we obtain a better bound for $\norm{T_{t+1}T_{t+1}^\top}$. We expand $T_{t+1}T_{t+1}^\top$ and obtain
\begin{align}
	T_{t+1} T_{t+1}^\top &= (\cN_t(T_t) + \gamma_t V^\top \Delta_t F_t)(\cN_t(T_t) + \gamma_t V^\top \Delta_t F_t)^\top\\
	&= \underbrace{\cN_t(T_t)\cN_t(T_t)^\top}_{Z_1} +\underbrace{\gamma_t V^\top \Delta_t F_t \cN_t(T_t)^\top + \gamma_t \cN_t(T_t) F_t^\top \Delta_t^\top V}_{Z_2}\\
	&\qquad + \underbrace{\gamma_t^2 V^\top \Delta_t F_tF_t^\top \Delta_t^\top V}_{Z_3}
\end{align}
By definition, 
\begin{equation}
	\cN_t(T_t) = T_t - \gamma_t(T_tT_t^\top T_t + T_tS_t^\top S_t).
\end{equation}
Plug this into $\cN_t(T_t)\cN_t(T_t)^\top$, we obtain
\begin{align}
	Z_1&= \cN_t(T_t)\cN_t(T_t)^\top\\
	 &=  \left(T_t - \gamma_t(T_tT_t^\top T_t + T_tS_t^\top S_t)\right)\left(T_t - \gamma_t(T_tT_t^\top T_t + T_tS_t^\top S_t)\right)^\top\\
	&=Z_4 + Z_5,
\end{align}
where 
\begin{equation}
	Z_4 = T_tT_t^\top -2\gamma_t T_tT_t^\top T_tT_t^\top- \gamma_t T_tS_t^\top S_tT_t^\top
\end{equation}
and 
\begin{equation}
	Z_5 =- \gamma_t T_tS_t^\top S_tT_t^\top+ \gamma_t^2(T_tT_t^\top T_t + T_tS_t^\top S_t)(T_tT_t^\top T_t + T_tS_t^\top S_t)^\top.
\end{equation}
We bound each of them separately. Since $k=r$, $S_t^\top S_t$ is a $r$-by-$r$. Moreover, 
\begin{equation}
\sigma_r(S_t^\top S_t) = \sigma_r(S_t)^2\ge\frac{\sigma_r}{2},
\end{equation}
By $\gamma_t \le \frac{0.01}{\sigma_1}$,
\begin{align}
	\norm{I-\gamma_tS_t^\top S_t - 2\gamma_tT_tT_t^\top} &\le 	\norm{I-\gamma_tS_t^\top S_t}\\
	&\le 1-\frac{\gamma_t \sigma_r}{2}.
\end{align} 
Consequently,
\begin{align}
	\norm{Z_4}&= \norm{T_t(I -\gamma_t S_t^\top S_t- 2\gamma_t T_t^\top T_t)T_t^\top}\\
	&\le \norm{T_t}^2\norm{(I -\gamma_t S_t^\top S_t- 2\gamma_t T_t^\top T_t)}\\
	&\le (1-\frac{\gamma_t \sigma_r}{2} )\norm{T_t}^2.
\end{align}
In addition, 
\begin{align}
	Z_5 &=  - \gamma_t T_tS_t^\top S_tT_t^\top + \gamma_t^2\left[T_tT_t^\top \left(T_tS_t^\top S_tT_t^\top \right) + \left(T_tS_t^\top S_tT_t^\top \right) T_tT_t^\top\right] +\gamma_t^2 T_tS_t^\top S_tS_t^\top S_t T_t^\top\\
	&\preceq (-\gamma_t + \frac{2}{100}\gamma_t^2\sigma_r+ 4\sigma_1\gamma_t^2)T_tS_t^\top S_t T_t^\top\\
	&\preceq 0
\end{align}
Combining, we obtain
\begin{align}
	\norm{\cN_t(T_t)\cN_t(T_t)^\top} \le \norm{Z_4} \le  (1-\frac{\gamma_t\sigma_r}{2})\norm{T_tT_t^\top}.
\end{align}
On the other hand, we see that $\norm{\cM_t(S_t)} \le 3\sqrt{\sigma_1}$ and $\norm{\cN_t(T_t)}\le \sqrt{\sigma_1}$(by bound on $S_t$ and $T_t$ and the update rule). As a result,
\begin{align}
	\norm{V^\top \Delta_t F_t \cN_t(T_t)^\top}&\le \norm{F_t} \norm{\cN_t(T_t)} \norm{\Delta_t}\\
	&\le (\norm{S_t}+\norm{T_t})\norm{\cN_t(T_t)} \delta \sqrt{k+r} \norm{F_tF_t^\top - \truX}\\
	&\le 3\sigma_1\delta \sqrt{k+r} \norm{F_tF_t^\top -\truX}.\\
	&\le 12\sigma_1 \delta \sqrt{k+r}E_t.
\end{align}
Consequently,
\begin{align}
	\norm{Z_2} \le 2\gamma_t \norm{V^\top \Delta_t F_t\cN_t(T_t)^\top} \le 0.03 \gamma_t \sigma_r E_t
\end{align}
Furthermore, 
\begin{align}
	\norm{Z_3}&\le \gamma_t^2 \norm{F_t}^2 \norm{\Delta_t}^2\\
	&\le 9\gamma_t^2 \sigma_1 (\delta \sqrt{k+r})^2 \norm{F_tF_t^\top - \truX}^2\\
	&\le 144 \gamma_t^2 \sigma_1 (\delta \sqrt{k+r})^2 E_t^2\\
	&\le 0.01\gamma_t \sigma_rE_t.
\end{align}
In the last inequality, we used our assumption that $\gamma_t \le 0.01\sigma_1$ and $\delta \sqrt{k+r} \le 0.001$.
Combining, we obtain
\begin{align}
	\norm{T_{t+1} T_{t+1}^\top} &\le \norm{Z_1}+\norm{Z_2}+\norm{Z_3}\\
	&\le (1-\frac{\gamma_t\sigma_r}{3})\norm{T_tT_t^\top}
\end{align}

\section{Proof of RDPP}
    \label{sec: Proof of RDPP random}
    Throughout this section, we denote
\begin{align*}
	\bS \;:=\;\{X \in \mathcal{S}^{d\times d} \colon \fnorm{X}=1\},\quad \bS_r\;:=\; \{X \in \mathcal{S}^{d\times d} \colon \fnorm{X}=1, \rank(X)\le r\}.
\end{align*}
Here we split the Proposition~\ref{prop: main RDPP for two models} into two parts and prove them separately. For the ease of notation, we use $r$ to denote the rank, instead of $k'$.
\begin{proposition}\label{prop: sign-RIP}
	Assume that the sensing matrix $A_i\overset{i.i.d.}{\sim}\textit{GOE}(d)$,\footnote{Gaussian orthogonal ensemble(GOE): $A$ is symmetric with $A_{ij}=A_{ji}\sim N(0,\frac{1}{2})$ for $i\neq j$ and $A_{ii}\sim N(0,1)$ independently.} and the corruption is from model~\ref{md: randomCorruption}. Then RDPP holds with parameters $(r,\delta)$ and a scaling function $\psi(X) = \frac{1}{m}\sum_{i=1}^{m}\sqrt{\frac{2}{\pi}}\left(1-p + p\EE_{s_i\sim \mathbb{P}_i}\left[\exp(-\frac{s_i^2}{2\fnorm{X}^2})\right]\right)$ with probability at least $1-Ce^{-cm\delta^4}$, given $m \gtrsim \frac{dr \left(\log(\frac{1}{\delta}) \vee 1\right)}{\delta^4}$.
\end{proposition}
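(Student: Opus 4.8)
\textbf{Proof proposal for Proposition~\ref{prop: sign-RIP}.} The plan is a covering-number argument resting on (a) an exact computation of $\EE[D(X)]$ via the Gaussian structure of the GOE, (b) a pointwise deviation bound obtained by a decorrelation trick, and (c) a discretization estimate that copes with the \emph{non-Lipschitz} dependence of $D$ on $X$. Without loss of generality I restrict to $\fnorm{X}=1$, i.e. $X\in\bS_r$: since $\sign$ is scale-invariant, $D(cX)$ is the $D$-operator for $X$ with corruptions $s_i/c$, which is again an admissible RC corruption, and on $\bS_r$ the scaling function $\psi(X)$ reduces to the fixed constant $\psi=\frac1m\sum_i\sqrt{2/\pi}\,(1-p+p\,\EE_{s_i}[e^{-s_i^2/2}])$.

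\emph{Step 1 (expectation).} Fix $X\in\bS_r$ and set $g_i=\dotp{A_i,X}$. For the GOE one has $\mathrm{Cov}(\dotp{A_i,Y},\dotp{A_i,Z})=\dotp{Y,Z}$, so $g_i\sim N(0,1)$ and $\EE[A_i g_i]=X$; thus $A_i=g_iX+R_i$ with $R_i:=A_i-g_iX$ satisfying $\EE[R_ig_i]=0$, hence (joint Gaussianity) $R_i\perp g_i$, and since $s_i\perp A_i$ we get $R_i\perp(g_i,s_i)$, hence $R_i\perp\sigma_i:=\sign(g_i-s_i)$. Therefore $\EE[\sigma_iA_i]=X\,\EE[\sigma_i g_i]+\EE[\sigma_i]\,\EE[R_i]=X\,\EE[\sigma_ig_i]$, and the one-dimensional integral $\EE[\sign(g-s)g\mid s]=2\phi(s)=\sqrt{2/\pi}\,e^{-s^2/2}$ gives, after averaging over $i$, $\EE[D(X)]=\psi X=\psi\,X/\fnorm X$ — exactly the claimed scaling function (the general-$\fnorm X$ formula falls out of the same computation after rescaling $g_i$).

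\emph{Step 2 (pointwise concentration).} Using the same decomposition,
\[
D(X)-\psi X=\Big(\tfrac1m\textstyle\sum_i\sigma_ig_i-\psi\Big)X+\tfrac1m\textstyle\sum_i\sigma_iR_i .
\]
The scalar coefficient is an average of independent, uniformly sub-Gaussian variables (note $|\sigma_ig_i|=|g_i|$), so Bernstein gives $|\tfrac1m\sum_i\sigma_ig_i-\psi|\le\delta/4$ with probability $\ge1-2e^{-cm\delta^2}$. For the matrix term, conditioning on $(\sigma_i)_i$ leaves $\sum_i\sigma_iR_i$ a sum of i.i.d. centered Gaussian matrices, whose law is that of $\sqrt m$ times a single Gaussian matrix with covariance dominated by that of $A_1$; Gaussian matrix operator-norm concentration then yields $\|\tfrac1m\sum_i\sigma_iR_i\|\le\delta/4$ with probability $\ge1-2e^{-cm\delta^2}$ once $m\gtrsim d/\delta^2$. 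Hence $\|D(X)-\psi X\|\le\delta/2$ pointwise with probability $\ge1-Ce^{-cm\delta^2}$; the decorrelation trick is precisely what removes the extra factor of $d$ that a naive matrix Bernstein bound (with $\|A_i\|\asymp\sqrt d$) would pay.

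\emph{Step 3 (discretization, the crux).} Let $\mathcal N$ be an $\epsilon$-net of $\bS_r$; rank-$r$ symmetric matrices form a manifold of dimension $O(dr)$, so $\log|\mathcal N|\lesssim dr\log(1/\epsilon)$. For $X\in\bS_r$ pick $X'\in\mathcal N$ with $\fnorm{X-X'}\le\epsilon$. Since $X\mapsto D(X)$ is not Lipschitz, I would argue that $\sign(g_i-s_i)$ and $\sign(\dotp{A_i,X'}-s_i)$ differ only on the flip set $F_X=\{i:|g_i-s_i|\le|\dotp{A_i,X-X'}|\}$, whose per-sample probability is $\lesssim\sqrt\epsilon$ (split on whether $|\dotp{A_i,X-X'}|$ exceeds $\sqrt\epsilon$: the tail is negligible because $\dotp{A_i,X-X'}\sim N(0,\fnorm{X-X'}^2)$, while on the complement the density bound $(2\pi)^{-1/2}$ for $g_i-s_i$ — valid for any $s_i\perp A_i$ — controls the small-ball term). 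A Chernoff bound gives $|F_X|\lesssim\sqrt\epsilon\,m$ uniformly over $\mathcal N$, and then $\|D(X)-D(X')\|=\tfrac2m\|\sum_{i\in F_X}\pm A_i\|$ is bounded by controlling the spectral norm of adversarially-signed partial sums of GOE matrices over the flip set (via Cauchy–Schwarz against $\sum_i\|A_iv\|^2\lesssim md$, then a net over the sphere). This gives $\|D(X)-D(X')\|\le\delta/2$ provided $\epsilon\lesssim\mathrm{poly}(\delta/d)$; combining with Step 2 over $\mathcal N$ and balancing $\epsilon$ against $\delta$ yields the claim. The main obstacle is exactly this step: the sign discontinuity forbids the usual Lipschitz-plus-net reasoning, forcing a uniform control of both the fraction of flipped samples and the operator norm of the associated GOE partial sums, and the (necessarily somewhat lossy) Cauchy–Schwarz estimate there is what degrades the sample complexity to $m\gtrsim dr(\log(1/\delta)\vee1)/\delta^4$ rather than the $\delta^{-2}$ suggested by Step 2 alone.
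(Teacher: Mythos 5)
Your overall strategy matches the paper's almost exactly: compute $\EE[D(X)]$ from the Gaussian structure, establish pointwise concentration, and handle the sign discontinuity via a flip-set plus small-ball-plus-Chernoff plus Cauchy--Schwarz argument over a net of rank-$r$ matrices. Steps 1 and 2 are correct, and Step 2 is in fact a cleaner variant than what the paper does: the paper proves a \emph{scalar} pointwise bound for fixed $(X,Y)$ (Lemma on sub-Gaussianity of $\sign(\dotp{A_i,X}-s_i)\dotp{A_i,Y}$) and then recovers the operator norm through a separate net over $Y\in\bS_1$; your decomposition $A_i=g_iX+R_i$ with $R_i\perp(g_i,s_i)$ gives a direct operator-norm pointwise bound via Bernstein on the scalar part plus Gaussian-matrix concentration on $\tfrac1m\sum_i\sigma_iR_i$, bypassing the net over $Y$ at the pointwise stage. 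The justification that $R_i\perp g_i$ (by computing $\mathrm{Cov}(g_i,(R_i)_{kl})=X_{kl}-X_{kl}\mathrm{Var}(g_i)=0$) is correct for the GOE covariance $\mathrm{Cov}(\dotp{A,Y},\dotp{A,Z})=\dotp{Y,Z}$.

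Step 3 has a concrete technical slip in the Cauchy--Schwarz that would, taken literally, spoil the sample complexity. You bound $\tfrac2m\|\sum_{i\in F_X}\pm A_i\|$ via the vector estimate
\[
\Bigl\|\sum_{i\in F_X}\pm A_i\,v\Bigr\| \;\le\; \sqrt{|F_X|}\sqrt{\textstyle\sum_i\|A_iv\|^2}\;\lesssim\;\sqrt{|F_X|\,md},
\]
which carries a factor $\sqrt d$ (since $\EE\|A_iv\|^2\approx d$ for GOE). Forcing $\tfrac2m\sqrt{|F_X|md}\le\delta$ then requires $|F_X|\lesssim m\delta^2/d$, hence $\epsilon\lesssim\delta^4/d^2$, and the uniform Chernoff bound for $|F_X|$ over a net of cardinality $e^{O(dr\log(1/\epsilon))}$ then needs $m\,(\delta^2/d)^2\gtrsim dr\log(d/\delta)$, i.e.\ $m\gtrsim d^3r\log(d/\delta)/\delta^4$ --- overshooting the claimed $dr\log(1/\delta)/\delta^4$. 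The paper avoids this by keeping everything in the scalar/quadratic form: for symmetric $M$, $\|M\|=\sup_{Y\in\bS_1}\dotp{M,Y}=\sup_{\|v\|=1}|v^\top M v|$, and the Cauchy--Schwarz is applied to the scalars $v^\top A_i v=\dotp{A_i,vv^\top}$, which satisfy $\tfrac1m\sum_i\dotp{A_i,Y}^2\lesssim 1$ uniformly over rank-one $Y\in\bS_1$ by the $\ell_2$-RIP lemma (no $d$ factor). This gives $\tfrac2m\sqrt{|F_X|}\sqrt{\sum_i\dotp{A_i,Y}^2}\lesssim\sqrt{|F_X|/m}$, so $\epsilon\asymp\delta^4$ suffices and the Chernoff union bound produces exactly $m\gtrsim dr\log(1/\delta)/\delta^4$. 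Replace $\sum_i\|A_iv\|^2\lesssim md$ by $\sum_i(v^\top A_iv)^2\lesssim m$ in your Step 3 and the argument closes at the stated rate.
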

\begin{proposition}\label{prop: RDPP arbitrary}
    Assume that the sensing matrices $\{A_i\}_{i=1}^m$ have i.i.d. standard Gaussian entries, and the corruption is from model~\ref{md: arbitraryCorruption}. Moreover, we modify function $\sign(x)$ such that $\sign(x) = \begin{cases}
	\{-1\} & x<0\\
	\{-1,1\} & x=0\\
	\{1\} & x>0
\end{cases}$ . Then, RDPP-II holds with parameter $(r,\delta+ 3\sqrt{\frac{dp}{m}} +3p)$ and a scaling function $\psi(X)=\sqrt{\frac{2}{\pi}}$ with probability at least $1-\exp(-(pm+d)) - \exp(-c'm\delta^4)$, given $m \gtrsim \frac{dr \left(\log(\frac{1}{\delta}) \vee 1\right)}{\delta^4}$.
\end{proposition}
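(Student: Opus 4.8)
By homogeneity of both sides of \eqref{eq: RDPP} under scaling of $X$, I would reduce to $X\in\bS_r$, i.e.\ symmetric, rank at most $r$, with $\fnorm{X}=1$. Let $S\subset\{1,\dots,m\}$ with $|S|=\floor{pm}$ be the (random, $\mathcal A$-independent) support of $s$. Since $s_i=0$ for $i\notin S$, the plan is to split
\begin{equation*}
  D(X)\;=\;\underbrace{\tfrac1m\sum_{i=1}^m\sign(\dotp{A_i,X})A_i}_{=:\,D_{\mathrm c}(X)}\;+\;\underbrace{\tfrac1m\sum_{i\in S}\bigl(\sign(\dotp{A_i,X}-s_i)-\sign(\dotp{A_i,X})\bigr)A_i}_{=:\,R(X)},
\end{equation*}
where $D_{\mathrm c}(X)$ is exactly the corruption-free statistic and $R(X)$ collects the adversarial sign flips on $S$, each contributing a coefficient in $[-2,2]$. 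I would then bound $\sup_{X\in\bS_r}\opnorm{D_{\mathrm c}(X)-\sqrt{2/\pi}\,X}\le\delta$ and $\sup_{X\in\bS_r}\opnorm{R(X)}\le 3\sqrt{dp/m}+3p$ on events of the stated probabilities, and finish by the triangle inequality plus a union bound.

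\textbf{The corruption-free term.} First I would note that a Stein / Gaussian integration-by-parts computation gives $\EE[\sign(\dotp{A_i,X})A_i]=\sqrt{2/\pi}\,X$ (using $\mathrm{Cov}((A_i)_{jk},\dotp{A_i,X})=X_{jk}$ and that the Gaussian density of $\dotp{A_i,X}$ at $0$ is $1/\sqrt{2\pi}$), so $\EE D_{\mathrm c}(X)=\sqrt{2/\pi}\,X$. The required uniform deviation bound is then exactly the corruption-free ($p=0$) case of \Cref{prop: sign-RIP}; the only discrepancy is that here the $A_i$ have i.i.d.\ standard Gaussian entries rather than GOE. I would transfer via the decomposition $A_i=M_i+N_i$ with $M_i=(A_i+A_i^\top)/2$ (which is GOE) and $N_i=(A_i-A_i^\top)/2$ (anti-symmetric, independent of $M_i$): since $X$ is symmetric, $\dotp{A_i,X}=\dotp{M_i,X}$, so $\tfrac1m\sum_i\sign(\dotp{M_i,X})M_i$ is governed by \Cref{prop: sign-RIP}, while the residual $\tfrac1m\sum_i\sign(\dotp{M_i,X})N_i$ is mean zero conditionally on $\{M_i\}$ and lower order. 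This yields $\sup_{X\in\bS_r}\opnorm{D_{\mathrm c}(X)-\sqrt{2/\pi}\,X}\le\delta$ with probability at least $1-C e^{-c'm\delta^4}$ once $m\gtrsim dr(\log(1/\delta)\vee1)/\delta^4$.

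\textbf{The corruption term.} Since every coefficient of $R(X)$ lies in $[-2,2]$ and $\theta\mapsto\opnorm{\sum_{i\in S}\theta_iA_i}$ is convex, I would bound, uniformly in $X$,
\begin{equation*}
  \opnorm{R(X)}\;\le\;\tfrac1m\max_{\theta\in[-2,2]^{|S|}}\opnorm{\textstyle\sum_{i\in S}\theta_iA_i}\;=\;\tfrac2m\max_{\sigma\in\{\pm1\}^{|S|}}\opnorm{\textstyle\sum_{i\in S}\sigma_iA_i}.
\end{equation*}
For each fixed $\sigma$, $\sum_{i\in S}\sigma_iA_i$ has i.i.d.\ $N(0,|S|)$ entries, so Gaussian concentration of the operator norm gives $\opnorm{\sum_{i\in S}\sigma_iA_i}\le\sqrt{|S|}\,(C_1\sqrt d+u)$ with probability $\ge1-2e^{-u^2/2}$; taking $u\asymp\sqrt{pm+d}$ and union bounding over the $2^{|S|}\le2^{pm}$ vertices (legitimate because $S$ and its size are independent of $\mathcal A$) yields $\sup_{X\in\bS_r}\opnorm{R(X)}\lesssim\sqrt{dp/m}+p$ with probability at least $1-e^{-(pm+d)}$; a careful accounting of the constants gives the stated $3\sqrt{dp/m}+3p$.

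\textbf{Main obstacle.} The genuinely hard step is the uniform bound on $D_{\mathrm c}$ (inside \Cref{prop: sign-RIP}, which this proposition reuses): because $\sign$ is discontinuous, one cannot Lipschitz-discretize over the non-convex set $\bS_r$. The remedy I would use is to control, for $X,X'$ close in Frobenius norm, the number of indices $i$ with $\sign(\dotp{A_i,X})\ne\sign(\dotp{A_i,X'})$ through an anti-concentration (small-ball) estimate for the Gaussian $\dotp{A_i,X}$ near $0$, bounding each flip's contribution by $\opnorm{A_i}/m$, and then balance this discretization error against the $\asymp dr\log(1/\epsilon)$ metric entropy of $\bS_r$ — this trade-off is precisely what produces the $\delta^{-4}$ and $\log(1/\delta)$ factors in the sample size. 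The corruption part, by contrast, is routine once one observes that the worst-case over adversarial signs reduces to a union bound over a fixed (data-independent) sign set.
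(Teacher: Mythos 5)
Your high-level plan matches the paper's: separate the clean from the corrupted contributions, invoke the $p=0$ case of Proposition~\ref{prop: sign-RIP} for the uniform concentration of the clean part, control the corrupted part by a union bound over the $2^{|S|}$ adversarial sign choices via Gaussian operator-norm concentration (Lemma~\ref{lem: concentration of opnorm}), and finish with the triangle inequality. You also correctly spotted that Proposition~\ref{prop: sign-RIP} is stated for GOE while this proposition stipulates i.i.d.\ standard Gaussian entries; your symmetrization $A_i = M_i + N_i$ is a reasonable way to bridge that gap, which the paper glosses over (its proof of Proposition~\ref{prop: sign-RIP} works for either ensemble because it only touches $A_i$ through inner products with symmetric test matrices).

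The genuine gap is a factor of two in the corruption term, which prevents you from reaching the stated bound. You write $D(X) = D_{\mathrm c}(X) + R(X)$ with $R(X) = \tfrac1m\sum_{i\in S}\bigl(\sign(\dotp{A_i,X}-s_i)-\sign(\dotp{A_i,X})\bigr)A_i$, whose coefficients lie in $\{-2,0,2\}$, and then bound $\opnorm{R(X)}\le\tfrac{2}{m}\max_\sigma\opnorm{\sum_{i\in S}\sigma_i A_i}$. Feeding in the same operator-norm concentration estimate that the paper uses ($\tfrac{1}{m}\max_\sigma\opnorm{\sum_{i\in S}\sigma_i A_i}\le 3\sqrt{dp/m}+2p$) gives you roughly $6\sqrt{dp/m}+4p$, not $3\sqrt{dp/m}+3p$; the factor of two coming from the width of $[-2,2]$ is intrinsic to your splitting, so no amount of "careful accounting" recovers the claimed constant. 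The paper avoids this by decomposing $D(X)-\sqrt{2/\pi}\,X = Z_1 + Z_2 - Z_3$, where $Z_1 = \tfrac1m\sum_{i\in S}\sign(\dotp{A_i,X}-s_i)A_i$ keeps coefficients in $\{\pm1\}$ and is bounded directly (giving $3\sqrt{dp/m}+2p$), $Z_2$ is the clean sum over $S^c$ minus $(1-p)\sqrt{2/\pi}\,X$ (controlled by Proposition~\ref{prop: sign-RIP} and $\le\delta$), and the leftover expectation mass $Z_3 = p\sqrt{2/\pi}\,X$ is a deterministic term of norm $\le p$. Pushing that extra mass into a deterministic $Z_3$, rather than subtracting $\sign(\dotp{A_i,X})$ inside the $S$-sum as you do, is exactly what preserves the tighter constant. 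Your argument establishes the proposition up to absolute constants in front of $\sqrt{dp/m}$ and $p$, but not literally as stated.
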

\subsection{Proof of Proposition~\ref{prop: sign-RIP}}
In the probability bounds that we obtained, the $c$ might be different from bounds to bounds, but they are all universal constants.
\nct
{
\begin{lem}\label{lem: expection of subgradient}
	Suppose that we are under Model~\ref{md: randomCorruption}. Then, for every nonzero $X\in \bS^{d\times d}$, and  every $D \in \cD(X)$, the expectation $\expect{D}$ is 
	\begin{equation}
		\expect{D} = \psi(X) \frac{X}{\fnorm{X}},\,
	\text{where $\psi(X) = \frac{1}{m}\sum_{i=1}^{m}\sqrt{\frac{2}{\pi}} \left(1-p + p\mathbb{E}_{s_i \sim \mathbb{P}_i}\left[e^{-s_i^2/2\fnorm{X}^2}\right]\right)$.}
		\end{equation}
\end{lem}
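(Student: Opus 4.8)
The plan is to compute $\EE\big[\sign(\dotp{A_i,X}-s_i)A_i\big]$ for a single index $i$ and then average over $i=1,\dots,m$; by linearity of expectation this reduces the whole statement to the per-term computation. First I would note that under Model~\ref{md: randomCorruption} the corruption $s_i$ is independent of $A_i$, and $\dotp{A_i,X}$ is a non-degenerate Gaussian (its variance is $\fnorm{X}^2\neq 0$), so the event $\dotp{A_i,X}-s_i=0$ has probability zero; hence $\cD(X)$ is almost surely the singleton $\{D(X)\}$ and $\EE[D]$ is unambiguous.

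Next I would condition on $s_i$. The core step is a Gaussian conditioning (linear-regression) identity: for $A_i\sim\mathrm{GOE}(d)$ and symmetric $X$ one checks entrywise that $\mathrm{Cov}\big((A_i)_{kl},\dotp{A_i,X}\big)=X_{kl}$ — the factor $\tfrac12$ in the variance of the off-diagonal GOE entries is exactly cancelled by the double counting of $X_{kl}$ and $X_{lk}$ — while $\mathrm{Var}(\dotp{A_i,X})=\fnorm{X}^2$. Consequently $R_i:=A_i-\tfrac{\dotp{A_i,X}}{\fnorm{X}^2}X$ is jointly Gaussian with $\dotp{A_i,X}$ and uncorrelated with it, hence independent of it, with $\EE[R_i]=0$. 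Since $\sign(\dotp{A_i,X}-s_i)$ is a function of $\dotp{A_i,X}$ and $s_i$ only, this yields
\[
\EE\big[\sign(\dotp{A_i,X}-s_i)A_i \,\big|\, s_i\big] \;=\; \frac{X}{\fnorm{X}^2}\,\EE\big[\dotp{A_i,X}\,\sign(\dotp{A_i,X}-s_i)\,\big|\, s_i\big].
\]

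It then remains to evaluate a one-dimensional Gaussian integral. Writing $\dotp{A_i,X}=\fnorm{X}\,z$ with $z\sim N(0,1)$ and $a=s_i/\fnorm{X}$, a direct computation using $\int z\phi(z)\,dz=-\phi(z)$ gives $\EE[z\,\sign(z-a)]=2\phi(a)=\sqrt{\tfrac{2}{\pi}}\,e^{-a^2/2}$, so the conditional expectation above equals $\sqrt{\tfrac{2}{\pi}}\,e^{-s_i^2/(2\fnorm{X}^2)}\,\tfrac{X}{\fnorm{X}}$. Taking expectation over $s_i$ — which equals $0$ with probability $1-p$ and is distributed as $\mathbb{P}_i$ with probability $p$ — produces the per-term coefficient $\sqrt{\tfrac{2}{\pi}}\big(1-p+p\,\EE_{s_i\sim\mathbb{P}_i}[e^{-s_i^2/(2\fnorm{X}^2)}]\big)$, and averaging over $i=1,\dots,m$ gives $\psi(X)\tfrac{X}{\fnorm{X}}$, as claimed. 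The same argument applies verbatim when the $A_i$ have i.i.d.\ standard Gaussian entries, since $X$ is symmetric.

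The only place requiring genuine care is the covariance identity $\mathrm{Cov}(A_i,\dotp{A_i,X})=X$ for the GOE: the differing variances of diagonal and off-diagonal entries must be tracked correctly, and one must justify the passage from "uncorrelated" to "independent" via joint Gaussianity of $\big(\mathrm{vec}(A_i),\dotp{A_i,X}\big)$ so that $\sign(\dotp{A_i,X}-s_i)$ can be pulled out of the conditional expectation of $A_i$. Everything else is an elementary Gaussian integral and a conditioning on $s_i$.
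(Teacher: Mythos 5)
Your proof is correct and rests on the same core idea as the paper's: condition on $s_i$, reduce the matrix expectation to the scalar quantity $\EE[\dotp{A_i,X}\,\sign(\dotp{A_i,X}-s_i)]$ via Gaussian conditioning, and evaluate a one-dimensional integral to get $\sqrt{2/\pi}\,e^{-s_i^2/(2\fnorm{X}^2)}$. The presentations are dual: the paper fixes an arbitrary symmetric test matrix $Y$, writes $v=\dotp{A,Y}=\rho u+\sqrt{1-\rho^2}\,w$ with $u=\dotp{A,X}$ and $\rho=\dotp{X,Y}$, computes $\EE[\sign(u-s)v]=\rho\,\EE[\sign(u-s)u]$, and then reads off the matrix identity because the resulting linear functional of $Y$ must be given by $\sqrt{2/\pi}\,\EE[e^{-s^2/2\fnorm{X}^2}]\,X/\fnorm{X}$; you instead decompose $A$ itself as $A=\tfrac{\dotp{A,X}}{\fnorm{X}^2}X+R$ with $R$ mean-zero and (by joint Gaussianity plus the covariance identity $\mathrm{Cov}(A_{kl},\dotp{A,X})=X_{kl}$, where the factor $\tfrac12$ in the off-diagonal GOE variance is cancelled by the symmetric double-counting) independent of $\dotp{A,X}$, so that $R$ drops out directly. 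Both are correct; your version skips the "holds for all $Y$ so equals the claimed matrix" step and is arguably a bit more direct, while the paper's test-matrix argument generalizes immediately to a scalar concentration statement (which is how they later prove Lemma \ref{lem: concentration for fixed XY}). Your covariance verification and the a.s.\ well-definedness remark for $\cD(X)$ are both sound.
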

}
\begin{proof}
	We may drop the subscript under expectation when the distribution is clear. Firstly, we show that for any $X, Y \in \bS^{d\times d}$, if $s$ follows distribution $\mathbb{P}$, $A$ is GOE matrix and they are independent, then
	\begin{equation}\label{eqn: expectation of subgradient}
		\expect{\sign(\dotp{A,X} -s) \dotp{A,Y}} = \sqrt{\frac{2}{\pi}} \expect{e^{-s^2/2\fnorm{X}^2}} \dotp{\frac{X}{\fnorm{X}}, Y}.
	\end{equation}
	In this section, $\sign(\dotp{A,x}-s)$ should be thought of as any element chosen from the corresponding set. There is ambiguity when $\dotp{A,x}-s=0$, but this happens with probability 0, so it won't affect the result. Without loss of generality, we assume $\fnorm{X} = \fnorm{Y}=1$. To leverage the fact that $A$ is GOE matrix, we denote $u = \dotp{A,X}$, $v= \dotp{A,Y}$ and $\rho = \text{cov}(u,v)$. Simple calculation yields $u \sim N(0,1)$, $v\sim N(0,1)$ and $\rho = \dotp{X,Y}$. By coupling, we can write $v= \rho u + \sqrt{1-\rho^2}w$, where $w$ is another standard Gaussian independent of others. 
	Using the definition of $u,v,\rho,w$, we have 
	\nct{ 
	\begin{align}
		&\expect{\sign(\dotp{A,X} -s) \dotp{A,Y}} 
		=\expect{\sign(u-s) v} =\rho \expect{\sign(u-s) u}.\label{eq: expAX-s1}
	\end{align}
	}
	We continue the above equality using the properties of Gaussian:
\nct{	
\begin{align}
		\rho \expect{\sign(u-s) u}
		=&\rho \mathbb{E}_s\left[\int_{s}^{+\infty} u \frac{1}{\sqrt{2\pi}}e^{-u^2/2} du - \int_{-\infty}^{s}u \frac{1}{\sqrt{2\pi}}e^{-u^2/2} du\right]\\
		\overset{(a)}{=}&\rho \mathbb{E}_s\left[\int_{s}^{+\infty} u \frac{1}{\sqrt{2\pi}}e^{-u^2/2} du + \int_{-s}^{+\infty}u \frac{1}{\sqrt{2\pi}}e^{-u^2/2} du\right]\\
		\overset{(b)}{=}& 2\rho  \mathbb{E}_s\left[\int_{|s|}^{+\infty} u \frac{1}{\sqrt{2\pi}}e^{-u^2/2} du\right]\\
		=&\sqrt{\frac{2}{\pi}} \rho \mathbb{E}_s\left[ \int_{|s|}^{+\infty} d(-e^{-u^2/2})\right]
		=\sqrt{\frac{2}{\pi}} \rho \mathbb{E}_s\left[e^{-s^2/2}\right]. \label{eq: expAX-s2}
	\end{align}
}
\nct{
Here, in the steps $(a)$, we do a change of variable $u\mapsto -u$. In the step $(b)$, we use the fact that the density of standard Gaussian is symmetric. 
	Recall that $\rho = \dotp{X,Y}$. Hence, the equation~\eqref{eqn: expectation of subgradient} follows from \eqref{eq: expAX-s1} - \eqref{eq: expAX-s2}.} 
Since it holds for all symmetric $Y$, we obtain
	\begin{equation}
		\expect{\sign(\dotp{A,X} -s) A} = \sqrt{\frac{2}{\pi}} \expect{e^{-s^2/2\fnorm{X}^2}} \frac{X}{\fnorm{X}}.
	\end{equation}
	On the other hand, if we apply the above result to the case when $s\equiv 0$, we get
	\begin{equation}
		\expect{\sign(\dotp{A,X} ) A} = \sqrt{\frac{2}{\pi}}  \frac{X}{\fnorm{X}}.
	\end{equation}
	When $s_i$'s are form model~\ref{md: randomCorruption}, by tower property and results above,
	\begin{align}
		\expect{\sign(\dotp{A_i,X} -s_i) A_i }&=\expect{\EE[\sign(\dotp{A_i,X} -s_i) A_i \mid s_i]}\\
		&= (1-p)\expect{\sign(\dotp{A_i,X} ) A_i} + p\EE_{s_i\sim \mathbb{P}_i, A_i}\left[\sign(\dotp{A_i,X} -s_i) A_i\right]\\
		&=\sqrt{\frac{2}{\pi}}\left((1-p) +p\expect{e^{-s^2/2\fnorm{X}^2}} \right) \frac{X}{\fnorm{X}}
	\end{align}
	The lemma follows from the linearity of expectation.
\end{proof}
\nct{Lemma \ref{lem: expection of subgradient} is an analogue of \cite[Lemma 3]{ma2021implicit}. Note that the function $\psi$ is not necessarily the quantity $\sqrt{\frac{2}{\pi}}\left((1-p) +p\expect{e^{-s_i^2/2\fnorm{X}^2}} \right) \frac{X}{\fnorm{X}}$, which appears in \cite[Lemma 3]{ma2021implicit}, since  the corruptions are not assumed to be i.i.d in this paper.
}

Next, we prove a probability bound that holds for any fixed $X,Y \in \bS$.
\nct{
\begin{lem}\label{lem: concentration for fixed XY}
Under Model~\ref{md: randomCorruption}, there exists a universal constant $c$ such that for any $\delta >0, X\in \bS, Y\in \bS$, with probablity at most $ 2 e^{-cm\delta^2}$, the following event happens 
	\begin{equation}
		\abs{\frac{1}{m}\sum_{i=1}^{m}\sign\left(\dotp{A_i,X} -s_i\right) \dotp{A_i,Y} -\psi(X) \dotp{X,Y}} > \delta ,
	\end{equation}
		where $\psi(X) = \frac{1}{m}\sum_{i=1}^{m}\sqrt{\frac{2}{\pi}} \left(1-p + p\mathbb{E}_{s_i \sim \mathbb{P}_i}\left[e^{-s^2_i/2\fnorm{X}^2}\right]\right)$.
\end{lem}
}
\begin{proof}
	We first show that $\sign(\dotp{A_i,X} -s_i) \dotp{A_i,Y}$ is a sub-Gaussian random variable. Let consider the Orlicz norm~
	\cite{wainwright2019high} with $\psi_2(x) = e^{x^2}-1$.
	$\dotp{A_i,Y}$ is standard Gaussian, so it has sub-Gaussian parameter $1$. By property of Orlicz norm, $\norm{ \dotp{A_i,Y}}_{\psi_2}\le C$ for some constant $C$. Moreover, $\abs{\sign(\dotp{A_i,X} -s_i)}\le 1$, so
	\begin{equation}
		\norm{\sign(\dotp{A_i,X} -s_i) \dotp{A_i,Y}}_{\psi_2} \le \norm{ \dotp{A_i,Y}}_{\psi_2}\le C.
	\end{equation} 
	By property of Orlicz norm again, we know $\sign(\dotp{A_i,X} -s_i) \dotp{A_i,Y}$ is sub-Gaussian with constant sub-Gaussian parameter. By Lemma~\ref{lem: expection of subgradient}, we have 
	\begin{equation}
		\expect{\frac{1}{m}\sum_{i=1}^m\sign\left(\dotp{A_i,X}-s_i\right) \dotp{A_i,Y}} = \psi(X)\dotp{X,Y}.
	\end{equation}
	By Chernoff bound, we can find some constant $c>0$ such that 
	\begin{align}
		&P\left(\abs{\frac{1}{m}\sum_{i =1}^m\sign\left(\dotp{A_i,X}-s_i\right) \dotp{A_i,Y} - \psi(X)\dotp{X,Y}} \ge \delta\right)\\
		\qquad &\le 2e^{-cm\delta^2}
	\end{align}
\end{proof}
\nct{Lemma C.4 is an analogue of \cite[Lemma 4]{ma2021implicit}. Since the corruptions are not assumed to be i.i.d., the function $\psi$  is different from the quantity $\sqrt{\frac{2}{\pi}}\left((1-p) +p\expect{e^{-s_i^2/2\fnorm{X}^2}} \right) \frac{X}{\fnorm{X}}$, which appears in \cite[Lemma 3]{ma2021implicit}. Moreover, we need to apply a (generalized) Chernoff bound for a sum of random variables with different sub-Gaussian parameters in the end of our proof rather than a concentration bound for i.i.d. random variables as done in \cite[Lemma 4]{ma2021implicit}.}
\begin{proof}[Proof of Proposition~\ref{prop: sign-RIP}]
	Without loss of generality, we only need to prove the bound holds for all $X\in \bS_r$ with high probability. By Lemma~\ref{lem: covering lemma for low rank matrices}, we can find $\epsilon$-nets $\bS_{\epsilon,r} \subset \bS_r$, $\bS_{\epsilon, 1} \subset \bS_1$ with respect to Frobenius norm and satisfy $\abs{\bS_{\epsilon,r}} \le \left(\frac{9}{\epsilon}\right)^{(2d+1)r}$, $\abs{\bS_{\epsilon,1}} \le \left(\frac{9}{\epsilon}\right)^{2d+1}$. For any $\bar X \in \bS_{\epsilon,r}$, define $B_r(\bar X, \epsilon) =\{X \in \bb S_r \colon \fnorm{X-\bar X}\le \epsilon\}$. $B_1(\bar X, \epsilon)$ is defined similarly by $B_1(\bar X, \epsilon) =\{X \in \bb S_1 \colon \fnorm{X-\bar X}\le \epsilon\}$. 
	\nct{Then, for any $\bar X,\bar Y$ and $X\in B_r(\bar X,\epsilon)$, $Y \in B_1(\bar Y, \epsilon)$, we have $\dotp{X,Y}- \dotp{\bar X, \bar Y} = \dotp{X, Y - \bar Y} +\dotp{X-\bar X, \bar Y}$. By bounding the two terms on the RHS of the previous equality via the Cauchy-Schwarz's inequality, we have 
	\begin{equation}\label{eqn: XYbarXbarY}
		\abs{\dotp{X,Y}- \dotp{\bar X, \bar Y}} \le  2\epsilon.
	\end{equation}
	}
\nct{ Let us also decompose the quantity of interest, $R:=\frac{1}{m} \sum_{i=1}^m \sign(\dotp{A_i,X}- s_i)\dotp{A_i, Y} - \psi(X)\dotp{X,Y}$, into four terms}:
\nct{ 
\begin{align}
	   R&:\,=\frac{1}{m} \sum_{i=1}^m \sign(\dotp{A_i,X}- s_i)\dotp{A_i, Y} - \psi(X)\dotp{X,Y} \\
	   =&  \underbrace{\frac{1}{m} \sum_{i=1}^m \sign(\dotp{A_i,\bar X}- s_i)\dotp{A_i, \bar Y} - \psi(\bar X)\dotp{\bar X,\bar Y}}_{=:R_1}\\
		+& \underbrace{\frac{1}{m} \sum_{i=1}^m \sign(\dotp{A_i,\bar X}- s_i)\dotp{A_i, Y} -\sign(\dotp{A_i,\bar X}- s_i)\dotp{A_i, \bar Y}  }_{=:R_2}\\
		+& \underbrace{\frac{1}{m} \sum_{i=1}^m \sign(\dotp{A_i,X}- s_i)\dotp{A_i, Y} - \sign(\dotp{A_i,\bar X}- s_i)\dotp{A_i, Y}}_{=:R_3}\\
		+& \underbrace{\psi(\bar X)\dotp{\bar X,\bar Y} - \psi(X)\dotp{ X, Y}}_{=:R_4}
	\end{align}
}
\nct{Recall our goal is to give a high probablity bound on $\sup_{X\in \bS_r, Y\in \bS_1} \abs{R}$. To achieve this goal, we use the above decomposition and the triangle inequality, and have the following bound.
}
\nct{
	\begin{align}
		&\sup_{X\in \bS_r, Y\in \bS_1} \abs{R}
		=\sup_{\substack{\bar X\in \bS_{\epsilon,r}\\ \bar Y \in \bS_{\epsilon,1}}}\sup_{\substack{X \in B_r(\bar X,\epsilon)\\ Y\in B_1(\bar Y, \epsilon)}} \abs{R}\\
	\le & \underbrace{\sup_{\substack{\bar X\in \bS_{\epsilon,r}\\ \bar Y \in \bS_{\epsilon,1}}} \abs{R_1}}_{Z_1}
		+ \underbrace{\sup_{\substack{\bar X\in \bS_{\epsilon,r}\\ \bar Y \in \bS_{\epsilon,1}}}\sup_{Y \in B_r(\bar Y,\epsilon)} \abs{R_2}}_{Z_2}
		+\underbrace{\sup_{\substack{\bar X\in \bS_{\epsilon,r}\\ Y\in \bS_1}}\sup_{X\in B_r(\bar X, \epsilon)} \abs{R_3}}_{Z_3}
		+\underbrace{\sup_{\substack{\bar X\in \bS_{\epsilon,r}\\ \bar Y \in \bS_{\epsilon,1}}}\sup_{\substack{X\in B_r(\bar X, \epsilon)\\
					Y \in B_1(\bar Y,\epsilon)}} \abs{R_4}}_{Z_4}
	\end{align}
}
	By~\ref{eqn: XYbarXbarY} and $\psi(X)=\psi(\bar X) \le 1$, we obtain
	\begin{equation}
		Z_4 \le 2\epsilon.
	\end{equation}
	Then we hope to bound $Z_1, Z_2, Z_3$ separately. By union bound and Lemma~\ref{lem: concentration for fixed XY}, we have $Z_1 \le \delta_1$ with probability at least $1-2\abs{S_{\epsilon,r}}\abs{S_{\epsilon,1}}e^{-cm\delta_1^2}$.
	On the other hand, by $\ell_1/\ell_2$-rip~\eqref{lem:ell1/ell2},
	\begin{align}
		Z_2 &\le \sup_{\bar Y \in \bS_{\epsilon,1} ,Y \in B_1(\bar Y, \epsilon)} \frac{1}{m} \sum_{i=1}^{m}\abs{\dotp{A_i,Y-\bar Y}}\\
		&\le \epsilon \sup_{Z \in \bS_{2}} \frac{1}{m}\sum_{i=1}^{m}\abs{\dotp{A_i,Z}}\\
		&\le \epsilon \left(\sqrt{\frac{2}{\pi}} +\delta_2\right)
	\end{align}
	with probability at least $1-e^{-cm\delta_2^2}$, given $m \gtrsim d$.\\
	Moreover, by Cauchy-Schwartz inequality,
	\begin{equation}\label{eq: csrdpp}
		Z_3 \le \sup_{\substack{\bar X \in S_{\epsilon,r}\\ X\in B_r(\bar X,\epsilon)}}\left(\frac{1}{m}\sum_{i=1}^{m} \left(\sign(\dotp{A_i,\bar X}-s_i) - \sign(\dotp{A_i,X}-s_i)\right)^2\right)^{\frac{1}{2}} \sup_{Y\in \bS_1} \left(\frac{1}{m}\sum_{i=1}^{m} \dotp{A_i,Y}^2\right)^{\frac{1}{2}}.
	\end{equation}
	By $\ell_2$-rip~\eqref{lem: ell2-rip}, we know
	\begin{equation}
		\sup_{Y\in \bS_1} \frac{1}{m}\sum_{i=1}^{m} \dotp{A_i,Y}^2\le 1+\delta_3 
	\end{equation}
	with probability $1- C\exp(-Dm)$ given $m \gtrsim \frac{1}{\delta_3^2}\log(\frac{1}{\delta_3}) d$. Note that $\sign(\dotp{A_i,\bar X}-s_i) = \sign(\dotp{A_i,X}-s_i)$ if $\abs{\dotp{A_i, X-\bar X}}\le \abs{\dotp{A_i,\bar X}-s_i}$, as a result, for any $t > 0$,
	\begin{align}
		&\sup_{\substack{\bar X \in S_{\epsilon,r}\\ X\in B_r(\bar X,\epsilon)}}\frac{1}{m}\sum_{i=1}^{m} \left(\sign(\dotp{A_i,\bar X}-s_i) - \sign(\dotp{A_i,X}-s_i)\right)^2\\
		&\le \sup_{\substack{\bar X \in S_{\epsilon,r}\\ X\in B_r(\bar X,\epsilon)}} \frac{4}{m} \sum_{i=1}^{m} 1\left(\abs{\dotp{A_i,X-\bar X}} \ge \abs{\dotp{A_i,\bar X} -s_i}\right)\\
		&\le  \sup_{\substack{\bar X \in S_{\epsilon,r}\\ X\in B_r(\bar X,\epsilon)}} \frac{4}{m} \sum_{i=1}^{m} 1\left(\abs{\dotp{A_i,X-\bar X}} \ge t\right) + 1\left(\abs{\dotp{A_i,\bar X}- s_i} \le t\right)\\
		&\le \underbrace{\sup_{Z \in \epsilon \bS_{2r}} \frac{4}{m} \sum_{i=1}^{m}1\left(\abs{\dotp{A_i,Z}} \ge t\right)}_{Z_5} + \underbrace{\sup_{\bar X \in \bS_{\epsilon,r}} \frac{4}{m}\sum_{i=1}^{m} 1\left(\abs{\dotp{A_i,\bar X}- s_i} \le t \right)}_{Z_6} \label{eq: Z_5def}
	\end{align}
	For $Z_5$, we use the simple inequality $1(\abs{\dotp{A_i,Z}} \ge t) \le \frac{\abs{\dotp{A_i,Z}}}{t}$ and $\ell_1/\ell_2$-rip~\eqref{lem:ell1/ell2} and obtain
	\begin{align}
		Z_5 &\le \sup_{Z \in \epsilon \bS_{2r}}\frac{4}{m}\sum_{i=1}^{m} \frac{\abs{\dotp{A_i,Z}}}{t}\label{eq: boundZ_5}\\
		&\le \sup_{Z \in \bS_{2r}}\frac{4\epsilon}{m}\sum_{i=1}^{m} \frac{\abs{\dotp{A_i,Z}}}{t}\\
		&\le \frac{4\epsilon(1+\delta_4)}{t}\label{eq: lastboundZ_5}
	\end{align}
	with probability at least $1- e^{-cm\delta_4^2}$ given $m\gtrsim dr$.\\
	For $Z_6$, we firstly use Chernoff's bound for each fixed $\bar{X}$ and get 
	\begin{equation}
		\frac{1}{m}\sum_{i=1}^{m} 1\left(\abs{\dotp{A_i,\bar X} -s_i} \le t\right)\le \expect{ 1\left(\abs{\dotp{A_i,\bar X} -s_i} \le t\right)} + \delta_5
	\end{equation}
	with probability at least $1- e^{cm\delta_5^2}$. On the other hand, for fixed $\bar X \in \bS_{\epsilon,r}$, $\dotp{A_i,\bar X}$ is standard Gaussian. Since the density function of Gaussian is bounded above by $\frac{1}{2\pi}$,  we always have 
	\begin{equation}
		\expect{ 1\left(\abs{\dotp{A_i,\bar X} -s_i} \le t\right)} \le \frac{2}{\sqrt{2\pi}} t \le t.
	\end{equation}
	Consequently, 
	\begin{equation}
		\frac{1}{m}\sum_{i=1}^{m} 1\left(\abs{\dotp{A_i,\bar X} -s_i} \le t\right)\le t + \delta_5
	\end{equation}
	with probability at least $1-e^{-cm\delta_5^2}$.  By union bound, we have
	\begin{equation}
		Z_6 \le 4t + 4\delta_5
	\end{equation}
	with probability at least  $1-\abs{\bS_{\epsilon, r}}e^{-cm\delta_5^2}$. Combining, we have
	\begin{align}
		&\sup_{X,Y\in \bS_r} \abs{\frac{1}{m} \sum_{i=1}^m \sign(\dotp{A_i,X}- s_i)\dotp{A_i, Y} - \psi(X)\dotp{X,Y}}\\
		&\le \delta_1 + \epsilon\left(\sqrt{\frac{2}{\pi}} +\delta_2\right) + \sqrt{1+\delta_3}\sqrt{\frac{4\epsilon(1+\delta_4)}{t} + 4t +4 \delta_5} + 2\epsilon
	\end{align}
	with probability as least $1- 2\abs{S_{\epsilon,r}}\abs{S_{\epsilon,1}}e^{-cm\delta_1^2}-e^{-cm\delta_2^2} - C\exp(-Dm) - e^{-cm\delta_4^2} - \abs{\bS_{\epsilon, r}}e^{-cm\delta_5^2}$, given $m \gtrsim \max\{\frac{1}{\delta_3^2}\log\left(\frac{1}{\delta_3}\right)d, dr\}$. Take $\delta_1 = \delta, \delta_2=\delta_3=\delta_4=\frac{1}{2}, \delta_5 = \delta^2, t=\delta^2, \epsilon=\delta^4$, we have  
	\begin{equation}
		\sup_{X,Y\in \bS_r} \abs{\frac{1}{m} \sum_{i=1}^m \sign(\dotp{A_i,X}- s_i)\dotp{A_i, Y} - \psi(X)\dotp{X,Y}} \lesssim \delta
	\end{equation}
	with probability at least(given $m\gtrsim dr$)
	\begin{align}
		\qquad 1- 2\left(\frac{9}{\delta^4}\right)^{(r+1)(2d+1)}e^{-cm\delta^2} -C'\exp(-D'm) -\left(\frac{9}{\delta^4}\right)^{(2d+1)r}e^{-cm\delta^4}
	\end{align}
	Given $m \gtrsim dr\delta^4\log\left(\frac{1}{\delta}\right)$, we have
	\begin{align}
		&\qquad 2\left(\frac{9}{\delta^4}\right)^{(r+1)(2d+1)}e^{-cm\delta^2} +C'\exp(-D'm) +\left(\frac{9}{\delta^4}\right)^{(2d+1)r}e^{-cm\delta^4}\\
		&\lesssim \exp\left(8r(2d+1) \log\left(\frac{9}{\delta}\right) -cm\delta\right) + \exp\left(4r(2d+1)\log \left(\frac{9}{\delta}\right) - cm\delta^4\right)\\
		&\lesssim \exp(-c'm\delta^4)
	\end{align}
	So if $m \gtrsim dr\delta^4\log\left(\frac{1}{\delta}\right)$,
	\begin{equation}
		\sup_{X \in \bS_r, Y\in \bS_1} \abs{\frac{1}{m} \sum_{i=1}^m \sign(\dotp{A_i,X}- s_i)\dotp{A_i, Y} - \psi(X)\dotp{X,Y}} \lesssim \delta
	\end{equation}
	with probability at least $1-C \exp(-c'm\delta^4)$. This implies
	\begin{equation}
		\sup_{X \in \bS_r} \norm{\frac{1}{m} \sum_{i=1}^m \sign(\dotp{A_i,X}- s_i)A_i - \psi(X) X} \lesssim \delta
	\end{equation}
	by variational expression of operator norm. The proof is complete since we only need to prove RDPP for matrices with unit Frobenius norm.

\end{proof}
\nct{Proposition \ref{prop: sign-RIP} is an analogue of 
\cite[Proposition 5]{ma2021implicit}. Note that the function $\psi$ is different from the function $\psi$ in \cite[Proposition 5]{ma2021implicit} as the corruptions are not assumed to be i.i.d. in this paper. Our proof also deviates from the proof of \cite[Proposition 5]{ma2021implicit} in bounding the term $Z_5$, which appears in \eqref{eq: Z_5def}. This term corresponds to the first term on the RHS of the last line of eq. (38) in \cite{ma2021implicit}. In \cite{ma2021implicit}, this term is bounded by \cite[Lemma 8]{ma2021implicit} using empirical processes tools such as Talagrand's inequality. Here, we bound the term $Z_5$ using a simple contraction argument (stated as an inline inequality before \eqref{eq: boundZ_5}) and the $\ell_1/\ell_2$-RIP; see \eqref{eq: boundZ_5}-\eqref{eq: lastboundZ_5}.
}

\subsection{Proof of Proposition~\ref{prop: RDPP arbitrary}}
\label{sec: Proof of RDPP arbitrary}
We assume for simplicity that $pm$ and $(1-p)m$ are integers. Note that
\begin{align}
	&\qquad\frac{1}{m} \sum_{i=1}^m \sign(\dotp{A_i,X}- s_i)A_i- \psi(X)\frac{X}{\fnorm{X}}\\
	&=\frac{1}{m}\sum_{i\in S} \sign(\dotp{A_i,X}- s_i)A_i + \frac{1}{m}\sum_{i\notin S}\sign(\dotp{A_i,X}- s_i)A_i - \sqrt{\frac{2}{\pi}}\frac{X}{\fnorm{X}}\\
	&= \underbrace{\frac{1}{m}\sum_{i\in S} \sign(\dotp{A_i,X}- s_i)A_i}_{Z_1} + \underbrace{\frac{1}{m}\sum_{i\notin S}\sign(\dotp{A_i,X})A_i - (1-p)\sqrt{\frac{2}{\pi}}\frac{X}{\fnorm{X}}}_{Z_2}\\
	&\qquad-\underbrace{p\sqrt{\frac{2}{\pi}}\frac{X}{\fnorm{X}}}_{Z_3}
\end{align}
We bound $Z_1,Z_2,Z_3$ separately.
\begin{itemize}
	\item For $Z_1$, we observe the following fact: let $e_i\in\{-1,1\}$ be sign variables. For any fixed $\{e_i\}_{i\in S}$, $\sum_{i\in S}e_i A_i$ is a GOE matrix with $N(0, pm)$ diagonal elements and $N(0,\frac{pm}{2})$ off-diagonal elements. By lemma~\ref{lem: concentration of opnorm}, we have
	\begin{equation}
		P\left(\norm{\sum_{i\in S}e_iA_i} \ge \sqrt{pm}(\sqrt{d}+t)\right) \le e^{-\frac{t^2}{2}}.
	\end{equation}
	Take $t=2\sqrt{pm +d}$, we obtain
	\begin{equation}
		P\left(\norm{\sum_{i\in S}e_iA_i} \ge \sqrt{pm}(\sqrt{d}+2\sqrt{pm+d})\right) \le e^{-2(pm+d)}.
	\end{equation}
	As a result, by union bound(the union of all the possible signs), with probability at least $1- 2^{pm}e^{-2(pm+d)} \ge 1- e^{-(pm+d)}$,
	\begin{align}
		\norm{\sum_{i\in S}\sign(\dotp{A_i,X}- s_i)A_i} \le \sqrt{pm}(\sqrt{d}+2\sqrt{pm+d})
	\end{align}
	for any $X$.
	Note also that $\sqrt{d}+2\sqrt{pm+d} \le 3\sqrt{d}+2\sqrt{pm}$, so with probability at least $1-\exp(-(pm+d))$,
	\begin{equation}
		\norm{Z_1} \le 3\sqrt{\frac{dp}{m}} + 2p
	\end{equation}
	for any $X$.
	\item For $Z_2$, applying Proposition~\ref{prop: sign-RIP} with zero corruption and the assumption that $p<\frac{1}{2}$, we obtain that with probability exceeding $1-\exp(-cm(1-p)\delta^2) \ge 1-\exp(-c'm\delta^4)$, the following holds for all matrix $X$ with rank at most $r$,
	\begin{align}
		\norm{\frac{1}{(1-p)m} \sum_{i\notin S}\sign(\dotp{A_i,X})A_i - \sqrt{\frac{2}{\pi}}\frac{X}{\fnorm{X}}} \le \delta,
	\end{align}
	given $m\gtrsim \frac{dr\left(\log(\frac{1}{\delta}) \vee 1\right)}{\delta^4}$.
	Consequently, given $m\gtrsim \frac{dr\left(\log(\frac{1}{\delta}) \vee 1\right)}{\delta^4}$,
	with probability exceeding $1-\exp(-cm(1-p)\delta^2) \ge 1-\exp(-c'm\delta^4)$, 
	\begin{equation}
		\norm{Z_2}\le \delta
	\end{equation}
	for any $X$ with rank at most $r$.
	\item For $Z_3$, we have a deterministic bound
	\begin{equation}
		\norm{Z_3}\le \sqrt{\frac{2}{\pi}} p.
	\end{equation}
	Combining, we obtain that given $m\gtrsim  \frac{dr\left(\log(\frac{1}{\delta}) \vee 1\right)}{\delta^4}$, then with probability exceeding $1-\exp(-(pm+d)) - \exp(-c'm\delta^4)$, 
	\begin{equation}
		\norm{\frac{1}{m} \sum_{i=1}^m \sign(\dotp{A_i,X}- s_i)A_i- \psi(X)\frac{X}{\fnorm{X}}}\le 3\sqrt{\frac{dp}{m}} +3p + \delta
	\end{equation}
	for any $X$ with rank at most $r$.
\end{itemize}

\section{Choice of stepsize}
    \label{sec: Choice of Stepsize}
    First, we present a proposition that is the cornerstone for the choice of stepsize.
\begin{proposition}\label{prop: quantile concentration}
	Fix $p\in (0,1)$, $\epsilon \in (0,1)$. If $m \ge c_0(\epsilon^{-2}\log \epsilon^{-1}) dr\log d$ for some large enough constant $c_0$, then with probability at least $1-c_1\exp(-c_2m\epsilon^2)$, where $c_1$ and $c_2$ are some constants, we have for all  symmetric matrix $G \in \RR^{d\times d}$ with rank at most $r$, 
	\begin{equation}
		\xi_p\left(\{\left|\dotp{A_i,G}\right|\}_{i=1}^{m}\right) \in [\theta_p - 2\epsilon, \theta_p+2\epsilon] \fnorm{G},
	\end{equation}
where $\xi_p(\{\left|\dotp{A_i,G}\right|\}_{i=1}^m)$ is $p$-quantile of samples.~(see Definition 5.1 in \cite{li2020non})
\end{proposition}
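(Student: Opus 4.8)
The plan is to reduce the statement to a uniform concentration bound for the empirical cumulative distribution function (CDF) of $\{|\langle A_i,G\rangle|\}_{i=1}^m$ over the low-rank Frobenius sphere $\bS_r$, and then to invert that bound to control the empirical quantile. First, by scaling both sides of the claimed inclusion by $\fnorm{G}$ we may assume $\fnorm{G}=1$; then, since each $A_i$ is GOE and $G$ is symmetric with unit Frobenius norm, the scalars $\langle A_i,G\rangle$ are i.i.d.\ $N(0,1)$, so $|\langle A_i,G\rangle|$ is distributed as the folded normal $|Z|$, whose CDF $F$ has density $\sqrt{2/\pi}\,e^{-t^2/2}$. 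For a fixed $p\in(0,1)$ the quantile $\theta_p$ is a fixed finite number, and on the window $[\theta_p-2\epsilon,\theta_p+2\epsilon]$ (with $\epsilon$ small) this density is bounded below by a constant $c_p>0$ and above by $\sqrt{2/\pi}$; the constants $c_1,c_2$ in the statement depend on $p$ only through $c_p$ and $\theta_p$.

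The core step is the uniform CDF estimate
\[
\sup_{G\in\bS_r}\ \sup_{t\in\{\theta_p-2\epsilon,\ \theta_p+2\epsilon\}}\ \Big|\tfrac{1}{m}\sum_{i=1}^m \mathbf{1}\big(|\langle A_i,G\rangle|\le t\big)-F(t)\Big|\ \le\ c_p\,\epsilon .
\]
For a fixed $G$ and $t$, the summands are i.i.d.\ Bernoulli, so Hoeffding's inequality gives deviation at most $\epsilon'$ with probability $\ge 1-2e^{-2m\epsilon'^2}$ (this is where the $\epsilon^{-2}$ rather than $\epsilon^{-4}$ enters, since indicators are bounded). To make this uniform over $G$ I would use an $\eta$-net $\bS_{\eta,r}\subset\bS_r$ of cardinality at most $(9/\eta)^{(2d+1)r}$ from Lemma~\ref{lem: covering lemma for low rank matrices}, take a union bound, and then control the discretization error exactly as in the proof of Proposition~\ref{prop: sign-RIP}: if $\fnorm{G-\bar G}\le\eta$ then the indicators $\mathbf{1}(|\langle A_i,G\rangle|\le t)$ and $\mathbf{1}(|\langle A_i,\bar G\rangle|\le t)$ disagree only when $\big||\langle A_i,\bar G\rangle|-t\big|\le |\langle A_i,G-\bar G\rangle|$, where $G-\bar G$ has rank at most $2r$ and Frobenius norm at most $\eta$. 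For a level $s>0$ I would split according to whether $|\langle A_i,G-\bar G\rangle|$ exceeds $s$, bounding the large part via $\ell_1/\ell_2$-RIP (so it is $\lesssim \eta/s$ uniformly over rank-$2r$ directions, using $\mathbf{1}(|\langle A_i,Z\rangle|\ge s)\le |\langle A_i,Z\rangle|/s$) and bounding the near-threshold part $\tfrac{1}{m}\sum_i\mathbf{1}(||\langle A_i,\bar G\rangle|-t|\le s)$ by its expectation $\le 2\sqrt{2/\pi}\,s$ (Gaussian anti-concentration) plus a concentration deviation over the net. Choosing $s$ and $\eta$ as small polynomials in $\epsilon$ and $\epsilon'\asymp c_p\epsilon$ makes every error term $\lesssim c_p\epsilon$ with probability $\ge 1-c_1 e^{-c_2 m\epsilon^2}$; matching the net exponent $dr\log(1/\eta)\asymp dr\log(1/\epsilon)$ against the Hoeffding rate, together with the $\ell_1/\ell_2$- and $\ell_2$-RIP sample requirements, yields the stated sample complexity $m\gtrsim(\epsilon^{-2}\log\epsilon^{-1})\,dr\log d$.

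Given the uniform CDF estimate, the inversion is immediate: since $F(\theta_p)=p$ and $F$ increases at rate at least $c_p$ on the window, $F(\theta_p-2\epsilon)\le p-2c_p\epsilon$, hence $\tfrac{1}{m}\sum_i\mathbf{1}(|\langle A_i,G\rangle|\le\theta_p-2\epsilon)\le p-2c_p\epsilon+c_p\epsilon<p$, so the empirical $p$-quantile $\xi_p(\{|\langle A_i,G\rangle|\})$ is at least $\theta_p-2\epsilon$; symmetrically $F(\theta_p+2\epsilon)\ge p+2c_p\epsilon$ forces it to be at most $\theta_p+2\epsilon$. Restoring the factor $\fnorm{G}$ gives the claim for all rank-$\le r$ symmetric $G$ simultaneously. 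I expect the main obstacle to be the discretization step: because indicator functions are not Lipschitz, a naive net argument fails, and one must carefully run the two-scale truncation — large-perturbation part controlled by $\ell_1/\ell_2$-RIP, near-threshold part by Gaussian anti-concentration plus a union bound over the net — while keeping the sample complexity at $\widetilde{\mathcal O}(dr)$ rather than a worse power of $\epsilon^{-1}$; tracking how the final constants depend on $p$ through the density lower bound $c_p$ near $\theta_p$ is a secondary bookkeeping point.
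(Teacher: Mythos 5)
Your approach is correct, but it takes a genuinely different route from the paper. You reduce the quantile statement to a uniform empirical-CDF bound and then invert, and because indicators are discontinuous in $G$ you are forced into the same two-scale truncation machinery that the paper uses to prove Proposition~\ref{prop: sign-RIP} (split into a large-perturbation part controlled via $\ell_1/\ell_2$-RIP and a near-threshold part controlled by Gaussian anti-concentration plus a union bound over the net). That can be made to work, and since you are not composing with a Cauchy--Schwarz factor as in Proposition~\ref{prop: sign-RIP} you would indeed land at $\epsilon^{-2}$ rather than $\epsilon^{-4}$ sample dependence, matching the claim. The paper instead works directly with the sample quantile: it invokes the pointwise quantile concentration of Lemma~\ref{lem: concentration for sample level meadian} for each net point, and then handles discretization by the elementary fact (Lemma~\ref{lem: order stats  bounded by infty norm}) that any order statistic is $1$-Lipschitz in the $\ell_\infty$ norm of the sample vector, so
\[
\abs{\xi_p\big(\{|\dotp{A_i,G}|\}\big) - \xi_p\big(\{|\dotp{A_i,G_0}|\}\big)}
\le \max_i \abs{\dotp{A_i,G-G_0}}
\le \fnorm{G-G_0}\,\max_i \fnorm{A_i},
\]
with $\max_i\fnorm{A_i}\le 2\sqrt{d(d+m)}$ w.h.p.\ by Lemma~\ref{lem: concetration for max fnorm}. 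This sidesteps the indicator discontinuity entirely; the price is a finer net scale $\tau \asymp \epsilon/\sqrt{d(d+m)}$, which only costs an extra $\log d$ in the covering exponent and is absorbed by the $dr\log d$ factor already present. So: your two-scale route is a valid general-purpose argument, while the paper's route exploits a structural property of the quantile operator that makes the uniformization step almost trivial and shorter to write down; both hit the same sample complexity and the same $p$-dependence through the density lower bound near $\theta_p$.
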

Next, we prove a proposition that can be used to estimate $\fnorm{F_tF_t^\top-X^*}$ and $\fnorm{X^*}$ under corruption model~\ref{md: arbitraryCorruption}.
\begin{proposition}\label{prop: estimate fnorm arbitrary}
	Suppose we are under model~\ref{md: arbitraryCorruption} and  $y_i=\dotp{A_i, G} + s_i$'s are given. Fix $\epsilon <0.1$ and corruption probability $p<0.1$. Then if $m \ge c_0(\epsilon^{-2}\log \epsilon^{-1}) dr\log d$ for some large enough constant $c_0$, then with probability at least $1-c_1\exp(-c_2m\epsilon^2)$, where $c_1$ and $c_2$ are some constants, we have for any  symmetric matrix $G \in \RR^{d\times d}$ with rank at most $r$, 
	\begin{align}
		\xi_{\frac{1}{2}}\left(\{|y_i|\}_{i=1}^{m}\right) &\in [\theta_{\frac{1}{2}-p -\epsilon}, \theta_{\frac{1}{2}+p+\epsilon}] \fnorm{G}\\
		&\subset [\theta_{\frac{1}{2}} - L(p+\epsilon), \theta_{\frac{1}{2}}+ L(p+\epsilon)]\fnorm{G},
	\end{align}
where $L>0$ is some universal constant.
\end{proposition}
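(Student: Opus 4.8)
The plan is to combine Proposition~\ref{prop: quantile concentration}, applied to the \emph{noiseless} statistics $\{|\langle A_i,G\rangle|\}_{i=1}^m$, with the elementary fact that under Model~\ref{md: arbitraryCorruption} the corruption alters at most $\lfloor pm\rfloor$ of the $y_i$, and that replacing at most $k$ entries of an $m$-sample moves every empirical quantile by at most $k/m$ \emph{in the quantile level}. Since $y_i=\langle A_i,G\rangle$ off the support $S$ of $s$, everything here is a deterministic consequence of Proposition~\ref{prop: quantile concentration} plus order-statistic bookkeeping, so no new probabilistic estimate is needed.

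\textbf{Step 1 (quantile-shift lemma).} Write $k=\lfloor pm\rfloor$, $a_i:=|\langle A_i,G\rangle|$, $b_i:=|y_i|$. Since $s_i=0$ for $i\notin S$ and $|S|=k$, the multisets $\{a_i\}$ and $\{b_i\}$ agree in at least $m-k$ coordinates, so a counting argument on the order statistics gives $a_{(j-k)}\le b_{(j)}\le a_{(j+k)}$ for every admissible index $j$. Taking $j=\lceil m/2\rceil$ — legal since $p<0.1$ forces $k<m/2-1$ — and using monotonicity of the empirical quantile in its level, together with the integer identities $\lceil x\rceil-\lfloor y\rfloor\ge\lceil x-y\rceil$ and $k/m\le p$, yields
\[
\xi_{\frac12-p}\bigl(\{|\langle A_i,G\rangle|\}\bigr)\;\le\;\xi_{\frac12}\bigl(\{|y_i|\}\bigr)\;\le\;\xi_{\frac12+p+1/m}\bigl(\{|\langle A_i,G\rangle|\}\bigr).
\]

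\textbf{Step 2 (invoke Proposition~\ref{prop: quantile concentration} and convert to a level shift).} I would invoke Proposition~\ref{prop: quantile concentration} at the two fixed, $G$-independent levels $\tfrac12-p$ and $\tfrac12+p+\tfrac14\epsilon$ with precision parameter $\epsilon':=c\epsilon$ for a small universal $c$ — this only rescales $c_0,c_1,c_2$ — and union-bound the two events; using $1/m\le\tfrac14\epsilon$ for $m$ large and monotonicity again, uniformly over symmetric rank-$\le r$ $G$,
\[
\xi_{\frac12}(\{|y_i|\})\ge\bigl(\theta_{\frac12-p}-2\epsilon'\bigr)\fnorm{G},\qquad \xi_{\frac12}(\{|y_i|\})\le\bigl(\theta_{\frac12+p+\epsilon/4}+2\epsilon'\bigr)\fnorm{G}.
\]
To trade the additive $\pm2\epsilon'\fnorm{G}$ error for a shift of the quantile level, note the folded-normal density $f_{|Z|}(x)=\sqrt{2/\pi}\,e^{-x^2/2}$ is decreasing on $[0,\infty)$, so $\theta$ is increasing with $\theta_q'=1/f_{|Z|}(\theta_q)\ge\sqrt{\pi/2}\ge1$; hence $\theta_q-\delta\ge\theta_{q-\delta}$ and $\theta_q+\delta\le\theta_{q+\delta}$ for admissible $q,\delta$. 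Choosing $c$ with $2\epsilon'\le\tfrac14\epsilon$ and chaining the displays gives the first inclusion $\xi_{\frac12}(\{|y_i|\})\in[\theta_{\frac12-p-\epsilon},\theta_{\frac12+p+\epsilon}]\fnorm{G}$. For the second inclusion, all levels in sight lie in $[0.3,0.7]$ (since $p,\epsilon<0.1$), and on that compact interval $\theta$ is Lipschitz with constant $L:=1/f_{|Z|}(\theta_{0.7})$, a universal constant, so $|\theta_{\frac12\pm(p+\epsilon)}-\theta_{\frac12}|\le L(p+\epsilon)$, which is exactly the claimed containment.

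The only fiddly part is the ceiling/floor bookkeeping in Step~1 and verifying that the $O(1/m)$ rounding plus the $2\epsilon'$ concentration slack all fit inside the single spare $\epsilon$ present in the target quantile levels; the genuine content is merely the reduction ``sparse arbitrary corruption $\approx$ an $O(p)$-perturbation of the empirical measure $\Rightarrow$ an $O(p)$-shift of its quantiles,'' after which Proposition~\ref{prop: quantile concentration} does all the work and the probability $1-c_1\exp(-c_2m\epsilon^2)$ is inherited (up to constants) through a union bound over two applications.
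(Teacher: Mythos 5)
Your proposal is correct and follows essentially the same route as the paper: reduce the corrupted empirical median to a quantile of the clean samples shifted by $O(p)$ in level (the paper cites Lemma~\ref{lem: corrupted/clean samples} for this, while you re-derive the order-statistic bound $a_{(j-k)}\le b_{(j)}\le a_{(j+k)}$ directly), then invoke Proposition~\ref{prop: quantile concentration} at two fixed levels and convert the additive $\pm\epsilon\fnorm{G}$ slack into a quantile-level shift using $\theta'_q = 1/f_{|Z|}(\theta_q)\ge 1$, followed by Lipschitzness of $q\mapsto\theta_q$ on $[0.3,0.7]$. The only real difference is that you make explicit the monotonicity/derivative step that the paper leaves implicit when it asserts $\xi_{\frac12-p}(\{|\tilde y_i|\})\ge\theta_{\frac12-p-\epsilon}\fnorm{G}$ from the two-sided bound in Proposition~\ref{prop: quantile concentration}.
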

The following proposition can be used to estimate $\fnorm{F_tF_t^\top -X^*}$ and $\fnorm{X^*}$ under corruption model~\ref{md: randomCorruption}.
\begin{proposition}\label{prop: estimate fnorm random}
	Suppose we are under model~\ref{md: randomCorruption} and  $y_i=\dotp{A_i, G} + s_i$'s are given. Fix corruption probability $p<0.5$. Let $\epsilon = \frac{0.5-p}{3}$. Then if $m \ge c_0 dr\log d$ for some large enough constant $c_0$ depending on $p$, then with probability at least $1-c_1\exp(-c_2m\epsilon^2)$, where $c_1$ and $c_2$ are some constants, we have for all  symmetric matrix $G \in \RR^{d\times d}$ with rank at most $r$, 
	\begin{equation}
		\xi_{\frac{1}{2}}\left(\{|y_i|\}_{i=1}^{m}\right) \in [\theta_{\frac{0.5-p}{3}}, \theta_{1-\frac{0.5-p}{3}}] \fnorm{G}.
	\end{equation}

\end{proposition}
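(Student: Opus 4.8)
The plan is to deduce this from the clean quantile-concentration bound of Proposition~\ref{prop: quantile concentration} by controlling how far the corruptions can drag the empirical median. By homogeneity of both sides of the asserted inclusion in $\fnorm{G}$, I would first reduce to the case $\fnorm{G}=1$, so that each clean measurement $\dotp{A_i,G}$ is standard Gaussian and $\theta_\alpha$ is exactly the $\alpha$-quantile of $|\dotp{A_i,G}|$.

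First I would bound the number of corrupted coordinates. Under Model~\ref{md: randomCorruption} the set $S=\{i:s_i\neq 0\}$ has $|S|\sim\mathrm{Bin}(m,p)$ and is independent of $\{A_i\}$, so a Chernoff bound gives $|S|\le (p+\epsilon_0)m=:qm$ with probability at least $1-e^{-cm\epsilon_0^2}$ for any margin $\epsilon_0>0$; I would fix $\epsilon_0$ as a small function of $p$. On this event the vector $(|y_i|)_i$ and the clean vector $(|\dotp{A_i,G}|)_i$ differ in at most $qm$ coordinates, hence their empirical c.d.f.'s differ by at most $q$ pointwise, and therefore, deterministically and simultaneously for every symmetric $G$ and every level $\alpha$, one has $\xi_{\alpha-q}(\{|\dotp{A_i,G}|\})\le\xi_\alpha(\{|y_i|\})\le\xi_{\alpha+q}(\{|\dotp{A_i,G}|\})$. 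Note this argument uses only $|S|$, so the arbitrariness of the nonzero values $s_i$ under the RC model is irrelevant.

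Taking $\alpha=\tfrac12$ sandwiches the median of $\{|y_i|\}$ between the $(\tfrac12-q)$- and $(\tfrac12+q)$-quantiles of the clean sample. Since $p<\tfrac12$, for $\epsilon_0$ small both levels lie in $(0,1)$, so I would invoke Proposition~\ref{prop: quantile concentration} twice, at levels $\tfrac12-q$ and $\tfrac12+q$ with a small concentration parameter $\epsilon_1=\epsilon_1(p)$, to get, uniformly over rank-$\le r$ symmetric $G$ and for $m\gtrsim\epsilon_1^{-2}\log(\epsilon_1^{-1})\,dr\log d$, that $\xi_{1/2-q}(\{|\dotp{A_i,G}|\})\ge\theta_{1/2-q}-2\epsilon_1$ and $\xi_{1/2+q}(\{|\dotp{A_i,G}|\})\le\theta_{1/2+q}+2\epsilon_1$. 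The remaining step is arithmetic: with $q=p+\epsilon_0$ and $p<\tfrac12$ we have $\tfrac12-q>\tfrac{0.5-p}{3}$ and $\tfrac12+q<1-\tfrac{0.5-p}{3}$, and since $\alpha\mapsto\theta_\alpha$ is continuous and strictly increasing on $(0,1)$ the gaps $\theta_{1/2-q}-\theta_{(0.5-p)/3}$ and $\theta_{1-(0.5-p)/3}-\theta_{1/2+q}$ are strictly positive; choosing $\epsilon_1$ below a quarter of the smaller gap gives $\theta_{1/2-q}-2\epsilon_1\ge\theta_{(0.5-p)/3}$ and $\theta_{1/2+q}+2\epsilon_1\le\theta_{1-(0.5-p)/3}$. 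Rescaling by $\fnorm{G}$ and taking a union bound over the two failure events finishes the proof, with the exponent absorbed into $c_2m\epsilon^2$ for $\epsilon=(0.5-p)/3$, and $\epsilon_0,\epsilon_1,c_0$ all depending only on $p$.

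The main obstacle I foresee is not a hard estimate but making the second step airtight: pinning down the empirical-quantile convention so the $\pm q$ perturbation of the c.d.f.\ translates into exactly the claimed shift of order statistics, and checking this holds simultaneously for all $G$ — which it does, since the c.d.f.\ perturbation bound is purely deterministic once $|S|$ is controlled. A secondary point of care is the bookkeeping that threads the small constants $\epsilon_0,\epsilon_1$ through to the final probability bound and confirms they can be chosen as functions of $p$ alone.
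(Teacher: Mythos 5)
Your proof is correct and follows essentially the same route as the paper: Chernoff to bound the number of corrupted indices, the deterministic quantile sandwich (the paper's Lemma~\ref{lem: corrupted/clean samples}), two applications of Proposition~\ref{prop: quantile concentration} at levels $\tfrac12\pm q$, and arithmetic showing the resulting $\theta$-window sits inside $[\theta_{(0.5-p)/3},\theta_{1-(0.5-p)/3}]$. The only difference is bookkeeping: the paper fixes both the Chernoff margin and the concentration parameter in terms of $\epsilon=(0.5-p)/3$ so the levels collapse exactly to $(0.5-p)/3$ and $1-(0.5-p)/3$, whereas you keep $\epsilon_0,\epsilon_1$ as free $p$-dependent slack and check strict inequalities; your version is if anything slightly more faithful to the additive-error form of Proposition~\ref{prop: quantile concentration}.
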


    \subsection{Proof of Proposition~\ref{prop: quantile concentration}}
	The proof is modified from Proposition 5.1 in~\cite{li2020non}. We first note $\dotp{A_i,G} \sim N(0,\fnorm{G}^2)$ and  
	\begin{equation}
		\theta_p(|N(0,\fnorm{G}^2)|) = \theta_p\cdot \fnorm{G}.
	\end{equation} 
	Here $\theta_p(|N(0,\fnorm{G}^2)|)$ denote the $p$-quantile of folded $N(0,\fnorm{G}^2)$.
It suffices to prove the bound for all  symmetric matrices that have rank at most $r$ and unit Frobenius norm. For each fixed symmetric $G_0$ with $\fnorm{G_0}=1$, we know from Lemma~\ref{lem: concentration for sample level meadian} that 
	\begin{equation}\label{eqn: p=quantile concentration}
		\xi_p\left(\{\left|\dotp{A_i,G}\right|\}_{i=1}^{m}\right) \in [\theta_p - \epsilon, \theta_p+\epsilon]
	\end{equation}
with probability at least $1-2\exp(-cm\epsilon^2)$ for some constant $c$ that depends on $p$. Next, we extend this result to all symmetric matrices with rank at most $r$ via a covering argument. Let $S_{\tau, r}$ be a $\tau$-net for all symmetric matrices with rank at most $r$ and unit Frobenius norm. By Lemma~\ref{lem: covering lemma for low rank matrices}, $\left|S_{\tau,r}\right| \le \left(\frac{9}{\tau}\right)^{r(2d+1)}$. Taking union bound, we obtain
\begin{equation}
	\xi_p\left(\{\left|\dotp{A_i,G_0}\right|\}_{i=1}^{m}\right) \in [\theta_p - \epsilon, \theta_p+\epsilon],\qquad \forall G_0\in \bS_{\tau,r}
\end{equation}
with probability at least $1-2\left(\frac{9}{\tau}\right)^{r(2d+1)}\exp(-cm\epsilon^2)$. Set $\tau=\epsilon/(2\sqrt{d(d+m)})$. Under this event and the event that
\begin{equation}
	\max_{i=1,2,\ldots,m}\fnorm{A_i} \le 2\sqrt{d(d+m)},
\end{equation}
which holds with probability at least $1- m\exp(-d(d+m)/2)$ by Lemma~\ref{lem: concetration for max fnorm}, for any rank-$r$ matrix $G$ with $\fnorm{G}=1$, there exists $G_0 \in \bS_{\tau, r}$ such that $\fnorm{G-G_0}\le \tau$, and 
\begin{align}
\abs{\xi_p\left(\{\abs{\dotp{A_i,G}}\}_{i=1}^{m}\right) - \xi_p\left(\{\abs{\dotp{A_i,G_0}}\}_{i=1}^{m}\right) } &\le \max_{i=1,2,\ldots,m}\abs{\abs{\dotp{A_i,G}} - \abs{\dotp{A_i,G_0}}}\\
&\le \max_{i=1,2,\ldots,m}\abs{\dotp{A_i,G-G_0}}\\
&\le \fnorm{G_0-G} \max_{i=1,2,\ldots,m} \fnorm{A_i}\\
&\le \tau 2\sqrt{d(d+m)}\\
&\le \epsilon.
\end{align}
The first inequality follows from Lemma~\ref{lem: order stats  bounded by infty norm}. Combining with~\eqref{eqn: p=quantile concentration}, we obtain that for all symmetric with rank at most $r$ and unit Frobenius norm,
\begin{equation}
	\xi_p\left(\{\abs{\dotp{A_i,G}}\}_{i=1}^{m}\right) \in [\theta_p -2\epsilon, \theta_p+2\epsilon].
\end{equation}
The rest of the proof is to show that the above bound holds with probability at least $1- c_1\exp(-c_2m\epsilon^2)$ for some constants $c_1$ and $c_2$ which follows exactly the same argument as proof of Proposition 5.2 in~\cite{li2020non}.

\subsection{Proof of Proposition~\ref{prop: estimate fnorm arbitrary}}
	Let $\tilde y_i = \dotp{A_i,G}$ be clean samples. By lemma~\ref{lem: corrupted/clean samples}, we have 
	\begin{equation}
		\xi_{\frac{1}{2}}\left(\{|y_i|\}_{i=1}^{m}\right) \in [\xi_{\frac{1}{2}-p}\left(\{|\tilde y_i|\}_{i=1}^{m}\right),\xi_{\frac{1}{2}+p}\left(\{|\tilde y_i|\}_{i=1}^{m}\right)].
	\end{equation}
	Moreover, applying Proposition~\ref{prop: quantile concentration} to $(\xi_{\frac{1}{2}-p}\left(\{|\tilde y_i|\}_{i=1}^{m}\right) , \frac{\epsilon}{2})$ and $(\xi_{\frac{1}{2}+p}\left(\{|\tilde y_i|\}_{i=1}^{m}\right) , \frac{\epsilon}{2})$ , we know that if $m \gtrsim (\epsilon^{-2}\log \epsilon^{-1}) dr\log d$, the we can find constants $c_1$, $c_2$ that with probability at least $1-c_1\exp(-c_2m\epsilon^2)$, 
	\begin{equation}
		\xi_{\frac{1}{2}-p}\left(\{|\tilde y_i|\}_{i=1}^{m}\right) \ge \theta_{\frac{1}{2}-p -\epsilon} \fnorm{G}, \qquad \xi_{\frac{1}{2}+p}\left(\{|\tilde y_i|\}_{i=1}^{m}\right) \le \theta_{\frac{1}{2}+p -\epsilon} \fnorm{G}
	\end{equation}
	holds for any symmetric matrix $G$ with rank at most $r$. Combining, we obtain
	\begin{equation}
		\xi_{\frac{1}{2}}\left(\{|y_i|\}_{i=1}^{m}\right) \in [\theta_{\frac{1}{2}-p -\epsilon}, \theta_{\frac{1}{2}+p+\epsilon}] \fnorm{G}.
	\end{equation}
	In addition,  we easily see that $p\rightarrow \theta_{p}$ is a Lipschitz function with some universal Lipschitz constant $L$ in interval $[0.3,0.7]$. As a result,
	\begin{equation}
		[\theta_{\frac{1}{2}-p -\epsilon}, \theta_{\frac{1}{2}+p+\epsilon}] \fnorm{G}
		\subset [\theta_{\frac{1}{2}} - L(p+\epsilon), \theta_{\frac{1}{2}}+ L(p+\epsilon)]\fnorm{G}.
	\end{equation}
	We are done.

\subsection{Proof of Proposition~\ref{prop: estimate fnorm random}}
		Let $z_i$ be the indicator random variable that 
	\begin{equation}
		z_i = \begin{cases}
			1 & \text{$s_i$ is drawn from some corruption distribution $\mathbb{P}_i$}\\
			0 & \text{$s_i =0$}
		\end{cases}.
	\end{equation}
	Under corruption model~\ref{md: arbitraryCorruption}, $z_i$'s are i.i.d. Bernoulli random variables with parameter $p$. By standard Chernoff inequality, we obtain
	\begin{align}
		P\left(\sum_{i=1}^{m}z_i -pm \ge  \frac{0.5-p}{3} m\right) &=P\left(\sum_{i=1}^{m}z_i -pm \ge  \epsilon m\right)\\
		&\le \exp(-m\epsilon^2/2).
	\end{align}
	Therefore, with probability at least $1-\exp(-m\epsilon^2/2)$, the corruption fraction is less than $p + \frac{0.5-p}{3}$. Let $\tilde y_i = \dotp{A_i, G}$ be clean samples. By Lemma~\ref{lem: corrupted/clean samples}, we have
	\begin{align}
		\xi_{\frac{1}{2}}\left(\{|y_i|\}_{i=1}^{m}\right) \in [\xi_{\frac{1}{2} -p-\frac{0.5-p}{3}}(\{|\tilde y_i|\}_{i=1}^{m}), \xi_{\frac{1}{2} + p +\frac{0.5-p}{3}}(\{|\tilde y_i|\}_{i=1}^{m})] \fnorm{G}.
	\end{align}
	In addition, applying Proposition~\ref{prop: quantile concentration} to $(\xi_{\frac{1}{2} -p-\frac{0.5-p}{3}}(\{|\tilde y_i|\}_{i=1}^{m}) , \frac{\epsilon}{2})$ and $(\xi_{\frac{1}{2} +p+\frac{0.5-p}{3}}(\{|\tilde y_i|\}_{i=1}^{m}) , \frac{\epsilon}{2})$ , we know that($\epsilon = \frac{0.5-p}{3}$) if $m \gtrsim (\epsilon^{-2}\log \epsilon^{-1}) dr\log d\gtrsim dr\log d$, the we can find constants $c_1$, $c_2$ that with probability at least $1-c_1\exp(-c_2m\epsilon^2)$, 
	\begin{align}
		\xi_{\frac{1}{2}-p-\frac{0.5-p}{3}}\left(\{|\tilde y_i|\}_{i=1}^{m}\right) &\ge \theta_{\frac{1}{2}-p -\frac{0.5-p}{3}-\epsilon} \fnorm{G}, \\ \xi_{\frac{1}{2}+p+\frac{0.5-p}{3}}\left(\{|\tilde y_i|\}_{i=1}^{m}\right) &\le \theta_{\frac{1}{2}+p +\frac{0.5-p}{3}+\epsilon}\fnorm{G}
	\end{align}
	holds for any symmetric matrix $G$ with rank at most $r$. Plug in $\epsilon = \frac{0.5-p}{3}$, we obtain 
	\begin{equation}
		\xi_{\frac{1}{2}}\left(\{|y_i|\}_{i=1}^{m}\right) \in [\theta_{\frac{0.5-p}{3}}, \theta_{1-\frac{0.5-p}{3}}] \fnorm{G}
	\end{equation}
	for any symmetric $G$ with rank at most $r$ with the desired probability.

\section{Proof of Initialization}
	\label{sec: initialization}
	Throughout this section, we denote
\begin{align*}
	\bS \;:=\;\{X \in \mathcal{S}^{d\times d} \colon \fnorm{X}=1\},\quad \bS_r\;:=\; \{X \in \mathcal{S}^{d\times d} \colon \fnorm{X}=1, \rank(X)\le r\}.
\end{align*}
Recall that, we construct the matrix
\begin{align*}
    D \;=\;  \frac{1}{m} \sum_{i=1}^m \sign(y_i) A_i.
\end{align*}
 Based on this, we consider its eigen decomposition
\begin{align*}
    D \;=\; U\Sigma U^\top 
\end{align*}
Let $\Sigma_+^k$ be the top $k\times k$ submatrix of $\Sigma$, whose diagonal entries correspond to $k$ largest eigenvalues of $\Sigma$ with negative values replaced by $0$. Accordingly, we let $U_k \in \mathbb R^{d \times k}$ be the submatrix of $U$, formed by its leftmost  $k$ columns. Then we cook up a key ingredient of initialization:
\begin{align*}
    B \;=\; U_k (\Sigma_+^k)^{1/2}.
\end{align*}
In the following, we show that the initialization is close to the ground truth solution.
\begin{proposition}[random corruption]\label{prop: initialization for random corruption}
	Let $F_0$ be the output of Algorithm \ref{alg: initialization}. Fix constant $c_0<0.1$. For Model \ref{md: randomCorruption} with a fixed $p<0.5$ and $m \ge c_1 dr \kappa^4 (\log\kappa+\log r)\log d $. Then 
	 \begin{align}
	 	\|  F_0 F_0^\top - c^*\truX \| \;\leq\; c_0 \sigma_r/\kappa.
	 \end{align} holds for 
	$c^* \in \Big[\sqrt{\frac{1}{2\pi}} \cdot \frac{\theta_{\frac{0.5-p}{3}}}{\theta_{\frac{1}{2}}}, \sqrt{\frac{2}{\pi}} \cdot  \frac{\theta_{1-\frac{0.5-p}{3}}}{\theta_{\frac{1}{2}}}\Big]$ 
	w.p. at least  $1- c_2\exp(-\frac{c_3m}{\kappa^4 r})$, where the constants $c_1,c_2,c_3$ depend only on $p$ and $c_0$.
\end{proposition}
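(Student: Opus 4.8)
The plan is to show that the matrix $D = \frac1m\sum_{i=1}^m \sign(y_i)A_i$ constructed in Algorithm~\ref{alg: initialization} concentrates around a rescaling of $\truX$, so that its top-$k$ PSD spectral truncation $BB^\top$ recovers $\truX$ up to an unknown positive scalar, which the median-based factor $\gamma$ then pins down to the claimed interval. Since $y_i = \dotp{A_i,\truX}+s_i$, we have $\sign(y_i)=\sign(\dotp{A_i,\truX}-(-s_i))$, so $D$ is exactly the quantity $D(\truX)$ of Definition~\ref{defn: RDPP} with corruption $-s$, which under Model~\ref{md: randomCorruption} is again an RC corruption with the same rate $p$. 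By the RC case of Lemma~\ref{lem: expection of subgradient}, $\EE D = \psi\,\truX/\fnorm{\truX}=:M$ with $\psi=\psi(\truX)\in[\sqrt{1/(2\pi)},\sqrt{2/\pi}]$ (the lower bound uses $p<\tfrac12$, since $\psi=\tfrac1m\sum_i\sqrt{2/\pi}(1-p+p\,\EE[e^{-s_i^2/2\fnorm{\truX}^2}])\in\sqrt{2/\pi}[1-p,1]$). Applying Proposition~\ref{prop: sign-RIP} with rank parameter $k'=r$ and a parameter $\delta$ to be chosen, with probability at least $1-Ce^{-cm\delta^4}$ (given $m\gtrsim dr(\log(1/\delta)\vee 1)/\delta^4$) we get $\norm{D-M}\le\delta$.

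\textbf{From $D$ to $F_0F_0^\top$, and identifying $c^*$.} Since $M$ is PSD of rank at most $r\le k$, Weyl's inequality gives $|\mu_i(D)-\lambda_i(M)|\le\delta$ for all sorted eigenvalues; thus the eigenvalues of $D$ of index $>r$ are at most $\delta$ in absolute value, while the top $r$ are at least $\psi\sigma_r/\fnorm{\truX}-\delta$. Writing $D=\sum_i\mu_i(D)w_iw_i^\top$, the matrix $BB^\top=U_k\Sigma_+^kU_k^\top=\sum_{i\le k}\mu_i(D)^{+}w_iw_i^\top$ differs from $D$ by at most $\delta$ in the discarded tail $\sum_{i>k}\mu_i(D)w_iw_i^\top$ (each $|\mu_i(D)|\le\delta$ there) plus at most $\delta$ from zeroing the at-most-$\delta$-in-magnitude negative eigenvalues among the top $k$, so $\norm{BB^\top-D}\le 2\delta$ and $\norm{BB^\top-M}\le 3\delta$. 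With $F_0=\sqrt\gamma\,B$ we have $F_0F_0^\top=\gamma BB^\top$; setting $c^*:=\gamma\psi/\fnorm{\truX}$ makes $c^*\truX=\gamma M$, so $\norm{F_0F_0^\top-c^*\truX}=\gamma\norm{BB^\top-M}\le 3\gamma\delta$. By Proposition~\ref{prop: estimate fnorm random} applied to $G=\truX$ (rank $\le r$), which requires $m\gtrsim dr\log d$ and holds with probability $\ge 1-c_1e^{-c_2m\epsilon^2}$, $\epsilon=\tfrac{0.5-p}{3}$, one has $\gamma=\xi_{1/2}(\{|y_i|\})/(\sqrt{2/\pi}\theta_{1/2})\in\frac{[\theta_{(0.5-p)/3},\theta_{1-(0.5-p)/3}]}{\sqrt{2/\pi}\,\theta_{1/2}}\fnorm{\truX}$; combining this with $\psi\in[\sqrt{1/(2\pi)},\sqrt{2/\pi}]$ verifies that $c^*=\gamma\psi/\fnorm{\truX}$ lies in the claimed interval $\big[\sqrt{1/(2\pi)}\,\tfrac{\theta_{(0.5-p)/3}}{\theta_{1/2}},\ \sqrt{2/\pi}\,\tfrac{\theta_{1-(0.5-p)/3}}{\theta_{1/2}}\big]$, and also gives $\gamma\le C_p\fnorm{\truX}\le C_p\sqrt r\,\sigma_1$ for a constant $C_p$ depending only on $p$.

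\textbf{Choice of $\delta$, sample complexity, and the main obstacle.} It remains to pick $\delta$ small enough that $3\gamma\delta\le c_0\sigma_r/\kappa$. Using $\gamma\le C_p\sqrt r\,\sigma_1$ and $\sigma_r/\kappa=\sigma_1/\kappa^2$, it suffices to take $\delta$ of order $c_0/(C_p\kappa^2\sqrt r)$; feeding this into the RDPP requirement $m\gtrsim dr(\log(1/\delta)\vee 1)/\delta^4$ and intersecting with $m\gtrsim dr\log d$ (needed for the quantile estimate) yields a sample size of the advertised form $m\ge c_1\,dr\,\kappa^{O(1)}(\log\kappa+\log r)\log d$, while a union bound over the RDPP event and the Proposition~\ref{prop: estimate fnorm random} event bounds the failure probability by $c_2\exp(-c_3 m/(\kappa^{O(1)}r))$. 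The delicate part of the argument is precisely this last step of bookkeeping: because control of $\norm{F_0F_0^\top-c^*\truX}$ is obtained from the operator-norm RDPP error only after multiplying by $\gamma\asymp\fnorm{\truX}$ (up to a $\sqrt r$ factor), $\delta$ must be taken polynomially small in $\kappa$, and matching the advertised polynomial exponents in $\kappa$ and $r$ (and the corresponding tail exponent) requires tracking the perturbation constants carefully — a naive accounting gives a worse power of $\kappa$. A secondary subtlety, already handled above, is that both $\psi$ and $\gamma$ depend on the unknown corruption law, which is exactly why the conclusion is stated up to a scalar $c^*$ ranging in an interval rather than $c^*=1$.
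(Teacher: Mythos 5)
Your overall structure matches the paper's: concentrate $D$ around $\psi(\truX)\,\truX/\fnorm{\truX}$ in operator norm, control the loss from the rank-$k$ PSD truncation via Weyl's inequality, rescale by the median-based $\gamma$, and define $c^*$ as the product of $\gamma$ and $\psi/\fnorm{\truX}$. The Weyl step and the use of Proposition~\ref{prop: estimate fnorm random} are all fine.

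The gap you flag at the end is real, but it is not a matter of ``tracking the perturbation constants carefully'' — it is a matter of using the wrong concentration lemma. You invoke Proposition~\ref{prop: sign-RIP}, which is the \emph{uniform} RDPP bound over all rank-$r$ matrices $X$; this has sample-complexity requirement $m \gtrsim dr\,(\log(1/\delta)\vee 1)/\delta^{4}$. The $1/\delta^{4}$ and the extra factor of $r$ in that bound are intrinsic to the uniformity in $X$ (they come from the Cauchy--Schwarz step \eqref{eq: csrdpp} and the $\epsilon$-net over rank-$r$ matrices in the proof of Proposition~\ref{prop: sign-RIP}). For the initialization, however, there is only \emph{one} $X$, namely $\truX$ — you even note this when you observe that $D = D(\truX)$. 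With a fixed $X$ no sign-difference term $Z_3$ appears, and one only needs an $\epsilon$-net over the rank-one test direction $Y\in\bS_1$; the paper's Lemma~\ref{lem:pertubation-bound-spectral} does exactly this and requires only $m \gtrsim d\,(\log(1/\delta)\vee 1)/\delta^{2}$ (no extra $r$, $\delta^{2}$ rather than $\delta^{4}$). Plugging $\delta \asymp 1/(\kappa^{2}\sqrt{r})$ into your route gives $m \gtrsim d r^{3}\kappa^{8}(\log\kappa + \log r)$, while the fixed-$\truX$ lemma gives $m \gtrsim d r\,\kappa^{4}(\log\kappa + \log r)$, which (after intersecting with the $dr\log d$ requirement from the median estimate) is the stated bound. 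The tail exponent changes in the same way, from $m\delta^{4} \asymp m/(\kappa^{8}r^{2})$ to $m\delta^{2} \asymp m/(\kappa^{4}r)$, again matching the statement only for the latter. So to close the gap you must replace Proposition~\ref{prop: sign-RIP} by a fixed-$X$ operator-norm concentration for $D$ (Lemma~\ref{lem:pertubation-bound-spectral}); the remaining ingredients of your proposal then go through unchanged.

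A very minor remark: the interval you derive for $c^{*}$ tracks the $\sqrt{2/\pi}$ normalizer in the definition of $\gamma$ more carefully than the paper's write-up and is if anything the more defensible version; this is a constant-level discrepancy and not what prevents your argument from reaching the stated sample complexity.
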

\begin{proof}
	By Lemma~\ref{lem: random corruption} with $\delta =\frac{c_0}{3\theta_{1-\frac{0.5-p}{3}}} \frac{\sigma_r}{\sigma_1\kappa\sqrt{r}}$  and Proposition~\ref{prop: estimate fnorm random}, we know that there exists constants $c_1, c_2,c_3$ depending on $p$ and $c_0$ such that whenever $m \ge c_1 dr \kappa^4 (\log\kappa+\log r)\log d $, then with probability at least  $1- c_2\exp(-\frac{c_3m}{\kappa^4 r})$, we have 
	\begin{align}
		 \| BB^\top -\psi(\truX)  \truX / \| \truX \|_F \| \;\leq\; \frac{c_0}{\theta_{1-\frac{0.5-p}{3}}} \frac{\sigma_r}{\sigma_1\kappa\sqrt{r}}
	\end{align}
and 
\begin{align}
	\theta_{\frac{1}{2}}(\{|y_i|\}_{i=1}^{m}) \in [\theta_{\frac{0.5-p}{3}}, \theta_{1-\frac{0.5-p}{3}}]\fnorm{\truX}.
\end{align}
Combing with the fact that $\psi(\truX) \in [\sqrt{\frac{1}{2\pi}}, \sqrt{\frac{2}{\pi}}]$, we obtain 
\begin{align}
	 &\;\;\; \norm{\frac{\xi_{\frac{1}{2}}(\{|y_i|\}_{i=1}^{m})}{\theta_{\frac{1}{2}}}BB^\top - \frac{\xi_{\frac{1}{2}}(\{|y_i|\}_{i=1}^{m})}{\theta_{\frac{1}{2}}}\psi(\truX)  \truX / \| \truX \|_F}\\
	   &\le \frac{c_0}{\theta_{1-\frac{0.5-p}{3}}(F)}\frac{\sigma_r}{\sigma_1\kappa\sqrt{r}}\frac{\xi_{\frac{1}{2}}(\{|y_i|\}_{i=1}^{m})}{\theta_{\frac{1}{2}}}\\
	   &\le \frac{c_0}{\theta_{1-\frac{0.5-p}{3}}(F)}\frac{\sigma_r}{\sigma_1\kappa \sqrt{r}}  \frac{\theta_{1-\frac{0.5-p}{3}}}{\theta_{\frac{1}{2}}}\fnorm{\truX}\\
	   &\le c_0\frac{\sigma_r}{\sigma_1\kappa\sqrt{r}} \sqrt{r} \sigma_1\\
	   &\le c_0 \sigma_r/\kappa.
    \end{align}
Let $c_*= \frac{\xi_{\frac{1}{2}}(\{|y_i|\}_{i=1}^{m})\psi(\truX)}{\theta_{\frac{1}{2}}\| \truX \|_F}$, clearly we have 
\begin{align}
	c^* \in[(1-p)\theta_{\frac{0.5-p}{3}}(F), \theta_{1-\frac{0.5-p}{3}}(F)] \subset  [\frac{1}{2}\theta_{\frac{0.5-p}{3}}(F), \theta_{1-\frac{0.5-p}{3}}(F)].
\end{align}
The result follows.
\end{proof}

\begin{lemma}[random corrpution]\label{lem: random corruption}
Suppose we are under model~\ref{md: randomCorruption} with fixed $p<0.5$, and we are given  $\delta\le \frac{1}{10\kappa \sqrt{r}}$. Then we have universal constants $c_1,c_2,c_3$ such that whenever $m \ge c_1 \frac{d\left(\log(\frac{1}{\delta}) \vee 1\right)}{\delta^2}$, with probability at least $1-c_2\exp(-c_3m\delta^2)$, we have $\tilde X_0 = B B^\top $ satisfying the following
\begin{align}
    \| \tilde X_0 - \bar X \| \;\leq\; 3\delta,
\end{align}
where $\bar X =  \psi(\truX)  \truX / \| \truX \|_F$, and $\psi(X)=  \frac{1}{m}\sum_{i=1}^{m}\sqrt{\frac{2}{\pi}}\left(1-p + p\EE_{s_i\sim \mathbb{P}_i}\left[\exp(-\frac{s_i^2}{2\fnorm{X}^2})\right]\right)$.
\end{lemma}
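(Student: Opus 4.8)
The plan is to reduce the operator-norm bound $\|BB^\top - \bar X\| \le 3\delta$ to the RDPP bound for $D = \frac1m\sum_i \sign(y_i) A_i$ established in Proposition~\ref{prop: sign-RIP} (equivalently Proposition~\ref{prop: main RDPP for two models}), together with a standard perturbation argument for the eigenvalue-truncation step. Note first that $\bar X = \psi(\truX) \truX/\fnorm{\truX}$ is exactly $\EE(D)$ by Lemma~\ref{lem: expection of subgradient}, that it is PSD (since $\truX \succeq 0$ and $\psi(\truX) > 0$), and that it has rank $r$. The RDPP statement gives, with the stated probability and sample complexity $m \gtrsim d(\log(1/\delta)\vee 1)/\delta^2$ (here $k'=r$ so the exponent in $m$ is $\delta^{-2}$ not $\delta^{-4}$, since we only need the $Z_1$ term of that proof — I will need to double-check which regime of the covering argument is invoked, but for a \emph{fixed} target direction the $\delta^{-2}$ rate suffices), that
\[
\|D - \bar X\| \le \delta.
\]

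Next I would handle the truncation $D \mapsto B B^\top = U_k (\Sigma_+^k)^{1/2}\big((\Sigma_+^k)^{1/2}\big)^\top U_k^\top$. Let $\mathcal{P}_k^+(D)$ denote the best rank-$k$ PSD approximation of $D$ obtained by keeping the top $k$ eigenvalues and zeroing out negatives; then $BB^\top = \mathcal{P}_k^+(D)$. Since $\bar X$ is itself PSD of rank $r \le k$, it is a feasible competitor, so by the near-optimality of the truncation (or directly: $\mathcal{P}_k^+$ is the Euclidean projection onto the closed set of PSD matrices of rank $\le k$ in a suitable sense, or one can argue eigenvalue-by-eigenvalue using Weyl's inequality~\eqref{lem: weyl's ineq}) one gets $\|BB^\top - D\| \le \|\bar X - D\| \le \delta$, hence by the triangle inequality $\|BB^\top - \bar X\| \le 2\delta$. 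Combining with the small slack that the lemma allows gives $\|\tilde X_0 - \bar X\| \le 3\delta$; the extra $\delta$ gives room in case the projection argument only yields a factor-$2$ bound up to an additional lower-order term, or to absorb the distinction between $\Sigma_+^k$ and the exact top-$k$ truncation.

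The main obstacle is the middle step: verifying carefully that $\|\mathcal{P}_k^+(D) - D\|_{\mathrm{op}} \le \|\bar X - D\|_{\mathrm{op}}$. This is clean for the \emph{Frobenius} norm (Eckart–Young), but for the operator norm one must argue more carefully. The cleanest route is via Weyl: for $i \le k$, $|\lambda_i(D) - \lambda_i(\bar X)| \le \|D - \bar X\| \le \delta$, so the top $k$ eigenvalues of $D$ are within $\delta$ of those of $\bar X$ (which are $\psi(\truX)\sigma_j/\fnorm{\truX}$ for $j\le r$ and $0$ for $r < j \le k$); meanwhile $\lambda_{k+1}(D) \le \lambda_{k+1}(\bar X) + \delta = \delta$. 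Writing $D = \mathcal{P}_k^+(D) + R$ where $R$ collects the truncated (small or negative) eigen-components, one bounds $\|R\| \le \max\{\lambda_{k+1}^+(D),\, -\lambda_{\min}(D)\} \le \delta$ using $\|D-\bar X\|\le\delta$ and $\bar X \succeq 0$, whence $\|BB^\top - \bar X\| \le \|BB^\top - D\| + \|D - \bar X\| \le 2\delta \le 3\delta$. The only other routine items are checking the probability and sample-complexity bookkeeping match the statement (these follow verbatim from Proposition~\ref{prop: sign-RIP} with $k' = r$), and noting $\delta \le \frac{1}{10\kappa\sqrt r}$ is not even needed for this lemma per se but is carried along for the downstream application in Proposition~\ref{prop: initialization for random corruption}.
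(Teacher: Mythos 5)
Your proof is correct and follows a closely related but slightly different route from the paper's. The paper first invokes Lemma~\ref{lem:pertubation-bound-spectral} (the non-uniform, fixed-direction concentration at $\truX$, with sample complexity $m\gtrsim d(\log(1/\delta)\vee 1)/\delta^2$) to obtain $\|D-\bar X\|\le\delta$, exactly as you surmise. It then decomposes $BB^\top = C + U_{k-r}\Sigma_{k-r}U_{k-r}^\top$, where $C$ is the best rank-$r$ truncation of $D$, and bounds $\|C-\bar X\|\le 2\delta$ and $\|\Sigma_{k-r}\|\le\delta$ separately. Your alternative writes $D = BB^\top + R$, observes that every eigenvalue of $R$ is either $\lambda_i(D)$ for $i>k$ or the negative part of $\lambda_i(D)$ for $i\le k$, and uses Weyl plus $\bar X\succeq 0$ (of rank $r\le k$) to bound each of these by $\delta$ in absolute value, yielding the slightly sharper $\|BB^\top-\bar X\|\le 2\delta$. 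Both arguments are valid; yours is a bit more streamlined and, as you note, does not actually use the hypothesis $\delta\le\frac{1}{10\kappa\sqrt r}$ — the paper uses that hypothesis only to ensure the top $r$ eigenvalues of $D$ are positive so that its intermediate object $C$ (the rank-$r$ truncation) is well defined with $\lambda_r(C)>0$, a step you avoid by bounding $\|R\|$ directly.

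One small citation quibble you already flag yourself: the uniform Proposition~\ref{prop: sign-RIP} with $k'=r$ would cost $m\gtrsim dr(\log(1/\delta)\vee 1)/\delta^4$, which overshoots the stated complexity. The correct invocation is the fixed-matrix version at $X=\truX$ only (encapsulated in the paper as Lemma~\ref{lem:pertubation-bound-spectral}), which only needs the covering of the $Y$-direction in $\bS_1$ and hence has the $\delta^{-2}$ rate and no $r$ in the sample complexity; you identify this correctly, but the final sentence ``these follow verbatim from Proposition~\ref{prop: sign-RIP}'' would be better directed at Lemma~\ref{lem:pertubation-bound-spectral}. Also note that $\mathcal{P}_k^+$ is the Frobenius projection onto rank-$\le k$ PSD matrices, not the operator-norm projection, so the ``feasible competitor'' framing does not by itself yield $\|BB^\top-D\|\le\|\bar X-D\|$ in operator norm; the Weyl-based argument you then give is the rigorous route, and is what actually carries the proof.
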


\begin{proof}
By Lemma~\ref{lem:pertubation-bound-spectral}, we know that  with probability at least $1-C\exp(-c'm\delta^2)$,
\begin{equation}
	\norm{ D - \bar{X} }{} \leq  \delta.
\end{equation}
Here $c'$ and $C$ are some universal constants. On the other hand, $\psi(\truX) \ge (1-p)\sqrt{\frac{2}{\pi}} \ge \sqrt{\frac{1}{2\pi}}$, so $\lambda_r(\bar X) \ge \sqrt{\frac{1}{2\pi}}\frac{\sigma_r}{\sigma_1\sqrt{r}}= \sqrt{\frac{1}{2\pi}}\frac{1}{\kappa\sqrt{r}}$. By Lemma~\ref{lem: weyl's ineq} and our assumption that $\delta \le \frac{1}{10\kappa \sqrt{r}}$, we know that the top $r$ eigenvalues of $D$ are positive.  
Let $C$ be the best symmetric rank $r$ approximation of $D$ with $\lambda_r(C)>0$ and
\begin{align}
    U_k \;=\; \begin{bmatrix}
    U_r & U_{k-r}
    \end{bmatrix},
\end{align}
then we can write
\begin{align}
    B B^\top \;=\; C + U_{k-r} \Sigma_{k-r} U_{k-r}^\top,
\end{align}
where $\Sigma_{k-r} = \diag ( (\lambda_{r+1}(D))_+, \cdots, (\lambda_{k}(D))_+ ) $. Then we have
\begin{align}
    \| BB^\top - \bar{ X}  \| \;\leq\; \| C - \bar{X} \| \;+\; \| \Sigma_{k-r} \| .
\end{align}
Finally, given that $C$ is the best symmetric rank-$r$ approximation of $D$, we have
\begin{align}
	\norm{C-D} \;\leq\; \sigma_{r+1}(D) = \abs{ \sigma_{r+1}(D) - \sigma_{r+1}(\bar{X}) }\;\leq\; \norm{ D - \bar{X} } \;\leq\; \delta,
\end{align}
where for the equality, we used the fact that $\sigma_{r+1}(\bar{X}) = 0$. 
Combining, we obtain
\begin{align}
    \norm{C - \bar{X}} \;\leq\; \norm{ C - D } + \norm{ D - \bar{X} } \;\leq \;  2\delta,
\end{align}
and
\begin{align}
    \| \Sigma_{k-r} \| \;\leq\; \norm{D - \bar{X}}{} \;\leq \; \delta.
\end{align}
Therefore, we have
\begin{align}
     \norm{ BB^\top - \bar{ X}  }{} \;\leq\; 3 \delta
\end{align}
with probability at least $1-C\exp(-c'm\delta^2)$, given $m \gtrsim \frac{d\left(\log(\frac{1}{\delta}) \vee 1\right)}{\delta^2}$.
\end{proof}

\begin{lemma}[perturbation bound under random corruption]\label{lem:pertubation-bound-spectral}
For any $\delta>0$, whenever $m \gtrsim \frac{d\left(\log(\frac{1}{\delta}) \vee 1\right)}{\delta^2}$, we have
\begin{align}
      \norm{ D - \bar{X} }{} \leq  \delta
\end{align}
holds with probability at least $1-C\exp(-c'm\delta^2)$. Here, $\bar{X} =  \psi(\truX)  \truX / \| \truX \|_F$, and $c'$, and $C>0$ are some positive numerical constants.
\end{lemma}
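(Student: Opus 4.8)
\textbf{Proof proposal for Lemma~\ref{lem:pertubation-bound-spectral}.} The idea is to view $D=\frac1m\sum_{i=1}^m\sign(y_i)A_i=\frac1m\sum_{i=1}^m\sign(\dotp{A_i,\truX}+s_i)A_i$ as the finite-sample subgradient $D(\cdot)$ from the RDPP analysis, evaluated at the \emph{single} rank-$r$ matrix $\truX$. Because we only need concentration at one matrix rather than uniformly over all low-rank matrices, we can replace the two nested covering arguments used in the proof of Proposition~\ref{prop: sign-RIP} by a single net over test vectors; this is exactly what turns the $dr/\delta^4$ sample complexity there into the $d/\delta^2$ needed here (the stated $\log(1/\delta)$ factor is harmless extra slack). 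First I would compute the mean: since $\{-s_i\}$ is again a family of independent variables independent of $\{A_i\}$ and $(-s_i)^2=s_i^2$, Lemma~\ref{lem: expection of subgradient} applies verbatim and gives $\EE D=\psi(\truX)\,\truX/\fnorm{\truX}=\bar X$ with $\psi$ precisely the function in the statement; in particular $M:=D-\bar X$ is symmetric with mean zero.

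Next, operator-norm concentration. Fix a $\tfrac14$-net $\mathcal N\subset S^{d-1}$ with $|\mathcal N|\le 9^d$. Since $M$ is symmetric one has $\norm{M}\le 2\max_{u\in\mathcal N}|u^\top Mu|$, so it suffices to control $u^\top Mu$ for each fixed $u$. Writing $u^\top Du=\frac1m\sum_i\sign(\dotp{A_i,\truX}+s_i)\,\dotp{A_i,uu^\top}$, the GOE property gives $\dotp{A_i,uu^\top}=u^\top A_iu\sim N\!\big(0,\fnorm{uu^\top}^2\big)=N(0,1)$, and multiplying by the bounded factor $\sign(\cdot)\in[-1,1]$ keeps the summand sub-Gaussian with an absolute $\psi_2$-norm (this is the estimate behind Lemma~\ref{lem: concentration for fixed XY}). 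The summands are independent across $i$ with mean $\psi(\truX)\dotp{\truX,uu^\top}/\fnorm{\truX}=u^\top\bar Xu$, so a sub-Gaussian Hoeffding bound yields $\Pr\!\big(|u^\top Mu|>\delta/2\big)\le 2e^{-cm\delta^2}$ for a universal $c$. A union bound over $\mathcal N$ then gives $\norm{M}\le\delta$ outside an event of probability $\le 2\exp\!\big(d\log 9-cm\delta^2\big)\le C\exp(-c'm\delta^2)$, valid once $m\gtrsim d/\delta^2$, which is implied by the hypothesis $m\gtrsim d(\log(1/\delta)\vee 1)/\delta^2$. This is the claimed bound.

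The only point requiring genuine care is the legitimacy of the sub-Gaussian estimate for $\sign(\dotp{A_i,\truX}+s_i)\,u^\top A_iu$, whose two factors are correlated. This is resolved by the fact that Orlicz norms can only decrease under pointwise multiplication by a $[-1,1]$-valued random variable, so no independence between the sign and $u^\top A_iu$ is needed — only independence \emph{across} the index $i$, which holds because the pairs $(A_i,s_i)$ are i.i.d. Everything else — the variance computation $\Var(u^\top A_iu)=1$, the net cardinality $9^d$, the operator-norm-via-net inequality for symmetric matrices, the Hoeffding inequality for sub-Gaussian sums, and the bookkeeping of the normalization by $\fnorm{\truX}$ inside $\sign(\|\truX\|_F\dotp{A_i,\truX/\|\truX\|_F}+s_i)$ — is routine, so I do not anticipate any real obstacle beyond careful constant-chasing.
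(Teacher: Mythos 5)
Your proposal is correct and reaches the same result as the paper, but with a cleaner covering argument that is worth noting. The paper also starts from the fixed-$(X,Y)$ sub-Gaussian estimate (Lemma~\ref{lem: concentration for fixed XY}) and then covers the set of test matrices, but it uses a $\delta$-dependent net $\bS_{\epsilon,1}$ with $\epsilon\asymp\delta$ and then invokes the $\ell_1/\ell_2$-RIP (Lemma~\ref{lem:ell1/ell2}) to control the discretization error $Z_2$. You instead exploit that $M=D-\bar X$ is symmetric and use the standard deterministic inequality $\norm{M}\le 2\max_{u\in\mathcal N}|u^\top Mu|$ for a fixed $\tfrac14$-net $\mathcal N\subset S^{d-1}$ of size $\le 9^d$. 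This avoids the extra $\ell_1/\ell_2$-RIP lemma entirely and removes the dependence of the net cardinality on $\delta$; consequently, your argument only needs $m\gtrsim d/\delta^2$, which is strictly inside the hypothesis (the $\log(1/\delta)$ factor in the statement is just slack). Your handling of the $y_i=\dotp{A_i,\truX}+s_i$ versus $\dotp{A_i,\truX}-s_i$ sign flip is also correct: $\psi$ depends on $s_i$ only through $s_i^2$, so Lemma~\ref{lem: expection of subgradient} applies after replacing $s_i$ by $-s_i$. One small imprecision: the pairs $(A_i,s_i)$ are independent across $i$, but not i.i.d. under Model~\ref{md: randomCorruption} (the corruption distributions $\mathbb{P}_i$ may differ). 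This is harmless — sub-Gaussian Hoeffding needs only independence and a uniform $\psi_2$-bound, and the averaged mean is exactly $u^\top\bar Xu$ by the definition of $\psi$ as a sample average — but the wording should say ``independent'' rather than ``i.i.d.''
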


\begin{proof}
Without loss of generality, we assume $\fnorm{\truX} =1$. First, we prove $\norm{ D - \bar{X} }{} \leq  \delta$ by invoking Lemma \ref{lem: concentration for fixed XY}, then follow by a union bound. For each $Y \in \bb S_1$, let $B_1(\bar Y, \epsilon) =\{Z \in \bb S_1 \colon \fnorm{Z-\bar Y}\le \epsilon\}$. By Lemma~\ref{lem: covering lemma for low rank matrices}, we can always find an $\epsilon$-net  $\bS_{\epsilon, 1} \subset \bS_1$ with respect to Frobenius norm and satisfy $\abs{\bS_{\epsilon,1}} \le \left(\frac{9}{\epsilon}\right)^{2d+1}$.	Based on the $\epsilon$-net and triangle inequality, one has
\begin{align}
	\norm{D-\bar X}&= \sup_{Y\in \bS_1} \abs{\frac{1}{m} \sum_{i=1}^m \sign(\dotp{A_i,\truX}+ s_i)\dotp{A_i, Y} - \psi(\truX)\dotp{\truX,Y}}\\
	&=\sup_{ \bar Y \in \bS_{\epsilon,1}}\sup_{ Y\in B_1(\bar Y, \epsilon)} \abs{\frac{1}{m} \sum_{i=1}^m \sign(\dotp{A_i,\truX}+ s_i)\dotp{A_i, Y} - \psi(\truX)\dotp{\truX,Y}}\\
	&\le \underbrace{\sup_{ \bar Y \in \bS_{\epsilon,1}} \abs{\frac{1}{m} \sum_{i=1}^m \sign(\dotp{A_i,\truX}+ s_i)\dotp{A_i, \bar Y} - \psi( \truX)\dotp{\truX,\bar Y}}}_{Z_1}\\
	&+ \underbrace{\sup_{\bar Y \in \bS_{\epsilon,1}}\sup_{Y \in B_r(\bar Y,\epsilon)} \abs{\frac{1}{m} \sum_{i=1}^m \sign(\dotp{A_i,\truX}+ s_i)\dotp{A_i, Y} -\sign(\dotp{A_i, \truX}+ s_i)\dotp{A_i, \bar Y}  }}_{Z_2}\\
	&+\underbrace{\sup_{ \bar Y \in \bS_{\epsilon,1}}\sup_{
				Y \in B_1(\bar Y,\epsilon)} \abs{\psi(\truX)\dotp{\truX,\bar Y} - \psi(\truX)\dotp{ \truX, Y}}}_{Z_3}
\end{align}
Since $\psi(X)=\psi(\bar X) \le 1$, we obtain
\begin{equation}
	Z_3 \le \fnorm{\truX} \fnorm{\bar Y- Y} \le \epsilon.
\end{equation}
Then we hope to bound $Z_1, Z_2$ separately. By union bound and Lemma~\ref{lem: concentration for fixed XY}, we have $Z_1 \le \tilde \delta$ with probability at least $1-2\abs{S_{\epsilon,1}}e^{-cm \tilde \delta^2}$.
On the other hand, by $\ell_1/\ell_2$-rip~\eqref{lem:ell1/ell2},
\begin{align}
	Z_2 &\le \sup_{\bar Y \in \bS_{\epsilon,1} ,Y \in B_1(\bar Y, \epsilon)} \frac{1}{m} \sum_{i=1}^{m}\abs{\dotp{A_i,Y-\bar Y}}\\
	&\le \epsilon \sup_{Z \in \bS_{2}} \frac{1}{m}\sum_{i=1}^{m}\abs{\dotp{A_i,Z}}\\
	&\le \epsilon \left(\sqrt{\frac{2}{\pi}} +1\right)
\end{align}
with probability at least $1-e^{-cm}$, given $m \gtrsim d$.\\
 Combining, we have
\begin{align}
	\norm{D-\bar X}
	\le \delta_1 + \epsilon\left(\sqrt{\frac{2}{\pi}} +1\right) +\epsilon
\end{align}
with probability as least $1- 2\abs{S_{\epsilon,1}}e^{-cm\tilde \delta^2}-e^{-cm} $, given $m \gtrsim d$. Take $\tilde \delta = \delta/3, \epsilon=\delta/10$, we have  
\begin{equation}
	\norm{D-\bar X} \le\delta
\end{equation}
with probability at least(given $m\gtrsim d$)
\begin{align}
	\qquad 1- 2\left(\frac{90}{\delta}\right)^{(2d+1)}e^{-cm\delta^2} -e^{-cm}
\end{align}
Given $m \gtrsim \frac{d\left(\log(\frac{1}{\delta}) \vee 1\right)}{\delta^2}$, we have
\begin{align}
	&\qquad 2\left(\frac{90}{\delta}\right)^{(2d+1)}e^{-cm\delta^2} +e^{-cm}\\
	&\lesssim \exp\left((2d+1) \log\left(\frac{90}{\delta}\right) -cm\delta^2\right) + \exp(-cm)\\
	&\lesssim \exp(-c'm\delta^2)
\end{align}
So if $m \gtrsim \frac{d\log\left(\frac{1}{\delta}\right)}{\delta^2}$,
\begin{equation}
	\norm{D- \bar X} \le \delta
\end{equation}
with probability at least $1-C \exp(-c'm\delta^2)$. 
\end{proof}
\begin{proposition}[arbitrary corruption]\label{prop: initialization for arbitrary corruption}
Let $F_0$ be the output of Algorithm \ref{alg: initialization}. Fix constant $c_0<0.1$. For model~\ref{md: arbitraryCorruption} with $p\le \frac{\tilde c_0}{\kappa^2 \sqrt{r}}$ where $\tilde c_0$ depends only on $c_0$, there exist constants $c_1,c_2,c_3$ depending only on $c_0$ such that whenever $m \ge c_1 dr \kappa^2 \log d (\log \kappa +\log r) $, we have
\begin{align}
	\|  F_0 F_0^\top - c^*\truX \| \;\leq\; c_0 \sigma_r/\kappa.
\end{align}
with probability at least   $1-c_2\exp(-c_3\frac{m}{\kappa^4 r})-\exp(-(pm+d))$. Here $c^* = 1$.
\end{proposition}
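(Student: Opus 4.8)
The plan is to follow the two-stage template already used for the random-corruption case in the proof of Proposition~\ref{prop: initialization for random corruption}: first show that the spectral factor $B$ points in essentially the direction of $\truX$, then show that the median rescaling $\gamma$ produced by Algorithm~\ref{alg: initialization} puts $F_0F_0^\top=\gamma BB^\top$ on essentially the scale of $\truX$. Write $\bar X := \sqrt{2/\pi}\,\truX/\fnorm{\truX}$, the population target identified by the AC scaling function in Proposition~\ref{prop: main RDPP for two models}; note $\bar X$ is symmetric of rank $r$ with $\lambda_r(\bar X)=\sqrt{2/\pi}\,\sigma_r/\fnorm{\truX}\ge\sqrt{2/\pi}/(\kappa\sqrt{r})$.

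\textbf{Step 1: the direction.} I would first prove the arbitrary-corruption analogue of Lemma~\ref{lem: random corruption}: for a suitably small $\delta$, with high probability $\norm{BB^\top-\bar X}\le 3(\delta+3\sqrt{dp/m}+3p)$. This is the argument of Proposition~\ref{prop: RDPP arbitrary}, except that $D=\frac1m\sum_i\sign(y_i)A_i$ only has to concentrate \emph{at the single matrix $\truX$} rather than uniformly over all rank-$r$ matrices, so the appeal there to the uniform sign-RIP (Proposition~\ref{prop: sign-RIP}, which costs $m\gtrsim dr\delta^{-4}\log(1/\delta)$) is replaced by the pointwise perturbation bound of Lemma~\ref{lem:pertubation-bound-spectral}, which costs only $m\gtrsim d\,\delta^{-2}\log(1/\delta)$. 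Concretely, split $D-\bar X=Z_1+Z_2-Z_3$ with $Z_1=\frac1m\sum_{i\in S}\sign(y_i)A_i$, $Z_2=\frac1m\sum_{i\notin S}\sign(\dotp{A_i,\truX})A_i-(1-p)\bar X$, and $Z_3=p\,\bar X$: bound $\norm{Z_1}\le 3\sqrt{dp/m}+2p$ by the GOE operator-norm tail (Lemma~\ref{lem: concentration of opnorm}) and a union bound over the $2^{pm}$ sign patterns on $S$; bound $\norm{Z_2}\le\delta$ by Lemma~\ref{lem:pertubation-bound-spectral} (write $\norm{Z_2}=\sup_{Y\in\bS}|\cdots|$, cover $\bS$, apply Lemma~\ref{lem: concentration for fixed XY} on the net and the $\ell_1/\ell_2$-RIP for the discretization error); and $\norm{Z_3}=\sqrt{2/\pi}\,p$ deterministically. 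The passage from $D$ to $BB^\top$ is then verbatim from Lemma~\ref{lem: random corruption}: since $\delta+\sqrt{dp/m}+p$ is chosen $\ll\lambda_r(\bar X)$, Weyl's inequality makes the top-$r$ eigenvalues of $D$ positive, and writing $BB^\top=C+U_{k-r}\Sigma_{k-r}U_{k-r}^\top$ with $C$ the best symmetric rank-$r$ approximation of $D$ gives $\norm{C-D}\le\sigma_{r+1}(D)=|\sigma_{r+1}(D)-\sigma_{r+1}(\bar X)|\le\norm{D-\bar X}$ and $\norm{\Sigma_{k-r}}\le\norm{D-\bar X}$, hence $\norm{BB^\top-\bar X}\le 3\norm{D-\bar X}$.

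\textbf{Step 2: the scale, and assembling.} Apply Proposition~\ref{prop: estimate fnorm arbitrary} with $G=\truX$ to get $\xi_{1/2}(\{|y_i|\})\in[\theta_{1/2}-L(p+\epsilon),\ \theta_{1/2}+L(p+\epsilon)]\fnorm{\truX}$; then $\gamma=\xi_{1/2}(\{|y_i|\})/(\sqrt{2/\pi}\,\theta_{1/2})$ satisfies $\gamma\lesssim\fnorm{\truX}\le\sqrt{r}\,\sigma_1$, and $c_\star:=\gamma\sqrt{2/\pi}/\fnorm{\truX}$ lies within $L(p+\epsilon)/\theta_{1/2}$ of $1$, with $c_\star\truX=\gamma\bar X$. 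Therefore
\[
\norm{F_0F_0^\top-\truX}\le\norm{\gamma BB^\top-\gamma\bar X}+|c_\star-1|\,\sigma_1\le 3\gamma(\delta+3\sqrt{dp/m}+3p)+\frac{L}{\theta_{1/2}}(p+\epsilon)\sigma_1 .
\]
Choosing $\delta\asymp c_0\kappa^{-2}r^{-1/2}$ and $\epsilon\asymp c_0\kappa^{-2}$, and taking $\tilde c_0$ small enough in terms of $c_0$ so that $p\le\tilde c_0\kappa^{-2}r^{-1/2}$ makes the $p$- and $\sqrt{dp/m}$-terms negligible against $c_0\sigma_r/\kappa=c_0\sigma_r^2/\sigma_1$, the right-hand side is $\le c_0\sigma_r/\kappa$; this is the claim with $c^*=1$. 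A union bound over the three events (Lemma~\ref{lem: concentration of opnorm} for $Z_1$, Lemma~\ref{lem:pertubation-bound-spectral} for $Z_2$, Proposition~\ref{prop: estimate fnorm arbitrary} for $\gamma$) produces the failure probability $c_2\exp(-c_3 m/(\kappa^4 r))+\exp(-(pm+d))$, and the sample requirement is that of Lemma~\ref{lem:pertubation-bound-spectral} and Proposition~\ref{prop: estimate fnorm arbitrary} at these $\delta,\epsilon$, i.e.\ of the order stated in the theorem.

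\textbf{Main obstacle.} The delicate part is apportioning the target radius $c_0\sigma_r/\kappa$ between the two error sources. The spectral error enters as $\gamma\,\norm{BB^\top-\bar X}$, and since $\gamma$ may be as large as $\sqrt{r}\,\sigma_1$ this forces a \emph{direction} accuracy of order $c_0\kappa^{-2}r^{-1/2}$, which sets $\delta$ (and hence the $\kappa,r$-dependence of $m$); the scale-rounding error forces $p+\epsilon\lesssim c_0\kappa^{-2}$, which is why the corruption rate must be taken as small as $\tilde c_0\kappa^{-2}r^{-1/2}$. One then has to verify that with these choices the corruption contributions $\sqrt{dp/m}$ and $p$ in $\norm{D-\bar X}$ are genuinely subdominant (they are, precisely because $p$ is tied to $\kappa^{-2}r^{-1/2}$), and that only pointwise concentration at $\truX$ is ever invoked, so that the $\delta^{-4}$ cost of the uniform RDPP is avoided throughout.
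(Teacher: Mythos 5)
Your proposal is correct and follows essentially the same route as the paper's proof: decompose $D-\bar X$ into the three pieces $Z_1,Z_2,Z_3$, bound $Z_1$ by the GOE operator-norm tail with a union bound over the $2^{pm}$ sign patterns, bound $Z_2$ pointwise at $\truX$ via the perturbation argument (so only $m\gtrsim d\,\delta^{-2}\log(1/\delta)$ is needed rather than the $\delta^{-4}$ cost of uniform RDPP), pass from $D$ to $BB^\top$ through the best symmetric rank-$r$ approximation together with Weyl's inequality, and then control the median rescaling $\gamma$ by Proposition~\ref{prop: estimate fnorm arbitrary}, apportioning $c_0\sigma_r/\kappa$ between the direction error $\gamma\norm{BB^\top-\bar X}$ (which, since $\gamma\lesssim\sqrt{r}\sigma_1$, forces $\delta\asymp c_0\kappa^{-2}r^{-1/2}$) and the scale error (which forces $p+\epsilon\lesssim c_0\kappa^{-2}$). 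This is precisely the content of Lemmas~\ref{lem: perturbation bound under arbitrary corruption} and~\ref{lem: arbitrary corruption} together with the assembly in the paper's proof of this proposition.
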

\begin{proof}
	Taking  $\epsilon = \frac{c_0 \theta_{\frac{1}{2}}}{4L\kappa^2}$  in Proposition~\ref{prop: estimate fnorm arbitrary}, where $L$ is a universal constant doesn't depend on anything from Proposition~\ref{prop: estimate fnorm arbitrary}, we know that with probability at least $1-c_5\exp(-c_6\frac{m}{\kappa^4})$
	\begin{align}
		\xi_{\frac{1}{2}}\left(\{|y_i|\}_{i=1}^{m}\right) \in [\theta_{\frac{1}{2}} - L(p+\epsilon), \theta_{\frac{1}{2}}+ L(p+\epsilon)]\fnorm{\truX},
	\end{align}
given $m \ge c_7 dr \kappa^4 \log d \log \kappa $. Here $c_5,c_6,c_7$ are constants depending only on $c_0$.
Given $\tilde c_0 = \frac{c_0}{4L}$, the above inclusion implies that 
\begin{align}
	\left|1-\frac{	\xi_{\frac{1}{2}}\left(\{|y_i|\}_{i=1}^{m}\right) }{\fnorm{\truX}\theta_{\frac{1}{2}}}\right| \le \frac{L(p+\epsilon)}{\theta_{\frac{1}{2}}} \le \frac{c_0}{2\kappa^2}.
\end{align}

Take $\delta = \frac{c_0\sqrt{\frac{2}{\pi}}}{12(1+\frac{L}{\theta_{\frac{1}{2}}})}\frac{\sigma_r}{\sigma_1 \kappa\sqrt{r}}$ in lemma~\ref{lem: arbitrary corruption}, we know that with probability at least  $1-c_8\exp(-c_9\frac{m}{\kappa^4 r})-\exp(-(pm+d))$ for constants $c_8,c_9$ depending only on $c_0$,
\begin{equation}
	\norm{BB^\top -\sqrt{\frac{2}{\pi}}  \truX / \| \truX \|_F} \le \frac{c_0\sqrt{\frac{2}{\pi}}}{2(1+\frac{L}{\theta_{\frac{1}{2}}})}\frac{\sigma_r}{\sigma_1\kappa \sqrt{r}},
\end{equation}
given $m \gtrsim dr \kappa^4  (\log \kappa +\log r) $.
The above inequality implies that 
\begin{align}
	\norm{\frac{ \theta_{\frac{1}{2}}(\{|y_i|\}_{i=1}^{m})}{\sqrt{\frac{2}{\pi}}\theta_{\frac{1}{2}}}B B^\top - \frac{ \theta_{\frac{1}{2}}(\{|y_i|\}_{i=1}^{m})\truX}{\fnorm{\truX}\theta_{\frac{1}{2}}}} &\le \frac{1+\frac{L}{\theta_{\frac{1}{2}}}}{\sqrt{\frac{2}{\pi}}}  \frac{c_0\sqrt{\frac{2}{\pi}}}{2(1+\frac{L}{\theta_{\frac{1}{2}}})}\frac{\sigma_r}{\sigma_1 \sqrt{r}}\fnorm{\truX} \\
	&\le \frac{c_0 \sigma_r}{2}.
\end{align}
\end{proof}
Combining, we can find some constants $c_1,c_2,c_3$ depending only on $c_0$ such that whenever $m \ge c_1 dr \kappa^4 \log d (\log \kappa +\log r) $, then with probability at least $1-c_2\exp(-c_3\frac{m}{\kappa^4 r})-\exp(-(pm+d))$ ,
\begin{align}
	&\;\;\;\norm{\frac{ \theta_{\frac{1}{2}}(\{|y_i|\}_{i=1}^{m})}{\sqrt{\frac{2}{\pi}}\theta_{\frac{1}{2}}}B B^\top - \truX}\\
	 &\le \norm{\frac{ \theta_{\frac{1}{2}}(\{|y_i|\}_{i=1}^{m})}{\sqrt{\frac{2}{\pi}}\theta_{\frac{1}{2}}}B B^\top - \frac{ \theta_{\frac{1}{2}}(\{|y_i|\}_{i=1}^{m})\truX}{\fnorm{\truX}\theta_{\frac{1}{2}}}} + \norm{\left(1-\frac{	\theta_{\frac{1}{2}}\left(\{|y_i|\}_{i=1}^{m}\right) }{\fnorm{\truX}\theta_{\frac{1}{2}}}\right)\truX}\\
	&\le \frac{c_0 \sigma_r}{2\kappa}+ \frac{L(p+\epsilon)}{\theta_{\frac{1}{2}}}\sigma_1\\
	&\le \frac{c_0\sigma_r}{\kappa}.
\end{align}

\begin{lemma}[arbitrary corrpution]\label{lem: arbitrary corruption}
	Suppose we are given  $\delta\le \frac{1}{10\kappa \sqrt{r}}$. Suppose we are under model~\ref{md: arbitraryCorruption} with fixed $p<\delta/10$.  Then we have universal constants $c_1,c_2,c_3$ such that whenever $m \ge c_1 \frac{d\left(\log(\frac{1}{\delta}) \vee 1\right)}{\delta^2}$, with probability at least $1-c_2\exp(-c_3m\delta^2)-\exp(-(pm+d))$, we have $\tilde X_0 = B B^\top $ satisfying the following
	\begin{align}
		\| \tilde X_0 - \bar X \| \;\leq\; 6\delta,
	\end{align}
	where $\bar X =  \psi(\truX)  \truX / \| \truX \|_F$, and $\psi(X)=  \frac{1}{m}\sum_{i=1}^{m}\sqrt{\frac{2}{\pi}}\left(1-p + p\EE_{s_i\sim \mathbb{P}_i}\left[\exp(-\frac{s_i^2}{2\fnorm{X}^2})\right]\right)$.
\end{lemma}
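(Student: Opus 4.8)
The plan is to follow the proof of Lemma~\ref{lem: random corruption} almost verbatim, the only substantive change being that the spectral perturbation input must now be established under the arbitrary-corruption model~\ref{md: arbitraryCorruption}, where I would obtain $\norm{D-\bar X}\le 2\delta$ in place of the bound $\norm{D-\bar X}\le\delta$ used there. As in that proof I would first reduce to $\fnorm{\truX}=1$ and observe that for the AC model the relevant scaling is $\psi(\truX)=\sqrt{2/\pi}$, so $\bar X=\sqrt{2/\pi}\,\truX$ and $\lambda_r(\bar X)=\sqrt{2/\pi}/(\kappa\sqrt r)$; the $\EE[e^{-s_i^2/2}]$ correction appearing in the stated $\psi$ affects at most $\lfloor pm\rfloor$ coordinates and perturbs $\bar X$ by at most $p\sqrt{2/\pi}<\delta/10$ in operator norm, which is harmless for the $6\delta$ target. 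Then I would split $D=D_S+D_{S^c}$ with $D_S=\tfrac1m\sum_{i\in S}\sign(y_i)A_i$ and $D_{S^c}=\tfrac1m\sum_{i\notin S}\sign(\dotp{A_i,\truX})A_i$, using $s_i=0$ for $i\notin S$.

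For the corrupted part $D_S$ I would reuse the device from the proof of Proposition~\ref{prop: RDPP arbitrary}: since $S$ is chosen independently of $\{A_i\}$, for every \emph{fixed} sign vector $\{e_i\}_{i\in S}$ the matrix $\sum_{i\in S}e_iA_i$ is GOE-type with variance proportional to $\lfloor pm\rfloor$, so Lemma~\ref{lem: concentration of opnorm} with $t=2\sqrt{pm+d}$ together with a union bound over the $2^{\lfloor pm\rfloor}$ sign patterns gives $\norm{D_S}\le 3\sqrt{dp/m}+2p$ with probability at least $1-\exp(-(pm+d))$; under $p<\delta/10$ and $m\ge c_1 d(\log(1/\delta)\vee1)/\delta^2$ with $c_1$ large this is at most $\delta$. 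For the clean part, the $|S^c|\ge(1-p)m\ge m/2$ uncorrupted samples have sample mean $\tfrac1{|S^c|}\sum_{i\notin S}\sign(\dotp{A_i,\truX})A_i$ whose population mean is exactly $\bar X$ (Lemma~\ref{lem: expection of subgradient} at zero corruption), so running the proof of Lemma~\ref{lem:pertubation-bound-spectral} on these samples with tolerance $\delta-p$ shows this sample mean lies within $\delta-p$ of $\bar X$ in operator norm with probability $1-C\exp(-c_3 m\delta^2)$ (absorbing the harmless $(1-p)$ factor using $p<1/2$); rescaling by $|S^c|/m\in[1-p,1]$ then costs an extra $p\norm{\bar X}\le p$, giving $\norm{D_{S^c}-\bar X}\le\delta$. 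Summing the two pieces yields $\norm{D-\bar X}\le 2\delta$ on the intersection of the two events.

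The final step is the truncation argument, identical to Lemma~\ref{lem: random corruption}. Since $2\delta\le 1/(5\kappa\sqrt r)<\lambda_r(\bar X)=\sqrt{2/\pi}/(\kappa\sqrt r)$, Weyl's inequality (Lemma~\ref{lem: weyl's ineq}) forces the top $r$ eigenvalues of $D$ to be positive, so with $C$ the best symmetric rank-$r$ approximation of $D$ one has $BB^\top=C+U_{k-r}\Sigma_{k-r}U_{k-r}^\top$ with $\norm{\Sigma_{k-r}}\le\sigma_{r+1}(D)=|\sigma_{r+1}(D)-\sigma_{r+1}(\bar X)|\le\norm{D-\bar X}\le 2\delta$ and $\norm{C-D}\le\sigma_{r+1}(D)\le 2\delta$, whence $\norm{C-\bar X}\le 4\delta$ and $\norm{BB^\top-\bar X}\le 6\delta$; collecting failure probabilities gives the stated $1-c_2\exp(-c_3 m\delta^2)-\exp(-(pm+d))$.

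The one genuinely delicate point is the sign-pattern union bound controlling $\norm{D_S}$: one needs $2^{\lfloor pm\rfloor}e^{-t^2/2}$ to stay exponentially small, which forces $t\gtrsim\sqrt{pm}$, and the choice $t=2\sqrt{pm+d}$ is comfortably sufficient while keeping $3\sqrt{dp/m}+2p$ of order $\delta$ under the hypotheses $p<\delta/10$ and $m\gtrsim d(\log(1/\delta)\vee1)/\delta^2$. Everything else is a routine transcription, so I would present the argument by invoking Lemma~\ref{lem: random corruption}, Lemma~\ref{lem:pertubation-bound-spectral}, and Proposition~\ref{prop: RDPP arbitrary} rather than reproducing their computations.
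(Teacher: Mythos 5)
Your proposal follows essentially the same route as the paper: split $D$ into the corrupted and uncorrupted parts, bound the corrupted part by the sign-pattern union bound via Lemma~\ref{lem: concentration of opnorm} (exactly as in Lemma~\ref{lem: perturbation bound under arbitrary corruption}), bound the clean part via Lemma~\ref{lem:pertubation-bound-spectral} at zero corruption, and then finish with the identical best-rank-$r$ truncation and Weyl argument used in Lemma~\ref{lem: random corruption}. You are also right to flag the $\psi$ issue: the paper's own proof of Lemma~\ref{lem: perturbation bound under arbitrary corruption} and its downstream use in Proposition~\ref{prop: initialization for arbitrary corruption} both take $\psi(\truX)=\sqrt{2/\pi}$, so the RC-style $\psi$ appearing in the statement of Lemma~\ref{lem: arbitrary corruption} is a leftover from Lemma~\ref{lem: random corruption} that your $p\sqrt{2/\pi}<\delta/10$ absorption handles harmlessly.
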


\begin{proof}
	By Lemma~\ref{lem: perturbation bound under arbitrary corruption}, given $m \ge c_1 \frac{d\left(\log(\frac{1}{\delta}) \vee 1\right)}{\delta^2}$, we know that  with probability at least $1-\exp(-(pm+d)) - \exp(-c_2m\delta^2)$,
	\begin{equation}
		\norm{ D - \bar{X} }{} \leq  2\delta.
	\end{equation}
 On the other hand, $\psi(\truX) \ge (1-p)\sqrt{\frac{2}{\pi}} \ge \sqrt{\frac{1}{2\pi}}$, so $\lambda_r(\bar X) \ge \sqrt{\frac{1}{2\pi}}\frac{\sigma_r}{\sigma_1\sqrt{r}}= \sqrt{\frac{1}{2\pi}}\frac{1}{\kappa\sqrt{r}}$. By Lemma~\ref{lem: weyl's ineq} and our assumption that $\delta \le \frac{1}{10\kappa \sqrt{r}}$, we know that the top $r$ eigenvalues of $D$ are positive.  
	Let $C$ be the best symmetric rank $r$ approximation of $D$ with $\lambda_r(C)>0$ and
	\begin{align}
		U_k \;=\; \begin{bmatrix}
			U_r & U_{k-r}
		\end{bmatrix},
	\end{align}
	then we can write
	\begin{align}
		B B^\top \;=\; C + U_{k-r} \Sigma_{k-r} U_{k-r}^\top,
	\end{align}
	where $\Sigma_{k-r} = \diag ( (\lambda_{r+1}(D))_+, \cdots, (\lambda_{k}(D))_+ ) $. Then we have
	\begin{align}
		\| BB^\top - \bar{ X}  \| \;\leq\; \| C - \bar{X} \| \;+\; \| \Sigma_{k-r} \| .
	\end{align}
	Finally, given that $C$ is the best symmetric rank-$r$ approximation of $D$, we have
	\begin{align}
		\norm{C-D} \;\leq\; \sigma_{r+1}(D) = \abs{ \sigma_{r+1}(D) - \sigma_{r+1}(\bar{X}) }\;\leq\; \norm{ D - \bar{X} } \;\leq\; 2\delta,
	\end{align}
	where for the equality, we used the fact that $\sigma_{r+1}(\bar{X}) = 0$. 
	Combining, we obtain
	\begin{align}
		\norm{C - \bar{X}} \;\leq\; \norm{ C - D } + \norm{ D - \bar{X} } \;\leq \;  4\delta,
	\end{align}
	and
	\begin{align}
		\| \Sigma_{k-r} \| \;\leq\; \norm{D - \bar{X}}{} \;\leq \; 2\delta.
	\end{align}
	Therefore, we have
	\begin{align}
		\norm{ BB^\top - \bar{ X}  }{} \;\leq\; 6 \delta
	\end{align}
	with probability at least $1-\exp(-(pm+d)) - \exp(-c_2m\delta^2)$, given $m \ge  c_1\frac{d\left(\log(\frac{1}{\delta}) \vee 1\right)}{\delta^2}$.
\end{proof}
\begin{lemma}[perturbation bound under arbitrary corruption]\label{lem: perturbation bound under arbitrary corruption}
	Given a fixed constant $\delta>0$. Suppose the measurements ${A_i}$'s are i.i.d. GOE, $s_i$'s are from model~\ref{md: arbitraryCorruption} with fixed $p \le \delta/10$. There exist universal constants $c_1$ and $c_2$ such that whenever $m \ge c_1   \frac{d\left(\log(\frac{1}{\delta}) \vee 1\right)}{\delta^2}$, with probability with probability at least $1-\exp(-(pm+d)) - \exp(-c_2m\delta^2)$, we have  $   D = \frac{1}{m} \sum_{i=1}^m \sign(y_i) A_i.$ satisfying the following
	\begin{align}
		\| D - \bar X \| \;\leq\;2 \delta,
	\end{align}
	where $\bar X =  \sqrt{\frac{2}{\pi}}  \truX / \| \truX \|_F$.
\end{lemma}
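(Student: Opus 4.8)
The plan is to mirror the proof of Lemma~\ref{lem:pertubation-bound-spectral} (the random-corruption perturbation bound), but to first peel off the corrupted summands so that the remainder is an empirical average over the clean indices whose population mean is exactly $(1-p)\bar X$. Assume for simplicity that $pm$ is an integer and let $S$ (with $|S|=pm$, chosen independently of $\mathcal A$) be the corruption support; since $s_i=0$ for $i\notin S$ and $\tfrac1m\sum_{i\notin S}\bar X=(1-p)\bar X$, we may write
\[
D - \bar X \;=\; \underbrace{\Big(\tfrac1m\sum_{i\notin S}\sign(\dotp{A_i,\truX})A_i - (1-p)\bar X\Big)}_{D_1} \;+\; \underbrace{\tfrac1m\sum_{i\in S}\sign(y_i)A_i}_{D_2} \;-\; p\,\bar X .
\]
Because $\norm{\bar X}=\sqrt{2/\pi}$, the last term has operator norm $\sqrt{2/\pi}\,p\le \delta/10$ under the hypothesis $p\le\delta/10$, so it suffices to bound $\norm{D_1}$ and $\norm{D_2}$.

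For $D_2$: for each fixed sign vector $(e_i)_{i\in S}\in\{\pm1\}^{|S|}$, the matrix $\sum_{i\in S}e_iA_i$ is $\sqrt{pm}$ times a $\mathrm{GOE}(d)$, so by the operator-norm concentration for GOE matrices (Lemma~\ref{lem: concentration of opnorm}) with $t=2\sqrt{pm+d}$ and a union bound over the $2^{pm}$ sign patterns, with probability at least $1-2^{pm}e^{-2(pm+d)}\ge 1-e^{-(pm+d)}$ one has $\norm{D_2}\le\sqrt{p/m}\,(\sqrt d+2\sqrt{pm+d})\le 3\sqrt{dp/m}+2p$ simultaneously for every $\truX$ and every choice of corruption values $\{s_i\}_{i\in S}$. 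Taking $m\ge c_1 d(\log(1/\delta)\vee1)/\delta^2$ with $c_1$ large and using $p\le\delta/10\le1$, this is at most, say, $\delta/2$.

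For $D_1$: here the key observation — and the reason we do \emph{not} simply invoke Proposition~\ref{prop: RDPP arbitrary} — is that $\truX$ is \emph{fixed}, so no net over rank-$r$ matrices and no Cauchy--Schwarz sign-change term are needed, and the sample complexity drops from $\delta^{-4}$ to $\delta^{-2}$. Using $\norm{D_1}=\sup_{Y\in\bS_1}|\dotp{D_1,Y}|$, take a $(\delta/10)$-net $\bS_{\epsilon,1}$ of $\bS_1$ with $|\bS_{\epsilon,1}|\le(90/\delta)^{2d+1}$ (Lemma~\ref{lem: covering lemma for low rank matrices}). For each $\bar Y\in\bS_{\epsilon,1}$, Lemma~\ref{lem: concentration for fixed XY} applied with no corruption and $(1-p)m\ge m/2$ clean samples gives $\bigl|\tfrac1{(1-p)m}\sum_{i\notin S}\sign(\dotp{A_i,\truX})\dotp{A_i,\bar Y}-\sqrt{2/\pi}\dotp{\truX/\fnorm{\truX},\bar Y}\bigr|\le\delta/3$ with probability at least $1-2e^{-cm\delta^2}$; multiplying by $(1-p)$ and union-bounding over $\bS_{\epsilon,1}$ controls the net points, and the discretization error is absorbed by the $\ell_1/\ell_2$-RIP \eqref{lem:ell1/ell2}, since for $Y\in B_1(\bar Y,\epsilon)$ one has $|\dotp{D_1,Y-\bar Y}|\le\tfrac1m\sum_i|\dotp{A_i,Y-\bar Y}|+\sqrt{2/\pi}\,\epsilon\lesssim\epsilon$. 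Choosing $\epsilon=\delta/10$ and $m\ge c_1 d(\log(1/\delta)\vee1)/\delta^2$ with $c_1$ large makes $(90/\delta)^{2d+1}$ lose to $e^{-cm\delta^2}$, yielding $\norm{D_1}\le\delta$ with probability at least $1-Ce^{-c'm\delta^2}$. Combining the three pieces gives $\norm{D-\bar X}\le\delta+\delta/2+\delta/10<2\delta$ with probability at least $1-\exp(-(pm+d))-\exp(-c_2m\delta^2)$ (adjusting $c_2<c'$ using that $m\delta^2$ is large).

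\textbf{Main obstacle.} The one conceptual point is resisting the temptation to apply the full RDPP estimate (Proposition~\ref{prop: RDPP arbitrary}), which would force $m\gtrsim d/\delta^4$; instead one must exploit that $\truX$ is fixed to net only over rank-one $Y$. The rest is routine bookkeeping: arranging the constants so that the GOE-block bound, the net-point bound, the discretization error, and the $p\bar X$ shift together stay below $2\delta$, and combining the $\exp(-(pm+d))$ and $\exp(-c_2m\delta^2)$ failure probabilities.
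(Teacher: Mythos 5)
Your proposal is correct and follows essentially the same route as the paper: decompose $D-\bar X$ into the clean-sample average (bounded by a net argument over $\bS_1$ as in Lemma~\ref{lem:pertubation-bound-spectral}), the corrupted block (bounded by GOE operator-norm concentration with a union bound over $2^{pm}$ sign patterns, exactly as the paper does for $Z_1$), and the deterministic shift $-p\bar X$. The only cosmetic difference is that you unfold the net argument for the clean block, whereas the paper invokes Lemma~\ref{lem:pertubation-bound-spectral} directly; your explicit remark that one should not reach for the full RDPP estimate (to avoid $\delta^{-4}$) matches what the paper does implicitly.
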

\begin{proof}
	Let $S$ be the set of indices that the corresponding observations are corrupted. We assume for simplicity that $pm$ and $(1-p)m$ are integers. Note that
	\begin{align}
		D-\bar X&=\frac{1}{m} \sum_{i=1}^m \sign(\dotp{A_i,\truX}+ s_i)A_i- \sqrt{\frac{2}{\pi}}\frac{\truX}{\fnorm{\truX}}\\
		&=\frac{1}{m}\sum_{i\in S} \sign(\dotp{A_i,\truX}+s_i)A_i + \frac{1}{m}\sum_{i\notin S}\sign(\dotp{A_i,\truX})A_i - \sqrt{\frac{2}{\pi}}\frac{\truX}{\fnorm{\truX}}\\
		&= \underbrace{\frac{1}{m}\sum_{i\in S} \sign(\dotp{A_i,\truX}+ s_i)A_i}_{Z_1} + \underbrace{\frac{1}{m}\sum_{i\notin S}\sign(\dotp{A_i,\truX})A_i - (1-p)\sqrt{\frac{2}{\pi}}\frac{\truX}{\fnorm{\truX}}}_{Z_2}\\
		&\qquad-\underbrace{p\sqrt{\frac{2}{\pi}}\frac{\truX}{\fnorm{\truX}}}_{Z_3}
	\end{align}
	We bound $Z_1,Z_2,Z_3$ separately.
	\begin{itemize}
		\item For $Z_1$, we observe the following fact: let $e_i\in\{-1,1\}$ be sign variables. For any fixed $\{e_i\}_{i\in S}$, $\sum_{i\in S}e_i A_i$ is a GOE matrix with $N(0, pm)$ diagonal elements and $N(0,\frac{pm}{2})$ off-diagonal elements. By lemma~\ref{lem: concentration of opnorm}, we have
		\begin{equation}
			P\left(\norm{\sum_{i\in S}e_iA_i} \ge \sqrt{pm}(\sqrt{d}+t)\right) \le e^{-\frac{t^2}{2}}.
		\end{equation}
		Take $t=2\sqrt{pm +d}$, we obtain
		\begin{equation}
			P\left(\norm{\sum_{i\in S}e_iA_i} \ge \sqrt{pm}(\sqrt{d}+2\sqrt{pm+d})\right) \le e^{-2(pm+d)}.
		\end{equation}
		As a result, by union bound(the union of all the possible signs), with probability at least $1- 2^{pm}e^{-2(pm+d)} \ge 1- e^{-(pm+d)}$,
		\begin{align}
			\norm{\sum_{i\in S}\sign(\dotp{A_i,\truX}- s_i)A_i} \le \sqrt{pm}(\sqrt{d}+2\sqrt{pm+d}).
		\end{align}
		Note also that $\sqrt{d}+2\sqrt{pm+d} \le 3\sqrt{d}+2\sqrt{pm}$, so with probability at least $1-\exp(-(pm+d))$,
		\begin{equation}
			\norm{Z_1} \le 3\sqrt{\frac{dp}{m}} + 2p
		\end{equation}
		for any $X$.
		\item For $Z_2$, by the proof of  Lemma~\ref{lem:pertubation-bound-spectral} with zero corruption and the assumption that $p<\frac{1}{2}$, we obtain that with probability exceeding $1-\exp(-cm(1-p)\delta^2) \ge 1-\exp(-c'm\delta^2)$, the following holds,
		\begin{align}
			\norm{\frac{1}{(1-p)m} \sum_{i\notin S}\sign(\dotp{A_i,\truX})A_i - \sqrt{\frac{2}{\pi}}\frac{\truX}{\fnorm{\truX}}} \le \delta,
		\end{align}
		given $m\gtrsim \frac{d\left(\log(\frac{1}{\delta}) \vee 1\right)}{\delta^2}$.
		Consequently, given $m\gtrsim \frac{d\left(\log(\frac{1}{\delta}) \vee 1\right)}{\delta^2}$,
		with probability exceeding $1-\exp(-cm(1-p)\delta^2) \ge 1-\exp(-c'm\delta^2)$, 
		\begin{equation}
			\norm{Z_2}\le \delta
		\end{equation}
		for any $X$ with rank at most $r$.
		\item For $Z_3$, we have a deterministic bound
		\begin{equation}
			\norm{Z_3}\le \sqrt{\frac{2}{\pi}} p.
		\end{equation}
		Combining, we obtain that given $m\gtrsim  \frac{d\left(\log(\frac{1}{\delta}) \vee 1\right)}{\delta^2}$, then with probability exceeding $1-\exp(-(pm+d)) - \exp(-c'm\delta^2)$, 
		\begin{equation}
			\norm{\frac{1}{m} \sum_{i=1}^m \sign(\dotp{A_i,\truX}- s_i)A_i- \psi(\truX)\frac{\truX}{\fnorm{\truX}}}\le 3\sqrt{\frac{dp}{m}} +3p + \delta.
		\end{equation}
	Take $\delta = \frac{c_0}{3\kappa\sqrt{r}}$ and let $m \gtrsim   \frac{d\left(\log(\frac{1}{\delta}) \vee 1\right)}{\delta^2}$, we know that if $p \le \delta/10$, we have 
	\begin{equation}
		\norm{\frac{1}{m} \sum_{i=1}^m \sign(\dotp{A_i,\truX}- s_i)A_i- \psi(\truX)\frac{\truX}{\fnorm{\truX}}}\le2 \delta
	\end{equation}
with probability at least $1-\exp(-(pm+d)) - \exp(-c'm\delta^2)$.
	\end{itemize}
\end{proof}

\section{Proof of \Cref{thm: identifiability}}
    \label{sec: proofofidentifiability}
	Here we prove the identifiability result in Section \ref{sec:models}.
\begin{proof}
Using Lemma \ref{lem:ell1/ell2}, we know that the  $\ell_1/\ell_2$-RIP conditions holds for $\mathcal{A}$: for some universal $c>0$, with probability at least
$1-\exp(-cm\delta^2)$, there holds. 
\begin{align*}
    \left| \frac{1}{m}\norm{\mathcal{A}(X)}_1 - \sqrt{\frac{2}{\pi}} \norm{X}_F \right|
    \le \delta \norm{X}_F,\quad \forall X \in \RR^{d\times d}: \textup{rank}(X) \le k+r.
\end{align*}
Now for any subset $L\subset \{1,\dots,m\}$, we can define $\mathcal{A}_L$ as $[\mathcal{A}(X)]_i = \dotp{A_i,X}$ if $i\in L$ and $0$ otherwise. Then if the size of $L$ satisfies that  $|L|\geq C d(r+k)\log d$ for some universal constant, using Lemma \ref{lem:ell1/ell2} again, we have
with probability at least
$1-\exp(-c|L|\delta^2)$, there  holds
\begin{align*}
    \left| \frac{1}{|L|}\norm{\mathcal{A}_{L}(X)}_1 - \sqrt{\frac{2}{\pi}} \norm{X}_F \right|
    \le \delta \norm{X}_F,\quad \forall X \in \RR^{d\times d}: \textup{rank}(X) \le k+r,
\end{align*}
Note that the above holds for each fixed $L$. If we choose $S$ to be the set of indices of nonzero $s_i$. Using Bernstein's inequality, we know with probability at least $1-\exp(-c\epsilon^2 m(1-p))$, $|L|\geq (1-\epsilon)(1-p)m$. Due to our model assumptions, $S$ is independent of $\mathcal{A}$. Hence, the above displayed inequality does hold for $L=S^c$ with probability at least $1- \exp(-c_1(\epsilon^2 +\delta^2)m(1-p))$.

Let us assume the above two displayed inequalities, the second one with $L=S^c$ in the following derivation. 
Let $F$ is optimal for \eqref{opt: maintextmain}. Starting from the optimality of $F$ and $\truX$ has rank $r\leq k$, we have 
\begin{align*}
    0 &\ge \frac{1}{m} \norm{\mathcal{A}(FF^\top)- y}_1 -  \frac{1}{m} \norm{\mathcal{A}(\truX)- y}_1\\
    &= \frac{1}{m}\norm{\mathcal{A}(FF^\top - \truX) -s }_1 -  \frac{1}{m} \norm{s}_1 \\
    &= \frac{1}{m}\norm{ \left[\mathcal{A}_{S^c}(FF^\top - \truX) \right] }_1 + \frac{1}{m}\norm{ \left[\mathcal{A}_{S}(FF^\top - \truX)\right] -s }_1-  \frac{1}{m} \norm{s}_1 \\
    &\ge \frac{1}{m}\norm{ \left[\mathcal{A}_{S^c}(FF^\top - \truX) \right]}_1 - \frac{1}{m}\norm{ \left[\mathcal{A}_{S}(FF^\top - \truX)\right] }_1\\
    &=  \frac{2}{m}\norm{ \left[\mathcal{A}_{S^c}(FF^\top - \truX) \right] }_1 - \frac{1}{m}\norm{ \mathcal{A}(FF^\top - \truX) }_1 \\
    &\ge \left( 2(1-p)(1-\epsilon) \left(\sqrt{\frac{2}{\pi}} - \delta\right) - \left(\sqrt{\frac{2}{\pi}} + \delta \right)  \right) \norm{FF^\top - \truX}_F.
\end{align*}
Hence so long as $2(1-p)(1-\epsilon) \left(\sqrt{\frac{2}{\pi}} - \delta\right) - \left(\sqrt{\frac{2}{\pi}} + \delta \right)>0$, we know  $FF^\top = \truX$. The condition $2(1-p)(1-\epsilon) \left(\sqrt{\frac{2}{\pi}} - \delta\right) - \left(\sqrt{\frac{2}{\pi}} + \delta \right)>0$ is satisfied with probability at least $1-\exp(-c'm)$ and $m\geq C'(r+k)d\log d$ for some $c'$ and $C'$ depending on $p$.

\end{proof}

\section{Results under better initialization}
    \label{sec: better convergence result}
    As indicated in remarks under Theorem~\ref{thm: exact recovery under models}, we can show that the sample complexity for provable convergence is indeed $O(dk^3\kappa^4(\log \kappa + \log k) \log d$, given $p \lesssim \frac{1}{\kappa \sqrt{r}}$ in either model. The proof consists of two theorems stated below.
\begin{thm}\label{lem: mainlemma faster}
Suppose the following conditions hold: 
\begin{itemize}[leftmargin=0.3in,topsep=0em,noitemsep]
      \item[(i)] Suppose $F_0$ satisfies
    \begin{align}\label{eqn: initialconditionmainlemma faster}
		\|  F_0 F_0^\top - \truX \| \;\leq\; c_0 \sigma_r
	\end{align}
for small sufficiently small universal constant $c_0$.
    \item[(ii)] The stepsize satisfies $0<\frac{c_1}{\sigma_1}\leq \frac{\eta_t}{\fnorm{F_tF_t^\top -\truX}} \leq  \frac{c_2}{\sigma_1}$ for some small numerical constants $c_1 < c_2 \le 0.01$ and all $t\ge 0$. 
    \qq{consistent} 
    \item[(iii)] $(r+k,\delta)$-RDPP holds for $\{A_i,s_i\}_{i=1}^m$ with $\delta \leq \frac{c_3}{\kappa\sqrt{k+r}}$ and a scaling function $\psi\in \big[\sqrt{\frac{1}{2\pi}}, \sqrt{\frac{2}{\pi}}\big].$ Here $c_3$ is some sufficiently small universal constant.
\end{itemize}
Then, we have a sublinear convergence in the sense that for any $t \ge 0$,
\[
\norm{F_{t}F_{t}^\top -\truX} \leq c_5\sigma_1\frac{1}{\kappa+t}.
\]
Moreover, if $k=r$, then under the same set of condition, we have convergence at a linear rate
\[
\norm{F_{t}F_{t}^\top -\truX}  \leq c_6\sigma_r\left(1-\frac{c_7}{\kappa}\right)^t,\qquad \forall t\ge 0.
\]
Here $c_5,c_6$ and $c_7$ are universal constants.
\end{thm}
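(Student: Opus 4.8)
The plan is to recognize that the strengthened initialization hypothesis (i) already places the iterate in the ``Stage~3'' regime of the analysis of \Cref{lem: mainlemma}, so that no burn-in phase is needed, and that the larger admissible stepsize $\gamma_t = \Theta(1/\sigma_1)$ afforded by (ii) (rather than $\Theta(\sigma_r/\sigma_1^2)$) directly buys back one factor of $\kappa$ in each rate. Concretely, decompose $F_t = US_t + VT_t$ as in \Cref{sec: analysisOfMainTheorem} and set $E_t = \max\{\norm{S_tS_t^\top - D_S^*}, \norm{S_tT_t^\top}, \norm{T_tT_t^\top}\}$. By the ``$\max \le$ operator norm'' direction of Lemma~\ref{lem: decomposition of FF-X}, condition (i) gives $E_0 \le c_0\sigma_r$.

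Next I would record the parameter translations. By \eqref{eq: gammatdef}, $\gamma_t = \eta_t\psi(F_tF_t^\top - \truX)/\fnorm{F_tF_t^\top - \truX}$, so (ii) together with $\psi \in [\sqrt{1/2\pi},\sqrt{2/\pi}]$ gives $c_1\sqrt{1/2\pi}\,\sigma_1^{-1} \le \gamma_t \le c_2\sqrt{2/\pi}\,\sigma_1^{-1}$; writing $c_\gamma := c_1\sqrt{1/2\pi}$ and taking $c_2$ small enough that $c_2\sqrt{2/\pi}\le 0.01$, this reads $c_\gamma/\sigma_1 \le \gamma_t \le 0.01/\sigma_1$. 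Condition (iii) with $c_3 \le 0.001$ gives $\delta\sqrt{k+r} \le c_3/\kappa = c_3\sigma_r/\sigma_1 \le 0.001\,\sigma_r/\sigma_1$, and (i) with $c_0 \le 0.01$ gives $E_0 \le 0.01\sigma_r$. Hence all hypotheses of Proposition~\ref{prop: stagethree} (and, when $k=r$, of Proposition~\ref{prop: stagethree k=r}) hold at $t=0$. Since those propositions yield $E_{t+1} \le E_t(1-\gamma_t E_t) \le E_t$, with $\gamma_t E_t \le (0.01/\sigma_1)(0.01\sigma_r) < 1$, the sequence $\{E_t\}$ is non-increasing, so $E_t \le 0.01\sigma_r$ for all $t$ and the hypotheses propagate by induction; thus the one-step recursions hold for every $t\ge 0$.

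Then I would solve the recursions. In the general case, put $G_t := (c_\gamma/\sigma_1)E_t$; from $E_{t+1} \le E_t(1-\gamma_t E_t) \le E_t(1-(c_\gamma/\sigma_1)E_t)$ we get $G_{t+1}\le G_t(1-G_t)$ with $G_0 \le c_\gamma c_0/\kappa < 1$, and the reciprocal telescoping $1/G_{t+1}\ge 1/G_t + 1$ gives $G_t \le 1/(1/G_0 + t) \le 1/(\kappa/(c_\gamma c_0) + t)$, hence $E_t = (\sigma_1/c_\gamma)G_t \lesssim \sigma_1/(\kappa+t)$. When $k=r$, Proposition~\ref{prop: stagethree k=r} gives $E_{t+1}\le (1-\gamma_t\sigma_r/3)E_t \le (1-c_\gamma/(3\kappa))E_t$, so $E_t \le E_0(1-c_\gamma/(3\kappa))^t \le c_0\sigma_r(1-c_\gamma/(3\kappa))^t$. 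Finally, applying the other direction of Lemma~\ref{lem: decomposition of FF-X}, namely $\norm{F_tF_t^\top - \truX} \le \norm{S_tS_t^\top - D_S^*} + 2\norm{S_tT_t^\top} + \norm{T_tT_t^\top} \le 4E_t$, and absorbing absolute constants, yields $\norm{F_tF_t^\top - \truX} \le c_5\sigma_1/(\kappa+t)$ and, for $k=r$, $\norm{F_tF_t^\top - \truX} \le c_6\sigma_r(1-c_7/\kappa)^t$, with $c_5,c_6,c_7$ universal.

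Rather than a genuine obstacle, the one point to watch is that Propositions~\ref{prop: stagethree} and \ref{prop: stagethree k=r} remain valid with the $\kappa$-times larger stepsize $\gamma_t = \Theta(1/\sigma_1)$: this is fine because, once $E_t \le 0.01\sigma_r$, one has $\norm{S_tS_t^\top - D_S^*} \le \sigma_r/10$ and may invoke the relaxed branch of Proposition~\ref{prop: bound on MM NN}, whose only stepsize requirement is $\gamma_t \le 0.01/\sigma_1$; so those propositions already contain all the needed estimates and only need to be re-instantiated with the new (still admissible) stepsize. The remaining work is purely bookkeeping of constants, ensuring $c_0,c_1,c_2,c_3$ can be taken small enough that all the above thresholds hold simultaneously.
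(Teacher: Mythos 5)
Your proposal is correct and follows essentially the same path as the paper's proof: translate conditions (ii)--(iii) into the parameter bounds $\gamma_t \in [c_\gamma/\sigma_1, 0.01/\sigma_1]$ and $\delta\sqrt{k+r}\le 0.001\sigma_r/\sigma_1$, observe that (i) gives $E_0\le 0.01\sigma_r$ so the iterates start directly in the ``Stage~3'' regime, apply Propositions~\ref{prop: stagethree}/\ref{prop: stagethree k=r} by induction, solve the one-step recursion (reciprocal telescoping for $k>r$, geometric for $k=r$), and finish with Lemma~\ref{lem: decomposition of FF-X}. You also handle the constant translation slightly more carefully than the paper: from $\gamma_t\ge\sqrt{1/(2\pi)}\,c_1/\sigma_1$ the correct lower-bound coefficient is $c_\gamma = c_1\sqrt{1/(2\pi)}$ as you write, whereas the paper's displayed bound $c_\gamma\ge c_1\sqrt{2/\pi}$ is off by a factor of~$2$ (harmless, since only the order matters).
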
 
\begin{proof}
Take $c_0 =0.01$ and $c_3= 0.001$. Next, by definition, 
$\gamma_t = \frac{\eta_t \psi(F_tF_t^\top -\truX)}{\fnorm{F_tF_t^\top -\truX}}$. By the second assumption and the assumption on range of $\psi$, we know 
\begin{equation}
    \gamma_t \in[\sqrt{\frac{1}{2\pi}} \frac{c_1}{\sigma_1}, \sqrt{\frac{2}{\pi}} \frac{c_2}{\sigma_1}].
\end{equation}
Since we assumed $c_2\le 0.01$, so the stepsize condition $\gamma_t \le \frac{0.01}{\sigma_1}$ is satisfied. Hence, both Proposition~\ref{prop: stagethree} and Proposition~\ref{prop: stagethree k=r} hold for $t=0$. We consider two cases separately. 
\begin{itemize}
    \item $k > r$, By Proposition~\ref{prop: stagethree} and induction, we know
    \begin{equation}
        E_{t+1} \le E_t(1-\gamma_tE_t) \le E_t(1-\frac{c_\gamma}{\sigma_1} E_t),\qquad \forall t \ge 0.
    \end{equation}
    where $ c_1\sqrt{\frac{2}{\pi}}\le c_\gamma \le 0.01$. Define $G_t = \frac{c_\gamma}{\sigma_1}E_t$, then we have $G_0 <1$ and
\begin{align}
	G_{t+1} \le G_t(1-G_t),\qquad \forall t \ge 0.
\end{align}
Taking reciprocal, we obtain
\begin{equation}
	\frac{1}{G_{t+1}} \ge \frac{1}{G_t} + \frac{1}{1-G_t} \ge \frac{1}{G_t} +1,\qquad \forall t \ge 0.
\end{equation}
So we obtain 
\begin{equation}
	G_{t} \le \frac{1}{\frac{1}{G_{0}}+t}, \qquad \forall t \ge 0.
\end{equation}
Plugging in the definition of $G_t$, we obtain
\begin{equation}
    E_{\tau_2 +t} \le  \frac{\sigma_1}{c_\gamma} \frac{1}{\frac{\sigma_1}{c_\gamma  E_{0}} +t} \le \frac{\sigma_1}{c_\gamma } \frac{1}{\frac{100\sigma_1}{c_\gamma \sigma_r} +t} = \frac{\sigma_1}{c_\gamma} \frac{1}{\frac{100}{c_\gamma}\kappa +t}\le \frac{\sigma_1}{c_\gamma} \frac{1}{\kappa +t}.
\end{equation}
Since $c_\gamma \ge c_1\sqrt{\frac{2}{\pi}}$, we can simply take $c_5 = \frac{1}{4c_1}\sqrt{\frac{\pi}{2}}$, apply Lemma~\ref{lem: decomposition of FF-X}, and get 
\begin{equation}
    \norm{F_tF_t^\top -\truX} \le c_5\sigma_1\frac{1}{\kappa +t},\qquad \forall t\ge 0.
\end{equation}
So the proof is complete in overspecified case.

\item $k=r$. By Proposition~\ref{prop: stagethree k=r} and induction,
we obtain
\begin{equation}
    E_{t+1} \le (1-\frac{\gamma_t\sigma_r}{3})E_t \le (1-\frac{c_\gamma \sigma_r}{\sigma_1})E_t, \forall t\ge 0.
\end{equation}
Applying this inequality recursively and noting $c_\gamma \ge c_1\sqrt{\frac{2}{\pi}}$, we obtain
\begin{equation}
    E_{\cT_2+t} \le (1-\frac{c_\gamma \sigma_r}{\sigma_1})^t E_{\cT_2}\le \left(1-\frac{c_1\sqrt{\frac{2}{\pi}}}{\kappa}\right)^t 0.01\sigma_r, \forall t \ge 0.
\end{equation}
Thus, we can take $c_6 = 0.01/4$, $c_7 = c_1\sqrt{\frac{2}{\pi}}$, apply Lemma~\ref{lem: decomposition of FF-X} and get
\begin{align}
     \norm{F_tF_t^\top -\truX} \le c_6\sigma_r\left(1-\frac{c_7}{\kappa}\right)^t,\qquad \forall t\ge 0.
\end{align}
\end{itemize}
The proof is complete.
\end{proof}
\begin{thm}\label{thm: exact recovery under models faster}
 Suppose under either Model \ref{md: arbitraryCorruption} or \ref{md: randomCorruption}, we have $m\geq c_1' dk^2\kappa^{4}(\log \kappa + \log k)\log d$ and  $p\le \frac{c_2'}{\kappa\sqrt{k}}$ for some constants $c_1',c_2'$ depending only on $c_0$ and $c_3$. Then under both models, with probability at least $1- c_4' \exp(-c_5' \frac{m}{k^2\kappa^{4}})-\exp(-(pm+d))$ for some constants $c_4',c_5'$ depending only on $c_0$ and $c_3$, our subgradient method \eqref{eqn: mainalgorithm} 
 with the initialization  in Algorithm \ref{alg: initialization}
 and the adaptive stepsize choice \eqref{eqn: stepsizerule} with  $C_\eta \in [ \frac{c_6'}{\theta_{\frac{1}{2}}\sigma_1}, \frac{c_7'}{\theta_{\frac{1}{2}}\sigma_1}]$ with some universal $c_6',c_7'\leq 0.001$, converges as stated in \Cref{lem: mainlemma faster}.
\end{thm}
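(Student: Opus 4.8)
The plan is to follow exactly the scheme used for \Cref{thm: exact recovery under models}: it suffices to show that, with the stated probability, the three hypotheses of \Cref{lem: mainlemma faster} are verified under our choice of spectral initialization (\Cref{alg: initialization}), the adaptive stepsize \eqref{eqn: stepsizerule}, and the stated ranges of $m$ and $p$; the convergence conclusions then follow verbatim from \Cref{lem: mainlemma faster} (whose proof, in turn, only invokes \Cref{prop: stagethree} and \Cref{prop: stagethree k=r}). The one structural change from the proof of \Cref{thm: exact recovery under models} is that hypothesis (iii) of \Cref{lem: mainlemma faster} only asks for $\delta\lesssim \frac{1}{\kappa\sqrt{k+r}}$ rather than $\delta\lesssim \frac{1}{\kappa^3\sqrt{k}}$, which is what lowers the $\kappa$-exponent in the sample complexity; the price is that the cheaper but \emph{exact}-scale initialization \eqref{eqn: initialconditionmainlemma faster} (with $c^\ast=1$) forces the small-corruption assumption $p\lesssim\frac{1}{\kappa\sqrt{k}}$.

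\emph{Hypotheses (i) and (iii).} For Model~\ref{md: arbitraryCorruption}, since $p\le \frac{c_2'}{\kappa\sqrt k}\le \frac{\tilde c_0}{\kappa^2\sqrt r}$ for $c_2'$ small, \Cref{prop: initialization for arbitrary corruption} applied with rank $r$ gives $\|F_0F_0^\top-\truX\|\le c_0\sigma_r$ (with $c^\ast=1$) on an event of probability at least $1-c_2\exp(-c_3 m/(\kappa^4 r))-\exp(-(pm+d))$, provided $m\gtrsim dr\kappa^4\log d(\log\kappa+\log r)$; this is implied by the hypothesis on $m$. For Model~\ref{md: randomCorruption} the same conclusion follows by re-running the proof of \Cref{prop: initialization for random corruption} under the extra assumption $p\lesssim\frac{1}{\kappa\sqrt r}$: by \Cref{lem: random corruption} with $\delta\asymp\frac{c_0}{\kappa\sqrt r}$ one has $\|BB^\top-\psi(\truX)\truX/\fnorm{\truX}\|\lesssim\frac{c_0}{\kappa\sqrt r}$ while $\psi(\truX)=\sqrt{2/\pi}\,(1-O(p))$, and a small-$p$ sharpening of \Cref{prop: estimate fnorm random} (control the corruption fraction by $2p$ via Chernoff and apply \Cref{prop: quantile concentration}) shows $\xi_{\frac12}(\{|y_i|\})=\theta_{\frac12}\fnorm{\truX}\,(1+O(p))$; substituting these into the rescaling in \Cref{alg: initialization} yields $F_0F_0^\top=(1+O(p))\truX$ plus an $O(\sqrt r\,\sigma_1\delta)=O(c_0\sigma_r)$ error, i.e.\ \eqref{eqn: initialconditionmainlemma faster} after absorbing constants. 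For hypothesis (iii), apply \Cref{prop: main RDPP for two models} with $k'=k+r$ and target accuracy $\delta_0\asymp\frac{c_3}{\kappa\sqrt{k+r}}$: for Model~\ref{md: arbitraryCorruption} the extra $3\sqrt{dp/m}+3p$ is $\lesssim\frac{c_3}{\kappa\sqrt{k+r}}$ because $p\lesssim\frac{1}{\kappa\sqrt k}$ and $m$ is large, so $(k+r,\delta)$-RDPP holds with $\delta\le\frac{c_3}{\kappa\sqrt{k+r}}$ and $\psi\equiv\sqrt{2/\pi}\in[\sqrt{1/2\pi},\sqrt{2/\pi}]$; for Model~\ref{md: randomCorruption} it holds directly with $\delta\le\delta_0$ and $\psi(X)\in[\sqrt{1/2\pi},\sqrt{2/\pi}]$. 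The binding sample-size requirement comes from this step, $m\gtrsim \frac{d(k+r)\log(1/\delta_0)}{\delta_0^4}$, and the failure probability is $\lesssim\exp(-cm\delta_0^4)\lesssim\exp(-c'm/(k^2\kappa^4))$.

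\emph{Hypothesis (ii).} By \Cref{prop: estimate fnorm arbitrary} for Model~\ref{md: arbitraryCorruption}, respectively its small-$p$ variant for Model~\ref{md: randomCorruption}, applied with rank $k+r$ and $\epsilon\asymp p$, on an event of probability at least $1-c\exp(-c'm/(k^2\kappa^4))$ the estimate
\[
\tau_{\mathcal{A},y}(F)=\xi_{\tfrac12}\bigl(\{|\dotp{A_i,FF^\top}-y_i|\}_{i=1}^m\bigr)\in\bigl[\theta_{\tfrac12}-L(p+\epsilon),\,\theta_{\tfrac12}+L(p+\epsilon)\bigr]\fnorm{FF^\top-\truX}
\]
holds \emph{uniformly over all symmetric $G=FF^\top-\truX$ of rank at most $k+r$}; in particular it holds for every iterate, since $\operatorname{rank}(F_tF_t^\top-\truX)\le k+r$. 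With $L(p+\epsilon)\le\tfrac12\theta_{\frac12}$ and $C_\eta\in[\frac{c_6'}{\theta_{\frac12}\sigma_1},\frac{c_7'}{\theta_{\frac12}\sigma_1}]$ with $c_6',c_7'\le 0.001$, this gives $\frac{\eta_t}{\fnorm{F_tF_t^\top-\truX}}=C_\eta\,\frac{\tau_{\mathcal{A},y}(F_t)}{\fnorm{F_tF_t^\top-\truX}}\in[\frac{c_1}{\sigma_1},\frac{c_2}{\sigma_1}]$ for some $c_1<c_2\le 0.01$, which is hypothesis (ii). A union bound over the initialization, RDPP and stepsize-concentration events gives the claimed probability, and \Cref{lem: mainlemma faster} finishes the proof.

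\emph{Main obstacle.} The delicate point, exactly as in \Cref{thm: exact recovery under models}, is that hypothesis (ii) and the initialization scale estimate must hold \emph{simultaneously along the entire, data-dependent trajectory}; this is precisely why the quantile concentration bounds (\Cref{prop: quantile concentration}, \Cref{prop: estimate fnorm arbitrary}, \Cref{prop: estimate fnorm random}) are proved uniformly over the low-rank set containing $\{F_tF_t^\top-\truX\}$ rather than pointwise. Once those uniform bounds and the RDPP bound are in hand, the rest is bookkeeping: matching the various universal constants ($c_0$, $c_3$, $c_i'$), checking that $m\ge c_1' dk^2\kappa^4(\log\kappa+\log k)\log d$ dominates each of the several thresholds, and carrying out the union bound. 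The only genuinely new piece of work relative to the earlier theorem is re-deriving the $c^\ast=1$ initialization for the RC model in the small-$p$ regime, since \Cref{prop: initialization for random corruption} as stated only controls $c^\ast$ up to a folded-normal quantile ratio.
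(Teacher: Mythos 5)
Your overall plan---verify hypotheses (i), (ii), (iii) of \Cref{lem: mainlemma faster} and conclude---is the same as the paper's, and most of your bookkeeping is sound. But there are two concrete gaps.

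\textbf{Gap 1: you cannot invoke \Cref{prop: initialization for arbitrary corruption} directly.} You claim that $p\le \frac{c_2'}{\kappa\sqrt k}\le \frac{\tilde c_0}{\kappa^2\sqrt r}$ ``for $c_2'$ small,'' but this inequality fails for large $\kappa$: it is equivalent to $c_2'\le \tilde c_0\sqrt{k/r}/\kappa$, which cannot hold for a $\kappa$-independent constant $c_2'$. \Cref{prop: initialization for arbitrary corruption} gives the stronger conclusion $\|F_0F_0^\top-\truX\|\le c_0\sigma_r/\kappa$ under the stronger hypothesis $p\lesssim\frac{1}{\kappa^2\sqrt r}$; here only the weaker conclusion $\|F_0F_0^\top-\truX\|\le c_0\sigma_r$ is needed, which is why \Cref{lem: mainlemma faster} can tolerate the weaker hypothesis $p\lesssim\frac{1}{\kappa\sqrt k}$. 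The paper's proof therefore does not cite \Cref{prop: initialization for arbitrary corruption} but instead re-runs the initialization argument from scratch: it takes $\epsilon=\frac{c_0\theta_{1/2}}{4L\kappa}$ (one power of $\kappa$, not two) in \Cref{prop: estimate fnorm arbitrary} and $\delta\asymp\frac{\sigma_r}{\sigma_1\sqrt r}$ in \Cref{lem: arbitrary corruption}, then combines as in the proof of \Cref{prop: initialization for arbitrary corruption}. You need to do the same; the proposition as stated is not directly applicable.

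\textbf{Gap 2: $\epsilon\asymp p$ gives the wrong failure probability.} In hypothesis (ii) you apply \Cref{prop: estimate fnorm arbitrary} ``with $\epsilon\asymp p$'' and assert failure probability $\exp(-c'm/(k^2\kappa^4))$. The failure probability of that proposition is $c_1\exp(-c_2m\epsilon^2)$, so to get the stated bound you need $\epsilon\gtrsim 1/(k\kappa^2)$; if $p$ is very small (even $p=0$, which the theorem allows), your choice $\epsilon\asymp p$ collapses and the sample-complexity requirement $m\gtrsim \epsilon^{-2}\log(\epsilon^{-1})\,d(k+r)\log d$ diverges. The paper fixes $\epsilon=\frac{c_0\theta_{1/2}}{4L\kappa}$, which is $p$-independent; you should do likewise, merely checking afterward that $L(p+\epsilon)\le \tfrac12\theta_{1/2}$ holds under $p\lesssim\frac{1}{\kappa\sqrt k}$.

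\textbf{Structural difference in the RC case.} You propose re-deriving the $c^\ast=1$ initialization directly in the RC model via a ``small-$p$ sharpening'' of \Cref{prop: estimate fnorm random} and \Cref{lem: random corruption}. The paper takes a shortcut and reduces Model~\ref{md: randomCorruption} to Model~\ref{md: arbitraryCorruption}: under RC with small $p$, a Chernoff bound shows the corruption set has size at most $O(pm)$ with probability $1-\exp(-cmp)$, and conditional on this size the corrupt locations are uniformly random; since AC allows arbitrary values, the AC analysis applies with a slightly larger corruption rate. Your re-derivation would likely also work, but it is more work than the reduction for no extra payoff. The remainder of your argument---applying \Cref{prop: main RDPP for two models} with $k'=k+r$ and $\delta_0\asymp\frac{1}{\kappa\sqrt{k+r}}$, bounding the $3\sqrt{dp/m}+3p$ spillover, the stepsize range check, and the observation that the quantile concentration must hold uniformly over rank-$(k+r)$ matrices---is correct.
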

\begin{proof}
We can WLOG only prove this for model~\ref{md: arbitraryCorruption} because model~\ref{md: randomCorruption} can be reduced to model~\ref{md: arbitraryCorruption} by adding a small failure probability. Taking  $\epsilon = \frac{c_0 \theta_{\frac{1}{2}}}{4L\kappa}$  in Proposition~\ref{prop: estimate fnorm arbitrary}, where $L$ is a universal constant doesn't depend on anything from Proposition~\ref{prop: estimate fnorm arbitrary}, we know that with probability at least $1-c_8'\exp(-c_9'\frac{m}{\kappa^2})$
	\begin{align}
		\xi_{\frac{1}{2}}\left(\{|y_i|\}_{i=1}^{m}\right) \in [\theta_{\frac{1}{2}} - L(p+\epsilon), \theta_{\frac{1}{2}}+ L(p+\epsilon)]\fnorm{\truX},
	\end{align}
given $m \ge c_{10}' dr \kappa^2 \log d \log \kappa $. Here $c_8',c_9',c_{10}'$ are constants depending only on $c_0$.
Given $ c_2' \le \frac{c_0\theta_{\frac{1}{2}}}{4L}$, the above inclusion implies that 
\begin{align}
	\left|1-\frac{	\xi_{\frac{1}{2}}\left(\{|y_i|\}_{i=1}^{m}\right) }{\fnorm{\truX}\theta_{\frac{1}{2}}}\right| \le \frac{L(p+\epsilon)}{\theta_{\frac{1}{2}}} \le \frac{c_0}{2\kappa}.
\end{align}

Take $\delta = \frac{c_0\sqrt{\frac{2}{\pi}}}{12(1+\frac{L}{\theta_{\frac{1}{2}}})}\frac{\sigma_r}{\sigma_1 \sqrt{r}}$ in lemma~\ref{lem: arbitrary corruption}, we know that with probability at least  $1-c_{11}'\exp(-c_{12}'\frac{m}{\kappa^2 r})-\exp(-(pm+d))$ for constants $c_{11}',c_{12}'$ depending only on $c_0$,
\begin{equation}
	\norm{BB^\top -\sqrt{\frac{2}{\pi}}  \truX / \| \truX \|_F} \le \frac{c_0\sqrt{\frac{2}{\pi}}}{2(1+\frac{L}{\theta_{\frac{1}{2}}})}\frac{\sigma_r}{\sigma_1\sqrt{r}},
\end{equation}
given $m \ge c_{13}' dr \kappa  (\log \kappa +\log r) $ with $c_{13}'$ depending only on $c_0$.
The above inequality implies that 
\begin{align}
	\norm{\frac{ \theta_{\frac{1}{2}}(\{|y_i|\}_{i=1}^{m})}{\sqrt{\frac{2}{\pi}}\theta_{\frac{1}{2}}}B B^\top - \frac{ \theta_{\frac{1}{2}}(\{|y_i|\}_{i=1}^{m})\truX}{\fnorm{\truX}\theta_{\frac{1}{2}}}} &\le \frac{1+\frac{L}{\theta_{\frac{1}{2}}}}{\sqrt{\frac{2}{\pi}}}  \frac{c_0\sqrt{\frac{2}{\pi}}}{2(1+\frac{L}{\theta_{\frac{1}{2}}})}\frac{\sigma_r}{\sigma_1 \sqrt{r}}\fnorm{\truX} \\
	&\le \frac{c_0 \sigma_r}{2}.
\end{align}
Combining, we can find some constants $c_{14}',c_{15}',c_{16}'$ depending only on $c_0$ such that whenever $m \ge c_{14}' dr \kappa^2 \log d (\log \kappa +\log r) $, then with probability at least $1-c_{15}'\exp(-c_{16}'\frac{m}{\kappa^4 r})-\exp(-(pm+d))$ ,
\begin{align}
	&\;\;\;\norm{\frac{ \theta_{\frac{1}{2}}(\{|y_i|\}_{i=1}^{m})}{\sqrt{\frac{2}{\pi}}\theta_{\frac{1}{2}}}B B^\top - \truX}\\
	 &\le \norm{\frac{ \theta_{\frac{1}{2}}(\{|y_i|\}_{i=1}^{m})}{\sqrt{\frac{2}{\pi}}\theta_{\frac{1}{2}}}B B^\top - \frac{ \theta_{\frac{1}{2}}(\{|y_i|\}_{i=1}^{m})\truX}{\fnorm{\truX}\theta_{\frac{1}{2}}}} + \norm{\left(1-\frac{	\theta_{\frac{1}{2}}\left(\{|y_i|\}_{i=1}^{m}\right) }{\fnorm{\truX}\theta_{\frac{1}{2}}}\right)\truX}\\
	&\le \frac{c_0 \sigma_r}{2}+ \frac{L(p+\epsilon)}{\theta_{\frac{1}{2}}}\sigma_1\\
	&\le c_0\sigma_r.
\end{align}
Thus, $\norm{F_0F_0^\top - \truX} \le c_0\sigma_r$, which is the first condition.\\
Recall stepsize rule~\eqref{eqn: stepsizerule},
\begin{equation}
 \tau_{\mathcal{A},y}(F) =\xi_{\frac{1}{2}}\left(\{|\dotp{A_i,FF^\top}-y_i|\}_{i=1}^{m}\right), \quad \text{and} \quad \eta_t = C_\eta \tau_{\mathcal{A},y}(F_t).
\end{equation}
By Proposition~\ref{prop: estimate fnorm arbitrary}, with same choice of $\epsilon$, we know that with probability at least least $1-c_{17}'\exp(-c_{18}'\frac{m}{\kappa^2})$
	\begin{align}
	\tau_{\mathcal{A},y}(F_t) \in [\theta_{\frac{1}{2}} - L(p+\epsilon), \theta_{\frac{1}{2}}+ L(p+\epsilon)]\fnorm{F_tF_t^\top-\truX},\qquad \forall t\ge 0,
	\end{align}
given $m \ge c_{19}' dr \kappa^2 \log d \log \kappa $. Here $c_{17}',c_{18}',c_{19}'$ are constants depending only on $c_0$. By our condition on $C_\eta$, we know 
\begin{equation}
    \frac{\eta_t}{\fnorm{F_tF_t^\top -\truX}} \in \left[\frac{c_6' (1-\frac{c_0}{2\kappa})}{\sigma_1},\frac{c_7' (1+\frac{c_0}{2\kappa})}{\sigma_1}\right]
\end{equation}
Hence, the second condition in Theorem~\ref{lem: mainlemma faster} is satisfied.\\
By Proposition~\ref{prop: main RDPP for two models}, we know that  whenever $m \gtrsim c_{20}'dk^2\kappa^4 (\log k +\log \kappa)$ for some constant depending on $c_3$ and $c_2'\le  c_3/10$, $(r+k,\delta)$-RDPP holds with $\delta \le \frac{c_3}{\kappa \sqrt{k+r}}$ and $\psi(X)=\sqrt{\frac{2}{\pi}}$ with probability at least $1-\exp(-(pm+d)) - \exp(-\frac{c_{21}m}{k^2\kappa^4})$ for some constant $c_{21}$ depending only on $c_3$.\\
Since all the constants we introduced in this proof depend only on $c_0$ and $c_3$, so we can combing them and find desired $c_i', i\ge 1$.
\end{proof}
\section{RDPP and $\ell_1/\ell_2$-RIP}
	\label{sec: RDPP and l1l2RIP}
	Recall our definition of $(k',\delta)$ RDPP states that for all rank at most $k'$ matrix $X$, the following holds: 
\begin{equation}\label{eq: appendixRDPP}
	D(X):=\frac{1}{m}\sum_{i=1}^{m}{\sign}(\dotp{A_i,X} -s_i)A_i,\quad \text{and}\quad
		\norm{D(X) - \psi(X) \frac{X}{\fnorm{X}}} \le \delta.
\end{equation}

The $(k',\delta)\;\ell_1/\ell_2$-RIP states that for all rank $k'$ matrix $X$, the following holds.
	\begin{equation}
		\left(\sqrt{\frac{2}{\pi}}-\delta\right)\fnorm{X} \le \frac{1}{m}\sum_{i=1}^{m}\abs{\dotp{A_i,X}} \le \left(\sqrt{\frac{2}{\pi}}+\delta\right)\fnorm{X}. 
	\end{equation}
We shall utilize the following top $k'$ Frobenius norm: for an matrix $Y\in \RR^{d\times d}$
\[
\fknorm{Y}{k'} := \sqrt{\sum_{i=1}^{k'}\sigma_i^2(Y)} = \sup_{\rank(Z)\leq k',\fnorm{Z}= 1}\dotp{Y,Z}.
\]
Here $\sigma_i(Y)$ is the $i$-th largest singular value of $Y$. The second variational characterization can be proved by considering the orthogonal projection of the rank $k'$ singular vector space of $Y$ and its complement.

Now suppose there holds the $(k',\frac{\delta}{\sqrt{k'}})$ RDPP with corruption always $0$ and scale function being $\sqrt{\frac{2}{\pi}}$. Then we have 
\begin{equation}
    \begin{aligned}
   \frac{\delta}{\sqrt{k'}} 
   & \overset{(a)}{\geq} 	\norm{D(X) - \psi(X) \frac{X}{\fnorm{X}}} \\
   & \overset{(b)}{=} \norm{\frac{1}{m}\sum_{i=1}^{m}{\sign}(\dotp{A_i,X} )A_i - \sqrt{\frac{2}{\pi}} \frac{X}{\fnorm{X}} } \\ 
   & \overset{(c)}{\geq}\frac{1}{\sqrt{k'}} 
   \fknorm{\frac{1}{m}\sum_{i=1}^{m}{\sign}(\dotp{A_i,X})A_i - \sqrt{\frac{2}{\pi}} \frac{X}{\fnorm{X}} }{k'}\\
   & \overset{(d)}{=}\frac{1}{\sqrt{k'}}  \sup_{\rank(Y)\leq k',\fnorm{Y}\leq 1}
   \dotp{\frac{1}{m}\sum_{i=1}^{m}{\sign}(\dotp{A_i,X} )A_i - \sqrt{\frac{2}{\pi}} \frac{X}{\fnorm{X}},Y}\\
   & \overset{(e)}{\geq} \frac{1}{\sqrt{k'}} 
   \dotp{\frac{1}{m}\sum_{i=1}^{m}{\sign}(\dotp{A_i,X} )A_i - \sqrt{\frac{2}{\pi}} \frac{X}{\fnorm{X}},\frac{X}{\fnorm{X}}}\\
   & =\frac{1}{\sqrt{k'}}  \left(\frac{1}{m}\sum_{i=1}^m \frac{\abs{\dotp{A_i,X}}}{\fnorm{X}} - \sqrt{\frac{2}{\pi}}
   \right).
    \end{aligned}
\end{equation}
Here in the step $(a)$, we use the definition of RDPP. In the step $(b)$, we use the assumption on $s_i=0$ always and $\psi = \sqrt{\frac{2}{\pi}}$. In $(c)$, we use the relationship between operator norm and top $k'$ Frobenius norm. In step $(d)$, we use the variational characterization of top $k'$ Frobenius norm. In step $(e)$, we use the fact that $X$ is rank at most $k'$. The above derivation completes one side of the $\ell_1/\ell_2$-RIP. The other side can be proved by taking $Y=-\frac{X}{\fnorm{X}}$ in the above step $(e)$.
\section{Auxiliary Lemmas}
	This section contains lemmas that will be useful in the proof.
\begin{lemma}\label{lem: S(I-S) svd} 
	Let $A$ be an $n\times n$ symmetric matrix. Suppose that $\|A\| \le \frac{1}{2\eta}$, the largest singular value and the smallest singular value of $A(I-\eta A)$ are $\sigma_1(A) - \eta \sigma_1^2(A)$ and $\sigma_m(A) - \eta \sigma_m^2(A)$.
\end{lemma}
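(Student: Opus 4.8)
The plan is to diagonalize $A$ and reduce the statement to an elementary one‑variable monotonicity fact. Since $A$ is symmetric, write $A = Q\Lambda Q^\top$ with $Q$ orthogonal and $\Lambda = \diag(\lambda_1,\dots,\lambda_n)$, $\lambda_1 \ge \cdots \ge \lambda_n$. Then $A(I-\eta A) = Q\,(\Lambda - \eta\Lambda^2)\,Q^\top$ shares the eigenbasis of $A$, so its eigenvalues are exactly $g(\lambda_i)$, where $g(t) := t - \eta t^2$ (here $\eta>0$, as otherwise the hypothesis $\|A\|\le \tfrac1{2\eta}$ is vacuous). Thus the whole problem is to understand $g$ on the interval $[\lambda_n,\lambda_1]$, which by $\|A\|\le \tfrac1{2\eta}$ is contained in $[-\tfrac1{2\eta},\tfrac1{2\eta}]$.

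Next I would record two elementary facts about $g$ on $[-\tfrac1{2\eta},\tfrac1{2\eta}]$. First, $g'(t) = 1 - 2\eta t \ge 0$ there, so $g$ is nondecreasing; hence $\max_i g(\lambda_i) = g(\lambda_1)$ and $\min_i g(\lambda_i) = g(\lambda_n)$. Second, in the case relevant to the applications, where $A$ is positive semidefinite (of the form $SS^\top$, $S^\top S$, $TT^\top$, etc.), we have $0 \le \lambda_i \le \tfrac1{2\eta} < \tfrac1\eta$, on which $g(t) = t(1-\eta t)\ge 0$; therefore $A(I-\eta A)\succeq 0$, and its singular values coincide with its nonnegative eigenvalues. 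Combining, and using $\lambda_1 = \sigma_1(A)$, $\lambda_n = \sigma_n(A)$ for PSD $A$: the largest singular value of $A(I-\eta A)$ is $g(\sigma_1(A)) = \sigma_1(A) - \eta\sigma_1^2(A)$ and the smallest is $g(\sigma_n(A)) = \sigma_n(A) - \eta\sigma_n^2(A)$.

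The one point to be careful about — and the step I would double‑check rather than a genuine obstacle — is the passage from eigenvalues to singular values. For a general \emph{indefinite} symmetric $A$ the extreme singular values of $A(I-\eta A)$ need not equal $\sigma_1(A) - \eta\sigma_1^2(A)$ and $\sigma_n(A)-\eta\sigma_n^2(A)$ (take $A=\diag(1,-1)$ with $\eta<1$), so the statement should be read, and is applied, with $A$ positive semidefinite; under that reading the argument above is airtight. If a fully general symmetric version were needed, the correct formulation is about the extreme \emph{eigenvalues} of $A(I-\eta A)$, and that follows from the monotonicity of $g$ alone, with no PSD hypothesis.
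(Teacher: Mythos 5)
Your proof is correct and follows essentially the same route as the paper's: diagonalize $A$, observe that $A(I-\eta A)$ shares the eigenbasis with eigenvalues $g(\lambda_i)$ for $g(t)=t-\eta t^2$, and use monotonicity of $g$ on $(-\infty,\tfrac1{2\eta}]$. You additionally (and correctly) flag a point the paper glosses over: the lemma as literally stated, for a general symmetric $A$, concerns \emph{singular} values, but after applying $g$ one only obtains the correct ordering of \emph{eigenvalues}; your counterexample $A=\diag(1,-1)$ shows the singular-value claim fails in the indefinite case. The paper's proof writes the eigendecomposition $U_A\Sigma_A U_A^\top$ and calls it ``the SVD'' of both $A$ and $A(I-\eta A)$, which silently presumes $A\succeq 0$ and $\Sigma_A-\eta\Sigma_A^2\succeq 0$ — exactly the PSD reading you make explicit, and the one under which the lemma is actually invoked (on $S_t^\top S_t$, $T_t T_t^\top$, etc.). So your treatment is the same argument, stated more carefully.
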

\begin{proof}
	Let $U_A \Sigma_A U_A^\top$ be the SVD of $A$. Simple algebra shows that
	\begin{equation}
		A\left(I - \eta A\right) = U_A\left(\Sigma_A - \eta\Sigma_A^2\right) U_A^\top.
	\end{equation}
	This is exactly the SVD of $A\left(I - \eta A\right)$. Let $g(x) = x - \eta x^2$. By taking derivative, $g$ is monotone increasing in interval $[-\infty, \frac{1}{2\eta}]$. Since the singular values of $A(I-\eta A)$ are exactly the singular values of $A$ mapped by $g$, the result follows.
\end{proof}
\begin{lemma}\label{lem: S(I-SS) svd}
	Let $A$ be an $m\times n$ matrix. Suppose that $\|A\| \le \sqrt{\frac{1}{3\eta}}$, the largest singular value and the smallest singular value of $A(I-\eta A^\top A)$ are $\sigma_1(A) - \eta \sigma_1^3(A)$ and $\sigma_m(A) - \eta \sigma_m^3(A)$.
\end{lemma}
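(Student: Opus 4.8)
The plan is to reduce the statement to a one‑dimensional claim about the scalar map $g(x) := x-\eta x^3$, exactly in the spirit of the proof of Lemma~\ref{lem: S(I-S) svd} above. Write the full singular value decomposition $A = U\Sigma W^\top$ with $U\in\RR^{m\times m}$, $W\in\RR^{n\times n}$ orthogonal and $\Sigma\in\RR^{m\times n}$ carrying the singular values $\sigma_1(A)\ge\sigma_2(A)\ge\cdots\ge 0$ on its main diagonal. Then $A^\top A = W\Sigma^\top\Sigma W^\top$, so $I-\eta A^\top A = W(I-\eta\Sigma^\top\Sigma)W^\top$, and hence
\[
A(I-\eta A^\top A) \;=\; U\Sigma W^\top W(I-\eta\Sigma^\top\Sigma)W^\top \;=\; U\,\Sigma(I-\eta\Sigma^\top\Sigma)\,W^\top.
\]
Since $\Sigma^\top\Sigma$ is the $n\times n$ diagonal matrix with entries $\sigma_i^2(A)$ (and zeros beyond index $\min(m,n)$), the product $\Sigma(I-\eta\Sigma^\top\Sigma)$ is again an $m\times n$ ``diagonal'' matrix whose $(i,i)$ entry equals $\sigma_i(A)-\eta\sigma_i^3(A)$.

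Next I would verify that this is already a legitimate SVD, i.e.\ that its diagonal entries are nonnegative. Since $\|A\|=\sigma_1(A)\le\sqrt{1/(3\eta)}$, for every $i$ we have $\eta\sigma_i^2(A)\le\eta\sigma_1^2(A)\le\tfrac13<1$, so $\sigma_i(A)-\eta\sigma_i^3(A)=\sigma_i(A)\bigl(1-\eta\sigma_i^2(A)\bigr)\ge 0$. Therefore $U\bigl[\Sigma(I-\eta\Sigma^\top\Sigma)\bigr]W^\top$ is a valid SVD of $A(I-\eta A^\top A)$, and its singular values are precisely $\{\sigma_i(A)-\eta\sigma_i^3(A)\}$, i.e.\ the image of the singular values of $A$ under $g$.

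It then remains to identify the largest and smallest among these values. Consider $g(x)=x-\eta x^3$ on the interval $[0,\sqrt{1/(3\eta)}]$, which contains all $\sigma_i(A)$ by the assumption on $\|A\|$. Its derivative $g'(x)=1-3\eta x^2$ is nonnegative on this interval, so $g$ is nondecreasing there, and hence $g(\sigma_m(A))\le g(\sigma_i(A))\le g(\sigma_1(A))$ for all $i$ (using the convention that $A$ has $\min(m,n)$ singular values, ordered decreasingly, so that $\sigma_m(A)$ is the smallest); this yields the stated values for the extreme singular values. There is essentially no real obstacle here: the only point needing a line of care is the bookkeeping of the rectangular shape of $\Sigma$ (trailing zero singular values when $m\ne n$), which does not affect the conclusion since $g(0)=0$ and $g$ is nonnegative on the relevant interval.
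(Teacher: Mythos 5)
Your proof is correct and follows essentially the same route as the paper's: diagonalize via the SVD of $A$, observe that $A(I-\eta A^\top A)$ inherits the same singular vectors with singular values mapped by $g(x)=x-\eta x^3$, and use monotonicity of $g$ on the relevant interval. You are slightly more careful than the paper in handling the rectangular shape of $\Sigma$ (writing $\Sigma(I-\eta\Sigma^\top\Sigma)$ rather than $\Sigma-\eta\Sigma^3$) and in explicitly verifying nonnegativity of the resulting diagonal entries, but the substance is identical.
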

\begin{proof}
	Let $U_A \Sigma_A V_A^\top$ be the SVD of $A$. Simple algebra shows that
	\begin{equation}
		A\left(I - \eta A^\top A\right) = U_A\left(\Sigma_A - \eta\Sigma_A^3\right) V_A^\top.
	\end{equation}
	This is exactly the SVD of $A\left(I - \eta A^\top A\right)$. Let $g(x) = x - \eta x^3$. By taking derivative, $g$ is monotone increasing in interval $[-\sqrt{\frac{1}{3\eta}}, \sqrt{\frac{1}{3\eta}}]$. Since the singular values of $A(I-\eta A^\top A)$ are exactly the singular values of $A$ mapped by $g$, the result follows.
\end{proof}
\begin{lem}\label{lem: norm of inverse}
	Let $A$ be an $n\times n$ matrix such that $\norm{A} <1$. Then $I +A$ is invertible and 
	\begin{equation}
		\norm{(I+A)^{-1}} \le \frac{1}{1-\norm{A}}.
	\end{equation}
\end{lem}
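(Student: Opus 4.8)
\textbf{Proof proposal for Lemma~\ref{lem: norm of inverse}.}
The plan is to run the standard Neumann series argument, together with a lower bound on $I+A$ as an operator. First I would observe that for every vector $x$,
\[
\norm{(I+A)x} \;\ge\; \norm{x} - \norm{Ax} \;\ge\; (1-\norm{A})\,\norm{x},
\]
using the triangle inequality and the definition of the operator norm. Since $\norm{A}<1$, the right-hand side is strictly positive whenever $x\neq 0$, so $I+A$ has trivial kernel; as $A$ is a square matrix on a finite-dimensional space, injectivity gives invertibility. Then, writing $y=(I+A)x$ so that $x=(I+A)^{-1}y$, the displayed inequality reads $\norm{y}\ge (1-\norm{A})\,\norm{(I+A)^{-1}y}$, i.e. $\norm{(I+A)^{-1}y}\le \norm{y}/(1-\norm{A})$ for all $y$, which is exactly the claimed operator-norm bound after taking the supremum over unit vectors $y$.

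Alternatively, and perhaps more transparently for the norm estimate, I would define the partial sums $B_N=\sum_{k=0}^{N}(-A)^k$ and note that $\norm{B_N - B_M}\le \sum_{k=M+1}^{N}\norm{A}^k \to 0$ as $M,N\to\infty$ because $\norm{A}<1$, so $\{B_N\}$ is Cauchy and converges in operator norm to some matrix $B$. Multiplying the identity $(I+A)B_N = I + (-A)^{N+1}$ and letting $N\to\infty$ (the remainder term has norm $\norm{A}^{N+1}\to 0$) shows $(I+A)B=I$, and similarly $B(I+A)=I$, so $B=(I+A)^{-1}$. Finally, $\norm{(I+A)^{-1}} = \norm{B} \le \sum_{k=0}^{\infty}\norm{A}^k = \frac{1}{1-\norm{A}}$ by the triangle inequality and the geometric series.

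Either route is elementary; there is no real obstacle here, and the "hard part" is merely choosing which presentation to include. I would favor the first argument for brevity, since it avoids discussing convergence of operator-valued series, though I might cite the Neumann series as the standard reference. The only point requiring a line of care is the passage from injectivity to invertibility, which uses finite-dimensionality of the ambient space (the statement is about $n\times n$ matrices, so this is immediate).
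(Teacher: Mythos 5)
Your second argument (Neumann series) is exactly the paper's proof: define $B=\sum_{k\ge 0}(-A)^k$, verify it inverts $I+A$, and bound its norm by the geometric series. Your first argument is a genuinely different and slightly more elementary route: the lower bound $\norm{(I+A)x}\ge(1-\norm{A})\norm{x}$ gives injectivity, hence invertibility by finite-dimensionality, and the same inequality immediately yields the operator-norm bound without any discussion of convergence of matrix-valued series. The first route is arguably cleaner in this finite-dimensional setting and avoids the (small) technical overhead of checking that the Neumann series converges and that one may pass to the limit in $(I+A)B_N=I+(-A)^{N+1}$; the Neumann route, on the other hand, generalizes verbatim to bounded operators on a Banach space and is the textbook reference the paper implicitly leans on. Both are correct; either would serve.
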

\begin{proof}
	Since $\norm{A}<1$, the matrix $B= \sum_{i=0}^{\infty}(-1)^iA^i$ is well defined and indeed $B$ is the inverse of $I+A$. By continuity, subaddivity and submultiplicativity of operator norm,
	\begin{align}
		\norm{(I+A)^{-1}} = \norm{B} \le \sum_{i=0}^{\infty} \norm{A^{i}} \le \sum_{i=0}^{\infty} \norm{A}^i = \frac{1}{1-\norm{A}}.
	\end{align}
\end{proof}
\begin{lem}\label{lem: sigmar ineq}
	Let $A$ be an $r\times r$ matrix and $B$ be an $r\times k$ matrix. Then
	\begin{equation}
		\sigma_r(AB) \le \norm{A} \sigma_r(B).
	\end{equation}
\end{lem}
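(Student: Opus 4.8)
The plan is to reduce the statement to the variational characterization of the smallest singular value of an $r$-rowed matrix (here $AB \in \RR^{r\times k}$). For any $M \in \RR^{r\times k}$ one has $\sigma_r(M)^2 = \lambda_{\min}(MM^\top) = \min_{x\in\RR^r,\ \norm{x}=1}\norm{M^\top x}^2$, since $MM^\top$ is an $r\times r$ PSD matrix whose eigenvalues are exactly $\sigma_1(M)^2 \ge \dots \ge \sigma_r(M)^2$. Applying this with $M = AB$ gives
\[
\sigma_r(AB) \;=\; \min_{\norm{x}=1}\norm{(AB)^\top x} \;=\; \min_{\norm{x}=1}\norm{B^\top A^\top x},
\]
so it suffices to exhibit a single unit vector $x$ with $\norm{B^\top A^\top x} \le \norm{A}\,\sigma_r(B)$.

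First I would dispose of the degenerate case: if $\rank(A) < r$ then $\rank(AB) \le \rank(A) < r$, hence $\sigma_r(AB) = 0$ and the inequality holds trivially because the right-hand side is nonnegative. So assume $A$ (equivalently $A^\top$) is invertible. Let $u\in\RR^r$ be a unit eigenvector of $BB^\top$ corresponding to its smallest eigenvalue, so that $\norm{B^\top u} = \sigma_r(B)$. Set $x = (A^\top)^{-1}u/\norm{(A^\top)^{-1}u}$, which is a unit vector. Then $A^\top x = u/\norm{(A^\top)^{-1}u}$, and consequently
\[
\norm{B^\top A^\top x} \;=\; \frac{\norm{B^\top u}}{\norm{(A^\top)^{-1}u}} \;=\; \frac{\sigma_r(B)}{\norm{(A^\top)^{-1}u}}.
\]

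To finish, I would lower-bound the denominator: $1 = \norm{u} = \norm{A^\top (A^\top)^{-1}u} \le \norm{A^\top}\,\norm{(A^\top)^{-1}u} = \norm{A}\,\norm{(A^\top)^{-1}u}$, so $\norm{(A^\top)^{-1}u} \ge 1/\norm{A}$. Plugging this in yields $\norm{B^\top A^\top x} \le \norm{A}\,\sigma_r(B)$, and since $\sigma_r(AB)$ is the minimum over all unit $x$ of $\norm{B^\top A^\top x}$, we conclude $\sigma_r(AB) \le \norm{A}\,\sigma_r(B)$. There is essentially no hard step here; the only points requiring care are getting the correct direction of the norm inequality for $(A^\top)^{-1}u$ (hence we bound $\norm{u}$ from above rather than $\norm{(A^\top)^{-1}u}$ directly) and the separate treatment of rank-deficient $A$. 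I note in passing that this lemma is the special case $i=1,\ j=r$ of the standard singular-value inequality $\sigma_{i+j-1}(XY) \le \sigma_i(X)\sigma_j(Y)$, but the self-contained argument above is all that is needed.
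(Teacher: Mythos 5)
Your proof is correct, but it takes a different route from the paper's. The paper applies the Courant--Fischer (max--min) characterization of $\sigma_r$ over $r$-dimensional subspaces of $\RR^k$ twice: $\sigma_r(AB) = \sup_{\dim S = r}\inf_{x\in S\setminus\{0\}}\norm{ABx}/\norm{x}$, bounds $\norm{ABx}\le\norm{A}\norm{Bx}$ pointwise inside the sup--inf, and pulls out $\norm{A}$ to recover $\norm{A}\sigma_r(B)$. That argument is uniform in $A$ and needs no case split. You instead exploit the fact that $M$ has exactly $r$ rows, so $\sigma_r(M)^2 = \lambda_{\min}(MM^\top) = \min_{\norm{x}=1}\norm{M^\top x}^2$, and then, after disposing of the rank-deficient case, exhibit a single witness $x = (A^\top)^{-1}u/\norm{(A^\top)^{-1}u}$ built from a bottom eigenvector $u$ of $BB^\top$; the key inequality $\norm{(A^\top)^{-1}u}\ge 1/\norm{A}$ is obtained by bounding $\norm{u}$ from above. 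Your approach is more constructive and arguably more elementary (it does not invoke the full min--max theorem over subspaces), at the cost of the explicit invertibility split; the paper's is shorter and case-free but relies on a stronger variational characterization. Both are valid and give the same bound.
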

\begin{proof}
	For any $r \times k$ matrix $C$, the variational expression of $r$-th singular value is 
	\begin{equation}
		\sigma_r(C) = \sup_{\substack{\text{subspace } S \subset \RR^k\\ \dim(S)=r}}\inf_{\substack{x \in S\\ x\neq 0}} \frac{\norm{Cx}}{\norm{x}}. 
	\end{equation}
Applying this variational result twice, we obtain
\begin{align}
	\sigma_r(AB)&= \sup_{\substack{\text{subspace } S \subset \RR^k\\ \dim(S)=r}}\inf_{\substack{x \in S\\ x\neq 0}} \frac{\norm{ABx}}{\norm{x}}\\
	&\le  \sup_{\substack{\text{subspace } S \subset \RR^k\\ \dim(S)=r}}\inf_{\substack{x \in S\\ x\neq 0}} \frac{\norm{A}\norm{Bx}}{\norm{x}}\\
	&= \norm{A}\sigma_r(B).
\end{align}
\end{proof}
\begin{lem}[Weyl's Inequality]\label{lem: weyl's ineq}
	Let $A$ and $B$ be any $m\times n$ matrices. Then 
	\begin{equation}
		\sigma_i(A-B) \le \norm{A-B},\qquad \forall 1\le i \le \min\{m,n\}.
	\end{equation}
When both $A$ and $B$ are symmetric matrices, the singular value can be replaced by eigenvalue.
\end{lem}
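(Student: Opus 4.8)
The plan is to derive the statement directly from the identification of the spectral norm with the largest singular value, together with the standard convention that singular values are listed in nonincreasing order. Concretely, for any matrix $M\in\RR^{m\times n}$ one has $\norm{M}=\sup_{\norm{x}=1}\norm{Mx}=\sigma_1(M)$ (the second equality being read off the singular value decomposition $M=U\Sigma V^\top$), and by convention $\sigma_1(M)\ge\sigma_2(M)\ge\cdots\ge\sigma_{\min\{m,n\}}(M)\ge 0$. First I would apply this with $M=A-B$: every singular value of $A-B$ is bounded above by the largest one, so $\sigma_i(A-B)\le\sigma_1(A-B)=\norm{A-B}$ for all $1\le i\le\min\{m,n\}$, which is exactly the claimed inequality. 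No additional estimate is needed.

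For the symmetric case, I would note that if $A$ and $B$ are symmetric then so is $A-B$, hence it admits a spectral decomposition $A-B=Q\Lambda Q^\top$ with $Q$ orthogonal and $\Lambda=\diag(\lambda_1(A-B),\dots,\lambda_m(A-B))$; the singular values of $A-B$ are then precisely the numbers $\abs{\lambda_j(A-B)}$ (in decreasing order of magnitude), and $\norm{A-B}=\max_j\abs{\lambda_j(A-B)}$. Therefore $\abs{\lambda_i(A-B)}\le\norm{A-B}$ for every $i$, and in particular $\lambda_i(A-B)\le\norm{A-B}$, giving the stated ``eigenvalue'' version.

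The only point worth flagging is that the perturbation form of Weyl's inequality that the paper actually invokes in several places — namely $\abs{\sigma_i(A)-\sigma_i(B)}\le\norm{A-B}$, and $\abs{\lambda_i(A)-\lambda_i(B)}\le\norm{A-B}$ for symmetric matrices (used, e.g., to get $\sigma_r(S_0S_0^\top)\ge\sigma_r(c^*D_S^*)-\norm{S_0S_0^\top-c^*D_S^*}$ and $\abs{\sigma_{r+1}(D)-\sigma_{r+1}(\bar X)}\le\norm{D-\bar X}$) — does not follow from the one-line argument above. If that stronger statement is the one intended, the extra work is the Courant--Fischer characterization: writing $\sigma_i(M)=\max_{\dim S=i}\min_{x\in S,\norm{x}=1}\norm{Mx}$, one fixes a subspace $S^\star$ of dimension $i$ achieving $\sigma_i(B)$ and uses the triangle inequality $\norm{Ax}\ge\norm{Bx}-\norm{(A-B)x}\ge\sigma_i(B)-\norm{A-B}$ uniformly on $S^\star\cap\{\norm{x}=1\}$, so $\sigma_i(A)\ge\sigma_i(B)-\norm{A-B}$; interchanging the roles of $A$ and $B$ closes it, and the symmetric version is identical with the analogous min--max formula for $\lambda_i$. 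Thus the ``hard part,'' to the extent there is one, is simply invoking the right variational principle; for the statement exactly as written, the result is immediate.
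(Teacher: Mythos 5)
Your proposal is correct. The paper states this lemma in the auxiliary section without giving any proof (it is invoked as a standard fact), so there is no in-paper argument to compare against; for the statement exactly as written, your observation that $\sigma_i(A-B)\le\sigma_1(A-B)=\norm{A-B}$ settles it, and the symmetric case via $\abs{\lambda_j(A-B)}\le\norm{A-B}$ is likewise fine. Your flag is also well taken: the places where the lemma is actually applied (e.g.\ $\sigma_r(S_0S_0^\top)\ge\sigma_r(c^*D_S^*)-\norm{S_0S_0^\top-c^*D_S^*}$ and $\abs{\sigma_{r+1}(D)-\sigma_{r+1}(\bar X)}\le\norm{D-\bar X}$) use the perturbation form $\abs{\sigma_i(A)-\sigma_i(B)}\le\norm{A-B}$, which does not follow from the one-line bound, and your Courant--Fischer argument (fixing an optimal $i$-dimensional subspace for $B$ and using $\norm{Ax}\ge\norm{Bx}-\norm{(A-B)x}$, then symmetrizing) is the right and complete way to obtain it, including the eigenvalue analogue for symmetric matrices.
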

\begin{lem}\label{lem: fnorm opnorm}
	Let $A$ be any $m\times n$ matrix with rank $r$. Then 
	\begin{equation}
		\norm{A} \le \fnorm{A} \le \sqrt{r}\norm{A}.
	\end{equation}
\end{lem}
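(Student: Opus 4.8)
The plan is to reduce both inequalities to elementary facts about the singular values of $A$. Let $\sigma_1 \ge \sigma_2 \ge \cdots \ge \sigma_{\min\{m,n\}} \ge 0$ denote the singular values of $A$, and recall the two standard characterizations $\norm{A} = \sigma_1$ (the spectral norm is the largest singular value) and $\fnorm{A}^2 = \trace(A^\top A) = \sum_{i}\sigma_i^2$; the latter follows by diagonalizing the positive semidefinite matrix $A^\top A$, whose eigenvalues are exactly the $\sigma_i^2$. Since $\rank(A) = r$, precisely the first $r$ singular values are strictly positive and $\sigma_{r+1} = \sigma_{r+2} = \cdots = 0$, so the sum defining $\fnorm{A}^2$ has only $r$ nonzero terms.

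First I would establish the left inequality: since every term $\sigma_i^2$ is nonnegative and $\sigma_1^2$ is one of them,
\[
\fnorm{A}^2 = \sum_{i=1}^{r}\sigma_i^2 \;\ge\; \sigma_1^2 = \norm{A}^2 ,
\]
and taking square roots gives $\norm{A} \le \fnorm{A}$. Then for the right inequality, since each of the $r$ terms is at most $\sigma_1^2$,
\[
\fnorm{A}^2 = \sum_{i=1}^{r}\sigma_i^2 \;\le\; r\,\sigma_1^2 = r\,\norm{A}^2 ,
\]
and taking square roots yields $\fnorm{A} \le \sqrt{r}\,\norm{A}$.

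There is essentially no real obstacle in this argument; it is a routine consequence of the singular value decomposition. The only point worth stating explicitly is that the rank of $A$ equals the number of its nonzero singular values, which is immediate from the SVD. Everything else is the elementary comparison $\|v\|_\infty \le \|v\|_2 \le \sqrt{r}\,\|v\|_\infty$ applied to the $r$-dimensional vector of nonzero singular values.
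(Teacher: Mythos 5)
Your proof is correct and uses essentially the same approach as the paper: reduce both inequalities to facts about the $r$ nonzero singular values of $A$, using $\norm{A}=\sigma_1$ and $\fnorm{A}^2=\sum_i\sigma_i^2$. The paper simply cites Cauchy's inequality for the upper bound where you write out the elementary term-by-term comparison; this is a cosmetic, not substantive, difference.
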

\begin{proof}
	Let $\sigma_1 \ge\sigma_2\ge\ldots \ge \sigma_r>0$ be singular values of $A$. Then we know $\norm{A}= \sigma_1$ and $\fnorm{A} = \sqrt{\sum_{i=1}^{r}\sigma_i^2}$. The result follows from Cauchy's inequality. 
\end{proof}

\begin{lem}\label{lem: decomposition of FF-X}
	Let $F_t$ be the iterates defined by algorithm~\ref{eqn: mainalgorithm}. Then we have 
	\begin{equation}
		\norm{F_tF_t^\top -\truX} \le \norm{S_tS_t^\top - D_S^*} + 2\norm{S_tT_t^\top} + \norm{T_tT_t^\top}.
	\end{equation}
Moreover,
\begin{equation}
	\max\{\norm{S_tS_t^\top -D_S^*}, \norm{S_tT_t^\top}, \norm{T_tT_t^\top}\} \le \norm{F_tF_t^\top -\truX}.
\end{equation}
\end{lem}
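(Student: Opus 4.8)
The plan is to reduce everything to a single $2\times 2$ block representation of $F_tF_t^\top - \truX$ and then read off both inequalities. First I would substitute $F_t = US_t + VT_t$ and $\truX = UD_S^*U^\top$ (using $U^\top V = 0$, $U^\top U = I_r$, $V^\top V = I_{d-r}$) to obtain the identity
\begin{align}
	F_tF_t^\top - \truX &= U\bigl(S_tS_t^\top - D_S^*\bigr)U^\top + US_tT_t^\top V^\top + VT_tS_t^\top U^\top + VT_tT_t^\top V^\top.
\end{align}
Equivalently, since $[\,U\ \ V\,]\in\RR^{d\times d}$ is an orthogonal matrix, conjugation gives
\begin{align}
	[\,U\ \ V\,]^\top\bigl(F_tF_t^\top - \truX\bigr)[\,U\ \ V\,] = \begin{bmatrix} S_tS_t^\top - D_S^* & S_tT_t^\top \\ T_tS_t^\top & T_tT_t^\top \end{bmatrix}.
\end{align}

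For the first (upper-bound) inequality I would apply the triangle inequality to the four-term expansion and use that $U$ and $V$ have orthonormal columns, so $\norm{UMU^\top} = \norm{M}$, $\norm{VNV^\top} = \norm{N}$, and $\norm{US_tT_t^\top V^\top} = \norm{VT_tS_t^\top U^\top} = \norm{S_tT_t^\top}$; summing the two equal cross terms yields the factor $2$ and the claimed bound $\norm{S_tS_t^\top - D_S^*} + 2\norm{S_tT_t^\top} + \norm{T_tT_t^\top}$.

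For the second (lower-bound) inequality I would use that conjugation by the orthogonal matrix $[\,U\ \ V\,]$ preserves the operator norm, so $\norm{F_tF_t^\top - \truX}$ equals the operator norm of the displayed $2\times 2$ block matrix; then invoke the elementary fact that the operator norm of any submatrix (here each of the blocks $S_tS_t^\top - D_S^*$, $S_tT_t^\top$, $T_tT_t^\top$) is at most the operator norm of the whole matrix, which follows from the variational characterization $\sigma_1(M) = \sup_{\norm{x}=\norm{y}=1}\abs{y^\top M x}$ by restricting $x,y$ to the appropriate coordinate subspaces. This gives the bound on the maximum. I do not anticipate a genuine obstacle here; the only point requiring a line of justification is the submatrix norm inequality, and the bookkeeping with the identity $U^\top V = 0$.
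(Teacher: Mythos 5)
Your proof is correct, and the first inequality is handled exactly as in the paper: expand via $F_t = US_t + VT_t$, apply the triangle inequality, and use that $U,V$ have orthonormal columns. For the second inequality your approach is essentially the same in spirit as the paper's — both rely on the variational characterization of the operator norm after exploiting the orthogonal decomposition — but your formulation is cleaner. The paper proves $\norm{S_tS_t^\top - D_S^*}\le\norm{F_tF_t^\top -\truX}$ by a quadratic-form argument ($\sup_{\norm{y}=1} y^\top M y$ with $y$ constrained to the column space of $U$), and then asserts "the same argument" handles $\norm{S_tT_t^\top}$ and $\norm{T_tT_t^\top}$; strictly speaking the quadratic-form argument only applies directly to the symmetric diagonal blocks, and the off-diagonal block $S_tT_t^\top$ needs the bilinear characterization $\sigma_1(M)=\sup_{\norm{x}=\norm{y}=1}\abs{y^\top M x}$ with $x,y$ ranging over different coordinate subspaces. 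Your route — conjugate by the orthogonal matrix $[\,U\ \ V\,]$ to exhibit $F_tF_t^\top-\truX$ as a $2\times 2$ block matrix and invoke the submatrix-norm inequality once — handles all three blocks uniformly and avoids the case distinction, so it is the tidier presentation of the same underlying idea.
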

\begin{proof}
	Recall that $F_t = US_t + VT_t$ and $\truX = UD_S^*U^\top$, so we have
	\begin{align}
		F_tF_t^\top -\truX &= (US_t + VT_t)(US_t+VT_t)^\top -UD_S^*\\
		&= U(S_tS_t^\top-D_S^*)U^\top + US_tV_t^\top V^\top + VT_tS_t^\top U^\top + VT_tT_t^\top V^\top 
	\end{align}
By triangle inequality and the fact that $\norm{U}=\norm{V}=1$, we obtain
\begin{align}
	\norm{F_tF_t^\top -\truX} \le \norm{S_tS_t^\top -D_S^*} + 2\norm{S_tT_t^\top} + \norm{T_tT_t}.
\end{align}
For the second statement, we observe 
\begin{align}
	\norm{S_tS_t^\top -D_S^*} &= \sup_{ x \in \RR^r, \norm{x}=1} x^T \left(S_tS_t^\top -D_S^*\right)x\\
	& \le \sup_{ y \in \RR^d, \norm{y}=1} y^\top U \left(S_tS_t^\top -D_S^*\right) U^\top y
\end{align}
The last inequality follows from the fact that for any $x\in \RR^r$, we can find a $y\in\RR^d$ such that $U^\top y =x$ and $\norm{y}=\norm{x}$. Indeed, we can simply take $y=Ux$. On the other hand,
\begin{align}
	\sup_{ y \in \RR^d, \norm{x}=1} y^\top U \left(S_tS_t^\top -D_S^*\right) U^\top y &= \norm{U(S_tS_t^\top -D_S^*)Y^\top}\\
	&\le \norm{S_tS_t^\top -D_S^*},
\end{align}
so actually we have equality
\begin{equation}
	\sup_{ x \in \RR^r, \norm{x}=1} x^T \left(S_tS_t^\top -D_S^*\right)x= \sup_{ y \in \RR^d, \norm{y}=1} y^\top U \left(S_tS_t^\top -D_S^*\right) U^\top y.
\end{equation}
Clearly, the sup can be attained, let $y_* = \argmax_{ y \in \RR^d, \norm{y}=1} y^\top U \left(S_tS_t^\top -D_S^*\right) U^\top y$. Then we claim that $y_*$ must lie in the column space of $U$. If not so, we can always take the projection of $y_*$ onto the column space of $U$ and normalize it, which will give a larger objective value, contradiction. As a result, $V^\top y^* =0$ and we obtain
\begin{align}
	\norm{S_tS_t^\top -D_S^*} & = y_*^\top U(S_tS_t^\top -D_S^*)U^\top y_*\\
	&= y_*^\top \left(F_tF_t^\top -\truX\right) y_*\\
	&\le\norm{F_tF_t^\top -\truX}.
\end{align}
We can apply the same argument to get $\norm{S_tT_t^\top}\le \norm{F_tF_t^\top-\truX}$ and $\norm{T_tT_t^\top}\le \norm{F_tF_t^\top-\truX}$.
\end{proof}

\begin{lem}[$\ell_1/\ell_2$-RIP, { \cite[Proposition 1 ]{li2020nonconvex}}]\label{lem:ell1/ell2} 
	Let $r \ge 1$ be given, suppose sensing matrices $\{A_i\}_{i=1}^{m}$ have i.i.d. standard Gaussian entries with $m \gtrsim dr$. Then for any $0<\delta <\sqrt{\frac{2}{\pi}}$, there exists a universal constant $c>0$, such that with probability exceeding $1-\exp(-cm\delta^2)$, we have 
	\begin{equation}
		\left(\sqrt{\frac{2}{\pi}}-\delta\right)\fnorm{X} \le \frac{1}{m}\sum_{i=1}^{m}\abs{\dotp{A_i,X}} \le \left(\sqrt{\frac{2}{\pi}}+\delta\right)\fnorm{X} 
	\end{equation}
	for any rank $2r$-matrix $X$.
\end{lem}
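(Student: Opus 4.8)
This lemma is precisely \cite[Proposition~1]{li2020nonconvex}, so the cleanest option is to invoke it directly; for completeness I sketch the standard argument. The plan is the usual two-step recipe for a restricted isometry-type bound: (i) show that for each \emph{fixed} low-rank $X$ the empirical average $\tfrac1m\sum_i|\dotp{A_i,X}|$ concentrates sharply around its mean $\sqrt{2/\pi}\fnorm{X}$; (ii) upgrade this to a statement uniform over all rank-$2r$ matrices by an $\epsilon$-net discretization of the low-rank unit sphere together with a Lipschitz continuity argument.

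For step (i), fix $X$ with $\fnorm{X}=1$. Since the entries of $A_i$ are i.i.d.\ $N(0,1)$, the inner product $\dotp{A_i,X}$ is $N(0,1)$, so the variables $h_i:=|\dotp{A_i,X}|$ are i.i.d.\ folded Gaussians with $\EE h_i=\sqrt{2/\pi}$ and sub-Gaussian norm bounded by a universal constant. Hoeffding's inequality for sums of sub-Gaussian random variables then yields, for any $t>0$,
\[
\Prob\!\left(\left|\tfrac1m\sum_{i=1}^m|\dotp{A_i,X}|-\sqrt{\tfrac2\pi}\right|\ge t\right)\le 2\exp(-c_0 m t^2)
\]
with $c_0$ universal.

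For step (ii), let $\mathcal{N}$ be an $\epsilon$-net, in Frobenius norm, of $\bS_{4r}=\{Z:\rank(Z)\le 4r,\ \fnorm{Z}=1\}$ with $\epsilon$ a small constant multiple of $\delta$; a standard volumetric bound (see Lemma~\ref{lem: covering lemma for low rank matrices}) gives $|\mathcal{N}|\le(9/\epsilon)^{4r(2d+1)}$. Union bounds over $\mathcal{N}$ applied to the tail estimate of step~(i), once with $t=\delta/4$ and once with the crude choice $t=\tfrac12$, show that with probability at least $1-2(9/\epsilon)^{4r(2d+1)}\exp(-c_0 m\delta^2/16)$ one has, for every $\bar Z\in\mathcal{N}$, both $\left|\tfrac1m\sum_i|\dotp{A_i,\bar Z}|-\sqrt{2/\pi}\right|\le\delta/4$ and $\tfrac1m\sum_i|\dotp{A_i,\bar Z}|\le\sqrt{2/\pi}+\tfrac12$; for $m\gtrsim dr$ (the implicit constant absorbing the $\delta^{-2}\log(1/\delta)$ bookkeeping in the exponent) this probability is at least $1-\exp(-cm\delta^2)$. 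Finally, for an arbitrary rank-$2r$ matrix $X$ with $\fnorm{X}=1$, pick $\bar X\in\mathcal{N}$ with $\fnorm{X-\bar X}\le\epsilon$; since $\rank(X-\bar X)\le 4r$, the normalized difference lies in $\bS_{4r}$, whence $\tfrac1m\sum_i|\dotp{A_i,X-\bar X}|\le(\sqrt{2/\pi}+\tfrac12)\epsilon$, and the triangle inequality bridges $X$ to $\bar X$. Combining and recalling $\epsilon$ is a small multiple of $\delta$ gives the claimed two-sided bound for all rank-$2r$ matrices, after relabelling $\delta$ by a constant factor.

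The one spot requiring care is the discretization step: one must \emph{not} estimate $\tfrac1m\sum_i|\dotp{A_i,X-\bar X}|$ by $\fnorm{X-\bar X}\cdot\max_i\fnorm{A_i}$, since $\max_i\fnorm{A_i}\asymp d$ is far too large to be absorbed into $\epsilon=\Theta(\delta)$; instead the discretization error must be controlled \emph{uniformly} over the rank-$4r$ sphere by the same concentration-plus-net estimate (at constant resolution), which is the reason the net is built for rank $4r$ rather than $2r$. Beyond that the argument is entirely routine, and since the statement is verbatim \cite[Proposition~1]{li2020nonconvex} I would simply cite it.
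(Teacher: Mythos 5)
The paper itself does not prove this lemma: it is imported verbatim as \cite[Proposition~1]{li2020nonconvex} and used as a black box (e.g., in the RDPP and initialization proofs), so your first instinct — simply cite it — is exactly what the paper does, and is the correct disposition.

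Your optional sketch is the standard pointwise-concentration-plus-net argument and is essentially right, but the covering bookkeeping has two small slips worth flagging. First, if $\mathcal{N}$ is an $\epsilon$-net of $\bS_{4r}$ and $X$ has rank $\le 2r$, then $\bar X\in\mathcal{N}$ may have rank up to $4r$, so $\rank(X-\bar X)\le 6r$, not $4r$; to get residuals of rank $\le 4r$ you want the net taken \emph{inside} $\bS_{2r}$. Second, and more substantively, bounding $\tfrac1m\sum_i|\dotp{A_i,X-\bar X}|$ requires a bound that is uniform over the \emph{whole} sphere of the larger rank class, not merely over the finite net, and your sketch only establishes the latter — you invoke $\le(\sqrt{2/\pi}+\tfrac12)\epsilon$ as if the net bound extended to all of $\bS_{4r}$. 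Plugging a net-only bound back into the residual is circular (the residual of a residual has even larger rank). The standard fix is a self-bounding step: let $M=\sup_{\bS_{2r'}}\tfrac1m\sum_i|\dotp{A_i,\cdot}|$, split any rank-$\le 2r'$ residual into two rank-$\le r'$ pieces by its SVD, deduce $M\le\max_{\bar Z\in\mathcal{N}}\tfrac1m\sum_i|\dotp{A_i,\bar Z}|+\sqrt{2}\,\epsilon\,M$, and then solve for $M$; with $\epsilon=\Theta(\delta)$ this closes the loop. Once that step is added the argument is complete and matches what \cite{li2020nonconvex} does.
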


\begin{lem}[$\ell_2$-RIP,  {\cite[Theorem 2.3]{candes2011tight}}]\label{lem: ell2-rip}
	Fix $0<\delta<1$, suppose that sensing matrices $\{A_i\}_{i=1}^{m}$ have i.i.d. standard Gaussian entries with $m \gtrsim \frac{1}{\delta^2} \log\left(\frac{1}{\delta}\right)dr$. Then with probability exceeding $1-C\exp\left(-Dm\right)$, we have 
	\begin{equation}
		(1-\delta)\fnorm{X}^2 \le \frac{1}{m}\sum_{i=1}^{m}\dotp{A_i,X}^2 \le (1+\delta)\fnorm{X}^2
	\end{equation}
	for any rank-$r$ matrix $X$. Here $C, D$ are universal constants.
\end{lem}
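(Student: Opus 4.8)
The statement is the $\ell_2$ restricted isometry property of a Gaussian measurement ensemble on rank-$r$ matrices; since it is quoted from \cite{candes2011tight}, I will only sketch the standard $\epsilon$-net proof, which runs in close parallel to the proof of the $\ell_1/\ell_2$-RIP (\Cref{lem:ell1/ell2}) with squares in place of absolute values. The plan is to proceed in three steps: (i) establish pointwise concentration for a fixed rank-$r$ matrix; (ii) take a union bound over an $\epsilon$-net of $\bS_r := \{X\in\RR^{d\times d}\colon \fnorm{X}=1,\ \rank(X)\le r\}$; and (iii) pass from the net to the whole set by a perturbation argument.

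For step (i) I would fix $X$ with $\fnorm{X}=1$ and $\rank(X)\le r$ and note that, since each $A_i$ has i.i.d.\ $N(0,1)$ entries, $\dotp{A_i,X}\sim N(0,1)$; hence $\frac1m\sum_{i=1}^m\dotp{A_i,X}^2$ is an average of $m$ i.i.d.\ $\chi^2_1$ variables, and Bernstein's inequality (or the Laurent--Massart $\chi^2$ tail bound) yields $\PP\big(\big|\frac1m\sum_{i=1}^m\dotp{A_i,X}^2-1\big|\ge \delta\big)\le 2\exp(-cm\delta^2)$ for a universal $c>0$ and all $0<\delta\le1$. For step (ii) I would invoke \Cref{lem: covering lemma for low rank matrices} to get an $\epsilon$-net $\bS_{\epsilon,r}\subset\bS_r$ in Frobenius norm with $|\bS_{\epsilon,r}|\le(9/\epsilon)^{(2d+1)r}$ and take a union bound, so that the two-sided bound holds simultaneously at all net points with probability at least $1-2(9/\epsilon)^{(2d+1)r}\exp(-cm\delta^2)$.

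Step (iii) is where the real work lies. Given an arbitrary $X\in\bS_r$, I would pick the nearest net point $\bar X$ (so $\fnorm{X-\bar X}\le\epsilon$ and $\rank(X-\bar X)\le 2r$) and expand $\dotp{A_i,X}^2=\dotp{A_i,\bar X}^2+2\dotp{A_i,\bar X}\dotp{A_i,X-\bar X}+\dotp{A_i,X-\bar X}^2$; summing over $i$ and controlling the cross term by Cauchy--Schwarz and the quadratic remainder by the same RIP-type quantity evaluated at the rank-$\le2r$ matrix $(X-\bar X)/\fnorm{X-\bar X}$, one gets a self-bounding inequality for the best RIP constant, which closes once $\epsilon$ is a small enough multiple of $\delta$ (alternatively, taking $\epsilon\asymp\delta^2$ and the coarser triangle-inequality bound as in the proof of \Cref{prop: sign-RIP} also works). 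Since $\delta$ is a fixed constant, once $m\gtrsim\frac1{\delta^2}\log(1/\delta)\,dr$ the failure probability $2(9/\epsilon)^{(2d+1)r}\exp(-cm\delta^2)$ is dominated by $\exp\big((2d+1)r\log(9/\epsilon)-cm\delta^2\big)\le C\exp(-Dm)$. The hard part will be making step (iii) rigorous: because $X\mapsto\frac1m\sum_i\dotp{A_i,X}^2$ is quadratic rather than linear, one cannot simply apply the triangle inequality to the net estimate, and must either track the bilinear cross-term through a recursion linking the RIP constants at ranks $r$ and $2r$, or accept the extra $\log(1/\delta)$ factor that appears in the stated sample complexity.
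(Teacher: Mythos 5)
Your sketch is a correct route to the result, but it is a different one from what the paper actually does. The paper's ``proof'' of \Cref{lem: ell2-rip} is a one-line remark: it cites \cite[Theorem 2.3]{candes2011tight} directly and simply observes that tracing the constants in that proof shows the required sample size is $m \gtrsim \log(1/\delta)/c$ where the constant $c$ from Cand\`es--Plan scales as $c \lesssim 1/\delta^2$, giving the stated $m \gtrsim \frac{1}{\delta^2}\log(1/\delta)\,dr$. In other words, the paper does not rederive the concentration inequality at all; it outsources the entire argument and only verifies that the cited theorem's constants, when unwound, yield the claimed dependence on $\delta$. Your proposal instead reproves the RIP from scratch via the standard three-step machine ($\chi^2$ pointwise concentration, union bound over the rank-$r$ net from \Cref{lem: covering lemma for low rank matrices}, and a perturbation/self-bounding step to go from the net to the full set). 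That is more self-contained and matches the structure used elsewhere in the paper (e.g.\ the proof of \Cref{prop: sign-RIP}), and you correctly flag the genuine difficulty: because the map $X \mapsto \frac{1}{m}\sum_i \dotp{A_i,X}^2$ is quadratic, the naive triangle-inequality step used for the $\ell_1$ version fails, and one must either run the bilinear self-bounding recursion linking ranks $r$ and $2r$, or take $\epsilon$ small enough (a multiple of $\delta$, not $\delta^2$ actually suffices for the classical self-bounding argument) and absorb the error. Either route closes and gives the stated $\log(1/\delta)/\delta^2$ factor. One small caveat: the failure probability you end up with has an exponent of the form $cm\delta^2 - O(dr\log(1/\delta))$, so the constant $D$ in ``$1 - C\exp(-Dm)$'' really depends on $\delta$; the lemma calls $C,D$ ``universal,'' which is defensible only because $\delta$ is treated as a fixed constant throughout the paper, but it is worth being explicit about that if you write the argument out in full.
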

\begin{proof}
This lemma is not exactly as Theorem 2.3 in~\cite{candes2011tight} stated, but it's straight forward from the proof of this theorem. All we need to note in that paper is that the sample complexity we need is $m \gtrsim \frac{\log(\frac{1}{\delta})}{c}$, where $c$ is the constant defined in Theorem 2.3~\cite{candes2011tight} for $t$ chosen to be $\delta$. By standard concentration, $c \lesssim \frac{1}{\delta^2}$ and the result follows.
\end{proof}

\begin{lem}[Covering number for symmetric low rank matrices]\label{lem: covering lemma for low rank matrices}
	Let $\mathbb{S}_r = \{X \in \bS^{d\times d} \colon \rank(X) \le r, \fnorm{X} =1\}$. Then, there exists an $\epsilon$-net $\mathbb{S}_{\epsilon,r}$ with respect to the Frobenius norm satisfying $|\mathbb{S}_{\epsilon, r}|\le \left(\frac{9}{\epsilon}\right)^{(2d+1)r}$.
\end{lem}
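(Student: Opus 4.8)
The plan is to parametrize $\mathbb{S}_r$ through the singular value decomposition and take a product of nets over the factors. Write an arbitrary $X \in \mathbb{S}_r$ as $X = UDV^\top$ with $U, V \in \RR^{d\times r}$ having orthonormal columns and $D \in \RR^{r\times r}$ diagonal; since $\fnorm{X} = \fnorm{D}$, the unit-norm constraint becomes $\fnorm{D} = 1$. We do not use the symmetry of $X$ here: $\mathbb{S}_r$ is contained in the set of all rank-$\le r$ matrices in $\RR^{d\times d}$, and the claimed bound is the generic one for that set.

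First I would build, at accuracy $\epsilon/3$: a net $\mathcal{N}_D$ for the Frobenius unit ball of $r\times r$ diagonal matrices — an $r$-dimensional Euclidean ball — with $|\mathcal{N}_D| \le (1 + 6/\epsilon)^r$; and nets $\mathcal{N}_U, \mathcal{N}_V$ \emph{with respect to the operator norm} for the operator-norm unit ball of $\RR^{d\times r}$ (which contains every $d\times r$ matrix with orthonormal columns), each living in a $dr$-dimensional space, with $|\mathcal{N}_U|, |\mathcal{N}_V| \le (1 + 6/\epsilon)^{dr}$. These cardinality bounds are the standard volumetric estimate that the unit ball of any norm on $\RR^n$ admits a $\delta$-net in that norm of size at most $(1+2/\delta)^n$, applied with $\delta = \epsilon/3$, together with $1 + 6/\epsilon \le 9/\epsilon$ for $\epsilon < 1$. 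Then set
\[
\mathbb{S}_{\epsilon, r} := \big\{\, \bar U \bar D \bar V^\top \ :\ \bar U \in \mathcal{N}_U,\ \bar D \in \mathcal{N}_D,\ \bar V \in \mathcal{N}_V \,\big\},
\]
so that $|\mathbb{S}_{\epsilon, r}| \le (9/\epsilon)^{dr}\,(9/\epsilon)^{r}\,(9/\epsilon)^{dr} = (9/\epsilon)^{(2d+1)r}$. Given $X = UDV^\top$, pick $\bar U, \bar D, \bar V$ with $\opnorm{U - \bar U}, \opnorm{V - \bar V}, \fnorm{D - \bar D} \le \epsilon/3$, and estimate, using the triangle inequality, the mixed submultiplicative bounds $\fnorm{AB}\le\opnorm{A}\fnorm{B}$ and $\fnorm{AB}\le\fnorm{A}\opnorm{B}$, and the fact that $U,V,\bar U,\bar V$ and $D,\bar D$ all have operator norm at most $1$:
\[
\fnorm{X - \bar U \bar D \bar V^\top}
\le \fnorm{(U-\bar U)DV^\top} + \fnorm{\bar U(D-\bar D)V^\top} + \fnorm{\bar U \bar D(V-\bar V)^\top}
\le \tfrac{\epsilon}{3} + \tfrac{\epsilon}{3} + \tfrac{\epsilon}{3} = \epsilon.
\]
If one insists that the net lie inside $\mathbb{S}_r$, replace each element by a nearest point of $\mathbb{S}_r$ at the cost of doubling the radius, which is absorbed by the slack in the constant $9$.

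The only point requiring care — not a genuine obstacle — is the choice of norm for each of the three nets: the $U$- and $V$-nets must be taken in operator norm and the $D$-net in Frobenius norm, so that each of the three error terms above factors as an operator-norm perturbation times a quantity of Frobenius norm at most $1$. Covering $U$ and $V$ in Frobenius norm instead would introduce a spurious $\sqrt r$ into the radius and break the accounting. With the norms chosen this way the remaining computation is entirely routine.
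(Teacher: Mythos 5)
Your argument is the covering argument from Cand\`es and Plan \cite[Lemma 3.1]{candes2011tight}, which is exactly what the paper cites; the paper only notes that it performs the same construction with an eigendecomposition in place of the SVD. The choice of operator norm for the orthogonal factors and Frobenius norm for the diagonal factor, the three-term telescoping triangle inequality, and the volumetric cardinality count are all correct.

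The genuine issue is the one you flag in your final parenthetical, and your proposed fix does not actually close it. As you construct it, the net points $\bar U\bar D\bar V^\top$ have rank at most $r$ and Frobenius norm at most $1$, but they are generically neither symmetric nor of unit Frobenius norm, so the net does not lie inside $\mathbb{S}_r$. The paper needs that containment: in the proof of Proposition~\ref{prop: sign-RIP} the net elements $\bar X$ are passed to Lemma~\ref{lem: concentration for fixed XY}, and in the proof of Proposition~\ref{prop: quantile concentration} they are passed to the fixed-matrix quantile concentration, and both steps use that $\dotp{A_i,\bar X}$ is exactly standard normal, which requires $\bar X$ to be symmetric with $\fnorm{\bar X}=1$. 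Projecting each net point onto $\mathbb{S}_r$ doubles the covering radius, so to recover an $\epsilon$-net you would have to start from $\epsilon/6$-nets for each factor; that gives cardinality at most $(1+12/\epsilon)^{(2d+1)r}$, which strictly exceeds $(9/\epsilon)^{(2d+1)r}$, so the constant $9$ is not in fact recovered by ``slack.'' The clean fix is the paper's: for symmetric $X$, write the eigendecomposition $X=Q\Lambda Q^\top$ with $Q\in\RR^{d\times r}$ having orthonormal columns and $\Lambda$ diagonal (entries may be negative) with $\fnorm{\Lambda}=1$; cover $Q$ within the Stiefel manifold in operator norm and $\Lambda$ on the Frobenius unit sphere. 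The net points $\bar Q\bar\Lambda\bar Q^\top$ are then automatically symmetric, rank at most $r$, and of unit Frobenius norm, so the net lies in $\mathbb{S}_r$, and the same three-term triangle inequality closes at radius $\epsilon$. With only one orthogonal factor to cover, the exponent actually improves to $(d+1)r$; the paper's stated $(2d+1)r$ is simply the weaker generic rectangular bound, which is also valid.
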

\begin{proof}
	The proof is the same as the proof of lemma 3.1 in \cite{candes2011tight}, except that we will do eigenvalue decomposition, instead of SVD.
\end{proof}
\begin{lem}[ {\cite[Lemma A.1]{li2020non}}]\label{lem: concentration for sample level meadian}
	Suppose $F(\cdot)$ is cumulative distribution function with continuous density function $f(\cdot)$. Assume the samples $\{x_i\}_{i=1}^{m}$ are i.i.d. drawn from $f$. Let $0<p<1$. If $l<f(\theta)<L$ for all $\theta$ in $\{\theta \colon |\theta-\theta_p| \le \epsilon\}$, then
	\begin{equation}
		|\theta_p(\{x_i\}_{i=1}^m) - \theta_p(F)| < \epsilon
	\end{equation}
holds with probability at least $1-2\exp(-2m\epsilon^2l^2)$. Here $\theta_p(\{x_i\}_{i=1}^m)$ and $\theta_p(F)$ are $p$-quantiles of samples and distribution $F$~(see Definition 5.1 in \cite{li2020non})
\end{lem}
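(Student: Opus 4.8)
The plan is to reduce the two-sided event $\{|\theta_p(\{x_i\}_{i=1}^m) - \theta_p(F)| \ge \epsilon\}$ to one-sided deviations of a Binomial count from its mean, and then apply Hoeffding's inequality; this is precisely the argument behind \cite[Lemma A.1]{li2020non}, which we quote, so I would in fact simply cite it rather than reproduce the proof. Write $\theta_p := \theta_p(F) = \inf\{t : F(t)\ge p\}$ and recall (Definition 5.1 in \cite{li2020non}) that the sample $p$-quantile is the order statistic $x_{(\lceil pm\rceil)}$.

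First I would handle the upper tail. The event $\{\theta_p(\{x_i\}_{i=1}^m) \ge \theta_p+\epsilon\}$ forces $N_+ := \#\{i : x_i \le \theta_p+\epsilon\} < \lceil pm\rceil$, since otherwise at least $\lceil pm\rceil$ of the samples lie at or below $\theta_p+\epsilon$, making the $\lceil pm\rceil$-th order statistic no larger than $\theta_p+\epsilon$. Here $N_+ \sim \mathrm{Binomial}(m, q_+)$ with $q_+ = F(\theta_p+\epsilon)$, and since $f \ge l$ on $\{\theta : |\theta-\theta_p|\le\epsilon\}$ while $F(\theta_p)\ge p$, the fundamental theorem of calculus gives $q_+ \ge p + l\epsilon$. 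Hoeffding's inequality then bounds $\Prob(N_+ < pm) \le \exp(-2ml^2\epsilon^2)$, the harmless gap between $pm$ and $\lceil pm\rceil$ being absorbed.

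Next I would handle the lower tail symmetrically: $\{\theta_p(\{x_i\}_{i=1}^m) \le \theta_p-\epsilon\}$ forces $\#\{i : x_i \le \theta_p-\epsilon\} \ge \lceil pm\rceil \ge pm$; the corresponding Binomial has success probability $q_- = F(\theta_p-\epsilon) \le p - l\epsilon$ (using $F(\theta_p)\le p$, valid since $F$ is continuous, together with $f\ge l$ on the same interval), and Hoeffding again yields a bound of $\exp(-2ml^2\epsilon^2)$. A union bound over the two tails gives the stated probability $1-2\exp(-2m\epsilon^2 l^2)$.

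I do not anticipate a real obstacle here: the statement is quantitative folklore and appears verbatim in \cite{li2020non}. The only bookkeeping requiring care is the rounding around $\lceil pm\rceil$, so that the exponent constant stays $2l^2$ rather than something weaker, and noticing that only the lower density bound $l$ enters the conclusion as stated — the upper bound $L$ in the hypothesis is carried along for consistency with the source, and becomes relevant only when one converts empirical-CDF control back into quantile control, as is done in the companion estimates of \Cref{prop: quantile concentration}.
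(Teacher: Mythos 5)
Your argument is correct and is exactly the standard Binomial-plus-Hoeffding proof of quantile concentration. The paper itself supplies no proof of this lemma — it simply cites \cite[Lemma A.1]{li2020non} — so there is nothing in the paper to compare against beyond the citation, which you also (sensibly) say you would give. The reduction of each one-sided deviation to a count $N_\pm$ of samples in a half-line, the use of $f \ge l$ together with the fundamental theorem of calculus to push $q_+ \ge p + l\epsilon$ and $q_- \le p - l\epsilon$, and the Hoeffding bound with exponent $2l^2\epsilon^2$ are all correct. Two small bookkeeping points you may as well record explicitly if you write this out: (i) since the distribution is continuous, with probability one no sample sits exactly at $\theta_p \pm \epsilon$, so switching between strict and non-strict inequalities costs nothing; (ii) because $N_+$ is integer-valued, the events $\{N_+ < \lceil pm\rceil\}$ and $\{N_+ < pm\}$ coincide, so the rounding you flag truly does vanish rather than merely being ``absorbed.'' Your observation that the upper density bound $L$ is not used in the conclusion as stated, but only in the companion estimates where CDF control is converted back to quantile control, is also accurate.
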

\begin{lem}[Concentration of operator norm]\label{lem: concentration of opnorm}
	Let $A$ be a $d$-by-$d$ GOE matrix having $N(0,1)$ diagonal elements and $N(0,\frac{1}{2})$ off-diagonal elements. Then we have
	\begin{equation}
		\expect{\norm{A}} \le \sqrt{d}
	\end{equation}
and
\begin{equation}
	P\left(\norm{A} - \expect{\norm{A}} \ge t\right) \le e^{-\frac{t^2}{2}}.
\end{equation}
\end{lem}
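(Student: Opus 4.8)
The plan is to establish the two assertions separately: the tail bound from Gaussian concentration of measure, and the bound on $\mathbb{E}\norm{A}$ from a standard Gaussian comparison argument. The only paper-specific subtlety is careful bookkeeping of the variance structure (diagonal entries of variance $1$, off-diagonal entries of variance $\tfrac12$ appearing twice).

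For the concentration inequality, I would realize $A$ as a fixed function of a standard Gaussian vector: let $z = ((z_i)_{i\in[d]}, (z_{ij})_{i<j})$ have i.i.d.\ $N(0,1)$ coordinates and set $A_{ii} = z_i$, $A_{ij} = A_{ji} = z_{ij}/\sqrt 2$; one checks directly that this reproduces the stated GOE law. The crucial observation is that $z \mapsto \norm{A(z)}$ is $1$-Lipschitz in the Euclidean norm: for two parameter vectors $z,z'$ with associated matrices $A,A'$, one has $\bigl|\norm{A}-\norm{A'}\bigr| \le \norm{A-A'}_{\mathrm{op}} \le \fnorm{A-A'}$ and $\fnorm{A-A'}^2 = \sum_i (z_i-z_i')^2 + 2\sum_{i<j}\tfrac12(z_{ij}-z_{ij}')^2 = \norm{z-z'}_2^2$, where the factor $\tfrac12$ exactly cancels the factor $2$ from double-counting the off-diagonal entries. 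Applying the Gaussian Lipschitz concentration inequality (Borell--Tsirelson--Ibragimov--Sudakov) with Lipschitz constant $1$ then gives $\Prob(\norm{A} - \mathbb{E}\norm{A} \ge t) \le e^{-t^2/2}$, which is the second claim.

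For $\mathbb{E}\norm{A}$, I would write $\norm{A} = \sup_{u,v\in S^{d-1}} u^\top A v$ and view $(G_{u,v}) := (u^\top A v)$ as a centered Gaussian process with $\mathbb{E}[G_{u,v}^2] = \tfrac12(1+\langle u,v\rangle^2) \le 1$ and covariance $\mathbb{E}[G_{u,v}G_{u',v'}] = \tfrac12(\langle u,u'\rangle\langle v,v'\rangle + \langle u,v'\rangle\langle u',v\rangle)$; a short computation then verifies $\mathbb{E}(G_{u,v}-G_{u',v'})^2 \le \norm{u-u'}^2 + \norm{v-v'}^2$, the squared-increment metric of the auxiliary process $H_{u,v} = \langle g,u\rangle + \langle h,v\rangle$ with $g,h$ independent standard Gaussians in $\RR^d$. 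Sudakov--Fernique comparison then yields $\mathbb{E}\norm{A} \le \mathbb{E}\sup_{u,v} H_{u,v} = \mathbb{E}\norm{g} + \mathbb{E}\norm{h} \le 2\sqrt d$. (Equivalently, the elementary symmetrization $A = \tfrac{1}{\sqrt 2}(B+B^\top)$ with $B$ having i.i.d.\ $N(0,\tfrac12)$ entries, combined with $\norm{A}\le\sqrt2\,\norm{B}$ and the Davidson--Szarek bound $\mathbb{E}\norm{B} \le 2\sqrt{d/2}$, gives $\mathbb{E}\norm{A}\le\sqrt{2d}$ with no work.)

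The real obstacle is not the structure of the argument but the numerical constant: the cheap comparison/triangle-inequality routes produce $\mathbb{E}\norm{A} \le 2\sqrt d$ (and the sharp value is $\mathbb{E}\norm{A} \sim \sqrt{2d}$), so matching the literal constant $\sqrt d$ would require either a sharper Gaussian comparison that genuinely exploits the symmetry of $A$ or a careful moment/trace computation. Since every downstream application of the lemma only uses $\mathbb{E}\norm{A} = O(\sqrt d)$ (e.g.\ in bounding $\norm{Z_1}$ in the initialization and RDPP perturbation lemmas), this constant is cosmetic and the Sudakov--Fernique bound suffices throughout.
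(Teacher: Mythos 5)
Your tail bound is identical to the paper's: both of you realize $A$ as a $1$-Lipschitz function of a standard Gaussian vector (with the variance $\tfrac12$ on the off-diagonal exactly cancelling the double-count in the Frobenius norm) and then apply Gaussian Lipschitz concentration.

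Your skepticism about the constant $\sqrt{d}$ in the expectation bound is well-founded, and in fact points to a genuine error in the paper. The paper's proof asserts that $A$ has the same distribution as $\frac{B+B^\top}{4}$ with $B$ having i.i.d.\ $N(0,1)$ entries; this is wrong. A direct variance check shows $\frac{B+B^\top}{4}$ has diagonal entries of variance $\tfrac14$ and off-diagonal entries of variance $\tfrac18$, whereas the correct symmetrization is $A\overset{d}{=}\frac{B+B^\top}{2}$ (diagonal variance $\tfrac44=1$, off-diagonal variance $\tfrac24=\tfrac12$). With the corrected factor the paper's own triangle-inequality argument yields $\expect{\norm{A}}\leq \expect{\norm{B}}\leq 2\sqrt{d}$, not $\sqrt{d}$. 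Moreover, since the semicircle law for this normalization puts $\norm{A}\sim\sqrt{2d}$ almost surely, the bound $\expect{\norm{A}}\leq\sqrt{d}$ cannot hold for large $d$ at all. Your Sudakov--Fernique route and your symmetrization route both correctly give $\expect{\norm{A}}\leq 2\sqrt{d}$ (note one small arithmetic slip: $\sqrt{2}\cdot 2\sqrt{d/2}=2\sqrt{d}$, not $\sqrt{2d}$ as you wrote for the symmetrization bookkeeping, though your headline constant $2\sqrt{d}$ from SF is right). You are also right that the defect is cosmetic: everywhere the lemma is invoked (e.g.\ in bounding $Z_1$ inside the RDPP and initialization perturbation arguments, where the dominant term is $3\sqrt{dp/m}$), the constant in front of $\sqrt{d}$ just propagates into an absolute constant, so replacing $\sqrt{d}$ by $2\sqrt{d}$ throughout leaves all downstream conclusions intact.
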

\begin{proof}
	We will use the following two facts~\cite{wainwright2019high}:
	\begin{enumerate}
		\item For a $d$-by-$d$ matrix $B$ with i.i.d. $N(0,1)$ entries, 
		\begin{equation}
			\expect{\norm{B}} \le 2\sqrt{d}. 
		\end{equation}
	\item Suppose $f$ is $L$-Lipschitz(with respect to the Euclidean norm) function and $a$ is a standard normal vector, then
	\begin{equation}
		P\left(f(a) - \expect{f(a)} \ge t \right) \le e^{-\frac{t^2}{2L}}.
	\end{equation}
	\end{enumerate}
Now we can prove the lemma. Firstly, we note that $A$  has the same distribution as $\frac{B+B^\top}{4}$ where $B$ has i.i.d. standard normal entries. By the first fact, we obtain
\begin{equation}
	\expect{\norm{A}}\le \expect{ \frac{\norm{B}+ \norm{B^\top}}{4}} \le \sqrt{d}.
\end{equation}
On the other hand, $\norm{A}$ can be written as a function of $\{A_{ii}\}$ and $\{\sqrt{2}A_{ij}\}_{i<j}$, which are i.i.d. standard normal random variables. Simple algebra yields that this function is $1$-Lipschitz. By the second fact,
\begin{align}
	P\left(\norm{A}  \ge \sqrt{d} +t\right)&\le P\left( \norm{A} \ge \expect{\norm{A}} +t \right)\\
	&\le e^{-\frac{t^2}{2}}.
\end{align}
\end{proof}
\begin{lem}[Concentration for $\chi^2$ distribution]\label{lem: concentration for chisquare distribution}
	Let $Y \sim \chi^2(n)$ be a $\chi^2$ random variable. Then we have
	\begin{equation}
		P\left(Y  \ge (1+2\sqrt{\lambda} + 2\lambda) n\right) \le \exp(-\lambda n/2).
	\end{equation}
\end{lem}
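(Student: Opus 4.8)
The plan is to prove Lemma~\ref{lem: concentration for chisquare distribution} by the standard Chernoff (exponential moment) argument, exploiting the closed form of the moment generating function of a chi-squared variable. Recall that for $Y\sim\chi^2(n)$ one has $\EE\!\left[e^{tY}\right]=(1-2t)^{-n/2}$ for every $t\in(0,1/2)$. Fixing $a:=(1+2\sqrt{\lambda}+2\lambda)n$, Markov's inequality applied to $e^{tY}$ gives
\[
P(Y\ge a)\;\le\; e^{-ta}(1-2t)^{-n/2},\qquad 0<t<\tfrac12,
\]
and therefore $P(Y\ge a)\le\exp\!\big(\inf_{0<t<1/2}\{-ta-\tfrac{n}{2}\log(1-2t)\}\big)$.

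Next I would carry out the minimization in $t$ explicitly. Differentiating $-ta-\tfrac{n}{2}\log(1-2t)$ in $t$ and setting the result to zero yields $1-2t=n/a$, i.e.\ the minimizer $t_\star=\tfrac12(1-n/a)$, which lies in $(0,1/2)$ since $a>n>0$. Substituting $1-2t_\star=n/a$ back into the exponent and simplifying gives
\[
P(Y\ge a)\;\le\;\exp\!\Big(-\tfrac{a-n}{2}+\tfrac{n}{2}\log\tfrac{a}{n}\Big).
\]
Using $\tfrac{a}{n}=1+2\sqrt{\lambda}+2\lambda$ and $a-n=(2\sqrt{\lambda}+2\lambda)n$, the bracketed exponent equals $\tfrac{n}{2}\big(\log(1+2\sqrt{\lambda}+2\lambda)-2\sqrt{\lambda}-2\lambda\big)$.

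The only mildly delicate point—hardly an obstacle—is the elementary inequality $\log(1+2\sqrt{\lambda}+2\lambda)\le 2\sqrt{\lambda}+\lambda$, which would reduce the exponent to at most $-\lambda n/2$ and finish the proof. The crude estimate $\log(1+x)\le x$ is just short of what is needed, so instead I would write $u=\sqrt{\lambda}$ and verify, by retaining the first three terms of the Taylor expansion of the exponential, that $e^{2u+u^2}\ge 1+(2u+u^2)+\tfrac12(2u)^2=1+2u+3u^2\ge 1+2u+2u^2$; taking logarithms gives exactly $2u+u^2\ge\log(1+2u+2u^2)$, as required. Alternatively, one may simply invoke the classical Laurent--Massart tail bound $P(Y-n\ge 2\sqrt{nx}+2x)\le e^{-x}$ with $x=\lambda n$, which directly yields the even stronger bound $e^{-\lambda n}$; the stated lemma is a deliberately weakened version, so no substantive difficulty arises.
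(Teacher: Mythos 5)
Your proof is correct, but it takes a more explicit route than the paper. The paper disposes of this lemma in one line, citing the standard sub-exponential concentration inequality together with the fact that the square of a standard normal is sub-exponential; it never writes down the moment generating function or optimizes the Chernoff exponent. You instead use the exact MGF $\EE[e^{tY}]=(1-2t)^{-n/2}$, carry out the optimization (correctly obtaining $1-2t_\star=n/a$ and the exponent $-\tfrac{a-n}{2}+\tfrac{n}{2}\log\tfrac{a}{n}$), and then reduce the claim to the elementary inequality $\log(1+2u+2u^2)\le 2u+u^2$ with $u=\sqrt{\lambda}$, which you verify via $e^{x}\ge 1+x+\tfrac{x^2}{2}$; your observation that the cruder $\log(1+x)\le x$ falls short is accurate, and the refinement you supply closes the gap. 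What your approach buys is a fully self-contained argument with the sharp Cramér--Chernoff rate as an intermediate step, from which the stated bound follows by a clean weakening; indeed the generic sub-exponential tail bound the paper gestures at, with the usual parameters $(\nu,b)=(2\sqrt{n},4)$ for $\chi^2(n)$, is slightly lossy in the large-$\lambda$ regime, so your explicit computation (or the Laurent--Massart bound you mention, which gives the stronger $e^{-\lambda n}$) is arguably the more careful way to justify the constant $\tfrac12$ in the exponent. What the paper's citation-style proof buys is brevity, at the cost of leaving these constants unchecked.
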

\begin{proof}
	It follows from standard sub-exponential concentration inequality and the fact that the square of a standard normal random variable is sub-exponential~\cite{wainwright2019high}.
\end{proof}
\begin{lem}[{\cite[Lemma A.8]{li2020non}}]\label{lem: concetration for max fnorm}
	Suppose $A_i \in \RR^{d\times d}$'s are independnet GOE sensing matrices having $N(0,1)$ diagonal elements and $N(0,\frac{1}{2})$ off-diagonal elements, for $i=1,2,\ldots,m $ and $m \ge d$. Then
	\begin{equation}
		\max_{i=1,2,\ldots,m} \fnorm{A_i} \le 2\sqrt{d(d+m)}
	\end{equation} 
holds with probability exceeding $1-m\exp(-d(d+m)/2)$.
\end{lem}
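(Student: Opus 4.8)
The plan is to reduce the claim to a $\chi^2$ tail bound for a single sensing matrix and then apply a union bound over $i=1,\dots,m$. First I would record the distribution of $\fnorm{A_i}^2$ under the GOE normalization used here (diagonal entries $N(0,1)$, off-diagonal entries $N(0,\tfrac12)$): writing $A=A_i$,
\[
\fnorm{A}^2 \;=\; \sum_{j=1}^d A_{jj}^2 \;+\; 2\sum_{j<k} A_{jk}^2 \;=\; \sum_{j=1}^d A_{jj}^2 \;+\; \sum_{j<k}\bigl(\sqrt{2}\,A_{jk}\bigr)^2,
\]
and since $\sqrt{2}\,A_{jk}\sim N(0,1)$ and all the listed variables are independent, $\fnorm{A}^2$ is a sum of $n:=d+\binom{d}{2}=\tfrac{d(d+1)}{2}\le d^2$ independent $\chi^2(1)$ variables; hence $\fnorm{A_i}^2\sim\chi^2(n)$.

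Next I would invoke Lemma~\ref{lem: concentration for chisquare distribution}, which gives $\PP\bigl(\chi^2(n)\ge(1+2\sqrt{\lambda}+2\lambda)n\bigr)\le\exp(-\lambda n/2)$ for every $\lambda>0$. Taking $\lambda=d(d+m)/n$, the hypothesis $m\ge d$ together with $n\le d^2$ forces $\lambda\ge 1+m/d\ge 2$, so a routine check gives $1+2\sqrt{\lambda}+2\lambda\le 4\lambda$, whence $(1+2\sqrt{\lambda}+2\lambda)n\le 4\lambda n=4d(d+m)$ and $\lambda n/2=d(d+m)/2$. This yields, for each fixed $i$,
\[
\PP\Bigl(\fnorm{A_i}\ge 2\sqrt{d(d+m)}\Bigr)\;=\;\PP\Bigl(\fnorm{A_i}^2\ge 4d(d+m)\Bigr)\;\le\;\exp\bigl(-d(d+m)/2\bigr).
\]
A union bound over $i=1,\dots,m$ then gives $\PP\bigl(\max_{i\le m}\fnorm{A_i}> 2\sqrt{d(d+m)}\bigr)\le m\exp(-d(d+m)/2)$, which is the assertion.

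There is no genuine obstacle here: the only care needed is to count correctly the $n=\tfrac{d(d+1)}{2}$ independent $\chi^2(1)$ summands in $\fnorm{A_i}^2$ under this GOE convention, and to note that the assumption $m\ge d$ is exactly what makes $\lambda\ge 2$ so that $1+2\sqrt{\lambda}+2\lambda\le 4\lambda$; everything else is constant bookkeeping. Alternatively, since the statement is quoted verbatim from \cite[Lemma A.8]{li2020non}, one may simply cite it.
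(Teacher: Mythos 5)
Your proof is correct and follows essentially the same route as the paper: identify $\fnorm{A_i}^2$ as a $\chi^2$ random variable, apply the $\chi^2$ tail bound (Lemma~\ref{lem: concentration for chisquare distribution}) with $\lambda\ge2$ guaranteed by $m\ge d$, and take a union bound. The only cosmetic difference is that you apply the tail bound directly with the exact degrees of freedom $n=\tfrac{d(d+1)}{2}$, whereas the paper instantiates it with $d^2$ (which works via stochastic domination of $\chi^2$ laws but is not spelled out); your parameterization is a bit cleaner, but the two arguments are the same.
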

\begin{proof}
	Let $A$ be a GOE sensing matrix described in this lemma, and $A_{ij}$ be the $ij$-th entry of $A$. Since 
	\begin{equation}
		\fnorm{A}^2 =\sum_{i=1}^{d}A_{ii}^2 + 2\sum_{i<j}A_{ij}^2 = \sum_{i=1}^{d}A_{ii}^2+\sum_{i<j} (\sqrt{2}A_{ij})^2,
	\end{equation}
we see that $\fnorm{A}^2$ is a $\chi^2(d(d+1)/2)$ random variable. By Lemma~\ref{lem: concentration for chisquare distribution}, we have 
\begin{equation}
	P\left(\fnorm{A}^2 \ge \left(1+2\sqrt{\lambda}+2\lambda\right)d^2 \right) \le \exp(-\lambda d^2/2)
\end{equation}
for any $\lambda>0$. Take $\lambda = \frac{d+m}{d} \ge 2.$ Simple calculus shows $2\lambda \ge 2\sqrt{\lambda}+1$. Thus, we obtain
\begin{equation}
	P\left(\fnorm{A}^2 \ge 4d(d+m)\right) \le \exp(-d(d+m)/2).
\end{equation}
Therefore, the proof is completed by applying the union bound.
\end{proof}
\begin{lem}[{\cite[Lemma A.2]{li2020non}}]\label{lem: order stats  bounded by infty norm}
	Given vectors $x= [x_1,x_2,\ldots,x_n]$ and $y=[y_1,y_2,\ldots,y_n]$. We reorder them so that 
	\begin{equation}
		x_{(1)} \le x_{(2)}\le\ldots\le x_{(n)}, \qquad \text{and} \qquad  y_{(1)}\le y_{(2)}\le\ldots \le y_{(n)}.
	\end{equation}
Then
\begin{align}
	\abs{x_{(k)} - y_{(k)}} \le \norm{x-y}_{\infty}, \qquad \forall k=1,2,\ldots,n.
\end{align}

\end{lem}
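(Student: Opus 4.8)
The plan is to exploit the pointwise bound $|x_i-y_i|\le \norm{x-y}_\infty$ coordinatewise, together with a counting argument that translates a bound on all coordinates into a bound on order statistics. Write $\epsilon:=\norm{x-y}_\infty$, so that $y_i-\epsilon\le x_i\le y_i+\epsilon$ for every index $i=1,\dots,n$.

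First I would establish the one-sided inequality $x_{(k)}\le y_{(k)}+\epsilon$. Let $J\subset\{1,\dots,n\}$ be a set of $k$ indices realizing the $k$ smallest entries of $y$, so that $y_j\le y_{(k)}$ for all $j\in J$. For each such $j$ we have $x_j\le y_j+\epsilon\le y_{(k)}+\epsilon$, hence there are at least $k$ coordinates of $x$ that do not exceed $y_{(k)}+\epsilon$; by the definition of the $k$-th smallest entry, this forces $x_{(k)}\le y_{(k)}+\epsilon$. Interchanging the roles of $x$ and $y$ and using that $\epsilon=\norm{y-x}_\infty$ is symmetric, the same argument gives $y_{(k)}\le x_{(k)}+\epsilon$. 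Combining the two inequalities yields $\abs{x_{(k)}-y_{(k)}}\le\epsilon=\norm{x-y}_\infty$, which is exactly the claim, and it holds uniformly in $k$.

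An equivalent and perhaps cleaner packaging of the same idea uses the min--max characterization of order statistics, $x_{(k)}=\max_{|S|=k}\ \min_{i\in S}x_i$: from $x_i\ge y_i-\epsilon$ one gets $\min_{i\in S}x_i\ge \min_{i\in S}y_i-\epsilon$ for every $S$ with $|S|=k$, and taking the maximum over such $S$ gives $x_{(k)}\ge y_{(k)}-\epsilon$; the reverse inequality follows by symmetry. Either way there is essentially no real obstacle: the only point requiring a little care is the presence of ties in the sorted values, but the counting formulation above handles this automatically because it only ever uses non-strict inequalities and never needs the sorting to be uniquely determined.
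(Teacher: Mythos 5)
The paper does not prove this lemma itself---it merely cites it as Lemma A.2 of \cite{li2020non}---so there is no in-paper proof to compare against. Your argument is correct and complete: the counting step (at least $k$ coordinates of $x$ fall at or below $y_{(k)}+\epsilon$ because each $x_j$ with $y_j\le y_{(k)}$ is within $\epsilon$ of such a $y_j$) gives the one-sided bound $x_{(k)}\le y_{(k)}+\epsilon$, and the symmetric role of $x$ and $y$ together with $\norm{x-y}_\infty=\norm{y-x}_\infty$ gives the other side; the min--max characterization you mention is an equivalent repackaging, and you are right that ties cause no trouble since only non-strict inequalities are used. This is the standard proof of this fact and is essentially what one finds in \cite{li2020non}.
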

\begin{lem}[{\cite[Lemma A.3]{li2020non}}]\label{lem: corrupted/clean samples}
	Consider corrupted samples $y_i = \dotp{A_i, \truX}+ s_i$ and clean samples $\tilde y_i = \dotp{A_i, \truX}$, $i=1,2,\ldots,m$. If $\mu<\frac{1}{2}$ is the fraction of samples that are corrupted by outliers, for $\mu < p < 1-\mu$, we have
	\begin{align}
		\theta_{p-\mu}(\{|\tilde y_i|\}_{i=1}^{m}) \le \theta_p(\{|y_i|\}_{i=1}^{m}) \le \theta_{p+\mu}(\{|\tilde y_i|\}_{i=1}^{m})
	\end{align}
\end{lem}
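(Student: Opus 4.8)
The plan is to reduce everything to a uniform bound between the empirical cumulative distribution functions (CDFs) of the corrupted and clean absolute samples, and then to translate that bound into the desired inequalities between their quantiles; no probabilistic input is needed, as the statement is purely combinatorial in the $m$ samples.

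First I would fix the set $S = \{i : s_i \neq 0\}$ of corrupted indices, so that $|S| \le \mu m$ and $y_i = \tilde y_i$ for every $i \notin S$. Writing $F_y(t) := \frac{1}{m}\#\{i : |y_i| \le t\}$ and $F_{\tilde y}(t) := \frac{1}{m}\#\{i : |\tilde y_i| \le t\}$ for the two empirical CDFs, I would observe that $\#\{i : |y_i| \le t\}$ and $\#\{i : |\tilde y_i| \le t\}$ can differ only over the indices in $S$, so
\[
\bigl| \#\{i : |y_i| \le t\} - \#\{i : |\tilde y_i| \le t\}\bigr| \le |S| \le \mu m ,
\]
and dividing by $m$ yields the key estimate $|F_y(t) - F_{\tilde y}(t)| \le \mu$ for every $t \ge 0$.

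Next I would recall (from Definition 5.1 of \cite{li2020non}) that the sample $p$-quantile of a list $\{x_i\}_{i=1}^m$ may be written $\theta_p(\{x_i\}) = \inf\{t : F(t) \ge p\}$ with $F$ the associated empirical CDF; since $F$ is a nondecreasing right-continuous step function, this infimum is attained, so $F(\theta_p(\{x_i\})) \ge p$ while $F(t) < p$ for all $t < \theta_p(\{x_i\})$. With this in hand, the upper bound follows by taking $t^\star = \theta_{p+\mu}(\{|\tilde y_i|\})$: then $F_{\tilde y}(t^\star) \ge p+\mu$, hence $F_y(t^\star) \ge F_{\tilde y}(t^\star) - \mu \ge p$, which forces $\theta_p(\{|y_i|\}) \le t^\star$. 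Symmetrically, taking $s^\star = \theta_p(\{|y_i|\})$ gives $F_y(s^\star) \ge p$, hence $F_{\tilde y}(s^\star) \ge F_y(s^\star) - \mu \ge p-\mu$, so $\theta_{p-\mu}(\{|\tilde y_i|\}) \le s^\star$; the hypothesis $\mu < p < 1-\mu$ guarantees both levels $p\pm\mu$ lie in $(0,1)$, so every quantile above is well defined.

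I expect the only delicate point to be matching the precise quantile convention of Definition 5.1 in \cite{li2020non} — in particular whether ties are resolved with $\ge$ versus $>$, and which endpoint of a flat stretch of the CDF is selected. Once the ``$\inf\{t : F(t) \ge p\}$'' characterization is pinned down (and one checks the trivial edge cases where the infimum ranges over empty or unbounded sets), the remainder is the two-line monotonicity argument above.
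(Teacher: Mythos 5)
Your argument is correct, and it is the natural elementary proof: only the indices in $S$ can change the counts $\#\{i:|\cdot|\le t\}$, giving the uniform bound $|F_y(t)-F_{\tilde y}(t)|\le\mu$, which then transfers to the quantiles via the generalized-inverse characterization $\theta_p = \inf\{t: F(t)\ge p\}$. Note that the paper does not actually prove this lemma but cites it from \cite{li2020non} (Lemma A.3); your reconstruction matches the standard argument used there.
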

\end{document}